\theoremstyle{plain}
\newtheorem{lemma}{Lemma}[section]
\newtheorem{prop}[lemma]{Proposition}
\newtheorem{theorem}[lemma]{Theorem}
\newtheorem{cor}[lemma]{Corollary}
\newtheorem{definition}[lemma] {Definition}
\newtheorem{notation}[lemma] {Notation}
\newtheorem*{theorem*}{Theorem}
\newtheorem*{conj*}{Conjecture}
\newtheorem*{claim*}{Claim}
\newlength{\dhatheight}
\newcommand{\lstar}{{{{\mathcal{L}_{\highsthree}}^{\mkern-20.5mu\highstar\mkern15mu}}}}
\newcommand{\highstar}{\mathchoice
  {\mbox{\smaller$pq$}}
  {\mbox{\smaller$pq$}}
  {\raisebox{.5pt}{\larger[-3]$*$}}
  {\raisebox{.5pt}{\larger[-3]$*$}}
}
\newcommand{\highsthree}{\mathchoice
  {\mbox{\smaller$pq$}}
  {\mbox{\smaller$pq$}}
  {\raisebox{1.5pt}{\larger[-3]$S^3$}}
  {\raisebox{1.5pt}{\larger[-3]$S^3$}}
}
\newcommand{\highpq}{\mathchoice
  {\mbox{\smaller$pq$}}
  {\mbox{\smaller$pq$}}
  {\raisebox{1.5pt}{\larger[-3]$pq$}}
  {\raisebox{1.5pt}{\larger[-4]$pq$}}
}
\newcommand{\lowR}{\mathchoice
  {\mbox{\smaller$\mkern.2mu\mathbb{R}$}}
  {\mbox{\smaller$\mkern.2mu\mathbb{R}$}}
  {\raisebox{-.8pt}{\larger[-3]$\mkern.1mu\mathbb{R}$}}
  {\raisebox{-.8pt}{\larger[-4]$\mkern.1mu\mathbb{R}$}}
}
\newcommand{\C}{{{\mathbb C}}}
\renewcommand{\P}{{{\mathbb P}}}
\newcommand{\Q}{{{\mathbb Q}}}
\newcommand{\R}{{{\mathbb R}}}
\newcommand{\Z}{{{\mathbb Z}}}
\renewcommand{\mod}{{{\mathrm{mod\;}}}}
\renewcommand{\ker}{{{\text {ker} \ }}}
\newcommand{\into}{{\hookrightarrow}}
\renewcommand{\epsilon}{{\varepsilon}}
\title[Rational L-space surgeries 
on satellites by algebraic links]{Rational L-space surgeries\\
on satellites by algebraic links}
\author{Sarah Dean Rasmussen}
\email{S.Rasmussen@dpmms.cam.ac.uk}
\address{DPMMS\\ University of Cambridge\\ UK}
\keywords{Heegaard Floer, L-space, algebraic link, torus link, satellite, cable, Dehn surgery}
\begin{document}

\begin{abstract}
Given an $n$-component link $L$ in any 3-manifold $M$, the space 
$\mathcal{L} \subset (\mathbb{Q}\cup \mkern-1.5mu\{\infty\})^n$ 
of rational surgery slopes yielding L-spaces is already fully characterized
(in joint work by the author \cite{lslope}) 
when $n\!=\!1$ and $\mathcal{L}$ is nontrivial.
For $n\mkern-2mu>\mkern-3mu1$, however,
there are no previous results for 
$\mathcal{L}$ as a rational subspace, and only limited results for
integer surgeries $\mathcal{L}\cap\mathbb{Z}^n$ on $S^3\mkern-2mu$.
Herein, we provide the first nontrivial explicit descriptions of $\mathcal{L}$
for rational surgeries on multi-component links.
Generalizing Hedden's and Hom's L-space result for cables,
we compute both $\mathcal{L}$, and its topology,
for all satellites by torus-links in $S^3\mkern-2mu$. 
For fractal-boundaried $\mathcal{L}$ resulting from satellites by algebraic links
or iterated torus links, we develop arbitrarily precise approximation tools.
We also extend the provisional validity of the L-space conjecture for rational surgeries on a knot $K \subset S^3$ to rational surgeries on such satellite-links of $K$.
These results exploit the author's generalized Jankins-Neumann formula
for graph~manifolds~\cite{lgraph}.
\end{abstract}

\vspace*{-.0cm}

\maketitle

\vspace{-.4cm} 

\section{Introduction}

A connected, closed, oriented 3-manifold is called an {\textit{L-space}}
if its reduced Heegaard Floer homology vanishes.
The present work focuses on the following relative notion of L-space.

\begin{definition}
For a compact oriented 3-manifold $\mkern1muY\mkern-3mu$ with
boundary a disjoint union of $n$ tori,
the
{\em{L-space region}} 
$\mathcal{L}(Y) \subset \prod_{i=1}^n\P(H_1(\partial_i Y; \Z)) \cong (\Q\cup\{\infty\})^n$,
with complement $\mathcal{N}\mkern-2mu\mathcal{L}(Y)$,
is the space of (rational) Dehn-filling slopes of $\mkern2muY\mkern-3.5mu$ which yield
L-space Dehn-fillings.
\end{definition}


\vspace{.03cm}

\noindent {\textbf{Prior Results.$\;$}}Until
now, studies of multi-component L-space surgery slopes have been confined to {\em{integer}} surgeries on links in $S^3\mkern-2mu$. These primarily include numerical methods~of~Liu to plot individual points in $\mathcal{L} \cap \Z^2$ for 2-component links \cite{liulspace}, Gorsky and Hom's identification of torus-link satellites with integer L-space surgery slopes in the
positive orthant \cite{gorskyhom},
and Gorsky and N{\'e}methi's work on integer torus-link surgeries \cite{GNalglink}
and on a partial characterization (complete for algebraic links) of which 2-component links have
$\mathcal{L}\mkern.5mu\cap\mkern.5mu \Z^2$ bounded~from~below~\cite{GNLbounded}.

\vspace{.18cm}

\noindent {\textbf{Present Motivation.$\mkern5mu$}}As
subsets of $(\Q \cup \mkern-1.5mu\{\infty\})^n\mkern-2mu,$
L-space regions
exhibit qualitative features invisible to 
the set of integer L-space surgery slopes, such as nontrivial topological properties, fractal behaviors at the boundary of $\mathcal{L}$, and symmetries such as the action of 
$\Lambda$ in~Theorem~\ref{thm: torus link satellite in s3, lambda action}.

Since non-L-space regions
chart the silhouette of Heegaard Floer complexity as a function of
varying surgery slope, 
this creates a rich template to compare against 
the~surgery~regions supporting
any candidate
geometric structure potentially
responsible for nontrivial HF classes.
Such comparisons for Seifert fibered spaces led to 
the$\mkern4.5mu${\em{L-space$\mkern4.5mu$conjecture}}
that non-L-spaces are characterized by the existence of left orders on fundamental groups and/or
co-oriented taut~foliations~\cite{BGW, JuhaszConj}.
Both L-spaces and $\mathcal{L}$ also constrain complex singularities:
see Section~\ref{ss: intro sats by alg links}.

The author's joint result with J.~Rasmussen \cite{lslope}
characterizing nontrivial $\mathcal{L}$ for knot exteriors in 3-manifolds
led both to our toroidal gluing theorem for L-spaces~\cite{lslope}
and to the author's independent~proof~of~the 
L-space conjecture for graph manifolds \cite{lgraph}.
Our joint work on $\mathcal{L}$ combined
with Hanselman and 
Watson's studies of combinatorial
properties of certain bordered Floer algebras \cite{HanWat}
gave rise to a topological
realization of bordered Floer homology for 
single-torus~boundaries~\cite{HRW}. 
A multiple-boundary-component version 
of~this~should~also~exist.

\noindent {\textbf{Methods: Classification formula.}}
Despite reliance on an enhanced L-space gluing tool proved in 
Theorem~\ref{thm: knot exterior gluing theorem},\footnote{Seven months after the current article's appearance on the arXiv, Hanselman, Rasmussen, and Watson posted a revised version of \cite{HRW} with a new L-space gluing theorem subsuming the current paper's
Theorem~\ref{thm: knot exterior gluing theorem}.}
this paper was primarily made possible by
the author's classification of graph manifolds admitting co-oriented taut foliations, with proof of the
graph-manifold L-space conjecture as by-product \cite{lgraph}. (This is not to be confused with the author's joint work with
Hanselman {\em{et al}} \cite{HRRW}.)\footnote{The author conceived this foliation-classification project \cite{lgraph} shortly before her summons to collaborate with 
Hanselman$\mkern5mu${\em{et$\mkern5mu$al}} \cite{HRRW}.  These two proofs of the 
graph-manifold L-space conjecture make contact with foliations via disparate mechanisms. 
The classification result itself is exclusive to~the~author's~independent~work.}
 This classification 
combines a new {\em{classification formula}} 
(Theorem~\ref{thm: l-space interval for seifert jsj}),
generalizing that of Jankins and Neumann for Seifert~fibered~spaces~\cite{JankinsNeumann},
with
a {\em{structure$\mkern4.5mu$theorem}} 
(Theorem~\ref{thm: gluing structure theorem})
prescribing the interpretation of outputs of this formula.

%
This classification tool also governs L-space regions for
unions of graph manifolds with
single-torus-boundary manifolds.
In particular it gives a complete abstract characterization of $\mathcal{L}$
for any graph-manifold-exterior satellite
of any knot in any 3-manifold.
The classification formula alternately composes a 
linear-fractional transformation $\phi_{e*}^{\P}$,
induced by a gluing map $\phi_e$ for each edge $e$,
with a pair $y^v_{\pm}$, for each vertex $v$,
of extremizations of locally-finite collections of piecewise-constant 
functions of slopes in a certain Seifert-data-compatible basis.


\vspace{.2cm}

\noindent {\textbf{Results.}}
Herein, we analyze the intricate
behavior of solutions $\mathcal{L}$ to the 
classification formulae for exteriors of such satellites.
The bounded-chaotic behavior of these $y^v_{\pm}$
generically leads to fractal-boundaried $\mathcal{L}$,
but we develop precise tools for local approximation and
topological characterization.
As sample applications of these tools, 
Theorems 
\ref{thm: algebraic link satellites}
and~\ref{thm: iterated torus links satellites}~construct~global 
inner approximations of $\mathcal{L}$
for satellites by
algebraic links and iterated-torus-links, respectively.

Moreover, for a satellite in $S^3$ by an $n$-component torus link, the
chaotic behavior of
$\mkern.5mu y^v_{\pm}\mkern-2.5mu$ generically degenerates, and we provide an
{\em{exact}}$\mkern1.5mu$ explicit description of $\mathcal{L}$
and its various possible topologies, in
Theorems~\ref{thm: torus link satellite in s3, lambda action}
and
\ref{thm: intro topological theorem}, respectively.
Lastly, in
Theorem~\ref{thm: bgw and juhasz} and
Corollary~\ref{cor: bgw and juhasz}, we
promote L-space conjecture results for knot surgeries to results for satellite surgeries.

\vspace{.08cm}

\subsection{Torus-link satellites}

The $T(np,nq)$-torus-link satellite
$K^{(np,nq)} \mkern-4mu\subset\mkern-3mu M$
of a knot $K \mkern-4mu\subset\mkern-4mu M$ in a 3-manifold $M$
embeds the torus link $T(np,nq)$ in the boundary of a neighborhood $\nu(K)$ of 
$K\mkern-3.8mu\subset\mkern-3.7mu M$.
The exterior 
$\mkern-.1mu\boldsymbol{Y^{\mkern-1mu(np,nq\mkern-.5mu)}}\mkern-.2mu$
of $K^{(np,nq)}\mkern-1mu$ splices $K \mkern-3.6mu\subset\mkern-3.7mu M$ to the 
multiplicity-$q$ fiber of the Seifert fibered exterior of 
$T(np,nq)$.
This Seifert structure also
prescribes  
3 distinguished subsets
$\Lambda, \mathcal{R}, \mathcal{Z} \subset \prod_{i=1}^n\P(H_1(\partial_i Y^{(np,nq)}))$
of slopes.
The lattice $\Lambda$ {\em{acts}} on slopes by reparametrization of Seifert data,
and $\mathcal{R} \setminus \mathcal{Z}$ catalogs reducible surgeries
with no $S^1 \mkern-1.2mu\times\mkern-.7mu S^2$ connected summand.


\begin{theorem}
\label{thm: torus link satellite in s3, lambda action}
Suppose that $K \mkern-3mu\subset\mkern-3mu S^3\mkern-1.5mu$ is a positive L-space knot of genus 
$g(K)$, and that $\mkern1mun, p,q \in \Z,$
with $n, p\mkern-2mu >\mkern-2mu 0$ and
$\gcd(p,q) \mkern-2mu=\mkern-2mu 1$.
Then the $\mkern1muT\mkern-1mu(np,nq)$ torus-link satellite 
$K^{(np,nq)}\mkern-2mu\subset\mkern-2mu S^3$ of $K$ has L-space surgery region
given by the union of $\Lambda$-orbits
$\;\mathcal{L}_{S^3} = \Lambda \cdot
\mathcal{L}^*_{S^3}\;,$
where

\begin{itemize}
\item[$(i)$]
If 
$N:= 2g(K)-1 > \frac{q}{p}$ and
$K\subset S^3$ is nontrivial, then 

\vskip-.3cm
$$\mathcal{L}^*_{S^3}
:= 
\begin{cases}
\mkern-1mu\{(\infty, \ldots, \infty)\}
   &
p>1
   \\
\bigcup_{i=1}^{\mkern1mu n}\left(\{\infty\}^{i-1} \times [N, +\infty] \times \{\infty\}^{n-i}\right)
   &
p=1
\end{cases}.
$$

\vspace{-.02cm}

\item[$(ii)$]
If $2g(K)-1$
\raisebox{.1pt}{$\le$} 
$\mkern-8mu$ \raisebox{.09pt}{$\frac{q}{p}$},
and $K \subset S^3$ is nontrivial,
or if $p, q > 1$ and $K \subset S^3$ is the unknot (so that $K^{(np,nq)} = T(np,nq)$),
then for $N_{pq}:= pq-p-q+2g(K)p$, we have
\begin{align*}
& \mkern62mu
\mathcal{L}^*_{S^3}
\mkern2mu:=\mkern15mu
\mathcal{L}^-_{S^3} 
\,\;\;\;\cup\;\;\; \,
(\mathcal{R}_{S^3}\mkern-2.5mu\setminus\mkern-1.5mu\mathcal{Z}_{S^3})
\,\,\;\;\;\cup\;\;\;\,
\mathcal{L}^+_{S^3};
    \\
\mathcal{R}_{S^3} \mkern-2.5mu\setminus\mkern-1.5mu
\mathcal{Z}_{S^3}
&=
{\textstyle{\coprod_{i=1}^{\mkern1mu n}}}
\left( \vphantom{A^a} 
\left[-\infty, pq\right> \mkern-1mu\cup\mkern-2mu \left< pq, +\infty\right])^{i-1}
\mkern2mu\times\mkern2mu \{pq\} \mkern2mu\times\mkern2mu
(\left[-\infty, pq\right> \mkern-1mu\cup\mkern-2mu  \left< pq, +\infty\right])^{n-i} \right),
  \\
& \mkern30mu
\mathcal{L}^-_{S^3} 
=
\left[-\infty, pq\right>^{\mkern-1mu n}
\setminus \left[-\infty,\right. \left.N_{pq}\right>^{\mkern-1mu n}\mkern-2mu,
\mkern35mu
\mathcal{L}^+_{S^3}
=\left<pq, +\infty\right]^n \mkern-2mu.
\end{align*}


\end{itemize}
\end{theorem}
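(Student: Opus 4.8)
The plan is to present $Y^{(np,nq)}$ as a graph-manifold satellite of $K$ and then run the classification machinery. First I would fix the toroidal decomposition: $Y^{(np,nq)}$ splits along a single essential torus $T$ into the knot exterior $X_K := S^3\setminus\nu(K)$ and a Seifert fibered piece $\widehat Z$, where $\widehat Z$ is obtained from the Seifert fibered exterior of $T(np,nq)\subset S^3$ by removing a tubular neighborhood of its multiplicity-$q$ fiber; thus $\widehat Z$ is Seifert fibered over a planar base orbifold carrying a single cone point of order $p$, with $n+1$ boundary tori — $n$ of them the $\partial_i Y^{(np,nq)}$, and one of them $T$, glued to $\partial X_K$ by the splicing map. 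On each $\partial_i Y^{(np,nq)}$ I would record the distinguished fibered basis furnished by this Seifert structure; in it the regular fiber has slope $pq$ (the cabling slope of the $i$-th component, which is a cable of $K$), and the lattice $\Lambda$ is precisely the group of transition matrices between admissible choices of section for $\widehat Z$. Since $\Lambda$ acts by orientation-preserving self-homeomorphisms permuting the Seifert parametrizations, it preserves L-spaceness of fillings, so $\mathcal{L}_{S^3}=\Lambda\cdot\mathcal{L}^*_{S^3}$ and the task reduces to computing one orbit slice $\mathcal{L}^*_{S^3}$.

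Next I would feed two inputs into the gluing theorem, Theorem~\ref{thm: knot exterior gluing theorem}: the L-space region of the companion, which for a positive L-space knot is the closed arc $\mathcal{L}(X_K)=[\,2g(K)-1,+\infty\,]$ (from \cite{lslope} together with the Ozsv\'ath--Szab\'o surgery inequality, positivity of $K$ fixing which of the two arcs this is), and the L-space region of $\widehat Z$ together with all its Dehn fillings, as computed by the classification formula of Theorem~\ref{thm: l-space interval for seifert jsj} and interpreted via the structure theorem, Theorem~\ref{thm: gluing structure theorem}. Because the base of $\widehat Z$ is planar and carries only one cone point, the generically chaotic vertex extremizations $y^v_{\pm}$ degenerate to explicit piecewise-linear step functions, so the formula evaluates in closed form: filling the $n$ outer tori with slopes $(\gamma_1,\dots,\gamma_n)$ produces on $T$, after composing with the edge transformation $\phi_{e*}^{\P}$ of the splicing map, an interval constraint that must be met by the arc $[\,2g(K)-1,+\infty\,]$. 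The position of the image of the endpoint $2g(K)-1$ relative to the fiber slope is controlled precisely by the sign of $pN-q$ with $N=2g(K)-1$; this sign is what separates cases $(i)$ and $(ii)$, and it agrees with the Hedden--Hom cabling criterion for when the relevant cable of $K$ is itself an L-space knot.

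In case $(i)$, where $N>q/p$, the transported arc from $X_K$ lies on a single side of the fiber slope on $T$, and the structure theorem then forces, on each outer torus, either the meridional filling on every component — yielding $\mathcal{L}^*_{S^3}=\{(\infty,\dots,\infty)\}$ (the trivial filling, recovering $S^3$) when $p>1$ — or, when $p=1$, the meridional filling on all but one component together with a slope in $[\,N,+\infty\,]$ on the remaining one; here the structure theorem is what pins down that the boundary value $N$ is included and that no reducible stratum survives. In case $(ii)$, where $N\le q/p$ — or $K$ is the unknot with $p,q>1$, in which case $X_K$ is a solid torus and Theorem~\ref{thm: l-space interval for seifert jsj} applies directly to the Seifert fibered exterior of $T(np,nq)$ with no gluing step — the transported arc straddles the fiber slope $pq$, and $\mathcal{L}^*_{S^3}$ acquires three strata. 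Fillings with every $\gamma_i$ strictly above $pq$ are unconditionally L-spaces, contributing $\mathcal{L}^+_{S^3}=\langle pq,+\infty\,]^n$; fillings with every $\gamma_i$ strictly below $pq$ are L-spaces exactly when they are not all below the threshold $N_{pq}=pq-p-q+2g(K)p$ at which the filled Seifert piece's own L-space interval empties against $[\,2g(K)-1,+\infty\,]$, contributing $\mathcal{L}^-_{S^3}$; and fillings with exactly one $\gamma_i$ equal to the fiber slope $pq$ are reducible, splitting off a lens-space summand $L(p,q)$ together with $S^3_{q/p}(K)$ (an L-space precisely under the case-$(ii)$ hypothesis, since $q/p\ge N$), the residual summand being a filling of a lower-complexity piece certified L-space by the structure theorem — these, together with the fact that two or more $\gamma_i$ at $pq$ force an $S^1\times S^2$ summand (the locus $\mathcal{Z}_{S^3}$, never an L-space), give the term $\mathcal{R}_{S^3}\setminus\mathcal{Z}_{S^3}$. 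Taking the $\Lambda$-orbit of the union of the three strata completes the proof.

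The main obstacle is the middle step: carrying the classification formula through the single splicing edge while keeping exact control of half-open versus closed interval endpoints — equivalently, of precisely which reducible, lens-space, or Seifert-exceptional fillings are themselves L-spaces — and verifying the degeneration of $y^v_{\pm}$ that makes $\mathcal{L}^*_{S^3}$ piecewise-linear rather than fractal in this case. A secondary delicate point is identifying $\Lambda$ with the correct Seifert-reparametrization lattice and checking that the displayed $\mathcal{L}^*_{S^3}$ is a genuine fundamental slice for its action, so that $\mathcal{L}_{S^3}=\Lambda\cdot\mathcal{L}^*_{S^3}$ is not merely a covering but the exact region.
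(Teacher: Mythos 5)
Your strategy is the paper's strategy at the level of architecture: split $Y^{(np,nq)}$ along the splice torus into the L-space-knot exterior (with $\mathcal{L}=[2g(K)-1,+\infty]$) and the Seifert piece $\widehat Z=\hat{Y}^n_{(p,q)}$, run the classification formula of Theorem~\ref{thm: l-space interval for seifert jsj} with the structure theorem, split cases by the sign of $pN-q$, identify $\mathcal{R}\setminus\mathcal{Z}$ via the connected-sum decomposition at the fiber slope, and finish with the $\Lambda$-orbit. But as written there are two genuine gaps. First, your gluing step is applied where its hypotheses fail: Theorems~\ref{thm: gluing theorem for floer simple or graph manifold} and~\ref{thm: knot exterior gluing theorem} require the graph-manifold side to have \emph{incompressible} torus boundary, whereas the fillings $\widehat Z(\gamma_1,\dots,\gamma_n)$ that are solid tori (all $\textsc{sf}$-slopes integral) are exactly the locus that, in case $(i)$ with $p>1$, constitutes the \emph{entire} region $\mathcal{L}=\Lambda$; for those slopes (and for reducible fillings) you would need a separate argument identifying the resulting manifold as a Dehn filling of $K$ or a connected sum and tracking which slope of $K$ arises. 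The paper avoids this altogether: it never invokes the two-piece gluing theorem for this result, but instead deletes a regular fiber from the closed filled manifold and applies Theorem~\ref{thm: l-space interval for seifert jsj} directly, with the knot exterior entering as the Floer-simple boundary-incompressible summand, the boundary-compressible cases being covered by that theorem's last clause and by Theorem~\ref{thm: gluing structure theorem}.

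Second, the quantitative core is asserted rather than derived. The statements that the extremizations ``evaluate in closed form,'' that the lower threshold is $N_{pq}=pq-p-q+2g(K)p$, that the endpoints are closed or half-open exactly as displayed, that case $(i)$ splits into $p>1$ versus $p=1$ with the prong $[N,+\infty]$, and that the unknot case with $p,q>1$ behaves like case $(ii)$, all come from evaluating $y_\pm=\sup_k/\inf_k\, y_\pm(k)$ with $y_{0+}=\tfrac{q^*}{p}$, $y_{0-}=\tfrac{q^*}{p}+\tfrac{1}{p(q-Np)}$ glued in: this is where the congruence-type condition $\sum_i\lfloor[-y_i](q-Np)\rfloor=0$ produces $N_{pq}$, and where the torus-link (unknot) case requires the genuinely nontrivial arithmetic Lemma~\ref{lemma: p+q bound}. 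Likewise the equality $\mathcal{L}_{S^3}=\Lambda\cdot\mathcal{L}^*_{S^3}$ (not just $\supset$) needs the fundamental-domain check you flag but do not perform, which the paper reads off from the manifestly $\Lambda$-invariant $\textsc{sf}$-basis answer of Theorem~\ref{thm: s3 satellite in sf basis}. So the proposal is a sound plan whose hard part — essentially all of Theorem~\ref{thm: s3 satellite in sf basis} — remains to be carried out.
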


\begin{figure}
\includegraphics[scale=.65]{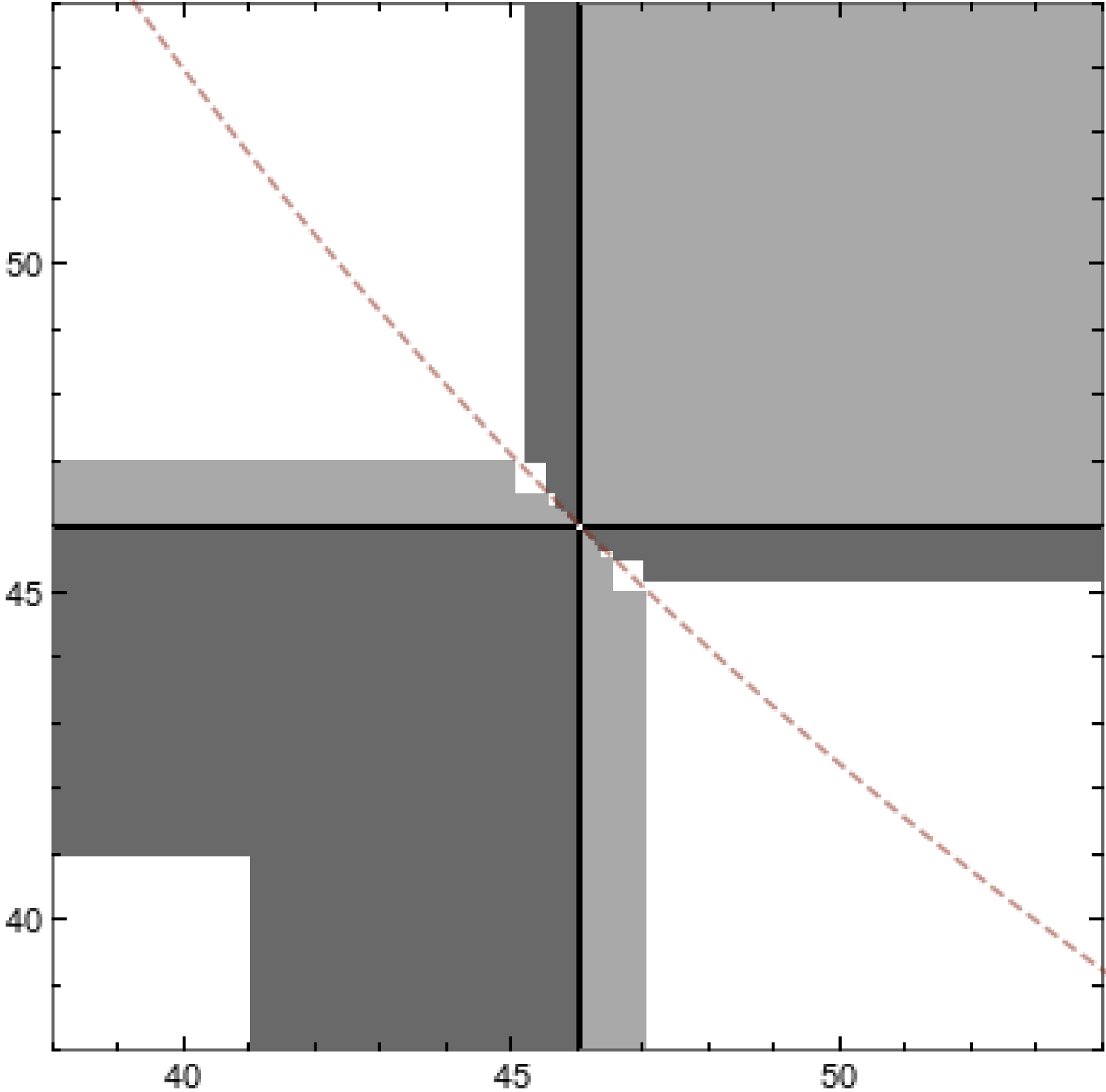}
\caption{
The L-space surgery region $\mathcal{L}_{S^3}$ for the $T(4,46)$~satellite~of~the
$P(-2,3,7)$
pretzel knot, with $pq \mkern-2mu=\mkern-2mu 46$ and $N_{pq} \mkern-3mu=\mkern-2mu 41$.
Here, $\Lambda \cdot \mathcal{L}^-_{S^3}$
is dark grey, 
$\Lambda \cdot \mathcal{L}^+_{S^3}$
is light grey,
$\mathcal{R}_{S^3}$
is black except for
$\mkern1mu\mathcal{Z}_{S^3} \mkern-2.5mu=\mkern-2.5mu \{(pq,pq)\}
\mkern-2.5mu\subset\mkern-1.5mu
\mathcal{N}\mkern-3mu\mathcal{L}_{S^3}$, and
$\mathcal{N\mkern-4muL}_{S^3}$ is white
except for the (dotted) conic
$\mathcal{B}_{S^3} \mkern-1.7mu=\mkern-1.7mu \{\alpha_1\alpha_2 \mkern-2.2mu=\mkern-2mu pq\}
\subset \mathcal{N}\mkern-2.5mu\mathcal{L}_{S^3}$ 
of rational longitudes of the satellite exterior.
As usual,  
$\mathcal{L}_{S^3} \mkern-3mu\setminus\mkern-1.8mu \mathcal{L}_{S^3}^*$
has radius $1$ about $\mathcal{R}_{S^3}$.
\vspace{-.18cm}
}
\label{fig:T(4,46) pretzel}
\end{figure}

\noindent {\textbf{Remarks.}}$\;$Positive L-space knots $K \mkern-4mu\subset\mkern-3mu S^3$ have
$\mathcal{L}_{S^3} 
\mkern-3.5mu=\mkern-1.8mu\mkern2mu[2g(K)\!-\!1,+\infty]$
\cite{OSRat}.
Theorem~\ref{thm: s3 satellite in sf basis}~and its remark cover the remaining (redundant or less interesting) cases of negative L-space or non-L-space knots $K$, and the fractal-boundaried case of $K^{(np,nq)} \mkern-2mu=\mkern-3mu T^{(n,nq)}\mkern-1mu$ (for $p\mkern-2.5mu=\mkern-2.5mu1$ and $K$ the unknot).
We use ``$\left<\right.\mkern-.1mu$''$\mkern-2mu$
for open endpoints and
implicitly intersect intervals
with $\Q\mkern0mu \cup \mkern-1.5mu\{\infty\}$.


\vspace{.29cm}

\noindent {\textbf{Example:}}$\;${\textbf{{\textit{Cables.}}}}$\,$ 
For $n\mkern-1mu=\mkern-1mu1$, $\Lambda$ is trivial and 
$\mathcal{R} \setminus \mathcal{Z} \mkern-1.3mu=\mkern-1.3mu \{pq\}$.
Thus
Theorem~\ref{thm: torus link satellite in s3, lambda action} yields

\vspace{-.48cm}

$$
(ii)
\mkern20mu
\mathcal{L}_{S^3\mkern-.8mu}
(\mkern.1mu Y^{\mkern-.2mu\text{\raisebox{-.6pt}{(}}p,q\text{\raisebox{-.6pt}{)}}} \mkern-1mu) 
\mkern2mu
\mkern5mu=\mkern5mu
\left[N_{pq}, pq\right> 
\mkern3mu
\cup\mkern3mu
\{pq\}
\mkern5mu\cup\mkern5mu
\left<pq, +\infty\right] 
\mkern5mu=\mkern5mu
[N_{pq}, +\infty]
\mkern5mu=\mkern5mu
[2g
(\mkern.1mu K^{\mkern-.2mu{\text{\raisebox{-.6pt}{(}}}p,\mkern1mu q\text{\raisebox{-.6pt}{)}}} \mkern-1mu) 
-1, +\infty]
\hphantom{(ii)}
\mkern20mu
$$

\vspace{-.01cm}

\noindent for 
$K \mkern-3.8mu\subset\mkern-4.3mu\mkern2mu S^3\mkern-.5mu$
a nontrivial positive L-space knot
with
$2g(K)\mkern-.7mu-\mkern-.7mu1 \mkern-1.3mu$
\raisebox{.2pt}{$\le$} 
$\mkern-8mu$ \raisebox{1pt}{$\frac{q}{p}$},
and $(i)$
$\mathcal{L}_{S^3\mkern-.8mu}
(Y^{\mkern-.1mu\text{\raisebox{-.8pt}{(}}p,q\text{\raisebox{-.8pt}{)}}}\mkern-1mu) 
\mkern-2mu=\mkern-2mu \{\infty\}$
for 
$2g(K)\mkern-1mu-\mkern-1mu 1 \mkern-2mu>\mkern-2mu \frac{q}{p}$,
recovering
well-known results of
Hedden \cite{Heddencableii} and Hom \cite{Homcable} for cables.

\vspace{.31cm}

\noindent {\textbf{Topology of $\mkern1mu\boldsymbol{\mathcal{L}(Y^{(np,nq)})}.$}}$\mkern3mu$
For any subset $A \mkern-1mu\subset\mkern-2mu (\Q\cup\{\infty\})^n \mkern-.7mu\hookrightarrow \mkern-.5mu
(\R \cup \{\infty\})^n$ with complement 
$A^c := (\Q\cup\{\infty\})^n \setminus A$
and real closure $\overline{A} \subset (\R \cup \{\infty\})^n$,
we define
$A^{\lowR} := \overline{A} \setminus A^c \subset (\lowR \cup \{\infty\})^n$.

\vspace{-.05cm}

\begin{theorem}
\label{thm: intro topological theorem}
Take $K,\mkern1mu K^{(np,nq)} \mkern-3mu\subset\mkern-3mu S^3\mkern-2mu,\mkern1mu$
$\mathcal{L},\mkern2mu \mathcal{N}\mkern-2mu\mathcal{L}, N,$ 
and $\Lambda$
as in Theorem~\ref{thm: torus link satellite in s3, lambda action},
with $q\!>\!0$, and let 
$\mathcal{B}$ be the set of rational longitudes,
as described in 
{\em{(\ref{eq: stuff with B})}},
of the exterior $Y^{(np,nq)}\mkern-3mu$ of $K^{(np,nq)}\mkern-3mu.$

\begin{itemize}
\item[$(i.a)$]
If $N \mkern-3mu$
\raisebox{.2pt}{$>$} 
$\mkern-10mu$ \raisebox{1pt}{$\frac{q}{p}$}
and $p\mkern-2mu>\mkern-2mu 1$, 
or if 
$N \mkern-3mu$
\raisebox{.2pt}{$>$} 
$\mkern-10mu$ \raisebox{1pt}{$\frac{q}{p}$}$\mkern2mu+\mkern1mu 1$
and $p\mkern-2mu=\mkern-2mu 1$, 
then $\mathcal{L}^{\lowR}\!$ deformation retracts onto $\Lambda$.

\item[$(i.b)$]
If $N \mkern-3mu$
\raisebox{.2pt}{$=$} 
$\mkern-10mu$ \raisebox{1pt}{$\frac{q}{p}$}$\mkern2mu+\mkern1mu1$
and $n \mkern-2mu>\mkern-2mu 2$, 
then 
$\mkern1mu\mathrm{rank} \,H_1(\mathcal{L}^{\lowR}) 
\mkern-2mu=\mkern-2mu \textstyle{\binom{n}{2}}\mkern-1mu-\mkern-1mu1$ and
$\mkern1mu\pi_1(\mathcal{L}^{\lowR}) 
\mkern-1.5mu\simeq\mkern-1.5mu 
\mathrm{ker}(\delta)$,
for the map

\vspace{-.05cm}

\noindent $\mkern1mu\delta\mkern-1mu:\mkern-1mu 
\mathfrak{F}^{\binom{n}{2}} \mkern-2mu\to\mkern-1mu 
\Lambda_{\textsc{sf}}\mkern-1mu$,
$(x_{ij})^{\mkern1mu e} \mapsto (\epsilon_i-\epsilon_j)e$
as in {\em{(\ref{eq: def of delta})}},
with $\mathfrak{F}^{\binom{n}{2}} := \left<x_{ij}\right>_{i<j}$ free and
$\epsilon_i \in \Z^n_{\textsc{sf}} $.

\item[($i.c)$]
If $N \mkern-2mu=\mkern-1.5mu \frac{q}{p}\mkern-1mu+\mkern-2mu 1$ and
$n \in \{1,2\}$, 
or if $N \mkern-2mu=\mkern-1.5mu \frac{q}{p}$,
then $\mathcal{L}^{\lowR}$ is contractible, of
dimension 1 or $n$.

\item[$(ii)$]
If $N \mkern-3mu<\mkern-2.5mu \frac{q}{p}$ or
$K^{(np,nq)} \mkern-1mu=\mkern-.5mu T(np,nq)$, then	
$\mathcal{N}\mkern-2.5mu\mathcal{L}^{\lowR}\mkern-2mu$ and $\mathcal{L}^{\lowR}\mkern-2mu$
deformation retract, respectively, onto 
$\mkern1mu\mathcal{B}^{\mkern.6mu\lowR}\mkern-2mu\cong\mkern-1mu \mathbb{T}^{n-1}\mkern-2mu$
and onto a $\mkern2mu\mathbb{T}^{n-1}\mkern-2mu$ parallel to 
$\mathcal{B}^{\mkern.6mu\lowR}\mkern-1mu$
in $\mkern1mu\prod_{i=1}^n \mkern-2mu\P(H_1(\partial_i Y^{(np,nq)};\R))
\mkern-2mu\cong\mkern-1mu \mathbb{T}^{n}\mkern-1mu$.
\end{itemize}
\end{theorem}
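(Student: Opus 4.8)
\noindent{\textbf{Proof strategy.$\;$}}The plan is to bootstrap from the explicit formula $\mathcal{L}_{S^3}=\Lambda\cdot\mathcal{L}^*_{S^3}$ of Theorem~\ref{thm: torus link satellite in s3, lambda action}. First I would pass to a Seifert-data-compatible basis on each factor of the real slope torus $\mathbb{T}^n:=\prod_{i=1}^{n}\P(H_1(\partial_i Y^{(np,nq)};\R))$; in such a basis the $\Lambda$-action is realized factorwise by the parabolic maps that reparametrize the Seifert section, so that $\Lambda$ becomes the reparametrization lattice $\Lambda_{\textsc{sf}}=\langle\epsilon_i-\epsilon_j\rangle\subset\Z^n_{\textsc{sf}}$ acting by affine translations whose common fixed point is the regular-fiber slope $z_0\in\mathbb{T}^n$. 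Rewriting the intervals of Theorem~\ref{thm: torus link satellite in s3, lambda action} in this basis turns $\mathcal{L}^*_{S^3}$ into an elementary semialgebraic set: either the single meridian point, or a unit coordinate star through it, when $N>q/p$ (the star occurring exactly in the subcase $p=1$); and a half-open cube with one corner subcube deleted, together with its bounding coordinate walls, when $N\le q/p$ or $K^{(np,nq)}=T(np,nq)$ --- the subcube governed by $N_{pq}$ and degenerating to the empty set precisely when $N=q/p$. In every case $\mathcal{L}_{S^3}$, and hence $\mathcal{L}_{S^3}^{\lowR}$, is $\Lambda_{\textsc{sf}}$-periodic.

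Second I would analyze the operation $A\mapsto A^{\lowR}$. As $A^{\lowR}=\overline{A}\setminus A^{c}$ is $A$ with its irrational accumulation points adjoined, the point is to locate $\overline{\mathcal{L}_{S^3}}\setminus\mathcal{L}_{S^3}$ and separate its rational from its irrational points. A $\Lambda$-orbit of a compact set accumulates only on the fiber-slope sublocus $\{z:z_i=(z_0)_i\text{ for some }i\}$, and I expect the dichotomy: in cases $(i.a)$, $(i.b)$, $(i.c)$ every point of $\overline{\mathcal{L}_{S^3}}\setminus\mathcal{L}_{S^3}$ is a rational non-L-space slope and is therefore stripped away, whereas in case $(ii)$ the residual accumulation collars the rational-longitude locus $\mathcal{B}$ of~(\ref{eq: stuff with B}), whose real points $\mathcal{B}^{\lowR}$ form a linear $\mathbb{T}^{n-1}\subset\mathbb{T}^n$. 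Thus in cases $(i.a)$--$(i.c)$ the space $\mathcal{L}_{S^3}^{\lowR}$ is the locally finite union of $\Lambda$-translates of the contractible model $\mathcal{L}^*_{S^3}$, the closures of distinct translates meeting only in stripped points --- except exactly when the star has sf-length $1$, i.e.\ $N=q/p+1$, in which case consecutive translates instead share an endpoint.

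Third I would execute the case analysis with explicit $\Lambda$-equivariant deformation retractions. In cases $(i.a)$ and $(i.c)$ each connected component of $\mathcal{L}_{S^3}^{\lowR}$ is a contractible star (or point); $\Lambda$ permutes these components freely when the meridian is not a fiber slope and $N>q/p+1$, whence $\mathcal{L}_{S^3}^{\lowR}\simeq\Lambda$, while for $N=q/p+1$ with $n\le 2$ the components chain into a single contractible line, and for $N=q/p$ the deleted subcube degenerates, so $\mathcal{L}_{S^3}^{\lowR}$ is contractible of dimension $1$ or $n$. In case $(i.b)$ ($p=1$, $N=q+1$, $n>2$) the unit stars chain tip-to-tip into a connected $1$-complex which, once the irrational boundary strata of $(\cdot)^{\lowR}$ are adjoined, is organized by a free covering map over a finite graph whose covering homomorphism is $\delta$; covering-space theory then identifies $\pi_1(\mathcal{L}_{S^3}^{\lowR})$ with $\mathrm{ker}(\delta)$ and gives $H_1$ of rank $\binom{n}{2}-1$. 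In case $(ii)$ I would show that the reducible-slope walls $\mathcal{R}\setminus\mathcal{Z}$ and the half-open cubes assemble, after applying $\Lambda$ and $(\cdot)^{\lowR}$, into a collar $\mathbb{T}^{n-1}\times[0,1)$ on the L-space side of $\mathbb{T}^n$ and into a two-sided collar of $\mathcal{B}^{\lowR}$ on the non-L-space side, so that $\mathcal{L}_{S^3}^{\lowR}$ and $\mathcal{N}\mathcal{L}_{S^3}^{\lowR}$ deformation retract onto parallel $(n-1)$-tori; a Mayer--Vietoris comparison with $H_*(\mathbb{T}^n)$ confirms the homotopy types and pins $\mathcal{B}^{\lowR}$ as the core of $\mathcal{N}\mathcal{L}_{S^3}^{\lowR}$.

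I expect the main obstacle to be the bookkeeping, in the two critical regimes $(i.b)$ and $(ii)$, of exactly which boundary strata of the $\Lambda_{\textsc{sf}}$-periodic set $\mathcal{L}_{S^3}$ survive the passage to $\mathcal{L}_{S^3}^{\lowR}$. Because the compactification $\mathbb{T}^n=(\R\cup\{\infty\})^n$ mixes the sum-zero translation directions of $\Lambda_{\textsc{sf}}$, the accumulation set of the lattice acquires lower-dimensional strata carrying genuine irrational intervals; it is precisely these added intervals that, in case $(i.b)$, cut the \emph{a priori} wild fundamental group of the tip-to-tip chain down to $\mathrm{ker}(\delta)$, and that, in case $(ii)$, glue the half-open cubes into an honest collar rather than a disjoint union. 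Determining this stratification, and then verifying that the resulting deformation retractions can be chosen $\Lambda$-equivariantly, is the delicate part; the remaining low-dimensional bookkeeping is routine.
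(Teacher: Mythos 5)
Your overall reduction---pass to a Seifert-compatible basis in which $\Lambda$ acts by sum-zero integer translations, read $\mathcal{L}$ off the explicit description (the paper does this via Theorem~\ref{thm: s3 satellite in sf basis}) as a lattice of coordinate stars or a cube-with-corner-deleted plus walls, and sort the cases by how the $\Lambda$-translates meet---is the paper's route, and your treatment of $(i.a)$ and $(i.c)$ matches it. The genuine gaps are in $(i.b)$ and $(ii)$. In $(i.b)$ your covering-space mechanism does not deliver what is claimed. The quotient of the tip-to-tip chain by $\Lambda$ is a finite graph with two vertices and $n$ edges, so the covering homomorphism you obtain is a surjection from a free group of rank $n-1$ onto $\Lambda_{\textsc{sf}}$ (sending the loop through the $i$-th and $n$-th arms to $\epsilon_i-\epsilon_n$), not the map $\delta:\mathfrak{F}^{\binom{n}{2}}\to\Lambda_{\textsc{sf}}$; identifying $\pi_1$ with $\ker(\delta)$ therefore requires a separate argument, which is exactly what the paper supplies with its explicit path-groupoid/reduced-word bijection $\rho:G_0\to\mathfrak{F}^{\binom{n}{2}}$. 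Moreover, the role you assign to the strata at infinity is both internally inconsistent (your second paragraph asserts that in cases $(i.*)$ all accumulation points are rational and stripped; your fourth paragraph needs adjoined irrational intervals there) and wrong in substance: on the strata with two $\infty$-coordinates every rational point lies in $\mathcal{Z}\subset\mathcal{N}\mkern-2mu\mathcal{L}$ and is deleted, so what survives the $\Q$-correction is a totally disconnected set of irrational points, and these form separate path components of $\mathcal{L}^{\lowR}$ (a path leaving such a stratum would have to keep one coordinate non-integral while a diverging coordinate remains integral and continuous, which is impossible). Points in separate path components cannot ``cut down'' $\pi_1$, and covering-space theory does not ``give $H_1$ of rank $\binom{n}{2}-1$'': an infinite regular cover of a finite graph with first Betti number at least $2$ has infinite-rank $H_1$, so your argument never reaches the stated $H_1$ computation.

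In case $(ii)$ the crux---that $\mathcal{N}\mkern-2mu\mathcal{L}^{\lowR}$ deformation retracts onto $\mathcal{B}^{\lowR}$ and $\mathcal{L}^{\lowR}$ onto a parallel $\mathbb{T}^{n-1}$---is asserted (``I would show that \dots assemble into a collar'') rather than proved, and a Mayer--Vietoris comparison with $H_*(\mathbb{T}^n)$ cannot substitute for constructing the retraction. The paper's proof rests on an explicit homotopy $\boldsymbol{z}_t(\boldsymbol{y})=\boldsymbol{y}+\tfrac{t}{n}\,l(\boldsymbol{y})\boldsymbol{1}$, pushing along the diagonal by the rational-longitude defect $l(\boldsymbol{y})=-\tfrac{1}{pq}-\sum_i y_i$, and its validity uses two specific facts: the rational longitude of the partially filled manifold always lies in the open non-L-space interval, and the endpoints $y_\pm(\boldsymbol{y})$ are piecewise constant in each coordinate with rational breakpoints, so the strict inequalities persist on the $\Q$-corrected $\R$-closure. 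Nothing in your outline identifies this (or any other) mechanism, nor does your appeal to $\Lambda$-equivariance help, since the retraction in question is along the $\boldsymbol{1}$-direction and is not built from the lattice action; so case $(ii)$ remains open in your proposal.
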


\vspace{.20cm}

\noindent {\textbf{$\boldsymbol{S^3}$ and $\boldsymbol{\textsc{SF}}$ slope bases.}}
The $S^3\mkern-1mu$ subscript
on, {\em{e.g.}}, $\mathcal{L}_{S^3}$, specifies the
conventional $S^3$ surgery basis for slopes. On
Seifert fibered $M$, the
``$\textsc{sf}$-slopes'' 
$(\Q\cup\mkern-2mu\{\infty\})^n_{\textsc{sf}} 
\mkern-2mu:=\mkern-2mu \prod_{i=1}^n \mkern-2mu\P(H_1(\partial_i M;\Z))_{\textsc{sf}}$
(see Section~\ref{ss: seifert fibered and special subsets})
mimic the Seifert-data fractions
$\frac{\beta_i}{\alpha_i}\mkern-2mu.\mkern1mu$
For $Y^{\mkern-.9mu(\mkern-.2munp,nq\mkern-.3mu)}\!,\mkern.5mu$ 
we change slope basis via

\vspace{-.21cm}

\begin{equation}
\psi: 
(\Q \cup \{\infty\})^n_{\textsc{sf}} \longrightarrow (\Q \cup \{\infty\})^n_{S^3},\;\;
\boldsymbol{y} \mapsto
\left(pq + {\textstyle{\frac{1}{y_1}}}, \ldots, pq + {\textstyle{\frac{1}{y_n}}}\right).
\end{equation}

\vspace{0cm}

We briefly pause here to elaborate on the distinguished subsets 
$\mathcal{R}$,
$\mathcal{Z}$,
$\mathcal{B}$,
and $\Lambda$ of $\textsc{sf}$-slopes.

\vspace{.2cm}

\noindent {\textbf{Reducible Slopes $\boldsymbol{\mathcal{R}}$ and $\boldsymbol{\mathcal{Z}}$.}}
Dehn filling along the smooth-fiber slope
$\infty \mkern-2.5mu\in\mkern-2.5mu (\Q \cup \mkern-2mu\{\infty\})_{\textsc{sf}}$
decomposes a Seifert fibered space as a connected sum, with one summand for each exceptional fiber  or boundary component.
After this connected-sum decomposition has occurred,
any additional $\infty$-fillings create $S^1\mkern-2mu \times\mkern-1mu S^2$ summands.
Thus, since $\mkern1mu\psi\mkern-2.5mu: \infty \mapsto pq + \frac{1}{\infty} = pq$,
our {\em{reducible slopes}} $\mathcal{R}$ and their
{\em{exceptional subset}} $\mathcal{Z}$
(see Definitions~\ref{def: R and Z} and \ref{def: false reducible}) satisfy
\vspace{-.03cm}
\begin{align}
\nonumber
\mathcal{R}_{S^3}(Y^{(np,nq)})
&=
{\textstyle{\bigcup_{i=1}^{\mkern1mu n}}} \mkern0mu
\{{\boldsymbol{\alpha}}\in (\Q\cup \{\infty\})^n_{S^3} |
\,\alpha_i=pq\},
\\ 
\mathcal{Z}_{S^3}(Y^{(np,nq)})
&=
{\textstyle{\bigcup_{i<j}}} \{{\boldsymbol{\alpha}}\in 
(\Q\cup \{\infty\})^n_{S^3} |
\,\alpha_i = \alpha_j =pq\},
\\ \nonumber
\mathcal{R}_{S^3}(Y^{(np,nq)}) \setminus
\mathcal{Z}_{S^3}(Y^{(np,nq)})
&=
\mathfrak{S}_n \cdot\left(\{pq\} 
\mkern1mu\times\mkern1mu
(\left[-\infty, pq\right> 
\mkern-1mu\cup\mkern-1mu  
\left< pq, +\infty\right])^{n-1}
\right),
\end{align}

\vspace{-.06cm}

\noindent where $\mathfrak{S}_n$ acts
on $S^3$-slopes
by reordering boundary components: $\sigma \cdot \boldsymbol{\alpha} = 
%
(\alpha_{\sigma(1)}, \ldots, \alpha_{\sigma(n)})$.

\vspace{.45cm}

\noindent {\textbf{Rational longitudes $\boldsymbol{\mathcal{B}}$.}}
Let $\mathcal{B}(M)$ denote the set of {\em{rational longitudes}} of $M$, that is, the
slopes yielding Dehn fillings with $b_1 \mkern-2mu>\mkern-2mu 0$.
In particular, for $\partial M$ with $n$ components, we have
\begin{equation}
\mathcal{Z} \mkern1mu=\mkern1mu \mathcal{R} \cap \mathcal{B},
\;\;\;\text{and}\;\;\;
\mathcal{B}^{\mkern.6mu\lowR} \cong \mathbb{T}^{n-1}
\hookrightarrow\mkern.5mu
{\textstyle{\coprod_{i=1}^n\mkern-2mu \P(H_1(\partial_i M ;\R))}}
\simeq
(\R\cup\{\infty\})^n 
\cong \mathbb{T}^n.
\end{equation}

For the exterior $Y^{(np,nq)}$ of $K^{(np,nq)}\mkern-2mu\subset\mkern-2mu S^3$,
Proposition~\ref{prop: basics for B}
tells us $\mathcal{B}_{\textsc{sf}}$
is the closure
\begin{equation}
\label{eq: stuff with B}
\mathcal{B}_{\textsc{sf}}(Y^{(np,nq)}) 
\mkern2mu
=
\left\{ \vphantom{\textstyle{\frac{1}{pq}}}\mkern-1mu
\boldsymbol{y} \in (\Q \cup \{\infty\})^n_{\textsc{sf}} \right|\,\left. 
{\textstyle{\frac{1}{pq} 
		\mkern-2mu+\mkern-2mu 
		\sum_{i=1}^n\mkern-1mu y_i}} = 0\right\}
\end{equation}

\vspace{-.04cm}

\noindent 
of the linear subspace
$\left\{ \vphantom{A^A} \mkern-4.5mu \right.
\boldsymbol{y} \in \Q^n_{\textsc{sf}} 
\mkern-1.5mu\left| \vphantom{A^A} \mkern-2mu \right.
{\textstyle{\frac{1}{pq} 
		\mkern-2mu+\mkern-2mu 
		\sum_{i=1}^n\mkern-1mu y_i}} = 0
\left. \vphantom{A^A} \mkern-4.5mu \right\}$
in $(\Q \mkern.3mu\cup \mkern-2mu\{\infty\})^n_{\textsc{sf}}\mkern1mu$.
The change of slope-basis $\psi$ transforms
$\mathcal{B}_{\textsc{sf}}(Y^{(np,nq)})$ into a
degree-$n$ hypersurface in $(\Q\cup\mkern-2mu\{\infty\})^n_{S^3}$.
For example, the $n\mkern-1.5mu=\mkern-1.5mu2$ case yields the conic
$\mathcal{B}_{S^3}(Y^{(2p,2q)})
\mkern-1.5mu=\mkern-2.5mu 
\{\boldsymbol{\alpha} \mkern-3mu\in\mkern-3mu \left.(\Q\cup\mkern-2.5mu\{\infty\})^n_{S^3} \mkern-1mu\right|\alpha_1 \alpha_2 = (pq)^2\}$,
as in Figure~\ref{fig:T(4,46) pretzel}.



\vspace{.45cm}

\noindent {\textbf{Symmetry by $\mkern-1mu\boldsymbol{\Lambda}$.}}
The lattice 
$\Lambda_{\textsc{sf}}(Y^{(np,nq)}) \mkern-3mu\subset\mkern-3mu 
(\Q \cup \mkern-.7mu\{\infty\})^n_{\textsc{sf}}$ of Seifert-data reparametrizations,
\begin{equation}
\Lambda_{\textsc{sf}}(Y^{(np,nq)}) 
:=\{ \boldsymbol{l} \in \Z^n \mkern-.5mu\subset 
(\Q \cup \mkern-.7mu\{\infty\})^n_{\textsc{sf}} |\;
{\textstyle{\sum_{i=1}^n l_i}} = 0\},
\end{equation}
acts on $\textsc{sf}$-slopes
by addition, $\boldsymbol{y} \mapsto \boldsymbol{l} + \boldsymbol{y}$,
thereby determining an action of $\Lambda$ on slopes in any basis.
In particular, in Theorem~\ref{thm: torus link satellite in s3, lambda action},
$\psi$ induces an action of
$\Lambda_{\textsc{sf}}(Y^{(np,nq)}\mkern-1mu)$ on
$(\Q \cup \mkern-2mu\{\infty\})^n_{S^3}$, via
\begin{equation}
\mkern100mu
\boldsymbol{l} \cdot \boldsymbol{\alpha}
= \psi(\boldsymbol{l} + \psi^{-1}(\boldsymbol{\alpha})),
\;\;\;\;\;\;\;\;
\boldsymbol{l}\in
\Lambda_{\textsc{sf}}(Y^{(np,nq)}),\,
\boldsymbol{\alpha}\in
(\Q \cup \{\infty\})^n_{S^3}.
\end{equation}

\vspace{-.02cm}

The action of $\Lambda$ induces homeomorphism on Dehn surgeries:
$\mkern-1muS^3_{\boldsymbol{l\mkern1mu\cdot\mkern1mu\alpha}}(K^{(np,nq)}\mkern-1.5mu)
\mkern-2mu\cong\mkern-2mu
S^3_{\boldsymbol{\alpha}}(K^{(np,nq)}\mkern-1.5mu)$
for
$\boldsymbol{l} \mkern-3mu\in\mkern-3mu \Lambda(Y^{(np,nq)})$
and
$\boldsymbol{\alpha}\mkern-3mu\in\mkern-3mu(\Q \mkern.5mu\cup\mkern-1.5mu \{\infty\}\mkern-1mu)^n_{S^3}$.
Thus $\Lambda$ preserves $\mathcal{R}$, $\mathcal{Z}$, $\mathcal{B}$, $\mathcal{L}$,
and $\mathcal{N}$\!$\mathcal{L}$ as sets, and since
$S^3_{\mkern-2mu\mathcal{L}^*}\mkern-1mu(K^{(np,nq)}\mkern-1.5mu)
\mkern-2mu\cong\mkern-2mu S^3_{\mkern-2mu\mathcal{L}}(K^{(np,nq)}\mkern-1.5mu)$,
$\mathcal{L}_{S^3}^*\mkern-1.5mu$ completely catalogs
the L-space {\em{surgeries}} on $K^{(np,nq)}\mkern-1.5mu.$

The L-space surgery {\em{slopes}} for $K^{(np,nq)}\mkern-1.5mu$
must retain their full $\Lambda$-orbits, but
$\textsc{sf}$-slopes do this automatically:
the expression of 
$\mathcal{L}_{\textsc{sf}}(Y^{(np,nq)})$ in
Theorem~\ref{thm: s3 satellite in sf basis}
is naturally $\Lambda$-invariant.
Moreover, 
$\mathcal{L}^*_{S^3}(Y^{(np,nq)})$ 
is {\em{almost}} $\Lambda$-invariant.
When $\mathcal{L}^*_{S^3} = \{\boldsymbol{\infty}\}$,
$\mathcal{L}_{S^3} \mkern-3mu\setminus\mkern-1.5mu \mathcal{L}^*_{S^3}$
is given by
\begin{equation}
(\Lambda_{S^3} \cdot \{\boldsymbol{\infty}\} ) \setminus
\{\boldsymbol{\infty}\}
=
\Lambda_{S^3} \mkern-1mu
\cap \mkern1mu \Q^n_{S^3}
\;\;\subset\;\;
(\left[pq-1, pq\right> \cup  \left<pq,pq+1\right])^n.
\end{equation}
For any $K^{(np,nq)}$ in 
Theorem~\ref{thm: torus link satellite in s3, lambda action},
the $\Lambda$-mismatch
$\mathcal{L}_{S^3} \mkern-2mu\setminus\mkern-.5mu \mathcal{L}^*_{S^3}$
lies inside a radius-1 neighborhood,
\begin{equation}
\label{eq: nbd of R of radius 1}
\mathcal{L}_{S^3} \mkern-3mu\setminus\mkern-1.5mu \mathcal{L}^*_{S^3}
\;\;\subset\;\;
{\textstyle{\bigcup_{i=1}^{\mkern1mu n} }}
\{{\boldsymbol{\alpha}} |\,\alpha_i\in
\left[pq-1, pq\right> \cup  \left<pq,pq+1\right]\}
=: U_{pq}^n(1),
\end{equation}
of $\mathcal{R}_{S^3}
\mkern-3mu=\mkern-1mu
\bigcup_{i=1}^{\mkern1mu n} \mkern-1mu 
\{{\boldsymbol{\alpha}} |\,\alpha_i \mkern-3mu=\mkern-2mu pq\},$
and $\mathcal{L}_{S^3}$ consists of a finite union of rectangles outside
any positive-radius neighborhood 
of $\mathcal{R}_{S^3}\mkern-1.5mu$.
Moreover, (\ref{eq: nbd of R of radius 1}) implies
the {\em{integer}} slopes in $\mathcal{L}_{\highsthree}(Y^{(np,nq)})$ 
satisfy

\vspace{-.12cm}

\begin{equation}
\mkern-3mu
\mathcal{L}_{\highsthree\mkern-2mu}
\mkern0mu\cap\mkern0mu
(\Z \mkern-.3mu\cup \mkern-2mu\{\infty\}\mkern-1.1mu)^n
\mkern4mu\subset\mkern4.5mu
\lstar
\mkern1.6mu\cup\mkern2mu 
\mathfrak{S}_n \mkern-5mu
\left(\vphantom{\mfrac{A}{B}}\mkern-3mu
\{pq\mkern-2mu-\mkern-4mu 1 \mkern-.8mu,\mkern-.5mu pq\mkern-2mu+\mkern-4mu 1\mkern-1mu\} 
\mkern-3mu\times\mkern-3mu 
(\Z \mkern-1.3mu\cup\mkern-2.2mu \{\infty\}\mkern-2mu)^{n-1} \mkern-3mu\right)\mkern-4.5mu,
\mkern-3mu
\end{equation}

\vspace{-.11cm}

\noindent and it is a simple exercise 
to determine
$\boldsymbol{\alpha} \in (\mathcal{L}_{\highsthree} \setminus \lstar)\cap (\Z \cup \{\infty\})^n$
with $\alpha_i \in \{pq-1, pq+1\}$.

\vspace{0cm}

\noindent {\textbf{New features:
Torus-link satellites vs One-strand Cables.}}
L-space regions for satellites by $n\mkern-2mu > \mkern-2mu 1$ torus links
introduce qualitatively new phenomena not present for $n\mkern-1mu=\mkern-1.5mu1$ cables.

\vspace{.05cm}

(a)
For $n\mkern-1mu>\mkern-1mu 1$, the action of $\Lambda$ becomes nontrivial,
although as discussed before, this action does not impact 
actual L-spaces resulting from surgery.

(b)
For $n\mkern-1mu>\mkern-1mu 1$, 
the codimension-1 subspace
$\mathcal{R}_{S^3} \mkern-2mu\subset\mkern-2mu (\Q \cup \mkern-.5mu\{\infty\})^n_{S^3}$ 
acquires positive dimension and 
the codimension-2 subspace
$\mathcal{Z}_{S^3} \mkern-2mu\subset\mkern-2mu (\Q \cup \mkern-.5mu\{\infty\})^n_{S^3}$ 
becomes nonempty,
although the set of reducible L-space slopes
$\mathcal{R}_{S^3} \mkern-1mu\setminus\mkern-1mu \mathcal{Z}_{S^3}$
remains a disjoint union of hyperplanes $\cong \Q^{n-1}$ for all $n$.

(c) For $n \mkern-2mu=\mkern-2mu p \mkern-2mu=\mkern-2mu 1$, both the L-space regions 
$\mathcal{L}_{S^3}(S^3\mkern0mu\setminus\mkern0mu\overset{\circ}{\nu}(K))
\mkern-2mu=\mkern-2mu[N,\infty] 
\mkern-2mu=\mkern-2mu [N_{1q},\infty]
\mkern-2mu=\mkern-2mu\mathcal{L}_{S^3}(Y^{(1,q)})$
and the spaces of resulting L-space surgeries 
$S^3_{\mathcal{L}}(K) \mkern-1mu=\mkern-1mu S^3_{\mathcal{L}}(K^{(1,q)})$
for $K$ and $K^{(1,q)}$
are identical, since the $p\mkern-2mu=\mkern-2mu1$ cable affects
framing without changing the knot. For $n\mkern-2mu>\mkern-2mu 1$, however,
the relationship between $S^3_{\mathcal{L}}(K)$ and $S^3_{\mathcal{L}}(K^{(n,nq)})$
depends on the difference $(2g(K)\mkern-1mu-\mkern-1mu 1)\mkern-.5mu-\mkern-.5mu \frac{q}{p}$:


\begin{center}
$(=)$\;\;
$S^3_{\mathcal{L}}(K^{(n,nq)}) 
= S^3_{\mathcal{L}}(K)\mkern1mu$ when 
$\mkern1mu2g(K)\mkern-1mu-\mkern-1mu 1 \mkern-1mu>\mkern-.5mu q$, 
\\
$(\subsetneq)$\;\;
$S^3_{\mathcal{L}}(K^{(n,nq)}) 
\subsetneq S^3_{\mathcal{L}}(K)\mkern1mu$ when 
$\mkern1mu2g(K)\mkern-1mu-\mkern-1mu 1 \mkern-1mu=\mkern-.5mu q$, 
\\
$(\supsetneq)$\;\;
$S^3_{\mathcal{L}}(K^{(n,nq)}) 
\supsetneq S^3_{\mathcal{L}}(K)\mkern1mu$ when 
$\mkern1mu2g(K)\mkern-1mu-\mkern-1mu 1 \mkern-1mu<\mkern-.5mu q$. 
\end{center}

(d)
For $n\mkern-2mu=\mkern-2mu 1$, 
$\mathcal{L}
(Y^{\mkern-.1mu\text{\raisebox{-.6pt}{(}}p,q\text{\raisebox{-.6pt}{)}}}\mkern-1.3mu)^{\lowR}$
is contractible and of dimension 0 or 1.
For $n\mkern-2mu>\mkern-2mu1$, however, Theorem \ref{thm: intro topological theorem}
catalogs 6 distinct topologies that occur for 
$\mathcal{L}
(Y^{\mkern-.1mu\text{\raisebox{-.6pt}{(}}p,q\text{\raisebox{-.6pt}{)}}}\mkern-1.3mu)^{\lowR}$,
including

$\mkern38mu$1. an infinite disjoint union of points
--- $(i.a)$, $p\neq 1$;

$\mkern38mu$2. an infinite disjoint union of contractible 1-dimensional spaces 
--- $(i.a)$, $p= 1$;

$\mkern38mu$3. a connected 1-dimensional space with $\,\text{rank}\,H_1 \mkern-2.5mu=\mkern-2.5mu 
\binom{n}{2}\mkern-2.5mu-\mkern-2.5mu 1$
--- $(i.b)$;

$\mkern38mu$4. a contractible, 1-dimensional space with $S^1\mkern-2mu$ closure 
--- $(i.c)$, $2g(K)\mkern-1mu-\mkern-1mu 1 
\mkern-3mu=\mkern-2.5mu \frac{q}{p}\mkern-1mu+\mkern-2mu 1$, $n \mkern-3mu=\mkern-3mu2$;

$\mkern38mu$5. a contractible $n$-dimensional space
--- $(i.c)$, $2g(K)\mkern-1mu-\mkern-1mu 1 \mkern-2mu=\mkern-1.5mu \frac{q}{p}$;

$\mkern38mu$6. an $n$-dimensional space that deformation retracts onto $\mathbb{T}^{n-1}$
--- $(ii)$.

\vspace{.2cm}

\subsection{The L-space Conjecture}
The L-space conjectures,
stated formally by Boyer-Gordon-Watson
\cite{BGW}
and 
Juh{\'a}sz 
\cite{JuhaszConj},
posit the existence of left-invariant orders on
fundamental groups and of co-oriented taut foliations, respectively, 
for all prime, compact, oriented non-L-spaces.

For $Y$ a compact oriented 3-manifold with torus boundary,
let $\mathcal{F}(Y) \subset \P(H_1(\partial Y;\Z))$ denote the space
of slopes $\alpha \in \P(H_1(\partial Y;\Z))$ for which $Y$ admits a co-oriented
taut foliation (CTF) restricting
to a product foliation of slope $\alpha$ on $\partial Y$.
Along a similar vein, we define
$\mathcal{LO}(Y) := \{\alpha\in \P(H_1(\partial Y;\Z)) | \mkern2.5mu \pi_1(Y(\alpha)) \text{ is LO}\}$, where LO stands for left-orderable.

\begin{theorem}
\label{thm: bgw and juhasz}
Take $K, K^{(np,nq)} \subset S^3$ as in 
Theorem \ref{thm: torus link satellite in s3, lambda action}, 
with $K$ nontrivial and $p\mkern-2mu>\mkern-2mu 1$.

\noindent $\;\;(\textsc{LO})$
Suppose
$\mathcal{LO}(Y)
\supset \mathcal{NL}(Y)$, for
$Y:= S^3\mkern0mu\setminus\mkern0mu\overset{\circ}{\nu}(K)$.

$\;(\textsc{lo}.i)$
If $2g(K)-1 > \frac{q+1}{p}$, then 
$\mathcal{LO}(Y^{(np,nq)}) = 
\mathcal{NL}(Y^{(np,nq)}).$

$\;(\textsc{lo}.ii)$
If $2g(K)-1 < \frac{q}{p}$, then
$\mathcal{LO}(Y^{(np,nq)}) \supset$
\vspace{-.10cm}
$$\mathcal{NL}(Y^{(np,nq)})
\;\setminus\; \Lambda(Y^{(np,nq)}) \cdot 
(\left[-\infty, N_{pq}\right>^n \mkern-1mu\setminus\mkern-1mu \left[-\infty, N_{pq}-p\right>^n).$$

\noindent $\;\;(\textsc{CTF})$
Suppose $\mathcal{F}(Y) = \mathcal{NL}(Y)$.

$\;(\textsc{ctf}.i)$
If $2g(K)-1 > \frac{q+1}{p}$, then 
$\mathcal{F}(Y^{(np,nq)}) = 
\mathcal{NL}(Y^{(np,nq)}) \setminus \mathcal{R}(Y^{(np,nq)}).$

$\;(\textsc{ctf}.ii)$
If $2g(K)-1 < \frac{q}{p}$, then
$\mathcal{F}(Y^{(np,nq)}) \supset$
$$\;(\mathcal{NL}(Y^{(np,nq)}) \mkern-1mu\setminus\mkern-1mu \mathcal{R}(Y^{(np,nq)}))
\;\setminus\; \Lambda(Y^{(np,nq)}) \mkern-2mu\cdot\mkern-2mu 
(\left[-\infty, N_{pq}\right>^n \mkern-1mu\setminus\mkern-1mu \left[-\infty, N_{pq}-p\right>^n).$$

\end{theorem}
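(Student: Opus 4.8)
The plan is to reduce to the graph-manifold classification already behind Theorem~\ref{thm: torus link satellite in s3, lambda action}. The classification formula (Theorems~\ref{thm: l-space interval for seifert jsj} and~\ref{thm: gluing structure theorem}), together with \cite{lgraph}, governs the L-space region, the CTF-slope set $\mathcal{F}$, and the left-orderable-slope set $\mathcal{LO}$ uniformly for graph manifolds, so the only genuinely new ingredient is a CTF version and a left-order version of the knot-exterior gluing Theorem~\ref{thm: knot exterior gluing theorem}, allowing these slope sets to propagate across the single essential torus $T$ along which $Y^{(np,nq)}$ splices $Y := S^3\setminus\overset{\circ}{\nu}(K)$ to the Seifert fibered exterior $W$ of $T(np,nq)$. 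Here $W$ is fibered over a disk with $n$ holes and two exceptional fibers of orders $p$ and $q$, with $T$ a neighbourhood of its multiplicity-$q$ fiber and $\partial Y$ glued to $T$; and since $K$ is a positive L-space knot, $\mathcal{L}(Y) = [2g(K)-1,+\infty]$ by \cite{OSRat}, so each of the two hypotheses asserts precisely that the relevant slope set of $Y$ coincides with $\mathcal{NL}(Y)$, that is, with the rational slopes $\alpha < 2g(K)-1$.

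Consider first the CTF statements. The inclusion $\mathcal{F}(Y^{(np,nq)}) \subseteq \mathcal{NL}(Y^{(np,nq)}) \setminus \mathcal{R}(Y^{(np,nq)})$ is unconditional: a Dehn filling admitting a CTF is not an L-space (Ozsv\'ath--Szab\'o; Bowden; Kazez--Roberts), and no reducible filling of $Y^{(np,nq)}$ carries a taut foliation, hence none carries a CTF. For the reverse inclusion one constructs, for each admissible $\boldsymbol{\alpha}$, a CTF on $Y^{(np,nq)}(\boldsymbol{\alpha})$ by gluing along $T$: the Seifert structure of $W$ supplies, via horizontal and vertical foliations in the style of Eisenbud--Hirsch--Neumann and organized as in \cite{lgraph}, a CTF on $W$ restricting to product foliations of the prescribed slopes $\boldsymbol{\alpha}$ on the outer tori and of some slope $\beta$ on $T$; whenever $\psi$ and the splice identification place $\beta$ below $2g(K)-1$, so that $Y$ carries a matching CTF by hypothesis, the two foliations glue. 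Feeding this one-sided criterion through $\psi$ and the classification formula in place of the sharp L-space criterion, the achievable $\boldsymbol{\alpha}$ exhaust $\mathcal{NL}(Y^{(np,nq)}) \setminus \mathcal{R}(Y^{(np,nq)})$ once the stronger inequality $2g(K)-1 > \frac{q+1}{p}$ holds, the extra room being exactly what allows the open constraint ``$\beta < 2g(K)-1$'' to be met over the whole non-L-space region; this is $(\textsc{ctf}.i)$. When instead $2g(K)-1 < \frac{q}{p}$, the matching slope $\beta$ reaches the required range only for $\boldsymbol{\alpha}$ lying outside the width-$p$ shell $\Lambda(Y^{(np,nq)}) \cdot \big( \left[-\infty, N_{pq}\right>^n \setminus \left[-\infty, N_{pq}-p\right>^n \big)$ of the inner wall of $\mathcal{L}^-_{S^3}$, which is $(\textsc{ctf}.ii)$. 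Every step descends to $\Lambda$-orbits automatically in the Seifert basis.

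The left-orderability statements follow the same template, with CTF gluing replaced by the slope-detection calculus of Boyer--Clay: $\pi_1(Y^{(np,nq)}(\boldsymbol{\alpha}))$ is the fundamental group of a graph of groups over the $\Z^2$ carried by $T$, and if the $Y$-side and $W$-side vertex groups are left-orderable with a common detected slope on $T$, the amalgam is left-orderable (cf.\ Bludov--Glass for the amalgamation step, and Boyer--Clay and \cite{HRRW} for the detection). The $W$-side input is read off the Seifert fibration through \cite{lgraph}, the same slope bookkeeping yields the two regimes with the same shell, and the reducible slopes are treated using Theorem~\ref{thm: torus link satellite in s3, lambda action} together with the connected-sum decomposition of the corresponding fillings. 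The one extra point is the reverse inclusion in $(\textsc{lo}.i)$: here Theorem~\ref{thm: torus link satellite in s3, lambda action} makes the L-space fillings of $Y^{(np,nq)}$ explicit --- the orbit $\Lambda \cdot \mathcal{L}^*_{S^3}$ consists of fillings homeomorphic to $S^3$ --- so $\mathcal{LO}(Y^{(np,nq)})$ is disjoint from $\mathcal{L}(Y^{(np,nq)})$, upgrading the constructive inclusion to an equality; for $(\textsc{lo}.ii)$ only the constructive inclusion is asserted, so this point does not arise.

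The main obstacle is the precision of the second regime. One must produce a CTF, respectively a left-order, for \emph{every} slope outside the shell $\Lambda \cdot \big( \left[-\infty, N_{pq}\right>^n \setminus \left[-\infty, N_{pq}-p\right>^n \big)$, and near the inner wall this forces the $T$-slope $\beta$ of the Seifert-side object to approach, without meeting, the critical value $2g(K)-1$; pinning the shell width at exactly $p$, rather than something coarser, requires tracking how the piecewise-constant extremizers $y^v_{\pm}$ of the classification formula jump across this value, together with the $\Lambda$-periodicity in the Seifert basis. A secondary difficulty is verifying the torus-gluing hypotheses at the degenerate slopes --- the fiber slope $\infty$ of $W$, the rational longitudes parametrized by $\mathcal{B}$, and slopes meeting the critical value on $\partial Y$ --- where ``restricts to a product foliation of slope $\alpha$'' and ``detects the slope $\alpha$'' must be checked by hand; these are precisely where the standing assumptions $p>1$ and $K$ nontrivial come into play.
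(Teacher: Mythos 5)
Your overall strategy coincides with the paper's: Theorem~\ref{thm: bgw and juhasz} is proved there as a special case of Theorem~\ref{thm: bgw and juhasz for general satellites}, by comparing, at the splice torus, the sharp L-space gluing criterion (Theorems~\ref{thm: gluing theorem for floer simple or graph manifold} and \ref{thm: knot exterior gluing theorem}) with the one-directional gluing statements for CTFs (product foliations of matching slope glue) and for left orders (Bludov--Glass via Clay--Lidman--Watson), feeding in $\mathcal{F}=\mathcal{NL}\setminus\mathcal{R}$ and $\mathcal{LO}=\mathcal{NL}$ for the Seifert side from \cite{lgraph} together with the hypotheses on $Y$, and then using the fact that in case $(i)$ every L-space filling is $S^3$ to upgrade the constructive inclusion to an equality. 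One small correction of framing: no ``CTF or LO version of Theorem~\ref{thm: knot exterior gluing theorem}'' is needed or used; only the easy one-sided implications enter, and the iff statement is needed only on the Heegaard Floer side to characterize $\mathcal{NL}(Y^{(np,nq)})$.

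The genuine gap is that the step you explicitly defer is the mathematical core of the paper's proof. The discrepancy between the closed criterion characterizing $\mathcal{NL}(Y^{(np,nq)})$ and the open criterion sufficient for gluing CTFs/LOs is an endpoint problem for the interval $\big[\tfrac{q^*}{p}-\tfrac{1}{p(pN-q)},\tfrac{q^*}{p}\big]_{\textsc{sf}}$, the pullback of $\mathcal{L}(Y)=[2g(K)-1,+\infty]$, and the paper resolves it with the explicit endpoint formulas of Propositions~\ref{prop: basic defs of y_0+-} and~\ref{prop: main bounds for y0- and y0+}: in case $(i)$, for boundary-incompressible partial fillings the endpoint $y_{0+}$ lies in $\left<q^*/p,\,1\right]+\Z$, so the strict/non-strict distinction can only bite when $\tfrac{q^*}{p}-\tfrac{1}{p(pN-q)}=0$, i.e.\ when $2g(K)-1=\tfrac{q+1}{p}$ (excluded by hypothesis), while hitting $\tfrac{q^*}{p}$ forces boundary compressibility and hence an $S^3$ filling; and in case $(ii)$ the failure locus $y_{0+}=\tfrac{p^*-q^*N}{q-pN}$ is pinned down by the conditions $\sum\lceil y_i\rceil=0$, $\sum\lfloor[-y_i](q-Np)\rfloor=0$, $\sum\lfloor[-y_i](q-(N-1)p)\rfloor>0$, whose image under $\psi$ is exactly $\Lambda\cdot\big(\left[-\infty,N_{pq}\right>^n\setminus\left[-\infty,N_{pq}-p\right>^n\big)$. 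You correctly name this as the main obstacle (``tracking how the extremizers $y^v_{\pm}$ jump''), but you do not carry it out, and without it neither the threshold $\tfrac{q+1}{p}$ in $(\textsc{lo}.i)/(\textsc{ctf}.i)$ nor the width-$p$ shell in $(\textsc{lo}.ii)/(\textsc{ctf}.ii)$ is established; as written your argument yields only qualitative versions of the statements, not the theorem's specific constants.
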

One might notice that our sharper results here lie in
case $(i)$ $2g(K)\mkern-1mu-\mkern-1mu 1 \mkern-3mu>\mkern-2.5mu \frac{q}{p}$ of
Theorem~\ref{thm: torus link satellite in s3, lambda action}.
While this case is the less interesting one from the standpoint of L-space production, 
it is the more nontrivial one from the standpoint of the L-space conjecture,
since in this case every 
non-$S^3$ surgery on $K^{(np,nq)}$
has non-trivial reduced Heegaard Floer homology.


\vspace{-.05cm}

For the 
$2g(K)\mkern-1mu-\mkern-1mu 1$
\raisebox{.3pt}{$<$} 
$\mkern-8mu$ \raisebox{1.4pt}{$\frac{q}{p}$}
case, 
the difficulty with slopes $\alpha \in \left(\left[-\infty, N_{\highpq}\right>^{\mkern-1.2mu n} 
\mkern-1mu\setminus\mkern-.7mu \left[-\infty, N_{\highpq}-p\right>^{\mkern-1.2mu n}\right)$ 
is that the existence of a CTF on 
$Y^{\mkern-.2mu\text{\raisebox{-.5pt}{(}}np,nq\text{\raisebox{-.5pt}{)}}} \mkern-1mu(\alpha)$ 
depends on the family of suspension foliations on 
$\partial Y$---necessarily of nontrivial holonomy---that arise from 
taking a CTF $F$ of slope $2g(K)\mkern-2mu-\mkern-2mu1$ on $Y$ and restricting
$F$ to $\partial Y$.  Such $F$ can only be extended over the union $Y^{(np,nq)}$
if it matches with the boundary restriction of some CTF
of $\textsc{sf}$-slope $\frac{p^*-Nq^*}{q-Np}$ on the Seifert fibered space
glued to $Y$ to form the satellite.
A similar phenomenon occurs for LOs on the fundamental group of
$Y^{(np,nq)}(\alpha)$.
See Boyer and Clay
\cite{BoyerClay} for more on this subtlety in gluing behavior.

In Theorem~\ref{thm: bgw and juhasz for general satellites}
of Section~\ref{s: bgw conjectures}, we prove a result analogous to the one above, but for
satellites by algebraic links or iterated torus-links.
Instead of restating this theorem here, we state a

\vspace{-.05cm}

\begin{cor}
\label{cor: bgw and juhasz}
For $K \mkern-2mu\subset\mkern-2mu S^3$ a positive L-space knot
with exterior $Y$,
suppose $K^{\Gamma} \mkern-3mu\subset\mkern-2mu S^3$ is
an algebraic link satellite or iterated torus-link satellite of
$K \mkern-2mu\subset\mkern-2mu S^3$,
with $2g(K)\mkern-1mu-\mkern-1mu 1 \neq \frac{q_r+1}{p_r}$
at the

\vspace{-.08cm}

\noindent root torus-link-satellite operation of $\Gamma$,
such that
$K^{\Gamma} \mkern-3mu\subset\mkern-2mu S^3$ has no L-space surgeries besides~$S^3$.

\vspace{.03cm}

\noindent $\;\;(\textsc{lo})$\;If $\mathcal{LO}(Y) \mkern-2mu=\mkern-2mu \mathcal{NL}(Y)$,
then every non-$S^3$ surgery on $K^{\Gamma} \mkern-2mu\subset\mkern-2mu S^3$
has LO fundamental group.

\noindent $\;\;(\textsc{ctf})$\;If $\mathcal{F}(Y) \mkern-2mu=\mkern-2mu \mathcal{NL}(Y)$,
then every irreducible non-$S^3$ surgery on $K^{\Gamma} \mkern-2mu\subset\mkern-2mu S^3$
admits a CTF.

\end{cor}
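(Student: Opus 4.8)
The plan is to obtain Corollary~\ref{cor: bgw and juhasz} as a specialization of Theorem~\ref{thm: bgw and juhasz for general satellites}, the iterated-satellite analogue of Theorem~\ref{thm: bgw and juhasz} proved in Section~\ref{s: bgw conjectures}. The first step is to recast the hypothesis that $K^\Gamma$ has no L-space surgery besides $S^3$ in terms of the slope spaces of the exterior $Y^\Gamma$ of $K^\Gamma$. Since $S^3$ is an L-space, every $S^3$-surgery slope lies in $\mathcal{L}(Y^\Gamma)$; conversely, by hypothesis every L-space filling of $Y^\Gamma$ is $S^3$, so $\mathcal{L}(Y^\Gamma)$ is exactly the set of $S^3$-surgery slopes and $\mathcal{NL}(Y^\Gamma)$ is exactly the set of slopes giving non-$S^3$ fillings. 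Likewise, a slope in $\mathcal{R}(Y^\Gamma)$ gives a reducible filling, which---the only L-space filling being the irreducible manifold $S^3$---is not an L-space; hence $\mathcal{R}(Y^\Gamma)\subseteq\mathcal{NL}(Y^\Gamma)$ and the slopes giving irreducible non-$S^3$ fillings are precisely those of $\mathcal{NL}(Y^\Gamma)\setminus\mathcal{R}(Y^\Gamma)$. Consequently $(\textsc{lo})$ is the inclusion $\mathcal{LO}(Y^\Gamma)\supseteq\mathcal{NL}(Y^\Gamma)$, and $(\textsc{ctf})$ is the inclusion $\mathcal{F}(Y^\Gamma)\supseteq\mathcal{NL}(Y^\Gamma)\setminus\mathcal{R}(Y^\Gamma)$, a slope of $\mathcal{F}(Y^\Gamma)$ furnishing a CTF of $Y^\Gamma$ that restricts to product foliations of the prescribed slopes on the boundary tori and therefore extends, by meridian disks in each filling solid torus, to a CTF of the closed surgered manifold.

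The second step is to check that the standing hypotheses land us in the exception-free regime of Theorem~\ref{thm: bgw and juhasz for general satellites}, in which those inclusions are equalities. The assumption that $K^\Gamma$ has no L-space surgery beyond $S^3$ forces the sharp case of that theorem, in which $\mathcal{LO}(Y^\Gamma)$ and $\mathcal{F}(Y^\Gamma)$ are determined with no residual $\Lambda$-orbit term: the gluing subtlety flagged after Theorem~\ref{thm: bgw and juhasz}---that for certain slopes bordering an $\mathcal{L}^-$-type stratum the existence of a CTF or a left order turns on extending a nontrivial-holonomy suspension foliation across a splice---does not occur, because such a stratum is present only when the construction produces L-space fillings other than $S^3$. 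Meanwhile the root condition $2g(K)-1\neq(q_r+1)/p_r$ excludes the unique slope value at which neither case~$(i)$ nor case~$(ii)$ of the theorem applies; here one uses that $2g(K)-1$ is an integer while the interval $(q_r/p_r,(q_r+1)/p_r)$ has length $1/p_r$ and hence contains no integer, so $(q_r+1)/p_r$ is the only exceptional value. With these points settled, Theorem~\ref{thm: bgw and juhasz for general satellites} yields $\mathcal{LO}(Y^\Gamma)=\mathcal{NL}(Y^\Gamma)$ whenever $\mathcal{LO}(Y)=\mathcal{NL}(Y)$, and $\mathcal{F}(Y^\Gamma)=\mathcal{NL}(Y^\Gamma)\setminus\mathcal{R}(Y^\Gamma)$ whenever $\mathcal{F}(Y)=\mathcal{NL}(Y)$, which with the first step gives $(\textsc{lo})$ and $(\textsc{ctf})$.

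I expect the real work to lie inside Theorem~\ref{thm: bgw and juhasz for general satellites} itself---propagating the foliation-extension and left-order constructions of Theorem~\ref{thm: bgw and juhasz} through the iterated splicing encoded by $\Gamma$, by way of the classification formula (Theorem~\ref{thm: l-space interval for seifert jsj}) and the descriptions of Theorems~\ref{thm: algebraic link satellites} and~\ref{thm: iterated torus links satellites}. Within the corollary, the only nonroutine point is confirming that the no-extra-L-space-surgery hypothesis together with the root condition genuinely place us in the exception-free case; the slope-space translation of the first step is formal. (If $K$ is the unknot, so that $K^\Gamma$ is an iterated torus link, the hypothesis of no non-trivial L-space surgery is essentially never satisfied, and the statement is vacuous.)
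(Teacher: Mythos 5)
Your route is the paper's own: Corollary~\ref{cor: bgw and juhasz} is presented there purely as a specialization of Theorem~\ref{thm: bgw and juhasz for general satellites}, with no separate written proof, and your two steps---translating the no-extra-L-space-surgery hypothesis into the statement that every non-$S^3$ filling slope lies in $\mathcal{NL}(Y^\Gamma)$, then landing in case $(i)$ of that theorem via the integrality observation that no integer lies strictly between $q_r/p_r$ and $(q_r+1)/p_r$, so the root condition $2g(K)-1\neq (q_r+1)/p_r$ removes the only problematic value---are exactly the intended reduction, as is capping off the boundary-product foliation by meridian disks.

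The one concrete step your plan skips is that Theorem~\ref{thm: bgw and juhasz for general satellites} also carries the standing hypotheses that $K$ is nontrivial, that $p_r>1$, and that $-1\notin j(E_{\mathrm{in}}(r))$, and none of these is delivered by your argument that case $(ii)$ is excluded; they too must be extracted from the hypothesis that $S^3$ is the only L-space surgery, by the same appeal to the classification results. Concretely: if $K$ is trivial, or if $2g(K)-1\le q_r/p_r$, then part $(ii)$ of Theorems~\ref{thm: algebraic link satellites}/\ref{thm: iterated torus links satellites} (or Theorem~\ref{thm: s3 satellite in sf basis}) exhibits non-$S^3$ L-space fillings; if $p_r=1$ with $2g(K)-1>q_r$, part $(i.b)$ shows $\mathcal{L}(Y^\Gamma)$ contains slopes realizing the nontrivial L-space surgeries on $K$ itself, which are not $S^3$ by Gordon--Luecke since $K$ is a nontrivial knot. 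The root-exceptional-splice case $-1\in j(E_{\mathrm{in}}(r))$ (algebraic satellites only) lies genuinely outside Theorem~\ref{thm: bgw and juhasz for general satellites} and is analyzed only in Section~\ref{ss: exceptional symmetries}, so your write-up should either rule it out from the hypothesis using that proposition or flag it as a restriction. Finally, a small imprecision in your first step: not every slope of $\mathcal{R}(Y^\Gamma)$ yields a reducible filling (the false-reducible slopes of Definition~\ref{def: false reducible} yield lens-space fillings), but under your hypothesis such an L-space filling must be $S^3$, so the implication you actually need---that an irreducible non-$S^3$ filling has slope in $\mathcal{NL}(Y^\Gamma)\setminus\mathcal{R}(Y^\Gamma)$---still holds.
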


\vspace{-.05cm}

{\noindent{In \cite{lslope}, J.~Rasmussen and the author conjectured that
our L-space gluing theorem (see Theorem~\ref{thm: gluing theorem for floer simple or graph manifold} below)
also holds without the hypothesis of admitting more than one L-space Dehn filling.
Hanselman, Rasmussen, and Watson recently announced a proof of this conjecture in \cite{HRW},
implying that
the above corollary also holds for any non-L-space knot $K \subset S^3$.
}}

\vspace{.2cm}

\subsection{Satellites by algebraic links}
\label{ss: intro sats by alg links}

In the context of negative definite graph manifolds,
the distinction between L-space and non-L-space has consequences
for algebraic geometry.

N{\'e}methi recently showed that the
unique negative-definite graph manifold $\text{Link}(X,\circ)$
bounding the germ of a normal complex surface singularity $(X,\circ)$
is an L-space~if and only if~$(X,\circ)$~is rational~$\!$\cite{NemethiLO}. Due to results
of the author in \cite{lgraph}, we can promote this statement to a relative version:~the
subregion
$\mathcal{L}^{\textsc{nd}} \mkern-2mu\subset\mkern-1.5mu \mathcal{L}(Y^{\Gamma})$
of negative-definite L-space Dehn filling slopes for a graph manifold $\mkern1muY^{\Gamma}\mkern-2mu$
parameterizes,
up to equisingular deformation,
the rational surface singularities $(X,\circ)$ admitting 
``end curves''
$(C, \circ) \mkern-2.5mu\subset\mkern-2.5mu (X, \circ)$
(see \cite{NeumannEnd}), 
such that $\text{Link}(X \mkern-1.5mu\setminus\mkern-1.5mu C) \mkern-2mu=\mkern-2mu Y^{\Gamma}\!$.
If~one~such $(X,\circ)$
is $(\C^2, 0)$, then $Y^{\Gamma}$ is the
exterior of an algebraic link, motivating the following study.

\vspace{.23cm}

\noindent{\textbf{Setup.}} Whereas a $T(np,nq)$-satellite operation is specified by
(an unknot complement in) 
the Seifert fibered exterior of $T(np,nq)$ determined by the triple $(p,q,n)$,
a {\em{sequence}} of torus-link-satellite operations
is specified by a rooted tree $\Gamma$
determining the graph manifold exterior of the pattern link,
where each vertex $v \mkern-2.5mu\in\mkern-2.5mu \mathrm{Vert}(\Gamma)$ specifies 
the Seifert fibered $T_v\mkern-3mu:=\mkern-3mu T(n_v p_v, n_v q_v)$-exterior in $S^3_v$,
determined by the triple
$(p_v, q_v, n_v)$, so that $S^3_v$ has 2 exceptional fibers $\lambda_{-1}^v$ and
$\lambda_0^v$ of respective multiplicities $p_v$ and $q_v$, and components of
$T_v$ are regular fibers in $S^3_v$.

Since we direct the edges of $\Gamma$ {\em{rootward}}, each vertex $w$
has a unique outgoing edge $e_w$, corresponding to the 
incompressible torus in whose neighborhood
$T_w$ is embedded, or equivalently, to a gluing map $\phi_{e_w}\mkern-2mu$ splicing
the multiplicity-$q_w$ fiber $\lambda_0^w \in S^3_w$ to a Seifert fiber in 
$S^3_u$, for $u:=v(e_w)$, where we write $v(e)$ 
to denote the vertex on which an edge $e\in \mathrm{Edge}(\Gamma)$ terminates.
(A ``splice'' is a type of toroidal connected sum 
exchanging meridians with longitudes.)


There are only two types of fiber in $S^3_u$ available for splicing:
a regular fiber, which we then regard as one of the $n_v$ components 
$f_j^u \in S^3_u$ of $T_v$, or the multiplicity-$p_v$ fiber $\lambda_{-1}^u \in S^3_u$.
When $\phi_{e_w}$ splices $\lambda_0^w \in S^3_w$ to some
$j^{\mathrm{th}}$ component $f^u_j \in S^3_u$ of $T_u$, we call
$\phi_{e_w}$ a {\em{smooth splice}}, set $j(e_w) := j \in\{1,\ldots, n_v\}$,
and declare the JSJ component $Y_u$ at $u$ to be the 
exterior of $\lambda_0^u \amalg T_u$ in $S^3_u$.
When $\phi_{e_w}$ splices $\lambda_0^w \in S^3_w$ to $\lambda_{-1}^u \in S^3_u$,
we call $\phi_{e_w}$ an {\em{exceptional splice}}, set $j(e_w) = -1$, and
define $Y_u$ to be the exterior of
$\lambda_{-1}^u \amalg \lambda_0^u \amalg T_u$ in $S^3_u$.
Since this latter splice could be redefined as a smooth one if $p_u=1$, we
demand $p_u > 1$ without loss of generality.

If we define $J_v \subset j(E_{\mathrm{in}}(v))$ and its complement $I_v$ by
\vspace{-.07cm}
\begin{equation}
J_v := 
j(E_{\mathrm{in}}(v))
\cap \{1, \ldots, n_v\},
\;\;\;\;\;
I_v := \{1, \ldots, n_v\} \setminus J_v,
\end{equation}

\vspace{-.07cm}

\noindent then $I_v$
catalogs the boundary components of $Y_v$ left unfilled, forming
the exteriors of link components,
so that the total satellite $K^{\Gamma} \mkern-4mu\subset\mkern-2mu S^3$
of $K \mkern-2mu\subset\mkern-2mu S^3$
has 
$\mkern-2mu \sum_{v \in \text{Vert}(\Gamma)}\mkern-2mu |I_v|$ components.
The pattern link specified by $\Gamma$
is then an {\em{algebraic}} link if and only if 
its graph manifold exterior is negative definite,
which, by straight-forward calculations as appear, for example, 
in Eisenbud and Neumann's book \cite{EisenbudNeumann},
is equivalent to the condition that $\Gamma$ is a tree, and that
\vspace{-.08cm}
\begin{align*}
(i)&\;\; p_v, q_v, n_v > 0 \;\;\text{for all } v\in \text{Vert}(\Gamma),
    \\
(ii)&\;\;\mkern46mu \Delta_e > 0 \;\;\text{for all } e\in \text{Edge}(\Gamma),
    \\
\Delta_e :=\mkern20mu&\mkern-20mu
\begin{cases}
\;p_{v(e)}q_{v(-e)} - p_{v(-e)}q_{v(e)} 
 & \;\;j(e) = -1
   \\
\;q_{v(-e)} - p_{v(e)} p_{v(-e)}q_{v(e)} 
 & \;\;j(e) \neq -1
\end{cases}.
\end{align*}

\vspace{-.08cm}

\noindent Conversely, given an isolated planar complex curve singularity 
$(\circ, C) \hookrightarrow (0, \C^2)$,
one can obtain such a tree $\Gamma$ from
Newton-Puisseux expansions for the defining equations of $C$,
or alternatively from the amputated
splice diagram of the dual plumbing graph of $\widetilde{X}$ for a good embedded resolution 
$(\widetilde{X}, \widetilde{C}) \to (\C^2, C)$.
Again, see  \cite{EisenbudNeumann} for details.

If $Y^{\Gamma}$
denotes the exterior of the $\Gamma$-satellite $K^{\Gamma} \subset S^3$
of $K \subset S^3$, then
for each JSJ component $Y_v$ of $Y^{\Gamma}$,
we again have reducible and exceptional subsets
$\mathcal{R}_v, \mathcal{Z}_v \subset \prod_{i\in I_v}\P(H_1(\partial_i Y_v; \Z))$,
along with a lattice $\Lambda_v$ acting on 
$\prod_{i\in I_v}\P(H_1(\partial_i Y_v; \Z))$
by addition of $\textsc{sf}_v$-slopes.

\begin{theorem}
\label{thm: algebraic link satellites}
Suppose $Y^{\Gamma} := S^3 \setminus \overset{\mkern2mu \circ}{\nu}(K^{\Gamma})$
is the exterior of an algebraic-link satellite $K^{\Gamma} \subset S^3$ of
a (possibly trivial) positive L-space knot $K \subset S^3$,
and suppose the triple $(p_r, q_r, n_r)$ specifies the initial torus-link satellite operation,
occuring at the root vertex $r \in \mathrm{Vert}(\Gamma)$.

$(i.a)$ If $K$ is nontrivial, $\frac{q_r}{p_r} < 2g(K) -1$, $p_r > 1$,
and $-1 \notin j(E_{\mathrm{in}}(r))$, then
$$
\mathcal{L}(Y^{\Gamma}) = \Lambda_{\Gamma};
\;\;\;
Y^{\Gamma}(\boldsymbol{y}^{\Gamma}) = S^3
\;\text{ for all }\; \boldsymbol{y}^{\Gamma} \in \mathcal{L}(Y^{\Gamma}).
$$

$(i.b)$ If $K$ is nontrivial, $\frac{q_r}{p_r} < 2g(K) -1 =: N$,
and $p_r = 1$, then

\vspace{-.4cm}

$$
\mathcal{L}_{S^3}(Y^{\Gamma})
=\mkern-3mu
\left(\mkern-3mu
\Lambda_r \mkern-3.7mu\cdot\mkern-2.5mu
\mathfrak{S}_{|I_r|}( [N,+\infty] \mkern-4mu\times\mkern-4mu \{\infty\}^{\mkern-1mu|I_r|-1})
\times
\mkern-15mu
{{\prod\limits_{e\in E_{\mathrm{in}}(r)}}} \mkern-12mu\Lambda_{\Gamma_{v(-e)}}
\mkern-5mu
\right)
\mkern1mu
\amalg
\mkern-8mu
\coprod_{e \in E_{\mathrm{in}}(r)}\mkern-20mu
\left(\mkern-1mu
\mathcal{L}_{\mkern-1muS^3}\mkern-1mu(Y^{\Gamma_{v(-e)}}) \times
\Lambda_{\Gamma  \mkern2mu\setminus\mkern2mu \Gamma_{\mkern-1muv(-e)}}
\mkern-1mu
\right)\mkern-2mu,
$$
\vspace{-.2cm}

{{where $Y^{\Gamma_{v(-e)}}$ denotes the exterior of the $\Gamma_{v(-e)}$-satellite
of $K \subset S^3$.}}

\vspace{.1cm}
$(ii)$ If $K$ is trivial, or if 
$K$ is nontrivial with
$\frac{q_r}{p_r} \ge \mkern2mu 2g(K) -1$ and  $q_r > \mkern2mu 2g(K) -1$,
then
$$
\;\;\;\;\;
\mathcal{L}_{\textsc{sf}_{\Gamma}}(Y^\Gamma) \supset
\prod_{v \in \mathrm{Vert}(\Gamma)}
\left(
\mathcal{L}^{\min -}_{\textsc{sf}_v}
\;\cup\;
\mathcal{R}_v \setminus \mathcal{Z}_v
\;\cup\;
\mathcal{L}^{\min +}_{\textsc{sf}_v}
\right),
\;\;\;
\text{where}$$
\vspace{-.4cm}
\begin{align*}
\mathcal{L}^{\min +}_{\textsc{sf}_v}
&:=
\left\{\vphantom{\textstyle{\sum\limits_{i\in I_v}}}
\boldsymbol{y}^v \mkern-4mu\in\mkern-2.5mu \Q^{|I_v|}
\right. 
\mkern-4mu
\left|\vphantom{\frac{A}{B}}\right.
\mkern-3mu
\left.
\textstyle{\sum\limits_{i\in I_v}} \lfloor y_i^v \rfloor \ge 0 \right\},
    \\
\mathcal{L}^{\min -}_{\textsc{sf}_v}
&:=
\left\{\vphantom{\textstyle{\sum\limits_{i\in I_v}}}
\boldsymbol{y}^v \mkern-4mu\in\mkern-2.5mu \Q^{|I_v|}
\right. 
\mkern-4mu
\left|\vphantom{\frac{A}{B}}\right.
\mkern-3mu
\left.
\textstyle{\sum\limits_{i\in I_v}} \lceil y_i^v \rceil \le m^-_v \right\}
\setminus 
\begin{cases}
\left\{
\mkern-4mu
\begin{array}{c}
\sum \lceil y_i^v \rceil = 0,
\\
\sum \mkern-2mu\left\lfloor [-y_i^v](q_v\mkern-2.7mu-\mkern-2.4mu p_v) \right\rfloor = 0
\end{array}
\mkern-4mu
\right\}
&
\mkern-6.5mu\begin{array}{l}
J_v = \emptyset; 
  \\
j(e_v) \neq -1
\end{array}
  \\
\vphantom{\mfrac{A^A}{B^B}^B}
\emptyset
& 
\text{otherwise}\,,
\end{cases}
\end{align*}
\vspace{-.3cm}
\begin{align*}
m_v^-
\mkern1mu
&:=
\mkern4mu-\mkern-6mu
{\sum\limits_{e\mkern1mu\in\mkern1mu 
E_{\mathrm{in}}\mkern-1mu(v)}}
\mkern-8mu
\left\{
\mkern-7mu
\begin{array}{ll}
1
& 
\mkern-3mu
j(e) \mkern-3mu\neq\mkern-3mu -1
  \\
\left\lceil \mkern-2mu\frac{p_{v(-e)}}{p_v\Delta_e}\mkern-3mu \right\rceil 
\mkern-3mu+\mkern-2mu 1
&
\mkern-3mu
j(e) \mkern-3mu=\mkern-3mu -1
\end{array}
\mkern-7mu
\right\}
\;-\;
\begin{cases}
1
& J_v \neq \emptyset;\, j(e_v) \neq -1
    \\
\left\lceil \mkern-2mu\frac{p_{v(e_v)}}{p_v\Delta_{e_v}}\mkern-3mu \right\rceil 
\mkern-3mu+\mkern-2mu 1
&
j(e_v) \mkern-3mu=\mkern-3mu -1
    \\
0
& \text{otherwise}\,.
\end{cases}\mkern2mu
\end{align*}
\end{theorem}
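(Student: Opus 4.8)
The plan is to decompose $Y^\Gamma$ along its JSJ tori into the Seifert-fibered pieces $\{Y_v\}_{v\in\mathrm{Vert}(\Gamma)}$ --- with the exterior of $K$ also glued along $\lambda_0^r$ at the root when $K$ is nontrivial --- and then feed this decomposition into the classification formula of Theorem~\ref{thm: l-space interval for seifert jsj} and the structure theorem of Theorem~\ref{thm: gluing structure theorem} (which themselves rest on the gluing tool of Theorem~\ref{thm: knot exterior gluing theorem}). As explained in the introduction, these reduce the determination of $\mathcal{L}(Y^\Gamma)$ to combining, for each vertex $v$, the pair $y^v_{\pm}$ of extremizations of the locally finite piecewise-constant L-space data of $Y_v$, with, for each rootward edge $e$, the linear-fractional transformation $\phi_{e*}^{\P}$ encoding the splice $\phi_e$. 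Because $\Gamma$ is a tree, the resulting system of gluing inequalities can be solved leafward-to-rootward: once the L-space region of each child subtree $\Gamma_{v(-e)}$ is known, a slope assignment on the free boundary components $I_v$ of $Y_v$ extends to an L-space filling of $\Gamma_v$ exactly when the slope it induces on each internal torus lies in the $\phi_{e*}^{\P}$-image of that child's region intersected with the L-space interval cut out by $y^v_{\pm}$.

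First I would pin down the vertex data. For the root, $Y_r$ together with the exterior of $K$ is a single-torus-link-satellite exterior, so Theorem~\ref{thm: torus link satellite in s3, lambda action} and its $\textsc{sf}$-basis refinement Theorem~\ref{thm: s3 satellite in sf basis} give $y^r_{\pm}$ directly, including the dichotomy between the constrained regime $\tfrac{q_r}{p_r}<2g(K)-1$ (case~(i) of that theorem) and the generic regime $\tfrac{q_r}{p_r}\ge 2g(K)-1$, $q_r>2g(K)-1$ (case~(ii), which produces the $\mathcal{L}^{\min+}$ and $\mathcal{L}^{\min-}$ strips). For $v\ne r$ the piece $Y_v$ is the exterior in $S^3_v$ of $\lambda_0^v\amalg T_v$ --- with an additional $\lambda_{-1}^v$ boundary when the splice at $v$ is exceptional --- which is the unknot specialization of the same Seifert computation, so its data is again read off from Theorem~\ref{thm: torus link satellite in s3, lambda action}(ii) and Theorem~\ref{thm: s3 satellite in sf basis}. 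In every case the $\textsc{sf}_v$-coordinate L-space interval at $Y_v$ has endpoints governed by $\lfloor\sum_{i\in I_v}y_i^v\rfloor$ and $\lceil\sum_{i\in I_v}y_i^v\rceil$; this is the source of the constraints $\sum\lfloor y_i^v\rfloor\ge 0$ and $\sum\lceil y_i^v\rceil\le m_v^-$ in the statement.

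Next I would run the induction. The base case is a leaf $w$, where Theorem~\ref{thm: s3 satellite in sf basis} gives $\mathcal{L}_{\textsc{sf}_w}(Y_w)=\mathcal{L}^{\min-}_{\textsc{sf}_w}\cup(\mathcal{R}_w\setminus\mathcal{Z}_w)\cup\mathcal{L}^{\min+}_{\textsc{sf}_w}$. For the step at a vertex $v$, I would apply $\phi_{e*}^{\P}$ to the inductively known region of each child $v(-e)$ after expressing it in the slope variable on the internal torus; the algebraic-link hypotheses $p_v,q_v,n_v>0$ and $\Delta_e>0$ --- equivalently, negative-definiteness of the plumbing, as in Eisenbud--Neumann~\cite{EisenbudNeumann} --- force every $\phi_{e*}^{\P}$ to be orientation-compatible with the L-space interval of $Y_v$, so that the surviving child values form an interval whose endpoint, once combined with the $y^v_{\pm}$ extremization, shifts the admissible range of $\sum\lceil y_i^v\rceil$ by $-1$ when $j(e)\ne -1$ and by $-\lceil p_{v(-e)}/(p_v\Delta_e)\rceil-1$ when $j(e)=-1$; the analogous contribution of the outgoing edge $e_v$ supplies the remaining summand of $m_v^-$. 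Retaining at each vertex only the slope assignments that visibly satisfy every incident gluing inequality simultaneously --- the ``$\min$'' combinations --- produces the stated product as an inner bound, which is why case~(ii) asserts ``$\supset$'' rather than equality: the non-$\min$ combinations, which we deliberately discard, are exactly what renders $\partial\mathcal{L}(Y^\Gamma)$ fractal. Case~(i) I would treat separately. When $\tfrac{q_r}{p_r}<2g(K)-1$ with $p_r>1$, Theorem~\ref{thm: torus link satellite in s3, lambda action}(i) collapses the root operation's $\mathcal{L}^*_{S^3}$ to $\{\boldsymbol{\infty}\}$, so every L-space filling must put the $I_r$-components at the smooth-fiber-slope $\Lambda$-orbit; under the extra hypothesis $-1\notin j(E_{\mathrm{in}}(r))$ this cascades down the tree (each child is spliced to a regular fiber of the now-trivially-filled root piece, reappearing as a $\Gamma_{v(-e)}$-satellite of $K$ forced into the same situation), and the connected-sum decomposition along these smooth-fiber slopes assembles, after the splicings with $K$, to $S^3$ --- giving $\mathcal{L}(Y^\Gamma)=\Lambda_\Gamma$ with $Y^\Gamma(\boldsymbol{y}^\Gamma)=S^3$. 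When instead $p_r=1$, Theorem~\ref{thm: torus link satellite in s3, lambda action}(i) leaves one extra interval $[N,+\infty]$ available on a single root component, and sorting slope assignments by whether this non-trivial surgery is spent at the root (the $\mathfrak{S}_{|I_r|}([N,+\infty]\times\{\infty\}^{|I_r|-1})$ summand, forcing all children into their $S^3$-producing $\Lambda$-orbits) or passed into a child subtree $Y^{\Gamma_{v(-e)}}$ (which then reappears as a $\Gamma_{v(-e)}$-satellite of $K$ with every remaining component filled trivially) yields exactly the disjoint union of case~(i.b).

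The step I expect to be the main obstacle is the bookkeeping in case~(ii): verifying that the intricate $m_v^-$, together with the exceptional exclusion set $\{\sum\lceil y_i^v\rceil=0,\ \sum\lfloor[-y_i^v](q_v-p_v)\rfloor=0\}$ in $\mathcal{L}^{\min-}_{\textsc{sf}_v}$, is genuinely what the composite of the $\phi_{e*}^{\P}$ with the $y^v_{\pm}$-extremizations produces along the internal tori. The exceptional-splice edges are where this is delicate, since there $\phi_{e*}^{\P}$ is authentically two-dimensional in the Seifert data and the $\Delta_e$-denominators enter; moreover the exclusion set is the image of the false-reducible slopes $\mathcal{Z}$ (Definition~\ref{def: false reducible}) under the recursion, so matching it requires tracking which reducible fillings of a child acquire an $S^1\times S^2$ summand once glued. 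The leaf base case, the orientation-compatibility of $\phi_{e*}^{\P}$ coming from $\Delta_e>0$, and the reduction of the vertex data to Theorems~\ref{thm: torus link satellite in s3, lambda action} and~\ref{thm: s3 satellite in sf basis} are by comparison routine.
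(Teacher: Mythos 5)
Your overall strategy is the paper's: compute the local data at each Seifert piece with Theorem~\ref{thm: l-space interval for seifert jsj} and Theorem~\ref{thm: gluing structure theorem}, propagate L-space intervals leafward-to-rootward through the maps $\phi^{\P}_{e*}$, and close at the root against the knot exterior via Theorem~\ref{thm: knot exterior gluing theorem}; your treatment of $(i.a)$ and $(i.b)$ (collapse of the root interval, the trivially-refilled-subtree dichotomy) is also how the paper argues, reducing to the iterated torus-link case and to Claim~1 in the proof of Theorem~\ref{thm: bgw and juhasz for general satellites}, with $p_r=1$ (resp.\ the hypothesis $-1\notin j(E_{\mathrm{in}}(r))$) excluding exceptional splices at the root.

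The gap is in case $(ii)$, and it is exactly the step you set aside as ``bookkeeping.'' The assertion that passing through an edge shifts the admissible range of $\sum_{i\in I_v}\lceil y^v_i\rceil$ by $-1$ for a smooth splice and by $-\lceil p_{v(-e)}/(p_v\Delta_e)\rceil-1$ for an exceptional splice is the inductive estimate to be \emph{proved}, not a consequence of $\Delta_e>0$ making $\phi^{\P}_{e*}$ ``orientation-compatible.'' At an exceptional splice the endpoint formulas of Theorem~\ref{thm: l-space interval for seifert jsj} cannot be used verbatim: one must replace $\bar{y}^v_{0\pm}$ by the modified quantities $\bar{y}^{v\,\prime}_{0\pm}$ (Proposition~\ref{prop: compute y0 for alg links}), compare them via Proposition~\ref{prop: y0bar y0prime compare}, and then carry out the explicit computation with the entries of $[\sigma^{\P}_{e_v*}]$ culminating in (\ref{eq: final answer Lmin- alg link}) to see that the image interval stays on the correct side of $\mu_{e_v}+m^{e_v-}$; nothing in your outline produces these numbers. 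Moreover, your one concrete claim about the delicate part is incorrect: the excluded set in $\mathcal{L}^{\min-}_{\textsc{sf}_v}$ is \emph{not} an image of false-reducible slopes under the recursion (and note that $\mathcal{Z}$ denotes the exceptional slopes of Definition~\ref{def: R and Z}, not the false-reducibles of Definition~\ref{def: false reducible}). The excluded slopes are finite fillings, with no $\infty$ coordinate, so no $S^1\times S^2$ summand is in play; they are precisely the locus where $\sum\lceil y^v_i\rceil=0$ and $\sum\lfloor[-y^v_i](q_v-p_v)\rfloor=0$, i.e.\ where the hypothesis of Proposition~\ref{prop: bounds good for iterated case} fails, so that $y^v_{0+}$ may land exactly on $(\varphi^{\P}_{e_v*})^{-1}\!\left(\lfloor p_v/q_v\rfloor+1\right)$ and the \emph{strict} inequality demanded by the inductive hypothesis in the boundary-incompressible case would break. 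Identifying and excluding that locus is part of the estimate, not a reducibility bookkeeping exercise, so as written the proposal does not yet contain the core of the proof.
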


This is not as horrible as it looks.
Part $(i.a)$ describes the case in which all L-space surgeries yield $S^3$.
For part $(i.b)$, either 
we trivially refill all
components of $Y^{\Gamma}$ except for one exterior component
in the root, effectively replacing $K^{\Gamma}$ with $K$;
or, we trivially refill all exterior components but
those of $Y_{\Gamma_{v(-e)}}$ for some incoming edge $e$, replacing $K^{\Gamma}$
with some $K^{\Gamma_{v(-e)}}$.

The notation $\Lambda_{\Gamma_v}$
and the term ``trivially refill'' hide a subtlety, however.
For both the above theorem and for
Theorem~\ref{thm: iterated torus links satellites} for iterated torus-link satellites,
we {\em{define}}
$\Lambda_{\Gamma_v}$ to mean
\begin{equation}
\label{eq: lambda_gamma def}
\Lambda_{\Gamma_v} = 
\left\{ \boldsymbol{y}^{\Gamma_v} \mkern-2mu\left|\mkern4mu
\vphantom{Y_0^{\Gamma_v}(\boldsymbol{y}^{\Gamma_v})}\right.
Y_0^{\Gamma_v}(\boldsymbol{y}^{\Gamma_v}) = S^3 \right\},
\;\;\;
Y_0^{\Gamma_v}
\mkern-4mu:=\mkern-4mu
\;\text{exterior of the}\;
\Gamma_v\text{-satellite of the unknot}.
\end{equation}
While 
$\Lambda_{\Gamma_v} \supset \prod_{u \in \text{Vert}(\Gamma_v)}\Lambda_u$,
the two sets need not be equal.
Similarly, if $N > \frac{q_r}{p_r}$ and
$j(e') = -1$ for some $e' \in E_{\mathrm{in}}(r)$,
then 
$\mathcal{L}(Y^{\Gamma}) \supset
\mathcal{L}\mkern-1mu(Y^{\Gamma_{v(-e')}}) \times
\Lambda_{\Gamma  \mkern2mu\setminus\mkern2mu \Gamma_{\mkern-1muv(-e')}}$,
but the containment can be proper.
Section~\ref{ss: exceptional symmetries}
provides a more explicit characterization of
$\mathcal{L}(Y^{\Gamma})$ in this case, along with
a concrete description of $\Lambda_{\Gamma}$
in the case of iterated torus-link satellites.

Part $(ii)$
of Theorem~\ref{thm: algebraic link satellites}
is analogous to
Theorem~\ref{thm: torus link satellite in s3, lambda action}.ii
for torus-link satellites,
but this similarity is masked by our transition from
$S^3$-slopes to $\textsc{sf}$-slopes.
For example, if $v$ is a leaf and its emanating edge $e_v$
does not correspond to an exceptional splice, then we have
\begin{equation*}
\mathcal{L}^{\min+}_{S^3_v} \mkern-2mu=
\Lambda_v \mkern-1mu\cdot\mkern-.5mu \left<p_v q_v , +\infty\right]^{|I_v|},
\;\;\;
\mathcal{L}^{\min-}_{S^3_v} =
\Lambda_v \cdot 
\left(\left[ -\infty , p_v q_v\right>^{|I_v|} 
\setminus 
\left[ -\infty , 
p_v q_v \mkern-2mu-\mkern-2mu q_v \mkern-2mu+\mkern-2mu p_v
\right>^{|I_v|}\right).
\end{equation*}

\vspace{-.2cm}

{\noindent{In all other cases, we still have
$\mathcal{L}^{\min+}_{S^3_v} \mkern-2mu=
\Lambda_v \mkern-1mu\cdot\mkern-1mu \left<p_v q_v , +\infty\right]^{|I_v|}$, but
$\mathcal{L}^{\min-}_{S^3_v}$ now sits inside the
unit-radius neighborhood of $\mathcal{R}_v$,
with $m_v^-$ providing a measure of how
deeply inside it sits.}}

In the set 
$\psi_v^{-1}\mkern-2mu\left(\left[ -\infty , 
p_v q_v \mkern-2mu-\mkern-2mu q_v \mkern-2mu+\mkern-2mu p_v
\right>^{|I_v|}\right)$
removed from
$\mathcal{L}^{\min -}_{\textsc{sf}_v}$
at a smoothly-spliced leaf $v$
(in part ($ii$) of the theorem),
the notation 
$[\cdot]: \Q \to \Z$, $[x] := x - \lfloor x \rfloor$
gives the fractional part of a rational number $x$,
whereas the notation 
$[a]_b := a - \left\lfloor\mkern-2mu\frac{a}{|b\mkern.2mu|}\mkern-2mu\right\rfloor \mkern-2mu |b|$
picks out the smallest nonnegative representative of  $a\; \mod b$
for any integers $a,b \in \Z$. For a suitable L-space region approximation
when $q_r = 2(K) -1 $ and $p_r = 1$,
see line (\ref{eq: special case of p=1 and q=N}) and the associated remark.

\subsection{Iterated torus-link-satellites}
For the case of iterated torus-link satellites,
we only allow ``smooth splice'' edges, corresponding
to the original type of torus-link satellite operation.
We also drop the algebraicity condition that $\Delta_e >0$,
and while we keep all $p_v, n_v >0$ without loss of generality,
we allow $q_v <0$ but demand $q_v \neq 0$, for each $v \in \text{Vert}(\Gamma)$.

\begin{theorem}
\label{thm: iterated torus links satellites}
Suppose $Y^{\Gamma} := S^3 \setminus \overset{\mkern2mu \circ}{\nu}(K^{\Gamma})$
is the exterior of an iterated torus-link satellite $K^{\Gamma} \subset S^3$ of
a (possibly trivial) positive L-space knot $K \subset S^3$,
and suppose the triple $(p_r, q_r, n_r)$ specifies the initial torus-link satellite operation
occuring at the root vertex $r \in \mathrm{Vert}(\Gamma)$.

$(i.a)$ If $K$ is nontrivial, $\frac{q_r}{p_r} < 2g(K) -1$, $p_r > 1$, then
$\mathcal{L}(Y^{\Gamma}) = \Lambda_{\Gamma}$.

$(i.b)$ If $K$ is nontrivial, $\frac{q_r}{p_r} < 2g(K) -1 =: N$,
and $p_r = 1$, then
\vspace{-.4cm}
$$
\mathcal{L}_{S^3}(Y^{\Gamma})
=\mkern-3mu
\left(\mkern-3mu
\Lambda_r \mkern-3.7mu\cdot\mkern-2.5mu
\mathfrak{S}_{|I_r|}( [N,+\infty] \mkern-4mu\times\mkern-4mu \{\infty\}^{\mkern-1mu|I_r|-1})
\times
\mkern-15mu
{{\prod\limits_{e\in E_{\mathrm{in}}(r)}}} \mkern-12mu\Lambda_{\Gamma_{v(-e)}}
\mkern-5mu
\right)
\mkern1mu
\amalg
\mkern-8mu
\coprod_{e \in E_{\mathrm{in}}(r)}\mkern-20mu
\left(\mkern-1mu
\mathcal{L}_{\mkern-1muS^3}\mkern-1mu(Y^{\Gamma_{v(-e)}}) \times
\Lambda_{\Gamma  \mkern2mu\setminus\mkern2mu \Gamma_{\mkern-1muv(-e)}}
\mkern-1mu
\right)\mkern-2mu,
$$

\vspace{-.08cm}

where $Y^{\Gamma_{v(-e)}}$ denotes the exterior of the $\Gamma_{v(-e)}$-satellite
of $K \subset S^3$.

\vspace{.15cm}

$(ii)$ If $K$ is trivial, or if 
$K$ is nontrivial with
$\frac{q_r}{p_r} \ge \mkern2mu 2g(K) -1$ and  $q_r > \mkern2mu 2g(K) -1$,
then
$$
\;\;\;\;\;
\mathcal{L}_{\textsc{sf}_{\Gamma}}(Y^\Gamma) \supset
\prod_{v \in \mathrm{Vert}(\Gamma)}
\left(
\mathcal{L}^{\min -}_{\textsc{sf}_v}
\;\cup\;
\mathcal{R}_v \setminus \mathcal{Z}_v
\;\cup\;
\mathcal{L}^{\min +}_{\textsc{sf}_v}
\right),
\;\;\;
\text{where}
$$

\vspace{-.4cm}

\begin{align*}
\mathcal{L}^{\min +}_{\textsc{sf}_v}
&:=
\left\{\vphantom{\textstyle{\sum\limits_{i\in I_v}}}
\boldsymbol{y}^v \mkern-4mu\in\mkern-2.5mu \Q^{|I_v|}
\right. 
\mkern-4mu
\left|\vphantom{\frac{A}{B}}\right.
\mkern-3mu
\left.
\textstyle{\sum\limits_{i\in I_v}} \lfloor y_i^v \rfloor \ge m^+_v \right\}
\setminus 
\begin{cases}
\left\{ \sum \lfloor y_i^v\rfloor = \sum \lfloor [y_i^v][p_v]_{q_v} \rfloor = 0 \right\}
&
J_v = \emptyset; 
\frac{p_v}{q_v}>+1
  \\
\left\{ \sum \lfloor y_i^v\rfloor = \sum \lfloor [y_i^v][-p_v]_{q_v} \rfloor = 0 \right\}
&
J_v = \emptyset; 
q_v <\mkern-2mu -1
  \\
\emptyset
&
\text{otherwise}
\end{cases},
    \\
\mathcal{L}^{\min -}_{\textsc{sf}_v}
&:=
\left\{\vphantom{\textstyle{\sum\limits_{i\in I_v}}}
\boldsymbol{y}^v \mkern-4mu\in\mkern-2.5mu \Q^{|I_v|}
\right. 
\mkern-4mu
\left|\vphantom{\frac{A}{B}}\right.
\mkern-3mu
\left.
\textstyle{\sum\limits_{i\in I_v}} \lceil y_i^v \rceil \le m^-_v \right\}
\setminus 
\begin{cases}
\left\{ \sum \lceil y_i^v \rceil = \sum \lfloor [-y_i^v][p_v]_{q_v} \rfloor = 0 \right\}
&
J_v = \emptyset; 
\frac{p_v}{q_v}
\mkern-1mu<\mkern-2mu -1
  \\
\left\{ \sum \lceil y_i^v \rceil = \sum \lfloor [-y_i^v][-p_v]_{q_v} \rfloor = 0 \right\}
&
J_v = \emptyset; 
q_v > +1
  \\
\emptyset
& 
\text{otherwise}
\end{cases},
\end{align*}
\begin{align*}
\text{with}
\;\;\;
m_v^+
&:=
\mkern4mu-\mkern-6mu
{\sum\limits_{e\mkern1mu\in\mkern1mu 
E_{\mathrm{in}}\mkern-1mu(v)}}
\mkern-12mu
\left(\left\lceil
\mfrac{p_{v(-e)}}{q_{v(-e)}}
\right\rceil - 1\right)
\;+\;
\begin{cases}
2 
&
q_v = -1
    \\
1
& J_v \neq \emptyset;\, q_v < -1 \text{ or } 
\frac{p_v}{q_v} > +1
    \\
0
& \text{otherwise}
\end{cases},
     \\
m_v^-
&:=
\mkern4mu-\mkern-6mu
{\sum\limits_{e\mkern1mu\in\mkern1mu 
E_{\mathrm{in}}\mkern-1mu(v)}}
\mkern-12mu
\left(\left\lfloor
\mfrac{p_{v(-e)}}{q_{v(-e)}}
\right\rfloor + 1\right)
\;-\;
\begin{cases}
2 
&
q_v = +1
    \\
1
& J_v \neq \emptyset;\, q_v > +1 \text{ or } 
\frac{p_v}{q_v} < -1
    \\
0
& \text{otherwise}
\end{cases}.
\hphantom{\text{with}
\;\;\;}
\end{align*}
\end{theorem}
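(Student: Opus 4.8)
The plan is to reduce the theorem to the already-stated classification formula of \cite{lgraph} (Theorems~\ref{thm: l-space interval for seifert jsj} and~\ref{thm: gluing structure theorem}) applied vertex-by-vertex to the graph-manifold exterior $Y^\Gamma$, and then to identify the resulting extremization data with the explicit formulas $\mathcal{L}^{\min\pm}_{\textsc{sf}_v}$, $m_v^\pm$ written above. First I would set up, for each vertex $v\in\mathrm{Vert}(\Gamma)$, the Seifert-data-compatible slope basis $\textsc{sf}_v$ on the boundary tori of the JSJ piece $Y_v$ (the exterior of $\lambda_0^v\amalg T_v$ in $S^3_v$), using that $S^3_v$ has exactly two exceptional fibers $\lambda^v_{-1},\lambda^v_0$ of multiplicities $p_v,q_v$ and that the $n_v$ link components are regular fibers; the change-of-basis $\psi_v$ is the multi-component analogue of the map $\psi$ from~(1). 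The point of working in $\textsc{sf}_v$-slopes is that, as remarked after Theorem~\ref{thm: torus link satellite in s3, lambda action}, the answer is automatically $\Lambda_v$-invariant and the piecewise-constant ``$y^v_\pm$'' extremizers of the classification formula become the floor/ceiling-of-sum expressions appearing in $\mathcal{L}^{\min\pm}_{\textsc{sf}_v}$.

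The key computational step is to evaluate, for a single Seifert piece $S^3_v$ with its two exceptional fibers and $n_v$ regular-fiber boundary components, the pair of extremizations of locally-finite families of piecewise-constant functions that Theorem~\ref{thm: l-space interval for seifert jsj} attaches to $v$. Because the relevant functions here are the ``staircase'' functions coming from a positive L-space knot (for the root piece, via $g(K)$) or from the torus-knot cabling data (for all other pieces, via $p_v,q_v$), these extremizations collapse to the combinatorics of $\sum_{i\in I_v}\lfloor y_i^v\rfloor$ and $\sum_{i\in I_v}\lceil y_i^v\rceil$, with corrections at the exceptional fibers $\lambda^v_{-1},\lambda^v_0$; this is exactly where the quantities $\lceil p_{v(-e)}/(p_v\Delta_e)\rceil$ (algebraic case) and $\lceil p_{v(-e)}/q_{v(-e)}\rceil$, $\lfloor p_{v(-e)}/q_{v(-e)}\rfloor$ (iterated-torus case) enter $m_v^\pm$, via the linear-fractional gluing maps $\phi^\P_{e*}$ applied to the incoming pieces' L-space regions. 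I would first do the leaf case (no incoming edges), where $m_v^\pm$ reduces to the last ``$\pm 1$ or $\pm 2$ or $0$'' bracket and one recovers the cable formula $\mathcal{L}^{\min+}_{S^3_v}=\Lambda_v\cdot\langle p_vq_v,+\infty]^{|I_v|}$ quoted after Theorem~\ref{thm: algebraic link satellites}; then handle one incoming smooth splice by pushing forward through $\phi^\P_{e*}$ to get the per-edge contribution to $m_v^\pm$; then sum over $E_{\mathrm{in}}(v)$. The ``$J_v=\emptyset$'' exceptional set removed from $\mathcal{L}^{\min\pm}_{\textsc{sf}_v}$ is precisely the locus where the candidate gluing would produce an $S^1\times S^2$ summand rather than a genuine reducible L-space, i.e.\ $\mathcal{Z}_v$-type behavior after the $\lambda^v_0$-filling; I would verify this using Definitions~\ref{def: R and Z} and~\ref{def: false reducible} together with the structure theorem's prescription for interpreting the formula's output on the rational-longitude locus.

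Finally, cases $(i.a)$ and $(i.b)$ are the degenerate regime $q_r/p_r<2g(K)-1$ at the root: here the root extremizer forces every surgery slope on the exterior boundary components of $Y_r$ (and, inductively, of all descendant pieces) to be a ``trivial refill,'' so the classification formula output is the lattice $\Lambda_\Gamma$ defined in~(\ref{eq: lambda_gamma def}), except that a $p_r=1$ (resp.\ exceptional) splice allows one non-trivial $[N,+\infty]$-interval to survive in the root (resp.\ in a descendant subtree $\Gamma_{v(-e)}$), giving the displayed disjoint union; this is the direct multi-component analogue of part $(i)$ of Theorem~\ref{thm: torus link satellite in s3, lambda action}, and the inductive bookkeeping over the rooted tree is routine once the single-vertex computation is in hand. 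The main obstacle I anticipate is the single-vertex extremization in case $(ii)$ with a mix of smooth incoming edges and a smooth versus exceptional outgoing edge: getting the constant $m_v^\pm$ exactly right — including the off-by-one shifts ``$\pm 1$'' when $J_v\neq\emptyset$ and the ``$\pm 2$'' when $q_v=\mp 1$ — requires carefully tracking how the linear-fractional transformations $\phi^\P_{e*}$ interact with the floor/ceiling functions across the rational-longitude hyperplane $\mathcal{B}_v$, and how the removed exceptional set transforms; the iterated-torus case additionally needs the $q_v<0$ sign analysis, which is why the $\mathcal{L}^{\min\pm}$ exceptional sets there split on $\operatorname{sign}(q_v)$ and on $p_v/q_v\gtrless\mp1$. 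Everything else is assembling these local computations along $\Gamma$ via the structure theorem.
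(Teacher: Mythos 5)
Your overall strategy is the paper's: work in the $\textsc{sf}_v$-bases, compute the endpoints $y^v_{0\pm}$ of each subtree's L-space interval via Theorem~\ref{thm: l-space interval for seifert jsj} and interpret them with Theorem~\ref{thm: gluing structure theorem}, treat leaves first, and propagate through the maps $\phi^{\P}_{e*}$ by a leaf-to-root induction (the paper's induction maintains the window
$\lceil p_{v(-e)}/q_{v(-e)}\rceil-1\le y^v_{j(e)+}\le y^v_{j(e)-}\le\lfloor p_{v(-e)}/q_{v(-e)}\rfloor+1$, strict when the subtree filling is boundary-incompressible, and closes at the root against $\mathcal{L}_{\overline{S}^3}(Y)=[0,1/N]$); parts $(i.a)$--$(i.b)$ are likewise handled as you indicate, with $(i.a)$ reducing to the statement that L-space fillings are exactly $S^3$ fillings so that $\mathcal{L}=\Lambda_\Gamma$ holds by the definition (\ref{eq: lambda_gamma def}). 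Note also that part $(ii)$ asserts only a containment, so you never need to ``identify'' the extremization output with $\mathcal{L}^{\min\pm}_{\textsc{sf}_v}$ exactly; one-directional estimates suffice.

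The genuine gap is your treatment of the sets removed from $\mathcal{L}^{\min\pm}_{\textsc{sf}_v}$ when $J_v=\emptyset$. These are \emph{not} the locus of $S^1\times S^2$-summand or $\mathcal{Z}_v$-type behavior, and a verification via Definitions~\ref{def: R and Z} and \ref{def: false reducible} would not go through: slopes in those removed sets typically give perfectly good (often L-space) fillings of $Y_{\Gamma_v}$. They are removed because on them the hypotheses $\bar{y}^v_{0\pm\Sigma}([\pm p_v]_{q_v})>0$ of Proposition~\ref{prop: bounds good for iterated case} fail, and that proposition (together with Propositions~\ref{prop: basic defs of y_0+-} and \ref{prop: main bounds for y0- and y0+}, whose proofs rest on CRT-type estimates as in Lemma~\ref{lemma: p+q bound}) is exactly what keeps $y^v_{0\pm}$ strictly on the correct side of the vertical asymptote $\eta_v=p^*_v/q_v$ of $\phi^{\P}_{e_v*}$, so that the pushed-forward endpoints $y^{v(e_v)}_{j(e_v)\pm}$ stay inside the inductive window, with equality only in the boundary-compressible case. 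On the removed locus the endpoint can reach $\eta_v$ (equivalently, the image interval reaches the fiber slope $\infty$ of the next vertex up — the non-monotone boundary described after Theorem~\ref{thm: intro topology of monotone stratum}), and the induction does not close. So the ``careful tracking'' you defer is precisely where the constants $m^\pm_v$, the $\pm1/\pm2$ shifts, the sign analysis in $q_v$, and the removed sets are forced; without an argument of the strength of Propositions~\ref{prop: main bounds for y0- and y0+} and \ref{prop: bounds good for iterated case} (including the ``strict unless \textsc{bc}'' bookkeeping), the proposal does not yet constitute a proof.
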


\noindent {\textbf{Monotonicity.}}
In both of the above theorems,
$\prod_{v \in \mathrm{Vert}(\Gamma)}
\left(
\mathcal{L}^{\min -}_{\textsc{sf}_v}
\;\cup\;
\mathcal{R}_v \setminus \mathcal{Z}_v
\;\cup\;
\mathcal{L}^{\min +}_{\textsc{sf}_v}
\right)$
is a component of
what we call the {\em{monotone stratum}}
$\,\mathcal{L}_{\textsc{sf}_{\Gamma}}^{\text{mono}}(Y^\Gamma)$
of $\mathcal{L}_{\textsc{sf}_{\Gamma}}(Y^\Gamma)$,
as discussed in
Section~\ref{ss: monotone strata}.
We say that
$\boldsymbol{y}^{\Gamma} \in \mathcal{L}_{\textsc{sf}_{\Gamma}}(Y^\Gamma)$
is {\em{monotone at $v \in \mathrm{Vert(\Gamma)}$}} if
\begin{equation}
\label{eq: monotonicity at v intro}
\infty \in 
\phi_{\mkern-1mue*\mkern1mu}^{\P}
\mathcal{L}_{\textsc{sf}_{v(-e)}\mkern-4mu}^{\circ}
(Y_{\Gamma_{v(-e)}}\mkern-1.5mu(\boldsymbol{y}^{\Gamma}|_{\Gamma_{v(-e)}}))
\;\forall\,e \in E_{\mathrm{in}}(v)
\;\;\;\text{and}\;\;\;
\infty\in
\phi_{\mkern-1mu{e_v}*\mkern1mu}^{\P}
\mathcal{L}_{\textsc{sf}_v\mkern-4mu}^{\circ}
(Y_{\Gamma_v}\mkern-1.5mu(\boldsymbol{y}^{\Gamma}|_{\Gamma_v})),
\end{equation}
or more prosaically (when the above interval interiors are nonempty) is monotone at $v$ if
\begin{equation*}
y^v_{j(e)+} \le y^v_{j(e)-} \;\forall\;e \in E_{\mathrm{in}}(v),
\;\;\;
y^{v(e_v)}_{j(e_v)+} \le y^{v(e_v)}_{j(e_v)-},
\end{equation*}
as these are the respective endpoints of the above intervals.
The {\em{monotone stratum}}
$\mathcal{L}^{\mathrm{mono}}_{\textsc{sf}_{\Gamma}}(Y^\Gamma)$
of $\mathcal{L}_{\textsc{sf}_{\Gamma}}(Y^\Gamma)$ is the set of slopes 
$\boldsymbol{y}^{\Gamma} \in \mathcal{L}_{\textsc{sf}_{\Gamma}}(Y^\Gamma)$
such that $\boldsymbol{y}^{\Gamma}$ is monotone at all $v \in \mathrm{Vert}(\Gamma)$.

Specifying different collections of local monotonicity conditions allows one to
decompose an L-space region into strata of disparate topologies.
For example, for the (globally) monotone stratum, we have the following topological result,
proved in Section~\ref{ss: monotone strata}.

\begin{theorem}
\label{thm: intro topology of monotone stratum}
Suppose that $K^{\Gamma} \subset S^3$ is an algebraic link satellite,
specified by $\Gamma$, of a positive L-space knot $K\subset S^3$,
where either $K$ is trivial, or
$K$ is nontrivial with
$\frac{q_r}{p_r} > \mkern2mu 2g(K) -1$.
Let $V \subset \mathrm{Vert}(\Gamma)$ denote the subset of vertices $v \in V$
for which $|I_v| > 0$.

Then the $\Q$-corrected $\R$-closure
$\mathcal{L}^{\mathrm{mono}}_{\textsc{sf}_{\Gamma}}(Y^\Gamma)^{\R}$
of the monotone stratum of 
$\mathcal{L}_{\textsc{sf}_{\Gamma}}(Y^\Gamma)$
is of dimension $|I_{\Gamma}|$ and 
deformation retracts onto an $(|I_{\Gamma}| - |V|)$-dimensional embedded torus,
\begin{equation}
\label{eq: torus}
\mathcal{L}^{\mathrm{mono}}_{\textsc{sf}_{\Gamma}}(Y^\Gamma)^{\R}
\;\;\leadsto\;\;
{\textstyle{\prod_{v\in V}}} \mathbb{T}^{|I_v|-1}
\;\into\; 
{\textstyle{\prod_{v\in V}}} 
(\R\cup\{\infty\})^{|I_v|}_{\textsc{sf}_v},
\end{equation}
projecting to embedded tori
$\mathbb{T}^{|I_v|-1}
\;\into\; 
(\R\cup\{\infty\})^{|I_v|}_{\textsc{sf}_v}$
parallel to 
$\mathcal{B}_{\textsc{sf}_v} \subset (\R\cup\{\infty\})^{|I_v|}_{\textsc{sf}_v}$.
\end{theorem}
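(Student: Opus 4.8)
The plan is to analyze the monotone stratum factor by factor over the vertices, reducing the computation to a product of local deformation retractions, one for each $v \in V$. First I would invoke the structure of Theorem~\ref{thm: algebraic link satellites}($ii$), which presents the component of $\mathcal{L}^{\mathrm{mono}}_{\textsc{sf}_{\Gamma}}(Y^\Gamma)$ as a product $\prod_v (\mathcal{L}^{\min-}_{\textsc{sf}_v} \cup (\mathcal{R}_v\setminus\mathcal{Z}_v) \cup \mathcal{L}^{\min+}_{\textsc{sf}_v})$ over all of $\mathrm{Vert}(\Gamma)$; the vertices $v\notin V$ (with $|I_v|=0$) contribute no coordinates, so after discarding them the $\R$-closure with rational correction becomes a genuine topological product over $v\in V$. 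The key reduction step is therefore: the $\Q$-corrected $\R$-closure of a product is the product of the $\Q$-corrected $\R$-closures, provided the factors are ``$\Q$-saturated'' in the appropriate sense (each factor is invariant under $\Lambda_v$ and defined by floor/ceiling inequalities whose real closures are transparent); I would verify this set-theoretic/point-set claim carefully since it is the glue holding the argument together.

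Second, I would carry out the local analysis at a single vertex $v\in V$. The set $\mathcal{L}^{\min-}_{\textsc{sf}_v} \cup (\mathcal{R}_v\setminus\mathcal{Z}_v) \cup \mathcal{L}^{\min+}_{\textsc{sf}_v}$ lives in $(\Q\cup\{\infty\})^{|I_v|}_{\textsc{sf}_v}$; its $\R$-closure after removing the complement is, I claim, a closed region in $(\R\cup\{\infty\})^{|I_v|}_{\textsc{sf}_v}\cong\mathbb{T}^{|I_v|}$ of full dimension $|I_v|$. The $\mathcal{L}^{\min+}$ piece is essentially a ``half-space'' $\sum\lfloor y_i^v\rfloor \ge m_v^+$, whose real closure is $\{\sum y_i^v \ge m_v^+\}$ (a solid region bounded by a hyperplane), and similarly $\mathcal{L}^{\min-}$ gives $\{\sum y_i^v \le m_v^-\}$ up to a measure-zero exceptional locus which disappears in the $\Q$-correction; the middle piece $\mathcal{R}_v\setminus\mathcal{Z}_v$ supplies the ``seam'' $\{\infty$ in some coordinate$\}$ stitching the two half-spaces together around the torus. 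Since the ambient space is a torus and the region is the complement of a thin open slab (parallel to the rational-longitude torus $\mathcal{B}_{\textsc{sf}_v}$), I would exhibit an explicit linear deformation retraction—flowing in the direction transverse to $\mathcal{B}_{\textsc{sf}_v}$—onto a single embedded $\mathbb{T}^{|I_v|-1}$ parallel to $\mathcal{B}_{\textsc{sf}_v}$. Taking products over $v\in V$ then yields the stated $\prod_{v\in V}\mathbb{T}^{|I_v|-1}$, of dimension $\sum_{v\in V}(|I_v|-1) = |I_\Gamma| - |V|$, with the asserted total dimension $|I_\Gamma| = \sum_v |I_v|$.

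The hard part will be making the ``$\R$-closure of the monotone stratum component'' genuinely equal to the product of the local $\R$-closures, i.e.\ controlling the interaction between the $\Q$-correction operation $A\mapsto A^{\lowR}=\overline{A}\setminus A^c$ and the Cartesian product, together with the fact that Theorem~\ref{thm: algebraic link satellites}($ii$) only asserts a containment $\supset$, not equality, for $\mathcal{L}_{\textsc{sf}_\Gamma}(Y^\Gamma)$. I would address this by restricting attention to the monotone stratum, where the monotonicity conditions~(\ref{eq: monotonicity at v intro}) force the gluing maps $\phi_{e*}^{\P}$ to behave compatibly across edges, so that on this stratum the containment becomes an equality and the product description is exact; this is precisely the payoff of isolating the monotone stratum rather than working with all of $\mathcal{L}_{\textsc{sf}_\Gamma}(Y^\Gamma)$. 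A secondary technical point is checking that the exceptional sets subtracted in the definitions of $\mathcal{L}^{\min\pm}_{\textsc{sf}_v}$ (the $J_v=\emptyset$ cases with the fractional-part conditions) are of real codimension $\ge 1$ and lie in the boundary of the region, hence are annihilated by the $\Q$-correction and do not affect the homotopy type; this is a routine but necessary verification about lattice points satisfying a secondary linear congruence. Once these point-set issues are settled, the deformation retraction itself is an elementary explicit homotopy, linear in each torus factor, and the topological conclusion follows immediately.
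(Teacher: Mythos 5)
There is a genuine gap at the very first step: you identify the monotone stratum $\mathcal{L}^{\mathrm{mono}}_{\textsc{sf}_{\Gamma}}(Y^\Gamma)$ with the product $\prod_{v}\bigl(\mathcal{L}^{\min -}_{\textsc{sf}_v}\cup(\mathcal{R}_v\setminus\mathcal{Z}_v)\cup\mathcal{L}^{\min +}_{\textsc{sf}_v}\bigr)$, claiming that on the monotone stratum the containment of Theorem~\ref{thm: algebraic link satellites}($ii$) becomes an equality. That is not true, and the theorem is about the full monotone stratum, not the product. The product $\mathcal{L}^{\min}$ is only an inner approximation (indeed only one component of $\mathcal{L}^{\mathrm{mono}}$): its defining bounds $m_v^{\pm}$ are uniform worst-case constants, whereas membership of $\boldsymbol{y}^v$ in the monotone stratum depends on the actual interval endpoints $y^v_{j(e)\pm}=\phi^{\P}_{e*}(y^{v(-e)}_{0\mp})$, which vary with the filling slopes chosen on the subtrees $\Gamma_{v(-e)}$; the stratum therefore fibers over the monotone strata of the subtrees with fibers $\mathcal{T}^v_{\boldsymbol{y}_*}$ that genuinely depend on the base point, and it has non-product pieces. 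Monotonicity constrains the \emph{order} of the endpoints relative to the asymptotes of $\phi^{\P}_{e*}$, not their values, so it does not force the product description. What is needed (and what the paper's argument supplies) is an inductive fibration argument: one shows each fiber $(\mathcal{T}^v_{\boldsymbol{y}_*})^{\R}$ is $|I_v|$-dimensional and retracts onto a $\mathbb{T}^{|I_v|-1}$ parallel to $\mathcal{B}_{\textsc{sf}_v}$ (by the same retraction along $\tfrac{1}{n}l(\boldsymbol{y})\boldsymbol{1}$ as in Theorem~\ref{thm: topology of torus link exterior L-space region}$.ii.b$, with $0$ replaced by the vertical asymptote $\eta_v$ of $\phi^{\P}_{e_v*}$), and then one must separately prove the fibration is trivial — in the paper this is exactly where the embedded product $\mathcal{L}^{\min}\subset\mathcal{L}^{\mathrm{mono}}$ is used, as a global section-like product whose factors retract onto the same tori. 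Your proposal proves, at best, the retraction statement for the component $\mathcal{L}^{\min}$, not for $\mathcal{L}^{\mathrm{mono}}$.

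A secondary, fixable inaccuracy: the $\Q$-corrected $\R$-closure of $\{\sum_{i}\lfloor y_i^v\rfloor\ge m_v^+\}$ is not the half-space $\{\sum_i y_i^v\ge m_v^+\}$; it is a staircase-type union of closed unit cubes (for example $(0.5,-0.4)$ lies in the half-space for $m=0$ but not in the closure of the floor-defined set). More generally the boundaries of the relevant regions are piecewise-constant in each coordinate (cf.\ the ziggurat behavior invoked in the proof of Theorem~\ref{thm: topology of torus link exterior L-space region}), not hyperplanes, so the local retraction must be set up as in the paper — using the piecewise-constancy of $y^v_{0\pm}$ and linearity of the longitude function — rather than by describing the region as the complement of a slab with flat sides. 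These local points can be repaired, but the product-identification step cannot; without the fibration-and-triviality argument (or a substitute), the proof does not reach the stated conclusion about $\mathcal{L}^{\mathrm{mono}}_{\textsc{sf}_{\Gamma}}(Y^\Gamma)^{\R}$.
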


Non-monotone strata, when they exist, change the topology of the
total L-space region and have implications for ``boundedness from below''
in the sense of N{\'e}methi and Gorsky \cite{GNLbounded},
but we defer the study of non-monotone regions to later work, 
whether by this author or others.



\vspace{.28cm}

\noindent {\textbf{New tools.}}
In fact, the propositions in
Sections~\ref{s: iterated torus-link-satellites}
and \ref{s: algebraic link satellites}
provide many tools for analyzing questions not addressed in this paper.
For example, in the absence of an exceptional splice at $v$,
Proposition~\ref{prop: main bounds for y0- and y0+}$(+)$.$iii$
precisely characterizes when 
$y_{0+}^{v(e)} \mkern-2mu=\mkern-2mu \frac{p_v^*}{q_v}$,
defining the right-hand boundary of the non-monotone stratum
at that component.
These tools can also be used to characterize the non-product components
of the monotone stratum more explicitly.

\vspace{.28cm}

\noindent {\textbf{New features.}}
Even so, Theorems 
\ref{thm: algebraic link satellites} and
\ref{thm: iterated torus links satellites}
already 
reveal more interesting behavior than appears for
nondegenerate torus-link satellites.  In particular, 
{\em{the boundary of the $S^3$-slope L-space region need not occur at infinity}}.
For example, if
\begin{itemize}
\item[(a)]
$\Gamma = v$ specifies a single torus-link satellite of the unknot, and $p_v =q_v = 1$,
\item[(b)]
$\Gamma$ specifies an iterated torus-link satellite, and $m^+_v < 0$, or
\item[(c)]
$\Gamma$ specifies an iterated or algebraic-link satellite,
and we restrict to an appropriate piece of 
$\mathcal{L}^{\text{mono}}_{\textsc{sf}_{\Gamma}}(Y^{\Gamma})|_{\textsc{sf}_v}$
outside the product region,
\end{itemize}
then we encounter regions of the form
\begin{align*}
\psi\left(\left\{\vphantom{\textstyle{\sum\limits_{i\in I_v}}}
\boldsymbol{y}^v \mkern-4mu\in\mkern-2.5mu \Q^{|I_v|}
\right.\right. 
\mkern-4mu
\mkern-10mu&\mkern10mu
\left|\vphantom{\frac{A}{B}}\right.
\mkern-3mu
\left.\left.
\textstyle{\sum\limits_{i\in I_v}} \lfloor y_i^v \rfloor \ge m^+_v \right\}
\right)
   \\
\;\,=\,\;& \Lambda_v \mkern-2mu\cdot\mkern-1.5mu 
\left<p_v q_v , +\infty\right]^{|I_v|}
\;\,\cup\;\,
\Lambda_v \mkern-2.5mu\cdot\mkern-1mu \mathfrak{S}_{|I_v|}\mkern-5mu
\left(
\left<-\infty, p_vq_v - 1\right]^{|m_v^+|} \times \left<p_v q_v , +\infty\right]^{|I_v|-|m_v^+|}
\right)
\end{align*}
for $-|I_v| \le m_v^+ < 0$, with additional components
added onto the unit-radius neighborhood of $\mathcal{R}_v$
if $m_v < -|I_v|$.  (An analogous phenomenon occurs in the
negative direction when $m_v^- > 0$.)
Thus, in such cases, the L-space region ``wraps around'' infinity.
In fact, given any $n-, n_+ \in \Z_{\ge 0}$,
it is possible to
construct an iterated satellite by torus links
for which the $S^3_v$ component of some stratum of the L-space region
fills up the quadrant
\begin{equation*}
\left< -\infty, p_v q_v - 1 \right>^{n-} \times
\left< p_v q_v + 1, +\infty \right>^{n+},
\;\;\;n_- + n_+ = |I_v|.
\end{equation*}
There likewise exist iterated torus-link satellite exteriors $Y^{\Gamma}$
with $u, v \in \text{Vert}(\Gamma)$ for which 
the projections of $\mathcal{L}_{S^3}(Y^{\Gamma})$
to the positive quadrant
$\left< p_u q_u \mkern-2mu+\mkern-2mu 1, +\infty \right>^{|I_u|}$
and to the negative quadrant 
$\left< -\infty, p_v q_v \mkern-2mu-\mkern-2mu 1\right>^{|I_v|}$
are both empty.

We therefore feel that the notion of ``L-space link''
should be broadened to encompass any link 
whose L-space surgery region contains an open neighborhood
in the space of slopes, rather than
defining this notion in terms of large 
positive slopes in the L-space surgery region.


\subsection{Organization}
Section \ref{s: conventions} establishes basic Seifert fibered space conventions and 
elaborates on the distinguished slope subsets $\Lambda$, $\mathcal{R}$,
and $\mathcal{Z}$.
Section \ref{s: L-space intervals and gluing}
introduces notation for L-space intervals
and proves a new gluing theorem for knot exteriors with graph manifolds.
Section~\ref{s: torus-link satellites}
introduces machinery
developed by the author in \cite{lgraph}
to compute L-space intervals for fiber
exteriors in graph manifolds, and applies this
to prove 
Theorem 
\ref{thm: s3 satellite in sf basis},
an $\textsc{sf}$-slope version
of the torus-link satellite results in 
Theorem~\ref{thm: torus link satellite in s3, lambda action}.
Section \ref{s: L-space region topology} addresses the topology of L-space regions
and proves Theorem \ref{thm: intro topological theorem}.
Section~\ref{s: iterated torus-link-satellites}
describes the graph $\Gamma$ associated
to an iterated torus-link satellite, computes
various estimates useful for bounding L-space surgery regions
for iterated-torus-link and algebraic link satellites, and proves 
Theorem~\ref{thm: iterated torus links satellites}
for iterated-torus-link satellites.
Section~\ref{s: algebraic link satellites}
describes adaptations of this graph $\Gamma$ to accomodate
algebraic link exteriors, discusses monotonicity, and proves
Theorems~\ref{thm: algebraic link satellites} and
\ref{thm: intro topology of monotone stratum}.
Section~\ref{s: bgw conjectures}
proves results related to
conjectures of Boyer-Gordon-Watson and Juh{\'a}sz,
including generalizations of Theorem \ref{thm: bgw and juhasz}.

Readers interested in constructing their own L-space regions
for algebraic link satellites or iterated torus-link satellites
should refer to the L-space interval technology introduced for graph manifolds in
Section~\ref{s: torus-link satellites},
and to the analytical tools developed in
Section~\ref{s: iterated torus-link-satellites}.

\subsection*{Acknowledgements}
It is a pleasure to thank Eugene Gorsky, Gordana Mati{\'c}, Andr{\'a}s N{\'e}methi,
and Jacob Rasmussen
and for helpful conversations.
I am especially indebted to Maciej Borodzik,
for his hospitality at the University of Warsaw,
for extensive feedback, and for his encouraging
me to write something down about L-space surgeries on algebraic links.

\section{Basis conventions and the slope subsets 
$\Lambda$, $\mathcal{R}$, and $\mathcal{Z}$}
\label{s: conventions}

Suppose $Y := M \setminus \overset{\,\circ}{\nu}(\boldsymbol{L})$,
with boundary $\partial Y = \coprod_{i=1} \partial_i Y\mkern-4mu$,
$\mkern3mu\partial_i Y = \partial \nu(L_i)$,
is the link exterior of 
an $n$-component link $\boldsymbol{L} = \coprod_{i=1}^n L_i \subset M$
in a closed oriented 3-manifold $M$.
Then, up to choices of sign, the Dehn filling $M$ of $Y$ specifies
a ({\textit{multi}}-){\textit{meridional class}}
$(\mu_1, \ldots, \mu_n) \in H_1(\partial Y; \Z) 
= \bigoplus_{i=1}^n H_1(\partial_i Y; \Z)$,
where each meridian
$\mu_i \in H_1(\partial_i Y; \Z)$
is the class of a curve bounding a compressing disk 
of the solid torus $\nu(L_i)$.
Any choice of classes $\lambda_1, \ldots, \lambda_n \in H_1(\partial Y; \Z)$
satisfying $\mu_i \cdot \lambda_i = 1$ for each $i$ then produces
a {\textit{surgery basis}} $(\mu_1, \lambda_1, \ldots, \mu_n, \lambda_n)$
for $H_1(\partial_i Y; \Z)$.
We call these $\lambda_i$ {\textit{surgery longitudes}},
or just {\textit{longitudes}}
if the context is clear.

When $M = S^3$,
$H_1(\partial Y; \Z)$
has a conventional basis
given by taking each $\lambda_i$ to be the
{\textit{rational longitude}}; that is,
each $\lambda_i$ generates the kernel of the homomorphism
$\iota_*^i : H_1(\partial_i Y; \Q) \to H_1( M \setminus \overset{\,\circ}{\nu}(L_i) ; \Q)$
induced by the inclusion 
$\iota: \partial_ i Y \hookrightarrow M \setminus \overset{\,\circ}{\nu}(L_i)$.
For $M = S^3$, the rational longitude coincides with Seifert-framed longitude.

It is important to keep in mind that for knots and links in $S^3$,
the conventional homology basis is not always the most natural surgery basis.
In particular, any cable or satellite of a knot in $S^3$ determines a
surgery basis for which the surgery longitude corresponds to the 
Seifert longitude of the associated torus knot or companion knot.
This {\textit{cable surgery basis}} or {\textit{satellite surgery basis}} does not
coincide with the conventional basis for $S^3$.

For $Y$ a compact oriented 3-manifold with
boundary $\partial Y = \coprod_{i=1}^n \partial_i Y$ a disjoint
union of tori,
any basis
$\textsc{B} = \prod_{i=1}^n (m_i, l_i)$
for $H_1(\partial Y; \Z)$
determines a map
\begin{align}
\pi_{\textsc{b}}:
\prod_{i=1}^n \P\left(H_1(\partial_i Y; \Z)\right)
&\longrightarrow 
(\Q \cup \{\infty\})^n_{\textsc{b}},
 \\
{\textstyle{\prod_{i=1}^n}} [a_i m_i + b_i l_i]
&\longmapsto 
({\textstyle{\frac{a_1}{b_1}, \ldots, \frac{a_n}{b_n}}}),
\nonumber
\end{align}
which associates $\textsc{b}$-{\textit{slopes}} $\frac{a_i}{b_i}  \in \Q \cup \{\infty\}$
to nonzero elements $a_i m_i + b_i l_i \in  H_1(\partial_i Y; \Z)$.
Each $\textsc{b}$-slope 
$({{\frac{a_1}{b_1}, \ldots, \frac{a_n}{b_n}}})\in (\Q \cup \{\infty\})^n_{\textsc{b}}$
specifies a {\textit{Dehn filling}} 
$Y_{\textsc{b}}({{\frac{a_1}{b_1}, \ldots, \frac{a_n}{b_n}}})$,
which is the closed 3-manifold given by attaching a compressing disk,
for each $i$,
to a simple closed curve in the primitive homology class corresponding to 
$[a_i m_i + b_i l_i] \in \P\left(H_1(\partial_i Y; \Z)\right)$,
and then gluing in a 3-ball to complete this solid torus filling of $\partial_i Y$.
Notationally, we write $\mathcal{A}_{\textsc{b}}(Y) := \pi_{\textsc{b}}(\mathcal{A}(Y))$
for any subset $\mathcal{A}(Y) \subset 
\prod_{i=1}^n \P\left(H_1(\partial_i Y; \Z)\right)$ of slopes for $Y$.
Thus, $\mathcal{L}_{S^3}(Y^{(np,nq)}) \subset (Q \cup \{\infty\})^n_{S^3}$
realizes $\mathcal{L}(Y^{(np,nq)})$ with respect
to the
conventional homology basis for link exteriors in $S^3\mkern-1mu$

\subsection{Seifert fibered basis}
\label{ss: seifert fibered and special subsets}
For $Y$ Seifert fibered over an $n$-times punctured $S^2$,
there is a conventional {\textit{Seifert fibered basis}}
$\textsc{sf} =(\tilde{f}_1, -\tilde{h}_1, \ldots, \tilde{f}_n, -\tilde{h}_n)$
for
$H_1(\partial Y; \Z)$
which makes slopes correspond to Seifert data 
for Dehn fillings of $Y$.
That is,
each $-\tilde{h}_i$ is the meridian of the $i^{\text{th}}$ excised regular fiber,
and each $\tilde{f}_i$ is the lift of the regular fiber class $f \in H_1(Y; \Z)$
to a class $\tilde{f}_i \in H_1(\partial_i Y; \Z)$ satisfying 
$-\tilde{h}_i \cdot {\tilde{f}}_i = 1$.  Note that this makes
$(\tilde{f}_i, -\tilde{h}_i)$ a {\textit{reverse-oriented}} basis for each 
$H_1(\partial_i Y; \Z)$, 
but this choice is made so that if $Y$ is trivially Seifert fibered,
then with respect to our Seifert fibered basis,
the Dehn filling
$Y_{\textsc{sf}}(\frac{\beta_1}{\alpha_1}, \ldots, \frac{\beta_n}{\alpha_n})$
coincides with the genus zero Seifert fibered space
$M := M_{S^2}(\frac{\beta_1}{\alpha_1}, \ldots, \frac{\beta_n}{\alpha_n})$
with (non-normalized) Seifert
invariants $(\frac{\beta_1}{\alpha_1}, \ldots, \frac{\beta_n}{\alpha_n})$
and first homology
\begin{equation}
\label{eq: presentation of H_1 of sf}
H_1(M;\Z) = \left< f, h_1 \ldots, h_n |\,
\textstyle{\sum_{i=1}^n} h_i = 0; \iota_{1*}(\mu_1) = \cdots = \iota_{n*}(\mu_n)=0 \right>,
\;\;
\text{where}
\end{equation}
\vspace{-.6cm}
\begin{equation}
\mu_i := \beta_i \tilde{f}_i - \alpha_i \tilde{h}_i \in H_1(\partial_i Y; \Z),\;\;
\iota_i : \partial_i Y \mkern3mu\into\mkern3mu Y,\;\;
h_i := \iota_{i*}(\tilde{h}_i),
\;\text{ and }\;
\iota_{i*}(\tilde{f}_i)=f.
\end{equation}

\subsection{Action of $\boldsymbol{\Lambda}$}
The relation $\sum_{i=1}^n h_i = 0$ in (\ref{eq: presentation of H_1 of sf})
comes from regarding the meridian images $-h_i \in H_1(Y; \Z)$
as living in some global section $S^2 \setminus \coprod_{i=1}^n D^2_i \,\into\, Y$
of the $S^1$ fibration, so that each $-h_i = -\partial D^2_i$ can be regarded as
$-h_i = \partial_i(S^2 \setminus \coprod_j^n D^2_j)$,
making the total class $-\sum_{i=1}^n h_i$ bound a disk in $S^2 \setminus \coprod_i^n D^2_i$.
This choice of global section is not canonical, however.
Any new choice of global section would correspond to a 
new choice of meridians,
\begin{equation}
\tilde{h}_i \,\longmapsto\, \tilde{h}_i' := \tilde{h}_i - l_i \tilde{f}_i,\;\;
i\in \{1,\ldots, n\}\;\,\text{ for some }\;
\boldsymbol{l} \in \Z^n,\;
\textstyle{\sum\limits_{i=1}^n} l_i = 0.
\end{equation}
Writing
$\mu_i = 
\beta'_i \tilde{f}_i - \alpha'_i \tilde{h}'_i$ to express $\mu_i$ in terms
of this new basis yields
\begin{equation}
\beta'_i = \beta_i + l_i \alpha_i,\;\;
\alpha'_i = \alpha_i,\;\;
{\textstyle{\frac{\beta'_i}{\alpha'_i} = \frac{\beta_i}{\alpha_i} + l_i}},\;\;\;\;\;
i\in \{1, \ldots, n\}.
\end{equation}
In other words, the lattice of global section reparameterizations
\begin{equation}
\Lambda(Y) := \{ \boldsymbol{l} \in \Z^n | \, \textstyle{\sum_{i=1}^n l_i = 0} \}
\end{equation}
acts on Seifert data, hence on $\textsc{sf}$-slopes, by addition,
without changing the underlying manifold or $S^1$ fibration.
Moreover,
for any choice of boundary-homology basis \textsc{b},
the change of basis from \textsc{sf}-slopes to \textsc{b}
slopes induces an action of $\Lambda$ on \textsc{b}-slopes.

\subsection{Action of $\boldsymbol{\Lambda}$ on torus-link-exterior slopes}
As occurs in the case when $Y=Y^n_{(p,q)}$ is the exterior
of the torus link
$T(np,nq) \subset S^3$
(see (\ref{eq: T(np,nq) as reg fibers in Seifert fibered space})),
the $\Lambda$-action on $S^3$-slopes
\begin{equation}
\label{eq: psi action of lambda defined for prop}
\mkern30mu
\boldsymbol{\alpha} \mapsto
\boldsymbol{l} \cdot \boldsymbol{\alpha}
= \psi(\boldsymbol{l} + \psi^{-1}(\boldsymbol{\alpha})),
\;\;\;\;\;\;\;\;
\boldsymbol{l}\in
\Lambda_{\textsc{sf}}(Y^n_{(p,q)}),\,
\boldsymbol{\alpha}\in
(\Q \cup \{\infty\})^n_{S^3},
\end{equation}
induced by the transformation
\begin{equation}
\psi: 
(\Q \cup \{\infty\})^n_{\textsc{sf}} \longrightarrow (\Q \cup \{\infty\})^n_{S^3},\;\;
\boldsymbol{y} \mapsto
\left(pq + {\textstyle{\frac{1}{y_1}}}, \ldots, pq + {\textstyle{\frac{1}{y_n}}}\right),
\end{equation}
is of particular interest to us.
To aid in the introduction's discussion of
the role of $\Lambda$ in
Theorem~\ref{thm: torus link satellite in s3, lambda action},
we temporarily introduce the sets
$L_0, L_1, L_2 \subset (\Q \cup \{\infty\})^n_{S^3}$
of~$S^3$-slopes,~as~follows:
\begin{align}
L_0 
&:= \{\boldsymbol{\infty}\}
\;\subset\;
(\Q\cup\{\infty\})^n_{S^3},
\;\;\;\;\;\; \boldsymbol{\infty} := (\infty, \ldots, \infty),
  \\
L_1
&:=
\mkern2mu
\mathfrak{S}_n \mkern-1mu\cdot ([N,+\infty] \times \{\infty\}^{n-1})
\;\subset\; (\Q\cup\{\infty\})^n_{S^3},
      \\
L_2
&:= 
\left(\vphantom{\textstyle{\frac{a}{b}}}\mkern-1.5mu
\left[-\infty, pq\right>^{\mkern-2.5mu n} 
\mkern-1.5mu\setminus \mkern-.5mu
\left[-\infty, N\right>^{\mkern-2.5mu n} \right)
\mkern3mu\cup\mkern3mu
{\widehat{\mathcal{R}}}_{S^3}
\mkern3mu\cup\mkern3mu
\left<pq,+\infty\right]^n
\;\subset\; (\Q\cup\{\infty\})^n_{S^3},
\end{align}
for some $N \mkern-2.5mu\in\mkern-1.5mu \Z$,
where we have temporarily introduced the notation ${\widehat{\mathcal{R}}}_{S^3}$
to denote the union
\begin{equation}
{\widehat{\mathcal{R}}}_{S^3}
:= 
\mathfrak{S}_n \mkern-2.5mu\cdot\mkern-1.5mu
\left(
\mkern-1mu \{pq\} 
\mkern-1mu\times\mkern-1mu
(\left[-\infty, pq\right> 
\mkern-2.5mu\cup\mkern-2.5mu  
\left< pq, +\infty\right])^{n-1}
\right)
\;\subset\;
(\Q\cup\{\infty\})^n_{S^3}
\end{equation}
of sets of $S^3$-slopes. Lastly, for $\varepsilon \mkern-2mu>\mkern-2mu 0$, 
we take $U_{pq}^n(\varepsilon)$
to be the radius-$\varepsilon$~punctured~neighborhood 
\begin{equation}
U_{pq}^n(\varepsilon) := {\textstyle{\bigcup_{i=1}^{\mkern1mu n} }}
\{{\boldsymbol{\alpha}} |\,\alpha_i\in
\left[pq-\varepsilon, pq\right> \cup  \left<pq,pq+\varepsilon\right]\}
\;\subset\;
(\Q\cup\{\infty\})^n_{S^3}
\end{equation}
of the union of hyperplanes
$
\mathcal{R}_{S^3}\mkern-3mu =
\bigcup_{i=1}^{\mkern1mu n} \mkern-1mu \{{\boldsymbol{\alpha}} |\,\alpha_i \mkern-2mu=\mkern-2mu pq\}
\subset
(\Q\cup\{\infty\})^n_{S^3}$ 
discussed in Section~\ref{ss: R and Z}.

\begin{prop}
The action of $\Lambda$ on $(\Q \cup \{\infty\})^n_{S^3}$ in
(\ref{eq: psi action of lambda defined for prop})
 satisfies the following properties:

\noindent $(a)$ 
$ \mkern5mu\Lambda_{S^3}\mkern-1mu \cap \mkern2mu \Q^n_{S^3} 
\;\subset\; 
\left(\vphantom{\frac{a}{b}}\left[pq-1, pq\right> 
\cup \left<pq, pq+1\right]\right)^n$;
$\;\;\Lambda_{S^3}= \Lambda \cdot \{\boldsymbol{\infty}\}_{S^3} = \Lambda \cdot L_0$.

\noindent $(b)$ 
$(\Lambda \mkern-2mu\cdot\mkern-2mu L_i) \mkern-3mu\setminus\mkern-3mu L_i
\mkern.2mu \subset \mkern.2mu U_{pq}^n(1)\mkern-.5mu$
for $i \mkern-2mu=\mkern-2mu 0$ always, 
for $i \mkern-2mu=\mkern-2mu 1$ 
when  $N \mkern-2mu>\mkern-2mu pq$,
and for $i \mkern-2mu=\mkern-2mu 2$
when $N \mkern-2mu\le\mkern-2mu pq$.

\noindent $(c)$ 
For $\varepsilon > 0$, each of the following sets of $S^3$-slopes
can be realized as a
union of finitely many rectangles of dimensions 0, 1, and $\{n-1, n\}$,
respectively:

$(\Lambda \cdot L_0) \setminus U_{pq}^n(\varepsilon),
\;\;$
$(\Lambda \cdot L_1) \setminus U_{pq}^n(\varepsilon)$ for $N > pq,\;\;$
and
$(\Lambda \cdot L_2) \setminus U_{pq}^n(\varepsilon)$ for $N \le pq$.

\end{prop}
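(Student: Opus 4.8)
The plan is to transport everything to $\textsc{sf}$-slopes via the coordinatewise M\"obius map $\psi\colon y\mapsto pq+\tfrac1y$, under which the $\Lambda$-action becomes translation by integer vectors $\boldsymbol l$ with $\sum_i l_i=0$, and to use two elementary facts: $\psi$ carries intervals to intervals and products of intervals to products of intervals, so ``finite union of rectangles'' is a basis-independent notion; and $\boldsymbol\alpha\notin U^n_{pq}(\varepsilon)$ if and only if every coordinate $y_i$ of $\psi^{-1}(\boldsymbol\alpha)$ is either $\infty$ or a rational with $|y_i|<\tfrac1\varepsilon$, i.e.\ $\psi^{-1}\bigl((U^n_{pq}(\varepsilon))^{\mathrm c}\bigr)=R_\varepsilon^n$ with $R_\varepsilon:=\{\infty\}\cup\bigl(-\tfrac1\varepsilon,\tfrac1\varepsilon\bigr)$. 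Part $(a)$ then follows at once: $\psi^{-1}(\boldsymbol\infty)=\boldsymbol 0$, so $\Lambda\cdot L_0=\psi(\Lambda_{\textsc{sf}})=\Lambda_{S^3}$; and for $\psi(\boldsymbol l)\in\Lambda_{S^3}$ the $i$th coordinate $pq+\tfrac1{l_i}$ is finite only when $l_i\neq 0$, in which case $|l_i|\ge 1$ puts it in $[pq-1,pq)\cup(pq,pq+1]$.

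For $(b)$ I would first write down $\textsc{sf}$-models: $\psi^{-1}(L_0)=\{\boldsymbol 0\}$; $\psi^{-1}(L_1)=\mathfrak S_n\cdot\bigl([0,\tfrac1{N-pq}]\times\{0\}^{n-1}\bigr)$, whose non-zero coordinate lies in $(0,1]$ precisely because $N>pq$; and, assuming $N\le pq$, $\psi^{-1}(L_2)=A\cup B\cup C$ with $C=\{\boldsymbol y:y_i\ge 0\ \forall i\}$, $A=\{\boldsymbol y:y_i\le 0\ \forall i\}\setminus\{\boldsymbol y:-\tfrac1{pq-N}<y_i\le 0\ \forall i\}$ (empty when $N=pq$), and $B=\psi^{-1}(\widehat{\mathcal R}_{S^3})=\{\boldsymbol y:\text{exactly one }y_i=\infty\}$. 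Now take $\boldsymbol\beta\in(\Lambda\cdot L_i)\setminus L_i$ with $\boldsymbol\beta\notin U^n_{pq}(1)$, and write $\boldsymbol\beta=\psi(\boldsymbol y')$, $\boldsymbol y'=\boldsymbol l+\boldsymbol y$, $\boldsymbol y\in\psi^{-1}(L_i)$; then every coordinate of $\boldsymbol y'$ is $\infty$ or of absolute value $<1$. On $\{\boldsymbol 0\}$, on the interval summands of $\psi^{-1}(L_1)$, and on the pieces $A$ and $C$, all coordinates $y_i$ are finite and of one sign ($\le 0$ throughout, or $\ge 0$ throughout); then $|l_i+y_i|<1$ forces every $l_i$ to have the opposite (weak) sign, and $\sum_i l_i=0$ forces $\boldsymbol l=\boldsymbol 0$, so $\boldsymbol y'=\boldsymbol y\in\psi^{-1}(L_i)$. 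On $B$, translation fixes the unique $\infty$-coordinate and keeps the others finite, so $\boldsymbol l+B=B$ outright. Either way $\boldsymbol\beta\in L_i$, a contradiction; hence $(\Lambda\cdot L_i)\setminus L_i\subset U^n_{pq}(1)$ in the stated ranges of $N$.

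For $(c)$, fix $\varepsilon>0$; in $\textsc{sf}$-coordinates the set in question is $(\Lambda\cdot L_i)\cap R_\varepsilon^n$, which I would stratify by the subset $S\subseteq\{1,\dots,n\}$ of coordinates equal to $\infty$. Translation fixes $\infty$-coordinates, and $L_0$, $L_1$ and the pieces $A,C$ of $L_2$ have no such coordinate while $\widehat{\mathcal R}_{S^3}$ has exactly one; so only $|S|=0$ contributes, plus $|S|=1$ in the $L_2$ case. On an $|S|=1$ stratum $\widehat{\mathcal R}_{S^3}$ is already $\Lambda$-invariant, giving the single $(n-1)$-rectangle $\{\infty\}_k\times(-\tfrac1\varepsilon,\tfrac1\varepsilon)^{n-1}$ for each $k$. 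On the $|S|=0$ stratum, a translate $\boldsymbol l+Q$ of a constituent rectangle $Q$ of $\psi^{-1}(L_i)$ meets the bounded box $(-\tfrac1\varepsilon,\tfrac1\varepsilon)^n$ only if every $l_i$ satisfies a bound on one side, forced by the one-side-bounded intervals of $Q$; combined with $\sum_i l_i=0$ this becomes a two-sided bound, so only finitely many $\boldsymbol l$ contribute, each contributing the rectangle $(\boldsymbol l+Q)\cap(-\tfrac1\varepsilon,\tfrac1\varepsilon)^n$. Reading off dimensions from $\psi^{-1}(L_0)$ (a point), the $1$-rectangles comprising $\psi^{-1}(L_1)$, and the $n$-rectangles of $A\cup C$ together with the $(n-1)$-rectangles of $B$ yields the claimed dimensions $0$, $1$, and $\{n-1,n\}$.

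The main obstacle I anticipate is the finiteness in $(c)$: $\psi^{-1}(L_2)$ is genuinely unbounded (it contains full coordinate half-lines), so a priori infinitely many $\Lambda$-translates could survive intersection with $R_\varepsilon^n$; the only mechanism preventing this is the defining relation $\sum_i l_i=0$ of $\Lambda$, which upgrades the one-sided coordinate bounds coming from $R_\varepsilon^n$ to genuine two-sided bounds. A constant secondary nuisance is keeping the interval arithmetic under the coordinatewise $\psi$ self-consistent---especially that in $\Q\cup\{\infty\}$ the single point $\infty$ plays the roles of both $+\infty$ and $-\infty$ and is the common fixed point of all $\Lambda$-translations.
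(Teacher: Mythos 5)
Your proposal is correct and follows essentially the same route as the paper's proof: pass to $\textsc{sf}$-slopes via $\psi$, where $\Lambda$ acts by trace-zero integer translation, note that the only $\infty$-stratum of $\Lambda\cdot L_i$ is the $\Lambda$-invariant set $\psi^{-1}(\widehat{\mathcal{R}}_{S^3})$, and handle the finite stratum by clipping against a bounded box; your parts $(a)$ and $(b)$ are the paper's arguments in contrapositive form (your ``all $l_i$ of one weak sign plus $\sum_i l_i=0$ forces $\boldsymbol{l}=\boldsymbol{0}$'' is dual to the paper's ``any nonzero $\boldsymbol{l}$ has a positive and a negative entry''). The one genuine divergence is the finiteness step in $(c)$: you bound the number of contributing translates, using that each constituent rectangle of $\psi^{-1}(L_i)$ is bounded on a consistent side in every coordinate, so the box gives one-sided bounds on the $l_i$ which $\sum_i l_i=0$ upgrades to two-sided bounds --- valid here precisely because no ``mixed'' rectangles occur among the constituents; the paper instead observes that, however many translates meet the box, only finitely many \emph{distinct} clipped rectangles can arise, an argument that never needs the trace-zero condition and would survive even mixed rectangles. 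Incidentally, your identification of $\psi^{-1}$ of the complement of $U^n_{pq}(\varepsilon)$ as the product $\bigl(\{\infty\}\cup(-\tfrac{1}{\varepsilon},\tfrac{1}{\varepsilon})\bigr)^n$ is the sharp statement; the paper's displayed formula for this complement is an over-estimate when $n\ge 2$, though this does not affect either conclusion.
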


\begin{proof}
Part $(a)$. The first statement follows from the fact that
$\frac{1}{m} \in \left[-1, 0\right> \cup \left<0, 1\right] \cup \{\infty\}$ 
for all $m \in \Z$. The second statement is due to the fact that
$\boldsymbol{0} \mkern-1.5mu\in\mkern-1.5mu \Lambda_{\textsc{sf}}$
implies
$\boldsymbol{\infty} \mkern-1.5mu =\mkern-1.5mu \psi(\boldsymbol{0}) \mkern-1.5mu\in\mkern-1.5mu \Lambda_{S^3}$.

Part $(b)$.
First note that the action of $\Z$ on $(\Q \cup \{\infty\})$ by addition fixes both 
$\infty = \psi_j^{-1}(pq)$ as a point
and its complement 
$\Q = \psi^{-1}_j
\left(\vphantom{\frac{a}{b}} 
\left[-\infty,pq\right> \cup \left<pq, +\infty\right] \right)
$
as a set, for any $j \in \{1, \dots,n\}$.
The action of $\Lambda$ on $(\Q\cup\{\infty\})^n_{S^3}$
therefore fixes setwise
the union
${\widehat{\mathcal{R}}}_{S^3}$
of products of such sets.
Since $(\Lambda \cdot X) \setminus X  \mkern1mu\subset\mkern1mu
(\Lambda_{\textsc{sf}} \setminus \{\boldsymbol{0}\}) \cdot X$
for any subset $X \subset (\Q \cup \{\infty\})^n_{S^3}$ of $S^3$-slopes,
and since 
$L_i \subset (\left[ -\infty, pq \right>^{\mkern-1.5mu n} \cup \left< pq,+ \infty \right]^n)_{S^3}$ 
for $i \mkern-2mu=\mkern-2mu 0$ always, 
for $i \mkern-2mu=\mkern-2mu 1$ 
when  $N \mkern-2mu>\mkern-2mu pq$,
and for $i \mkern-2mu=\mkern-2mu 2$
when $N \mkern-2mu\le\mkern-2mu pq$,
it is sufficient to show that
$(\Lambda_{\textsc{sf}} \mkern-2mu\setminus \{\boldsymbol{0}\}) 
\cdot (\left[ -\infty, pq \right>^{\mkern-1.5mu n} \cup \left< pq,+ \infty \right]^n)_{S^3}
\mkern-2mu \subset \mkern-1.5mu U_{pq}^n(1)$.
To see this, we first note that
$\boldsymbol{l} \mkern-2mu\in\mkern-2mu 
(\Lambda_{\textsc{sf}}\setminus \{\boldsymbol{0}\})\mkern-1mu$
must have at least one positive and at least one negative component, 
say  $l_{i+} \mkern-3.5mu \in \left[1, +\infty \right> \cap \Z$
and $l_{i-} \mkern-3.5mu \in \left<-\infty, -1\right] \cap \Z$, implying
$(\boldsymbol{l} + \left[0,+\infty\right>^n)|_{i+} \subset \left[1,+\infty\right>$
and
$(\boldsymbol{l} + \left<-\infty, 0\right]^n)|_{i-} \subset \left<-\infty,-1\right]$.
We then have
\begin{align}
(\boldsymbol{l}\cdot \left<pq,+\infty\right]^n)|_{i+}\mkern-1.5mu
&= \psi(\boldsymbol{l}+\psi^{-1}(\left<pq,+\infty\right]^n) )|_{i+}\mkern-1.5mu
= \psi(\boldsymbol{l}+ \left[0,+\infty\right>^n)|_{i+}
\subset \left<pq, pq+1\right],
   \\
(\boldsymbol{l}\cdot \left<-\infty, pq\right]^n)|_{i-}\mkern-1.5mu
&= \psi(\boldsymbol{l}+\psi^{-1}(\left<-\infty, pq\right]^n) )|_{i-}\mkern-1.5mu
= \psi(\boldsymbol{l}+ \left<-\infty, 0\right]^n)|_{i+}
\subset \left[pq-1, pq\right>,
\end{align}
and so we conclude that $\boldsymbol{l}  \cdot 
(\left[ -\infty, pq \right>^{\mkern-1.5mu n} \cup \left< pq,+ \infty \right]^n)_{S^3}
\mkern-2mu \subset \mkern-1.5mu U_{pq}^n(1)$, completing the proof of $(b)$.



Part $(c)$.
In the \textsc{sf} basis, the complement of $U_{pq}^n(\varepsilon)$ within
the set of $S^3$-slopes is given by
\begin{equation}
\psi^{-1}
\left(\vphantom{\textstyle{\frac{a}{b}}}
(\Q \cup \{\infty\})^n_{S^3} \setminus U^n_{pq}(\varepsilon)\right)
\mkern7mu= \mkern7mu
\left< -\textstyle{\frac{1}{\varepsilon}} , 
+\textstyle{\frac{1}{\varepsilon}}\right>^n
\mkern3mu\cup\mkern5mu
{\textstyle{\bigcup_{i=1}^n}}\mkern-1.5mu \{\boldsymbol{y} 
|\,y_i \mkern-1mu=\mkern-1mu \infty\}
\;\subset\;
(\Q\cup \{\infty\})^n_{\textsc{sf}}.
\end{equation}
Since both
${\textstyle{\bigcup_{i=1}^n}}\mkern-1.5mu \{\boldsymbol{y} 
|\,y_i \mkern-2.5mu=\mkern-2.5mu \infty\}_{\textsc{sf}}
\mkern-2mu=\mkern-1mu \psi^{-1}(\mathcal{R}_{S^3})$
and $\psi^{-1}(\widehat{\mathcal{R}}_{S^3})$
are fixed setwise by $\Lambda$,
and since
$\psi^{-1}(L_i) 
\cap 
\psi^{-1}(\mathcal{R}_{S^3})
\mkern-2mu=\mkern-2mu \emptyset$ for $i\mkern-2mu=\mkern-2mu0$ and for $i\mkern-2mu=\mkern-2mu1$ with $N \mkern-2mu>\mkern-2mu pq$,
but
$\psi^{-1}(L_i) 
\cap 
\psi^{-1}(\mathcal{R}_{S^3})
\mkern-2mu=\mkern-2mu \psi^{-1}({\widehat{\mathcal{R}}}_{S^3})
$
when $i\mkern-2mu=\mkern-2mu2$ and $N \mkern-2mu\le\mkern-2mu pq$,
it follows that
\begin{equation}
(\Lambda _{\textsc{sf}}+ \psi^{-1}(L_i) ) \,\cap\, 
{\textstyle{\bigcup_{i=1}^n}}\mkern-1.5mu \{\boldsymbol{y} 
|\,y_i \mkern-1mu=\mkern-1mu \infty\}_{\textsc{sf}} 
=
\begin{cases}
\emptyset
&
i=0 \text{ or } (i=1\text{ and }N> pq)
  \\
\psi^{-1}(\widehat{\mathcal{R}}_{S^3})
&
i=2 \text{ and } N \le pq
\end{cases}.
\end{equation}
Thus, since $\psi^{-1}({\widehat{\mathcal{R}}}_{S^3}) $ is already a finite union 
of $(n-1)$-dimensional rectangles, it suffices to show that
$(\Lambda_{\textsc{sf}} + \psi^{-1}(L_i)) \cap \left< -\textstyle{\frac{1}{\varepsilon}} , 
+\textstyle{\frac{1}{\varepsilon}}\right>^n_{\textsc{sf}}$
is a union of finitely many rectangles of dimensions $0$, $1$, or 
$n$ in the respective cases that $i=0$, $i=1$ with $N> pq$, or
$i=2$ with $N \le pq$. The proof of this latter statement is straightforward,
however, since $\psi^{-1}(L_i)$ is already a finite union of rectangles of dimensions 0, 1, or $n$, respectively for the three above respective cases, and only finitely many distinct rectangles can be formed by intersecting $\Z^n$ translates of these rectangles with 
$\left< -\textstyle{\frac{1}{\varepsilon}} , 
+\textstyle{\frac{1}{\varepsilon}}\right>^n_{\textsc{sf}} \subset (\Q\cup\{\infty\})^n_{\textsc{sf}}$.

\end{proof}

\subsection{Reducible and exceptional sets 
$\boldsymbol{\mathcal{R}}$ and $\boldsymbol{\mathcal{Z}}$}
\label{ss: R and Z}
Like the above action of $\Lambda$, the following facts about reducible fillings
are well known in low dimensional topology,
but for the benefit of a diverse readership we provide some details.
\begin{prop}
Let $\hat{Y}$ denote the trivial $S^1$ fibration over
$S^2 \setminus \coprod_{i=0}^n D^2_i$,
and let $Y$ denote the Dehn filling of $\hat{Y}$ along
the $S^1$-fiber lift $\tilde{f}_0 \in H_1(\partial_0 \hat{Y}; \Z)$,
{\em{i.e.}}, along the $\infty$ $\textsc{sf}$-slope of $\partial_0 \hat{Y}$.
Then $Y$ is a connected sum
$
Y = \#_{i=1}^n (S_i^3 \setminus S^1_f \times D^2_i)
$
(where $S^1_f$ is the fiber),
and each exterior $S_i^3 \setminus S^1_f \times D^2_i$
has meridian $-\tilde{h}_i$ and rational longitude $\tilde{f}_i$.

\end{prop}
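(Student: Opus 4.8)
The plan is to recognize $Y$ explicitly as the exterior of an $n$-component unlink in $S^3$, after filling the new boundary tori back in trivially. First I would Dehn fill $Y$ along the slope $-\tilde{h}_i$ on each $\partial_i Y$, $i=1,\dots,n$. Since $\partial_i\hat Y=\partial(D_i^2\times S^1)$ and $-\tilde{h}_i=\partial D_i^2\times\{\mathrm{pt}\}$, each of these fillings merely re-glues the solid torus $D_i^2\times S^1$; performing all of them recovers $\hat Y\cup\coprod_{i=1}^n(D_i^2\times S^1)=(S^2\setminus D_0^2)\times S^1$, a solid torus with boundary $\partial_0\hat Y$, still carrying the $\infty$-filling $V$ glued along $\tilde{f}_0$. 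As $\tilde{f}_0=\{\mathrm{pt}\}\times S^1$ is a longitude of the solid torus $(S^2\setminus D_0^2)\times S^1\cong D^2\times S^1$, attaching $V$ along it yields $S^3$. Thus after these fillings $Y$ has become a closed manifold $\bar Y\cong S^3$, and $Y=\bar Y\setminus\nu(L)$ for the $n$-component link $L=F_1\sqcup\cdots\sqcup F_n\subset\bar Y$ whose components $F_i$ are the cores of the re-glued solid tori, i.e.\ the images of the excised regular fibers $\{\mathrm{pt}_i\}\times S^1$.

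The heart of the argument is to show that $L$ is the $n$-component unlink, by producing $n$ pairwise disjoint embedded disks in $\bar Y$ bounding its components. Since $S^2\setminus\coprod_{j=0}^n D_j^2$ is planar, one may pick pairwise disjoint embedded arcs $\gamma_i$ in it running from distinct points $p_i\in c_0$ to points $q_i\in c_i$. The annulus $A_i:=\gamma_i\times S^1\subset\hat Y$ has one boundary circle equal to the parallel copy $\{p_i\}\times S^1$ of $\tilde{f}_0$ on $\partial_0$ — which bounds a meridian disk $\delta_i$ of $V$, chosen among parallel copies so that the $\delta_i$ are disjoint — and its other boundary a parallel copy of $\tilde{f}_i$ on $\partial_i$, which is isotopic to $F_i$ across a vertical annulus $B_i\subset D_i^2\times S^1$. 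Then $\bar E_i:=\delta_i\cup A_i\cup B_i$ is an embedded disk in $\bar Y$ with $\partial\bar E_i=F_i$, and the $\bar E_i$ are pairwise disjoint because they lie over disjoint arcs, in disjoint re-glued solid tori, and along disjoint parallel meridian disks of $V$. Hence $L=\partial(\bar E_1\sqcup\cdots\sqcup\bar E_n)$ is the unlink, so $Y=S^3\setminus\nu(n\text{-unlink})=\#_{i=1}^n(S^3\setminus\nu(U_i))$ with each summand a solid torus (the case $n=0$ being the empty statement $\bar Y\cong S^3$).

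Finally I would read off the boundary data. The $\tilde{f}_0$-filling on $\partial_0$ leaves $\nu(F_i)=D_i^2\times S^1$ untouched, so the meridian of the summand's core $F_i$ — the slope on $\partial_i Y$ bounding a disk in $\nu(F_i)$ — is $\partial D_i^2\times\{\mathrm{pt}\}=-\tilde{h}_i$. For the rational longitude: filling $\partial_0$ along $\tilde{f}_0$ kills the regular-fiber class $f\in H_1(\hat Y;\mathbb{Z})$, so $[\tilde{f}_i]=f$ dies in $H_1(Y;\mathbb{Z})$, hence also in its quotient $H_1(S^3\setminus\nu(F_i);\mathbb{Z})$; since $S^3\setminus\nu(F_i)$ is a solid torus it has a unique null-homologous boundary slope, which must therefore be $\tilde{f}_i$. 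I expect the only step carrying genuine content to be the construction of the disks $\bar E_i$, where the interaction of the fiber-slope filling with the product structure is used; the identification $\bar Y\cong S^3$ and the meridian/longitude bookkeeping are routine.
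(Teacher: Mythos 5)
Your proof is correct, and it takes a genuinely different route from the paper's. You refill the excised fiber neighborhoods along the meridians $-\tilde{h}_i$, identify the resulting closed manifold as $S^3$ (solid torus filled along a longitude), and then show the cores $F_i$ form an unlink by exhibiting disjoint embedded disks $\delta_i \cup (\gamma_i\times S^1)\cup B_i$; the decomposition $Y=\#_{i=1}^n\bigl(S^3_i\setminus S^1_f\times D^2_i\bigr)$ then follows from the standard splitting of an unlink exterior, and you verify the meridian and rational-longitude claims by a direct homology computation in each solid-torus summand. The paper instead never refills: it isotopes the planar section into a ``flower'' shape using disjoint arcs $\gamma_i$, cuts the $\infty$-filling solid torus $D^2_f\times\partial D^2_0$ along the meridian disks $D^2_f\times p_i^{\pm}$, and redistributes the pieces so as to exhibit the connect-sum spheres $S^2_i=(D^2_f\times p_i^-)\cup(S^1_f\times\gamma_i)\cup(D^2_f\times p_i^+)$ explicitly, leaving the boundary-framing statement implicit. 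The common geometric input is the same family of disjoint arcs in the planar base (your annuli $\gamma_i\times S^1$ versus the paper's petal boundaries), but your argument buys a more conceptual identification via the unlink and an explicit check of the meridian/rational-longitude data, while the paper's buys the connected-sum spheres themselves, which is convenient for readers who want the decomposition realized concretely inside $Y$.
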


\begin{proof}
Choose a global section 
$S^2 \setminus \coprod_{i=0}^n D^2_i \into \hat{Y}$
which respects the $\textsc{sf}$ basis.
We shall stretch the disk $S^2 \setminus D^2_0$ into a (daisy) flower shape,
with one $D^2_i$ contained in each petal.
Embed $2n$ points $p_1^-, p_1^+, \ldots, p_n^-, p_n^+ \,\into\, \partial D^2_0$,
in that order with respect to the orientation of $-\partial D^2_0$.
For each $i \in \{1, \ldots, n\}$, let $\delta_i$ and $\varepsilon_i$ denote 
the respective arcs from
$p_i^-$ to $p_i^+$ and from 
$p_i^+$ to $p_{i+1(\mod n)}^-$ along $-\partial D_0^2$, and
properly embed an arc 
$\gamma_i \into S^2 \setminus \coprod_{i=0}^n D^2_i$
from $p_i^+$ to $p_i^-$ which winds once positively around $D^2_i$
and winds zero times around the other $D^2_j$, without intersecting
any of the other $\gamma_j$ arcs.  Holding the $p_i^{\pm}$ points
fixed while stretching the $\delta_i$ arcs outward and pulling the $\gamma_i$
arcs tight realizes our global section 
as the punctured flower shape 
\begin{equation}
S^2 \setminus  
\textstyle{\coprod\limits_{i=0}^n D_i^2} = \textstyle{\tilde{D}_0^2 \cup \coprod\limits_{i=1}^n (\tilde{D}^2_i \setminus D_i^2)},
\end{equation}
where $\tilde{D}^2_0$
denotes the central disk of the flower,
bounded by 
$\partial \tilde{D}^2_0 = (\coprod_{i=1}^n -\gamma_i) \cup (\coprod_{i=1}^n \varepsilon_i)$,
and each $\tilde{D}^2_i$ denotes the petal-shaped disk bounded by
$\partial \tilde{D}^2_i = \delta_i \cup_{p_i^{\pm}} \gamma_i$.

The Dehn filling $Y$ is formed by multiplying the above global section with the fiber
$S^1_f$ and then gluing a solid
torus $D^2_f \times \partial D_0^2$
(with $\partial D^2_f = S^1_f$)
along $S^1_f \times \partial D_0^2$.
Since 
\begin{equation}
-\partial D_0^2 
=
\textstyle{(\coprod\limits_{i=1}^n \delta_i ) \cup (\coprod\limits_{i=1}^n \varepsilon_i)},
\;\;
\partial \tilde{D}^2_0 = \textstyle{
(\coprod\limits_{i=1}^n -\gamma_i) \cup (\coprod\limits_{i=1}^n \varepsilon_i)},
\;\;\text{and}\;\;
\partial \tilde{D}^2_i = \delta_i \cup_{p_i^{\pm}} \gamma_i,
\end{equation}
we can decompose the solid torus $D^2_f \times \partial D_0^2$ 
along the disks $D^2_f \times p_i^{\pm}$,
and distribute these solid-torus components among the boundaries of $\tilde{D}_0^2$ and
the $\tilde{D}^2_i$, so that
\begin{equation}
-\partial D^2_f \times \partial D^2_0
\;\,=\;\, 
(D^2_f \times \partial \tilde{D}^2_0) \setminus 
\textstyle{\coprod_{i=1}^n} (D^2_f \times -\overset{\circ}{\gamma}_i)
\;\,
\cup
\;\,
\textstyle{\coprod_{i=1}^n}
\left((D^2_f \times \partial \tilde{D}_i) \setminus (D^2_f \times \overset{\circ}{\gamma}_i)
\right),
\end{equation}
where the union is along the boundary 2-spheres 
\begin{equation}
S^2_i \;:=\; (D^2_f \times p_i^-) \,\cup\, (S^1_{\!f} \times \gamma_i) \,\cup\, (D^2_f \times p_i^+) 
\;=\; \partial (D^2_f \times \overset{\circ}{\gamma}_i)
\end{equation}
of the balls $D^2_f \times \pm\overset{\circ}{\gamma}_i$.
Thus, if we set 
$S^3_i \,:=\, (D^2_f \mkern-0mu\times\mkern-.5mu \partial \mkern-1.5mu\tilde{D}^2_i)
\mkern2.5mu\cup\mkern2.5mu 
(S^1_f \mkern-0mu\times\mkern-.5mu \tilde{D}^2_i)$ for $i \in \{0,\ldots, n\}$, then
\begin{equation}
Y \,=\, S^3_0 \;\#\, \textstyle{\coprod\limits_{i=1}^n} (S^3_i \setminus S^1_f \times D_i^2),
\end{equation}
with the connected sum taken along the spheres $S^2_i$.
\end{proof}

\begin{cor}
If $\hat{Y}$ is as above,
and if $(y_1, \ldots, y_n) \in \{\infty\}^k \times \Q^{n-k}$
for some $k \in \{0, \ldots, n\}$, then
$\hat{Y}_{\textsc{sf}}(\infty, y_1, \ldots, y_n) \;=\; 
(\#_{i=1}^k S^1 \times S^2)
\;\; \#\;\; 
(\#_{i=k+1}^n M_{S^2}(y_i))$.
\end{cor}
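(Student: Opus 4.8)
The plan is to deduce the corollary from the preceding proposition by distributing the remaining Dehn fillings over the connected-sum decomposition it supplies. First I would invoke that proposition to write $Y := \hat Y_{\textsc{sf}}(\infty) = \#_{i=1}^n (S^3_i \setminus S^1_f \times D^2_i)$, recording that each summand $X_i := S^3_i \setminus S^1_f \times D^2_i$ is a solid torus — the complementary genus-one Heegaard solid torus appearing in the construction of $S^3_i$ in that proof — carrying, at its boundary torus $\partial_i \hat Y$, the restriction of the Seifert fibered basis of $\hat Y$, namely $(\tilde f_i, -\tilde h_i)$, with meridian $-\tilde h_i$ and rational longitude $\tilde f_i$.

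Next I would observe that, once $Y$ has been formed, the remaining $n$ Dehn fillings of $\hat Y$ are carried out along the pairwise disjoint boundary tori $\partial_1 \hat Y, \dots, \partial_n \hat Y$, exactly one of which lies in each connected summand $X_i$. Since connected sum and Dehn filling commute when the fillings are supported in disjoint summands, this yields
\[ \hat Y_{\textsc{sf}}(\infty, y_1, \dots, y_n) \;=\; \#_{i=1}^n (X_i)_{\textsc{sf}}(y_i). \]
It then remains to identify each factor. For $i \le k$ we have $y_i = \infty$, i.e.\ the fiber slope $\tilde f_i$; by the proposition this is the rational longitude of the solid torus $X_i$, and Dehn filling a solid torus along its rational longitude produces the unique filling with $b_1 > 0$, namely $S^1 \times S^2$. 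For $i > k$ we have $y_i \in \Q$; here $X_i$ is a trivially Seifert fibered solid torus, and the Dehn-filling convention for trivially Seifert fibered manifolds recorded after (\ref{eq: presentation of H_1 of sf}) in Section~\ref{ss: seifert fibered and special subsets}, applied in the basis $(\tilde f_i, -\tilde h_i)$, identifies $(X_i)_{\textsc{sf}}(y_i)$ with the genus-zero Seifert fibered space $M_{S^2}(y_i)$. Reassembling — absorbing any $S^3$ factor that arises when some $M_{S^2}(y_i)$ is a sphere, and reordering the connect-sum factors — gives $(\#_{i=1}^k S^1 \times S^2) \# (\#_{i=k+1}^n M_{S^2}(y_i))$, as claimed.

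The routine ingredients are the interchange of connected sum with Dehn filling on disjoint summands and the identification of $S^1 \times S^2$ as the rational-longitude filling of a solid torus. The one point that genuinely needs care is that the basis $(\tilde f_i, -\tilde h_i)$ inherited by $X_i$ from $\hat Y$ really is the canonical Seifert fibered basis of $X_i$, so that both the $M_{S^2}(y_i)$ convention and the $y_i = \infty$ computation apply with the correct orientations and with no inadvertent inversion of the slope — and this is exactly what the ``meridian $-\tilde h_i$, rational longitude $\tilde f_i$'' clause of the proposition provides. I expect the only mild obstacle to be stating the connected-sum--Dehn-filling interchange cleanly enough to apply it to all $n$ boundary tori at once.
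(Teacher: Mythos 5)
Your reduction to filling the summands one at a time, and your treatment of the $y_i=\infty$ case, are fine; the gap is in the rational case, at exactly the point you flagged as needing care. The exterior $X_i = S^3_i\setminus S^1_f\times D^2_i$ is indeed a solid torus, but it is \emph{not} a trivially Seifert fibered solid torus in the sense required by the convention $Y_{\textsc{sf}}(\beta/\alpha)=M_{S^2}(\beta/\alpha)$: that convention applies to the trivial fibration over the disk, whose compressing slope is the section slope $-\tilde h$ (\textsc{sf}-slope $0$), whereas the proposition tells you that the compressing (rational-longitude) slope of $X_i$ is the fiber $\tilde f_i$ (\textsc{sf}-slope $\infty$). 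The fibration of $\hat Y$ does not extend over $X_i$ (the fiber bounds a meridian disk of the complementary Heegaard solid torus $D^2_f\times\partial\tilde D^2_i$), so the clause ``meridian $-\tilde h_i$, rational longitude $\tilde f_i$'' is precisely the evidence that $X_i$ carries the \emph{opposite} framing from the one your appeal to the $M_{S^2}$ convention needs, not a confirmation of it.

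Concretely, filling $X_i$ along $y_i=\beta_i/\alpha_i$ glues a solid torus of meridian $\beta_i\tilde f_i-\alpha_i\tilde h_i$ to a solid torus of meridian $\tilde f_i$, producing a lens space with $|H_1|=|\alpha_i|$, i.e.\ $L(\alpha_i,\beta_i)$ (equivalently $M_{S^2}(1/y_i)$ up to orientation), whereas $M_{S^2}(\beta_i/\alpha_i)$ has $|H_1|=|\beta_i|$ by the presentation in (\ref{eq: presentation of H_1 of sf}). The case $n=1$, $y_1=0$ makes the failure visible: $\hat Y_{\textsc{sf}}(\infty,0)$ is $S^3$ (a meridian-to-longitude gluing of two solid tori), while $M_{S^2}(0)=S^1\times S^2$. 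Carried out correctly, your decomposition argument yields $(\#_{i=1}^k S^1\times S^2)\,\#\,(\#_{i=k+1}^n L(\alpha_i,\beta_i))$; note that this is also what Definition~\ref{def: false reducible} presupposes (an \textsc{sf}-slope-$0$ filling contributes an $S^3$ summand, not an $S^1\times S^2$ summand), so the displayed $M_{S^2}(y_i)$ in the corollary cannot be read literally against the paper's own $M_{S^2}$ convention, and your argument, which takes it literally and derives it by misidentifying the framing of $X_i$, does not go through as written.
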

This motivates the following terminology.

\begin{definition}
\label{def: R and Z}
Suppose that $\partial Y = \coprod_{i=1}^n \mathbb{T}^2_i$, and that
$Y$ has a Seifert fibered JSJ component containing $\partial Y$.
Then the {\em{reducible slopes}} $\mathcal{R}(Y)$ and
{\em{exceptional slopes}} $\mathcal{Z}(Y) \subset \mathcal{R}(Y)$
are given by
$\mathcal{R}_{\textsc{sf}}(Y) := \mathfrak{S}_n \cdot (\{\infty\} \times (\Q \cup \infty)^{n-1})$
and $\mathcal{Z}_{\textsc{sf}}(Y) := \mathfrak{S}_n \cdot 
(\{\infty\}^2 \times (\Q \cup \infty)^{n-2})$, where $\mathfrak{S}_n$ acts by permutation of slopes.
\end{definition}
Note that occasionally, slopes in $\mathcal{R}(Y)$ yield Dehn fillings
which are connected sums of a lens space with 3-spheres, hence
are not reducible.
\begin{definition}
\label{def: false reducible}
Suppose $Y$ is as above.  If the Seifert fibered component containing
$\partial Y$ has no exceptional fibers, then
the {\em{false reducible slopes}}
$\mathcal{R}^0(Y) \subset \mathcal{R}(Y)$ of $Y$ are given by
$\mathcal{R}^0_{\textsc{sf}}(Y) = 
\mathfrak{S}_n \cdot (\{\infty\} \times \Q \times \{0\}^{n-2})$.
Any reducible slopes which are not false reducible are called {\em{truly reducible}}.
Equivalently, the truly reducible slopes are those slopes which 
yield reducible Dehn fillings.
\end{definition}

\subsection{Rational longitudes $\boldsymbol{\mathcal{B}}$}
\label{ss: rational longitudes B}
Our last distinguished slope set of interest, the set $\mathcal{B}$ of rational longitude slopes, makes sense for $Y$ of any geometric type.

\begin{definition}
Suppose $Y$ is a compact oriented 3-manifold with $\partial Y$ a union of $n >0$  toroidal boundary components and with at least one rational-homology-sphere Dehn filling. The {\em{set of rational longitudes}} 
$\mathcal{B} \subset 
\coprod_{i=1}^n \P(H_1(\partial_i Y; \Z))$ 
{\em{of}} $Y$ is the set of slopes
\begin{equation}
\mathcal{B}
\,:=\,
\left\{\beta \mkern1.5mu\in\mkern1.5mu 
{\textstyle{\coprod_{i=1}^n\mkern-1mu \P(H_1(\partial_i Y; \Z))}} \mkern1mu|\;
b_1(Y(\beta))>0\right\}.
\end{equation} 
\end{definition}
Note that this implies
$\mathcal{Z}= \mathcal{R} \cap \mathcal{B}$.
Moreover, when $Y$ is Seifert fibered, $\mathcal{B}_{\textsc{sf}}(Y)$
is the closure of a linear subspace of $\textsc{sf}$-slopes.
That is, by Section 5 of \cite{lslope} (for example), we have
\begin{equation}
\label{eq: sf B in general}
b_1\mkern-3mu\left( M_{S^2\mkern-6mu}
		\left(\vphantom{A^A} \right.
		\mkern-4mu{\textstyle{\frac{\beta_1}{\alpha_1},
				\ldots,\frac{\beta_n}{\alpha_n}}}\mkern-4mu
		\left.\vphantom{A^A}\right) \right) > 0
\;\,\iff\,\; {\textstyle{\sum_{i=1}^n\frac{\beta_i}{\alpha_i}}} = 0.
\end{equation}
In particular, since the $T(np,nq)$-exterior 
$Y^n_{(p,q)} :=
M_{S^2}(\mkern1mu{\textstyle{
\mkern-2mu-\mkern-1mu\frac{q^*\mkern-6mu}{p},
\mkern-1mu\frac{p^*\mkern-7mu}{q}}}, \overset{1}{*}, \ldots, \overset{n}{*}\mkern1mu)$,
constructed in
(\ref{eq: T(np,nq) as reg fibers in Seifert fibered space}),
has $\frac{\beta_{-1}}{\alpha_{-1}} + \frac{\beta_0}{\alpha_0} = \frac{1}{pq}$, 
the next proposition follows immediately from line 
(\ref{eq: sf B in general}).
\begin{prop}
\label{prop: basics for B}
If $\mkern1.5muY \mkern-1.5mu=\mkern-.5mu S^3 \mkern-1.5mu\setminus\mkern.5mu \overset{\circ}{\nu}(T(np,nq))$ is the
exterior of the $(np,nq)$ torus link, then
$$
\mathcal{B}_{\textsc{sf}}(Y)
=
\left\{ \vphantom{\textstyle{\frac{1}{pq}}}
\boldsymbol{y} \mkern-1.5mu\in\mkern-1.5mu (\Q\cup\mkern-2mu\{\infty\})^n_{\textsc{sf}}\mkern1mu\right|
\left.\mkern-1.5mu
{\textstyle{\frac{1}{pq} + \sum_{i=1}^n y_i}}\mkern-1.5mu=\mkern-1.5mu0\right\},
$$
is the closure
in $(\Q \cup \mkern-1.5mu\{\infty\}\mkern-1mu)^n_{\textsc{sf}}\mkern-1mu$ of the hyperplane
$\left\{ \vphantom{\textstyle{\frac{1}{pq}}}\mkern-1mu
\boldsymbol{y} \mkern-1mu\in\mkern-1mu \Q^n \mkern-.5mu\right|\left. \mkern-2mu
{\textstyle{\frac{1}{pq} 
		\mkern-3mu+\mkern-2.5mu 
		\sum_{i=1}^n\mkern-1.7mu y_i}} = 0\right\}$ in
	$\Q^n \mkern-4.2mu\subset\mkern-3.2mu (\Q\cup\mkern-1mu\{\infty\}\mkern-.8mu)^n_{\textsc{sf}}$.
Moreover, if 
$\overline{\mathcal{B}}_{\textsc{sf}}$ denotes the real closure of
$\mathcal{B}_{\textsc{sf}}$ in
$\mkern2mu\coprod_{i=1}^n\P(H_1(\partial_i Y; \R))
\stackrel{\sim}{\to}
(\R\cup\{\infty\})^n_{\textsc{sf}} $, 
then
$$
\overline{\mathcal{B}}_{\textsc{sf}} \cong \mathbb{T}^{n-1}
\cong (\R \cup \{\infty\})^{n-1} \hookrightarrow (\R\cup\{\infty\})^n_{\textsc{sf}}
\cong \mathbb{T}^n.
$$
\end{prop}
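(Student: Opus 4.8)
The plan is to read off the finite part of $\mathcal{B}_{\textsc{sf}}(Y)$ from line~(\ref{eq: sf B in general}), pin down the slopes at infinity using the reducible-filling analysis of Section~\ref{ss: R and Z}, and then identify the real closure by inspecting how the resulting affine hyperplane compactifies inside $\coprod_i\P(H_1(\partial_iY;\R))$.

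First, by the construction in~(\ref{eq: T(np,nq) as reg fibers in Seifert fibered space}) we have $Y = Y^n_{(p,q)} = M_{S^2}(-\tfrac{q^*}{p},\tfrac{p^*}{q},\overset{1}{*},\ldots,\overset{n}{*})$ with $-\tfrac{q^*}{p}+\tfrac{p^*}{q}=\tfrac{1}{pq}$, so for $\boldsymbol{y}\in\Q^n$ the Dehn filling $Y_{\textsc{sf}}(\boldsymbol{y})$ is the closed Seifert fibered space $M_{S^2}(-\tfrac{q^*}{p},\tfrac{p^*}{q},y_1,\ldots,y_n)$. Applying line~(\ref{eq: sf B in general}) gives $b_1(Y_{\textsc{sf}}(\boldsymbol{y}))>0$ precisely when $-\tfrac{q^*}{p}+\tfrac{p^*}{q}+\sum_i y_i=0$, i.e. when $\tfrac{1}{pq}+\sum_i y_i=0$; hence $\mathcal{B}_{\textsc{sf}}(Y)\cap\Q^n$ is exactly the affine hyperplane $H:=\{\boldsymbol{y}\in\Q^n\mid \tfrac{1}{pq}+\sum_i y_i=0\}$.

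Next I would handle slopes with at least one $\infty$-coordinate via the connected-sum principle recalled in Section~\ref{ss: R and Z}: filling one boundary torus of $Y$ along its fiber slope $\infty$ splits off a lens-space (possibly $S^3$) summand for each exceptional fiber and a solid-torus summand for each remaining boundary torus, after which a rational filling of a solid-torus summand contributes a lens space (or $S^3$) and a further $\infty$-filling contributes an $S^1\times S^2$. Thus a slope with exactly one infinite coordinate produces a connected sum of lens spaces, with $b_1=0$, whereas a slope with $k\ge 2$ infinite coordinates produces a connected sum containing $k-1\ge 1$ copies of $S^1\times S^2$, with $b_1>0$. Therefore $\mathcal{B}_{\textsc{sf}}(Y)=H\cup\{\boldsymbol{y}\mid\text{at least two coordinates }y_i\text{ equal }\infty\}$, and since the closure of $H$ in $(\Q\cup\{\infty\})^n_{\textsc{sf}}$ adds exactly these limit points — any escaping sequence in $H$ sends a second coordinate to $\infty$, and conversely every such configuration is a limit of points of $H$ — this yields both displayed descriptions of $\mathcal{B}_{\textsc{sf}}(Y)$ (with the convention for the equation at infinite slopes that is already used in~(\ref{eq: stuff with B})).

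For the real closure, $\overline{\mathcal{B}}_{\textsc{sf}}$ is the closure in $\coprod_i\P(H_1(\partial_iY;\R))\cong(\R\cup\{\infty\})^n_{\textsc{sf}}$ of the real affine hyperplane $H_{\R}=\{\boldsymbol{y}\in\R^n\mid\tfrac{1}{pq}+\sum_i y_i=0\}\cong\R^{n-1}$. Following Section~5 of~\cite{lslope}, I would identify this closure with an embedded $(n-1)$-torus by building an explicit homeomorphism $(\R\cup\{\infty\})^{n-1}\to\overline{\mathcal{B}}_{\textsc{sf}}$: in homogeneous slope coordinates $\mathcal{B}_{\textsc{sf}}(Y)$ is the zero set of the single multidegree-$(1,\ldots,1)$ equation $\tfrac{1}{pq}\prod_i b_i+\sum_i a_i\prod_{j\ne i}b_j=0$, and one tracks how $H_{\R}$ acquires its points at infinity in order to write down such a parametrization. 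I expect this last step — proving $\overline{\mathcal{B}}_{\textsc{sf}}\cong\mathbb{T}^{n-1}$ rigorously, and in particular understanding the compactification along the locus where several slopes simultaneously become $\infty$ — to be the main obstacle; everything preceding it reduces, via line~(\ref{eq: sf B in general}) and the reducibility results of Section~\ref{ss: R and Z}, to routine bookkeeping.
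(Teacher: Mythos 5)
Your treatment of the first two assertions is correct and is essentially the paper's own argument: the finite slopes are handled exactly as the paper intends, by feeding the Seifert data $-\frac{q^*}{p}+\frac{p^*}{q}=\frac{1}{pq}$ into line (\ref{eq: sf B in general}), and your use of the connected-sum analysis of Section~\ref{ss: R and Z} to see that a slope with exactly one infinite coordinate yields a connected sum of lens spaces (so $b_1=0$) while a slope with two or more infinite coordinates yields $S^1\times S^2$ summands (so $b_1>0$) makes explicit what the paper compresses into ``follows immediately,'' and is consistent with the paper's remark that $\mathcal{Z}=\mathcal{R}\cap\mathcal{B}$. Your identification of the closure of the affine hyperplane inside $(\Q\cup\{\infty\})^n_{\textsc{sf}}$ with exactly these extra points is also right.

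The genuine gap is the final assertion, which you yourself defer: you never prove $\overline{\mathcal{B}}_{\textsc{sf}}\cong\mathbb{T}^{n-1}$, and the route you sketch---parametrizing by $(\R\cup\{\infty\})^{n-1}$ and solving for the remaining slope---cannot be completed as stated once $n\ge3$. The ``solve for $y_n$'' map admits no continuous extension to points where two of the parametrizing slopes are already $\infty$ (along such a sequence the value of $y_n$ depends on how the two coordinates escape to infinity), so $\overline{\mathcal{B}}_{\textsc{sf}}$ is not a graph over $(\R\cup\{\infty\})^{n-1}$. Worse, the closure is not locally Euclidean along the deeper strata: in the chart $u_i=1/y_i$ near a point where three slopes are simultaneously $\infty$, the closure of the hyperplane is cut out by $u_1u_2+u_2u_3+u_3u_1+(\text{higher order terms})=0$, a quadric cone of signature $(1,2)$ (times a Euclidean factor), whose link along that stratum is a pair of circles rather than one. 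So no homeomorphism from $(\R\cup\{\infty\})^{n-1}$ can exist there, and any honest proof of the last sentence must either restrict to $n\le 2$ (the case pictured in Figure~\ref{fig:T(4,46) pretzel}), or reinterpret the claim (e.g.\ as a statement about the strata with at most two infinite coordinates, or about a torus mapping onto/retracting within the region) rather than as a homeomorphism of $\overline{\mathcal{B}}_{\textsc{sf}}$ with $\mathbb{T}^{n-1}$. Be aware that the paper itself offers no argument for this sentence beyond citing line (\ref{eq: sf B in general}), so your plan is not missing a step the paper supplies---but as a proof proposal it is incomplete precisely at the point you flagged, and the specific strategy you propose for closing it would fail.
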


\section{L-space intervals and gluing}
\label{s: L-space intervals and gluing}

\subsection{L-space interval notation}

For the following discussion,
$Y$ denotes a compact oriented 3-manifold
with torus boundary $\partial Y$,
and $\textsc{B}$ is a basis
for $H_1(\partial Y; \Z)$, inducing an
identification 
$\pi_B : \P(H_1(\partial Y; \Z)) \to (\Q \cup \{\infty\})_{\textsc{b}}
=: \P(H_1(\partial Y; \Z))_{\textsc{b}}$.

\begin{definition}
\label{def: def of [[,]] notation}
We introduce the notation $[[\cdot, \mkern-3.2mu\cdot\mkern-.3mu]]\mkern-.2mu,$ so that
for $\mkern1muy_-, y_+ \mkern-2.2mu\in\mkern-.3mu \P(H_1(\partial Y; \Z))_{\textsc{b}}$,
the subset $[[y_-, y_+]] \subset \P(H_1(\partial Y; \Z))_{\textsc{b}}$
is defined as follows.
$$
[[y_-, y_+]] := 
\begin{cases}
\P(H_1(\partial Y; \Z))_{\textsc{b}} \setminus \{y_-\} =
\P(H_1(\partial Y; \Z))_{\textsc{b}} \setminus \{y_-\}
  &\;\; y_- = y_+
    \\
\P(H_1(\partial Y; \Z))_{\textsc{b}} \cap
I_{(y_-, y_+)}
 &\;\; y_- \neq y_+,
\end{cases}
$$
where $I_{(y_-, y_+)} \subset  \P(H_1(\partial Y; \R))_{\textsc{b}}$
indicates the closed interval with
left-hand endpoint $y_-$ and right-hand endpoint $y_+$.
\end{definition}

By Proposition 1.3 and Theorem 1.6 of
J.~Rasmussen and the author's \cite{lslope},
the L-space interval $\mathcal{L}(Y) \subset \P(H_1(\partial Y ; \Z))$
of L-space Dehn filling slopes of
$Y$ can only take certain forms.

\begin{prop}[J. Rasmussen, S. Rasmussen \cite{lslope}]
\label{prop: structure of L-space intervals}

One of the following is true:
\begin{itemize}
\item[(i)] $\mathcal{L}(Y) = \emptyset$,

\item[(ii)] $\mathcal{L}(Y) = \{\eta\}$,
for some $\eta \in \P(H_1(\partial Y ; \Z))$,

\item[(iii)] $\mathcal{L}_{\textsc{b}}(Y) = [[l, l]]$, with
$l \in \P(H_1(\partial Y; \Z))_{\textsc{b}}$ the rational longitude of $Y$, or

\item[(iv)] $\mathcal{L}_{\textsc{b}}(Y) = [[y_-, y_+]]$ with $y_ \neq y_+$.
\end{itemize}
\end{prop}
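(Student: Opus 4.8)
The plan is to reduce this to the structure theory of \emph{Floer simple} manifolds developed in \cite{lslope}, supplemented by two elementary ingredients: the exceptional role of the rational longitude, and the surgery exact triangle. First I would dispose of cases (i) and (ii) via the dichotomy of Proposition~1.3 of \cite{lslope}: either $Y$ admits at most one L-space Dehn filling — so $\mathcal{L}(Y)$ is empty or a single slope — or $Y$ admits at least two, in which case $Y$ is Floer simple, and $\mathrm{rk}\,\widehat{HF}(Y(\gamma))$ attains its minimal possible value $|H_1(Y(\gamma);\Z)|$ for all $\gamma$ in a region controlled by the (normalized) Turaev torsion of $Y$, realized as a piecewise-linear function on $H_1(\partial Y;\R)$.

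Next I would record that the rational longitude $l$ is never an L-space slope: by definition of $\mathcal{B}$, the filling $Y(l)$ has $b_1 > 0$, so it is not a rational homology sphere and hence not an L-space. Thus $l \notin \mathcal{L}(Y)$ in every case, so $\mathcal{L}_{\textsc b}(Y) \subseteq \P(H_1(\partial Y;\Z))_{\textsc b} \setminus \{l\} = [[l,l]]$, and the only remaining question is the shape of $\mathcal{L}(Y)$ inside this punctured circle of slopes.

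For the Floer simple case, the key step is interval-convexity along the Farey tessellation of the circle of slopes. If $\gamma_1, \gamma_2 \in \mathcal{L}(Y)$ are Farey neighbors, $\Delta(\gamma_1,\gamma_2) = 1$, then their mediant $\gamma_{12}$ forms a distance-one triad with them, so the surgery exact triangle gives
\[
\mathrm{rk}\,\widehat{HF}(Y(\gamma_{12})) \ \le\ \mathrm{rk}\,\widehat{HF}(Y(\gamma_1)) + \mathrm{rk}\,\widehat{HF}(Y(\gamma_2)) \ =\ |H_1(Y(\gamma_1);\Z)| + |H_1(Y(\gamma_2);\Z)|,
\]
while the standard additivity of $|H_1|$ for the mediant of a distance-one pair (together with $\gamma_{12} \neq l$, hence $b_1(Y(\gamma_{12})) = 0$) identifies the right-hand side with $|H_1(Y(\gamma_{12});\Z)|$; comparing with the universal lower bound $\mathrm{rk}\,\widehat{HF}(Y(\gamma_{12})) \ge |H_1(Y(\gamma_{12});\Z)|$ forces equality, so $\gamma_{12} \in \mathcal{L}(Y)$. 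Iterating through the Farey subdivision between any two Farey-neighbor L-space slopes shows that every rational slope strictly between them lies in $\mathcal{L}(Y)$; combined with the fact that on any compact arc avoiding $l$ the Turaev torsion is piecewise linear with finitely many linear pieces, this upgrades to: $\mathcal{L}(Y)$ is the set of rational slopes in a \emph{closed} sub-arc of $\P(H_1(\partial Y;\R))_{\textsc b}$. If that arc is all of $\P(H_1(\partial Y;\R))_{\textsc b} \setminus \{l\}$ we land in case (iii), $\mathcal{L}_{\textsc b}(Y) = [[l,l]]$; otherwise it has two genuine endpoints $y_- \neq y_+$, neither equal to $l$, and we are in case (iv).

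I expect the genuinely hard step to be this last upgrade — from Farey interval-convexity to the exact statement $\mathcal{L}(Y) = [[y_-, y_+]]$. The surgery exact triangle alone yields convexity but neither closedness nor the existence and location of the endpoint slopes; for that one needs the full description from \cite{lslope} of $\widehat{HF}$ of the fillings of a Floer simple manifold in terms of the Turaev torsion viewed as a gluing function, which is precisely what pins the L-space region down to a closed interval with well-defined endpoints. Everything else — the dichotomy, the rational-longitude observation, and the mediant argument — is comparatively routine.
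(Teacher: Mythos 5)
There is no proof in this paper to compare against: the proposition is imported verbatim from \cite{lslope} (it is exactly Proposition~1.3 together with Theorem~1.6 there), and the present paper only cites it. Your proposal, read against that backdrop, is essentially a framing of the citation rather than an independent argument: the step you yourself identify as the hard one --- that $\mathcal{L}(Y)$ for a Floer simple $Y$ is the rational part of a \emph{closed} interval with rational endpoints (or all of $\P(H_1(\partial Y;\Z))\setminus\{l\}$) --- is precisely the content of Theorem~1.6 of \cite{lslope}, so invoking ``the full description from \cite{lslope}'' at that point makes the argument circular as a standalone proof of the quoted statement. The parts you do argue directly (the at-most-one-filling dichotomy, $l\notin\mathcal{L}(Y)$ because $b_1(Y(l))>0$) are correct and routine.

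Two concrete problems remain in the portion you attempt independently. First, the $|H_1|$ additivity is misjustified: writing $|H_1(Y(\gamma))| = c_Y\,\Delta(\gamma,l)$, one has $\Delta(\gamma_1+\gamma_2,\,l)=\Delta(\gamma_1,l)+\Delta(\gamma_2,l)$ if and only if $\gamma_1\cdot l$ and $\gamma_2\cdot l$ have the same sign, i.e.\ if and only if $l$ does not lie in the open arc between $\gamma_1$ and $\gamma_2$ containing the mediant; the condition ``$\gamma_{12}\neq l$'' is not sufficient (e.g.\ $\gamma_1=0$, $\gamma_2=\infty$, $l=2$, mediant $1$). This is repairable --- the failure occurs exactly on the arc containing $l$, which is the arc you do not want to propagate into --- but the proof must say so, since otherwise the mediant step is simply false as stated. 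Second, the jump from ``the mediant of two distance-one L-space slopes is an L-space slope'' to ``every rational slope strictly between any two L-space slopes on the side away from $l$ lies in $\mathcal{L}(Y)$'' needs an actual induction (through the Stern--Brocot subdivision of the arc, or on intersection number; note two L-space slopes need not have a common Farey neighbor), and even granting it, the exact triangle can only ever yield open convexity, never the closedness of the interval or the rationality and attainment of its endpoints, which is part of case $(iv)$. So as written the proposal establishes cases $(i)$--$(iii)$ and an approximation to $(iv)$, with the decisive part of $(iv)$ deferred to the very result being quoted.
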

It is for this reason 
that we refer to the space of L-space Dehn filling
slopes as an {\em{interval}}.
It also makes sense to speak of the
the {\em{interior}} of this interval.

\begin{definition}
\label{def: of interior of interval}
The {\em{L-space interval interior}} $\mathcal{L}^{\circ}(Y) \subset \mathcal{L}(Y)$
of $Y$ satisfies
$$
\mathcal{L}^{\circ}_{\textsc{b}}(Y)
:=
\begin{cases}
\emptyset
& \mathcal{L}(Y) = \emptyset
\text{ or }
\mathcal{L}(Y) = \{\eta\}
  \\
[[l,l]]
&
\mathcal{L}_{\textsc{b}}(Y) = [[l,l]]
  \\
\P(H_1(\partial Y; \Z))_{\textsc{b}} \cap {\overset{\circ}{I}}_{(y_-,y_+)}
& \mathcal{L}_{\textsc{b}}(Y) = [[y_-,y_+]] \text{ with } y_- \neq y_+,
\end{cases}
$$
where ${\overset{\circ}{I}}_{(y_-,y_+)} \subset \P(H_1(\partial Y; \R))_{\textsc{b}}$
indicates the {\em{open}} interval with
left-hand endpoint $y_-$ and right-hand endpoint $y_+$.
\end{definition}

This gives us a new way to characterize the property of
{\em{Floer simplicity}} for $Y$.
\begin{prop}[J. Rasmussen, S. Rasmussen \cite{lslope}]
The following are equivalent:
\begin{itemize}
\item
$Y$ has more than one L-space Dehn filling,
\item
$\mathcal{L}_{\textsc{b}}(Y) = [[y_-, y_+]]$ for some 
$y_-, y_+ \in \P(H_1(\partial Y; \Z))_{\textsc{b}}$,
\item
$\mathcal{L}^{\circ}(Y) \neq \emptyset$.
\end{itemize}
In the case that any, and hence all, of these three properties hold, 
we say that $Y\mkern-4mu$ is {\em{Floer~simple}}.
\end{prop}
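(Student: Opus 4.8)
The plan is to reduce the whole statement to Proposition~\ref{prop: structure of L-space intervals}, which guarantees that $\mathcal{L}(Y)$ realizes exactly one of the four cases (i)--(iv) listed there. I will show that each of the three bulleted conditions is equivalent to the assertion ``$\mathcal{L}(Y)$ is in case (iii) or (iv),'' which makes the three conditions mutually equivalent. Throughout I use two elementary facts about $\P(H_1(\partial Y;\Z))_{\textsc{b}}\cong\Q\cup\{\infty\}$: it is infinite, and for any distinct $y_-\neq y_+$ in $\P(H_1(\partial Y;\R))_{\textsc{b}}$ the open arc $\overset{\circ}{I}_{(y_-,y_+)}$ meets $\P(H_1(\partial Y;\Z))_{\textsc{b}}$ in infinitely many slopes (density of $\Q$).

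First I would dispose of the two ``small'' cases. If $\mathcal{L}(Y)=\emptyset$ (case (i)) or $\mathcal{L}(Y)=\{\eta\}$ (case (ii)), then $\mathcal{L}(Y)$ has at most one slope, so the first bullet fails; by Definition~\ref{def: of interior of interval} one has $\mathcal{L}^{\circ}(Y)=\emptyset$ in both cases, so the third bullet fails; and neither $\emptyset$ nor a singleton is of the form $[[y_-,y_+]]$, since by Definition~\ref{def: def of [[,]] notation} such a set equals $\P(H_1(\partial Y;\Z))_{\textsc{b}}\setminus\{y_-\}$ (infinite) when $y_-=y_+$ and contains the two endpoints $y_-,y_+$ when $y_-\neq y_+$ --- in particular it is always nonempty and never a single point --- so the second bullet fails as well.

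Next I would treat cases (iii) and (iv). If $\mathcal{L}_{\textsc{b}}(Y)=[[l,l]]$ or $\mathcal{L}_{\textsc{b}}(Y)=[[y_-,y_+]]$ with $y_-\neq y_+$, then the second bullet holds by definition. In case (iii), $\mathcal{L}_{\textsc{b}}(Y)=\P(H_1(\partial Y;\Z))_{\textsc{b}}\setminus\{l\}$ is infinite and $\mathcal{L}^{\circ}_{\textsc{b}}(Y)=[[l,l]]\neq\emptyset$ by Definition~\ref{def: of interior of interval}; in case (iv), $\mathcal{L}_{\textsc{b}}(Y)$ contains the infinitely many slopes lying in the closed arc $I_{(y_-,y_+)}$, and $\mathcal{L}^{\circ}_{\textsc{b}}(Y)=\P(H_1(\partial Y;\Z))_{\textsc{b}}\cap\overset{\circ}{I}_{(y_-,y_+)}$ is nonempty by the density remark. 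Hence in both cases all three bullets hold.

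Since exactly one of (i)--(iv) occurs, these two paragraphs show that each bullet holds precisely when we are in case (iii) or (iv), giving the desired equivalence; the term ``Floer simple'' then names exactly this common condition. I do not expect a genuine obstacle here: the substantive input is already Proposition~\ref{prop: structure of L-space intervals}, and all that remains is careful bookkeeping --- correctly unwinding Definitions~\ref{def: def of [[,]] notation} and~\ref{def: of interior of interval} in each of the four cases, and using density of $\Q$ to see that a nonempty open arc of slopes and a cofinite set of slopes are each neither empty nor a single point.
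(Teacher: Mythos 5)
Your argument is correct, but there is nothing in this paper to compare it against: the statement is quoted from \cite{lslope} and no proof is given here, the proposition being treated as a repackaging of the structure results imported from that paper. Your route --- reducing everything to Proposition~\ref{prop: structure of L-space intervals} and checking, via Definitions~\ref{def: def of [[,]] notation} and~\ref{def: of interior of interval}, that all three bullets fail in cases (i)--(ii) and all three hold in cases (iii)--(iv) --- is the natural derivation at the level of this paper, and you handle the only points that need care: that $[[y_-,y_+]]$ is cofinite when $y_-=y_+$ and contains both (rational) endpoints when $y_-\neq y_+$, so it is never empty or a singleton, and that the open arc between distinct slopes contains rational slopes, so $\mathcal{L}^{\circ}$ is nonempty in case (iv). One cosmetic remark: you invoke ``exactly one'' of (i)--(iv), but your argument only needs ``at least one'' (which is what Proposition~\ref{prop: structure of L-space intervals} literally asserts), since the bullets fail in (i)--(ii) and hold in (iii)--(iv); mutual exclusivity is automatic anyway because the four sets have different cardinality or complement behavior. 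With that understood, the proof is complete and sound, though strictly speaking the result ultimately rests on \cite{lslope}, where the dichotomy of Proposition~\ref{prop: structure of L-space intervals} is actually proved.
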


Both Floer simple manifolds and graph manifolds
have predictably-behaved unions with respect to the property
of being an L-space.
\begin{theorem}[Hanselman, J.~Rasmussen, S.~Rasmussen, Watson \cite{lslope,HRRW, lgraph}]
\label{thm: gluing theorem for floer simple or graph manifold}
If the manifold
$Y_1 \cup_{\varphi}\mkern-2mu Y_2$,
with gluing map
$\varphi: \partial Y_1 \to -\partial Y_2$,
is a closed union of 3-manifolds,
each with~incompressible single-torus boundary,
and with $Y_i$ both Floer simple or both graph manifolds,~then
$$
Y_1 \cup_{\varphi}\mkern-2mu Y_2
\,\text{ is an L-space }
\;\;\iff\;\;
\varphi_*^{\P}(\mathcal{L}^{\circ}(Y_1))
\cup
\mathcal{L}^{\circ}(Y_2) = \P(H_1(\partial Y_2; \Z)).
$$
\end{theorem}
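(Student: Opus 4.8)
The plan is to treat the two hypotheses separately: the Floer simple case is established by J.~Rasmussen and the present author in \cite{lslope}, and the graph manifold case by the author in \cite{lgraph} (see also \cite{HRRW}), so strictly this statement is a restatement; but since the rest of the paper rests on understanding how the interiors $\mathcal{L}^{\circ}(Y_i)$ enter, I indicate the mechanism in each case.

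\textbf{Floer simple case.} The key input is the classification of the bordered invariants of Floer simple manifolds from \cite{lslope}: up to homotopy equivalence $\cfd(Y_i)$ is determined by the pair $(\mathcal{L}(Y_i), \lambda_i)$, where $\lambda_i$ is the rational longitude, and in a basis adapted to $\partial Y_i$ its generators and differentials are read off from the endpoints $y_-, y_+$ of $\mathcal{L}_{\textsc{b}}(Y_i)$. After choosing bases making $\varphi$ standard, the Lipshitz--Ozsv\'ath--Thurston pairing theorem identifies $\hfhat(Y_1 \cup_\varphi Y_2)$ with the homology of $\widehat{CFA}(Y_1) \boxtimes \cfd(Y_2)$, whose rank I would compute directly from the two interval descriptions. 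The combinatorial point is that this rank equals $|H_1(Y_1 \cup_\varphi Y_2)|$, i.e.\ the filling is an L-space, exactly when no ``excess'' generators survive the box tensor product, and tracking which survive shows this occurs precisely when, after transporting to $\P(H_1(\partial Y_2;\Z))$ by $\varphi_*^{\P}$, the open intervals $\varphi_*^{\P}(\mathcal{L}^{\circ}(Y_1))$ and $\mathcal{L}^{\circ}(Y_2)$ leave no gap. I would present this geometrically through the immersed-curve reformulation of \cite{HRW}: a Floer simple manifold has immersed-curve invariant a single embedded, non-null-homotopic curve with trivial local system; $\hfhat$ of the glued manifold is the Lagrangian intersection Floer homology of the two curves in the torus; being an L-space is minimality of intersection; and $\mathcal{L}^{\circ}(Y_i)$ is exactly the open set of line-slopes into whose complementary region the curve of $Y_i$ can be homotoped, so the covering condition is exactly the condition that the two curves can be arranged to meet minimally. (An alternative route, closer to the original argument, inducts with the surgery exact triangle while controlling ranks.) The degenerate forms (i)--(iii) of Proposition~\ref{prop: structure of L-space intervals} I would dispatch by hand: an empty interval makes both sides false, and a longitudinal interval $[[l,l]]$ forces $b_1>0$ on the union unless $\varphi$ separates the two rational longitudes, after which the two sides again agree.

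\textbf{Graph manifold case.} Here I would induct on the number of JSJ pieces of $Y_1\cup_\varphi Y_2$, using the classification formula (Theorem~\ref{thm: l-space interval for seifert jsj}) and the structure theorem (Theorem~\ref{thm: gluing structure theorem}). Cutting along one JSJ torus presents the closed graph manifold either as a union of two smaller graph manifolds with incompressible torus boundary, or as a Seifert fibered piece filled along slopes dictated by the formula; in either case the formula computes the L-space interval of each partially filled piece from the extremization functions $y^v_\pm$ and the edge transformations $\phi_{e*}^{\P}$, and the content of the structure theorem is precisely that these intervals recombine under the ``complementary interiors cover'' rule of the statement. The base case is a Seifert fibered space over a planar surface, where L-space fillings are governed by the inequalities of Jankins and Neumann and the claim reduces to arithmetic with $\textsc{sf}$-slopes.

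\textbf{Main obstacle.} The hard part is not any single rank count but making the ``interior'' condition uniform across the degenerate interval types and across the cases where one or both of the glued-in solid-torus fillings leaves $b_1>0$: one must verify that the endpoints $y_-, y_+$, where a line of that slope may still give an L-space filling of $Y_i$ on its own yet fails to glue, are correctly accounted for by $\mathcal{L}^{\circ}$ rather than $\mathcal{L}$, and that the orientation reversal in $\varphi$ is tracked consistently between the two factors. In the graph manifold case the genuine difficulty is already isolated in the classification formula of \cite{lgraph}; the work here is purely the reduction to it, which is what keeps the argument tractable.
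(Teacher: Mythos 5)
Your proposal is essentially what the paper does: Theorem~\ref{thm: gluing theorem for floer simple or graph manifold} is imported verbatim from \cite{lslope,HRRW,lgraph} and the paper gives no independent proof of it, so identifying it as a restatement and deferring to those references is the intended treatment (the paper only proves the new extension, Theorem~\ref{thm: knot exterior gluing theorem}). Your sketches of the underlying mechanisms are consistent with how the cited works argue, with the minor caveats that the immersed-curve formulation of \cite{HRW} postdates this paper (see its footnote) and that Floer simplicity already places $\mathcal{L}(Y_i)$ in the interval cases of Proposition~\ref{prop: structure of L-space intervals}, so less bespoke handling of degenerate types is needed than you suggest.
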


Unfortunately, this theorem fails to encompass the case
in which an L-space knot exterior is glued to a non-Floer simple graph manifold,
and our study of surgeries on iterated or algebraic satellites will certainly require
this case.  We therefore prove the following result.

\vspace{.2cm}

\begin{theorem}
\label{thm: knot exterior gluing theorem}
If $Y_1 \cup_{\varphi}\mkern-2mu Y_2$,
with gluing map
$\varphi: \partial Y_1 \to -\partial Y_2$,
is a closed union of 3-manifolds, such that
$Y_1$
is the exterior of a nontrivial L-space knot $K \subset S^3$,
and $Y_2$ is a graph manifold, or connected sum
thereof, with incompressible single torus boundary, then
\begin{equation}
\label{eq: knot exterior gluing theorem}
Y_1 \cup_{\varphi}\mkern-2mu Y_2
\,\text{ is an L-space }
\;\;\iff\;\;
\varphi_*^{\P}(\mathcal{L}^{\circ}(Y_1))
\cup
\mathcal{L}^{\circ}(Y_2) = \P(H_1(\partial Y_2; \Z)).
\end{equation}
Moreover, if $Y_2$ is not Floer simple, then 
(\ref{eq: knot exterior gluing theorem})
holds for $K \subset S^3$ an arbitrary nontrivial knot.
\end{theorem}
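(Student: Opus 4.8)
The plan is to reduce to Theorem~\ref{thm: gluing theorem for floer simple or graph manifold} when $Y_2$ is Floer simple and to argue separately when it is not --- the ``moreover'' clause concerning only the latter. First I would clear away connected summands. Write $Y_2 = Y_2' \# Z_1 \# \cdots \# Z_k$ with $Y_2'$ the summand containing $\partial Y_2$ and each $Z_j$ a closed graph manifold; then $W := Y_1 \cup_\varphi Y_2 \cong (Y_1 \cup_\varphi Y_2') \# Z_1 \# \cdots \# Z_k$, so by the K\"unneth formula for $\hfhat$ the manifold $W$ is an L-space iff $Y_1 \cup_\varphi Y_2'$ and every $Z_j$ is. If some $Z_j$ (for instance an $S^1\times S^2$) is not an L-space, then $W$ is not an L-space and also $\mathcal{L}(Y_2) = \emptyset$, so $\mathcal{L}^{\circ}(Y_2) = \emptyset$ and --- since $\mathcal{L}^{\circ}(Y_1)$ is always a proper subset of $\P(H_1(\partial Y_1;\Z))$ --- both sides of (\ref{eq: knot exterior gluing theorem}) fail; if all $Z_j$ are L-spaces, then $\mathcal{L}^{\circ}(Y_2) = \mathcal{L}^{\circ}(Y_2')$ and $\partial Y_2 = \partial Y_2'$, so the equivalence for $(Y_1,Y_2)$ reduces to that for $(Y_1,Y_2')$. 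Hence I may assume $Y_2$ is a single graph manifold with incompressible torus boundary, in particular not a solid torus.

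\emph{Case 1: $Y_2$ Floer simple.} Since $K$ is a nontrivial L-space knot, $\mathcal{L}(Y_1)$ is a nontrivial interval by \cite{OSRat} (it equals $[2g(K)-1,+\infty]$ in the positive orientation), so $\mathcal{L}^{\circ}(Y_1)\neq\emptyset$ and $Y_1$ is Floer simple; and $\partial Y_1$ is incompressible because $K$ is nontrivial. Thus $Y_1,Y_2$ are both Floer simple with incompressible single-torus boundary, and Theorem~\ref{thm: gluing theorem for floer simple or graph manifold} yields (\ref{eq: knot exterior gluing theorem}).

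\emph{Case 2: $Y_2$ not Floer simple}, with $K$ now an arbitrary nontrivial knot. Here the right-hand side of (\ref{eq: knot exterior gluing theorem}) fails unconditionally: by Proposition~\ref{prop: structure of L-space intervals}, not being Floer simple forces $\mathcal{L}^{\circ}(Y_2)=\emptyset$, while $\mathcal{L}^{\circ}(Y_1)\subseteq\mathcal{L}(Y_1)$ omits the rational longitude slope of $Y_1$ (along which the filling has $b_1=1$), so $\varphi_*^{\P}(\mathcal{L}^{\circ}(Y_1))\cup\mathcal{L}^{\circ}(Y_2)\subsetneq\P(H_1(\partial Y_2;\Z))$. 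It therefore suffices to prove $W$ is not an L-space. If $b_1(W)>0$ this is clear, so assume $W$ is a rational homology sphere. I would then use bordered Floer homology --- equivalently, the immersed-curve pairing of \cite{HRW}: present $\hfhat(W)$ as the homology of a box tensor product $\widehat{CFA}(Y_2)\boxtimes\cfd(Y_1)$, equivalently as the Lagrangian Floer homology of the multicurves $\gamma(Y_1)$ and $\varphi(\gamma(Y_2))$ in the once-punctured boundary torus, and bound its rank from below. Two structural facts feed the bound. First, $\cfd(Y_2)$ decomposes into loops (it is of simple-loop type, \cite{HanWat}), so $\gamma(Y_2)$ is a disjoint union of embedded curves carrying local systems; and since $Y_2$ is not Floer simple, the classification of graph-manifold L-space intervals of \cite{lgraph} forces $\gamma(Y_2)$ to be \emph{defective} --- disconnected, or carrying a nontrivial local system, or having a component homotopic into the puncture. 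Second, $K$ nontrivial gives $g(K)\ge1$, so $\hfk(K)$ is supported in more than one Alexander grading and the (connected) immersed curve $\gamma(Y_1)$ is not homotopic to an essential simple closed curve: it must turn back over the puncture, with shape computed from a reduced model of $CFK^{\infty}(K)$. The plan is to verify that this forced turn-back of $\gamma(Y_1)$, paired against any defective $\gamma(Y_2)$, contributes intersection points that persist in homology, so that the rank of $\hfhat(W)$ strictly exceeds $|H_1(W;\Z)|$ and $W$ is not an L-space. For $K$ a positive L-space knot one can instead work with the simpler staircase curve: in bordered language, the standard form of $\cfd(Y_1)$ carries an unstable chain and at least one stable chain, and box-tensoring a stable chain against the bad loops of $\cfd(Y_2)$ already overshoots the L-space rank.

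The hard part will be this rank estimate in full generality: controlling the local systems that may decorate $\gamma(Y_2)$; handling a puncture-parallel component of $\gamma(Y_2)$, which on its own can be isotoped off $\gamma(Y_1)$, so that one must show the remaining components then pair badly; and pinning down which homotopy classes $\gamma(Y_1)$ can realize for an arbitrary nontrivial $K$ --- the point at which one genuinely needs the structure of the knot-complement immersed curve rather than merely its staircase form. A shorter route, if it can be had at the needed generality (for instance out of \cite{HRW}), is to show that a closed L-space split along an incompressible torus has both sides Floer simple; that would contradict the standing assumption on $Y_2$ and finish Case 2 at a stroke.
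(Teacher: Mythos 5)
Your reduction to prime $Y_2$ and your Case 1 ($Y_2$ Floer simple, quote Theorem~\ref{thm: gluing theorem for floer simple or graph manifold}) coincide with the paper's argument, and your observation that the right-hand side of (\ref{eq: knot exterior gluing theorem}) fails automatically when $Y_2$ is not Floer simple (since $\mathcal{L}^{\circ}(Y_2)=\emptyset$ and $\mathcal{L}^{\circ}(Y_1)$ misses the rational longitude) is also how the paper frames Case 2. The gap is in the substance of Case 2: you must show that $Y_1\cup_{\varphi}Y_2$ is \emph{not} an L-space whenever $Y_2$ is a boundary-incompressible graph manifold that is not Floer simple and $K$ is an arbitrary nontrivial knot, and what you offer there is a plan, not a proof. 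The proposed immersed-curve/bordered rank estimate is left with exactly the difficulties you name yourself (nontrivial local systems on $\gamma(Y_2)$, puncture-parallel components, and control of $\gamma(Y_1)$ for an arbitrary nontrivial $K$), and none of these is resolved; moreover the premise that $\cfd$ of an arbitrary graph manifold is of simple-loop type is not contained in \cite{HanWat} at the generality you need, and your ``shorter route'' --- that an L-space split along an incompressible torus has both pieces Floer simple --- is essentially the strengthened gluing theorem that Hanselman--Rasmussen--Watson proved only later (see the paper's footnote), so invoking it here is not available and would beg the question.

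The paper closes Case 2 by a completely different, and complete, mechanism: taut foliations. By the author's classification for graph manifolds \cite{lgraph}, a prime graph manifold $Y_2$ with single torus boundary has $\mathcal{F}(Y_2)=\mathcal{NL}(Y_2)\setminus\mathcal{R}(Y_2)$, which is cofinite in $\P(H_1(\partial Y_2;\Z))$ when $Y_2$ is not Floer simple; by Li--Roberts, the exterior of \emph{any} nontrivial knot in $S^3$ has $\mathcal{F}_{S^3}(Y_1)\supset\left<a,b\right>$ with $a<0<b$, hence infinite. So $\varphi_*^{\P}(\mathcal{F}(Y_1))\cap\mathcal{F}(Y_2)\neq\emptyset$, the product foliations on the boundary can be matched and glued to a co-oriented taut foliation on $Y_1\cup_{\varphi}Y_2$, and Eliashberg--Thurston together with Ozsv\'ath--Szab\'o (extended to $C^0$ foliations by Kazez--Roberts and Bowden) yields a nonzero class in reduced Heegaard Floer homology, so the union is not an L-space. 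If you want to salvage your write-up, either import this foliation argument for Case 2 or carry out the rank estimate in full --- but as it stands the ``moreover'' clause, which is the actual content of the theorem beyond Theorem~\ref{thm: gluing theorem for floer simple or graph manifold}, is unproved.
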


\begin{proof}
We first reduce to the case prime $Y_2$.
Let $Y_2^{\prime}$ denote the connected summand
of $Y_2$ containing $\partial Y_2$, and recall that hat
Heegaard Floer homology tensors over connected sums.
Thus, if $Y_2$ has any non-L-space closed connected summands,
then $Y_1 \cup_{\varphi}\mkern-2mu Y_2$ is a non-L-space and
$\mathcal{L}(Y_2) = \emptyset$, regardless of the union
$Y_1 \cup_{\varphi}\mkern-2mu Y_2^{\prime}$.
On the other hand, if all the closed connected summands of $Y_2$
are L-spaces, then
$\mathcal{L}^{\circ}(Y_2) = 
\mathcal{L}^{\circ}(Y_2^{\prime})$, and
$Y_1 \cup_{\varphi}\mkern-2mu Y_2$ is an L-space if and only if 
$Y_1 \cup_{\varphi}\mkern-2mu Y_2^{\prime}$ is an L-space.
We therefore henceforth assume $Y_2$ is prime.

If $Y_2$ is Floer simple, then when $K \subset S^3$ is an L-space knot,
$Y_1$ is Floer simple, and so the desired result
is already given by
Theorem~\ref{thm: gluing theorem for floer simple or graph manifold}.
We therefore assume $Y_2$ is not Floer simple,
implying $\mathcal{NL}(Y_2) = \P(H_1(\partial Y_2; \Z))$ or 
$\mathcal{NL}(Y_2) = \P(H_1(\partial Y_2; \Z)) \setminus \{y\}$
for some single slope $y$.
In either case, $\mathcal{L}^{\circ}\mkern-1.5mu(Y_2) = \emptyset$,
and so it remains to show that
$Y_1 \cup_{\varphi}\mkern-2mu Y_2$
is not an L-space.

In \cite{lgraph}, the author showed
for any (prime) graph manifold $Y_2$ with single torus boundary that
if $\mathcal{F}(Y_2)$ (called $\mathcal{F}^{\textsc{d}}\mkern-2mu(Y_2)$ in that
paper's notation) denotes the set of slopes $\alpha \in \P(H_1(\partial Y_2; \Z))$
for which $Y_2$ admits a co-oriented taut foliation restricting
to a product foliation of slope $\alpha$ on $\partial Y_2$, then
$\mathcal{F}(Y_2) = \mathcal{NL}(Y_2) \setminus \mathcal{R}(Y_2)$.
Since $Y_2$ is prime, 
$\mathcal{R}(Y_2) = \{\infty\} \in \P(H_1(\partial Y_2; \Z))_{\textsc{sf}}$.
In particular, $\mathcal{F}(Y_2)$ is the
complement of a finite set in $\P(H_1(\partial Y_2; \Z))$.

On the other hand, 
Li and Roberts show
in \cite{LiRoberts}
that for the exterior of an arbitrary nontrivial knot in $S^3$,
such as $Y_1$, one has $\mathcal{F}_{S^3}(Y_1) \supset \left<a,b\right>$ 
for some $a < 0 < b$.  In particular, $\mathcal{F}(Y_1)$ is infinite,
implying $\varphi_*^{\P}(\mathcal{F}(Y_1)) \cap \mathcal{F}(Y_2)$
is nonempty.  Thus we can construct a co-oriented taut foliation 
$F$ on $Y_1 \cup_{\varphi}\mkern-2mu Y_2$ by gluing together
co-oriented taut foliations restricting to a matching product foliation
of some slope $\alpha \in \varphi_*^{\P}(\mathcal{F}(Y_1)) \cap \mathcal{F}(Y_2)$
on $\partial Y_2$.

Eliashberg and Thurston showed in \cite{EliashbergThurston}
that a $C^2$ co-oriented taut foliation can be perturbed to a pair of 
oppositely oriented tight contact structures, each with a symplectic semi-filling
with $b_2^+ \mkern-4mu>\mkern-3mu 0$.
Ozsv{\'a}th and Szab{\'o} \cite{OSGen} showed that one can
associate a nonzero class in reduced Heegaard Floer homology
to such a contact structure.
This result was recently extended to $C^0$ co-oriented taut foliations
by Kazez and Roberts \cite{KazezRobertsCzero}
and independently by 
Bowden \cite{Bowden}.
Thus, our co-oriented taut foliation
$F$ on $Y_1 \cup_{\varphi}\mkern-2mu Y_2$
implies that
$Y_1 \cup_{\varphi}\mkern-2mu Y_2$
is not an L-space.

\end{proof}

\section{Torus-link satellites}
\label{s: torus-link satellites}

\subsection{$\,\boldsymbol{T(np,nq) \mkern-2mu\subset\mkern-2mu S^3}$ and
Seifert structures on $\boldsymbol{S^3}$}
\label{ss: seifert structures on exterior stuff}
Since $S^3$ is a lens space, any Seifert fibered realization of $S^3$ can 
have at most 2 exceptional fibers:
\begin{equation}
S^3 = M_{S^2}(\mkern1mu{\textstyle{
\mkern-2mu-\mkern-1mu\frac{q^*\mkern-6mu}{p},
\mkern-1mu\frac{p^*\mkern-7mu}{q}}}\mkern1mu),
\;\;\;\;
p^*p - q^*q = 1,
\end{equation}
where the right-hand constraint on $p, q, p^*\mkern-2mu, q^* \mkern-2mu\in\mkern-2mu \Z$
is necessary (and sufficient) to achieve
$H_1(M_{S^2}(\mkern1mu{\textstyle{
\mkern-2mu-\mkern-1mu\frac{q^*\mkern-6mu}{p},
\mkern-1mu\frac{p^*\mkern-7mu}{q}}}\mkern1mu)
; \Z) \mkern-2mu=\mkern-2mu 0$.
The one-exceptional-fiber Seifert structures for $S^3$ are exhausted by
the cases
$\left\{\vphantom{\frac{n}{1}}\right. \mkern-4mu
\mkern-2mu-\mkern-4mu\frac{q^*\mkern-6mu}{p}, \mkern-1mu\frac{p^*\mkern-7mu}{q}
\mkern-3mu\left.\vphantom{\frac{n}{1}}\right\}
\mkern-2mu=\mkern-2mu\left\{\frac{n}{1}, \frac{1}{0}\right\}$, $n\mkern-2mu\in\mkern-2mu\Z$.
The above Seifert structure 
exhibits $S^3$ as a union
\begin{align}
S^3 
&= \mkern18mu\nu(\lambda_{-1}) 
\mkern20mu\cup \mkern5mu 
\left(S^2 \mkern-2mu\setminus\mkern-2mu (D^2_0 \mkern1mu\amalg\mkern-1mu D^2_{-1})\vphantom{A^a}\right)
\mkern-3mu\times\mkern-2mu S^1
\mkern6mu\cup \mkern23mu 
\nu(\lambda_{0}) 
     \\
&=
(D^2_{-1} \mkern-1mu\times\mkern-2mu S^1)
\mkern5mu\cup\mkern46mu
[-\epsilon, +\epsilon] \mkern-1mu\times\mkern-1mu \mathbb{T}^2
\mkern40mu\cup\mkern8mu
(D^2_{0} \mkern-2mu\times\mkern-2mu S^1),
\end{align}
where $\lambda_{-1}$ and $\lambda_0$ are
exceptional fibers of 
meridian-slopes 
$[\mu_{-1}]_{\textsc{sf}} \mkern-2mu=\mkern-2mu -\frac{q^*\mkern-6mu}{p}$ and
$[\mu_0]_{\textsc{sf}} \mkern-2mu=\mkern-2mu \frac{p^*\mkern-7mu}{q}$,
respectively,
forming a Hopf link $\lambda_{-1}$, $\lambda_0 \mkern-2mu\subset\mkern-2mu S^3$.

Regular fibers in this Seifert fibration are confined to
some neighborhood $[-\epsilon, +\epsilon] \mkern-1mu\times\mkern-1mu \mathbb{T}^2$
of a torus $\mathbb{T}^2$, and they foliate this 
$\mathbb{T}^2$ with fibers all of the same slope.
Since $\lambda_{-1}$ and $\lambda_0$ are of multiplicities $p$ and $q$, respectively,
any regular fiber $f$ wraps 
$p$ times around the core $\lambda_{-1}$ of the solid torus neighborhood $\nu(\lambda_{-1})$,
and wraps $q$ times around the core $\lambda_0$ of 
$\nu(\lambda_{0})$, or equivalently, winds $q$ times {\em{along}} the core of $\nu(\lambda_{-1})$.
That is, any regular fiber $f$ is a $(p,q)$ curve in the boundary
$\mathbb{T}^2 = \partial\nu(\lambda_{-1})$ of the solid torus $\partial\nu(\lambda_{-1})$
of core $\lambda_{-1}$.
(See Proposition~\ref{prop: make satellite} for a more careful treatment of 
framings and orientations.)
Thus,  any collection
$f_1, \ldots, f_n$
of regular fibers in 
$M_{S^2}(\mkern1mu{\textstyle{
\mkern-2mu-\mkern-1mu\frac{q^*\mkern-6mu}{p},
\mkern-1mu\frac{p^*\mkern-7mu}{q}}}\mkern1mu)$
allows us to realize the exterior
\begin{equation}
\label{eq: T(np,nq) as reg fibers in Seifert fibered space}
Y^{\mkern1mu n}_{(p,q)}
\mkern1mu:=\mkern1.5mu
M_{S^2}(\mkern1mu{\textstyle{
\mkern-2mu-\mkern-1mu\frac{q^*\mkern-6mu}{p},
\mkern-1mu\frac{p^*\mkern-7mu}{q}}}\mkern1mu)
\mkern-1mu\setminus\mkern-1mu \overset{\mkern1mu\circ}{\nu}
\mkern-2mu\left(\vphantom{A^A}
\mkern1mu{\textstyle{\coprod_{i=1}^{\mkern1mu n} \mkern-2muf_i}}\mkern1mu\right)
\mkern2mu=:\mkern1.5mu
M_{S^2}(\mkern1mu{\textstyle{
\mkern-2mu-\mkern-1mu\frac{q^*\mkern-6mu}{p},
\mkern-1mu\frac{p^*\mkern-7mu}{q}}}, \overset{1}{\hat{*}}, \ldots, \overset{n}{\hat{*}}\mkern1mu)
\mkern3mu=\mkern3mu
S^3
\setminus \overset{\mkern1mu\circ}{\nu}
(T(np,nq))
\end{equation}
of $T(np,nq) \mkern-2mu\subset\mkern-2mu S^3$.
As a link in the solid torus, $T(np,nq) \subset \nu(\lambda_{-1})$
inhabits the exterior
\begin{equation}
\label{eq: def of solid torus Ypq}
\hat{Y}_{(p,q)} 
\mkern1mu:=\mkern1.5mu
M_{S^2}(\mkern1mu{\textstyle{
\mkern-2mu-\mkern-1mu\frac{q^*\mkern-6mu}{p},
\mkern-1mu\frac{p^*\mkern-7mu}{q}}}\mkern1mu)
\mkern-1.2mu\setminus\mkern-1mu \overset{\circ}{\nu}(\lambda_0)
\mkern2mu=:\mkern1.5mu
M_{S^2}(\mkern1mu{\textstyle{
\mkern-2mu-\mkern-1mu\frac{q^*\mkern-6mu}{p}}},
\overset{0}{\hat{*}}\mkern1mu)
\mkern3mu=\mkern3mu
\nu(\lambda_{-1})
\end{equation}
of the fiber $\lambda_0$ of meridian-slope 
$\frac{p^*\mkern-7mu}{q}$.
This solid-torus link
$T(np,nq) \subset \hat{Y}_{(p,q)}$ then
has exterior
\begin{equation}
\label{eq: def of hat Y n p q}
\hat{Y}_{(p,q)}^{\mkern1mu n} 
\mkern1mu:=\mkern1.5mu
\hat{Y}_{(p,q)} 
\mkern-1mu\setminus\mkern-1mu \overset{\mkern1mu\circ}{\nu}
\mkern-2mu\left(\vphantom{A^A}
\mkern1mu{\textstyle{\coprod_{i=1}^{\mkern1mu n} \mkern-2muf_i}}\mkern1mu\right)
\mkern2mu=:\mkern.5mu
M_{S^2}(\mkern1mu{\textstyle{
\mkern-2mu-\mkern-1mu\frac{q^*\mkern-6mu}{p}}},
\overset{0}{\hat{*}},
\overset{1}{\hat{*}}, \ldots, \overset{n}{\hat{*}}\mkern1mu)
\mkern3mu=\mkern3mu
\hat{Y}_{(p,q)} 
\mkern-3mu\setminus\mkern-1mu \overset{\mkern1mu\circ}{\nu}\mkern.5mu
(T(np,nq)).
\end{equation}

To make this association $(p,q) \mapsto \hat{Y}^n_{(p,q)}$
well defined, 
we adopt the following convention.
\begin{definition}
\label{def: p*q*}
To any $(p, q) \in \Z^2$ with $\gcd(p,q) = 1$,
we associate the pair $(p^*, q^*) \in \Z^2$:
\begin{equation}
(p, q) \mapsto (p^*, q^*) \in \Z^2,\;\;\;\;
p^*p - q^*q =1 ,\;\;\; q^* \in \{0, \ldots, p-1\},
\end{equation}
where we demand $p>0$ without loss of generality
(since $p=0$-satellites are unlinks).
\end{definition}

\subsection{Construction of satellites}
For a knot $K \mkern-2mu\subset\mkern-2mu M$ 
in a closed oriented 3-manifold $M$,
we define the $T(np,nq)$-torus-link satellite 
$K^{(np,nq)}\mkern-1mu \subset M$ 
to be the image of 
the torus link $T(np,nq)$ embedded in the boundary
of $\nu(K)$, composed with the inclusion $\nu(K) \mkern3mu \into \mkern3mu M$.

Thus, if we write $Y \mkern-2mu:=\mkern-2mu M \mkern-1.5mu\setminus\mkern-1.5mu 
\overset{\circ}{\nu}(K)$
for the exterior of 
$K \mkern-2mu\subset\mkern-2mu M$ 
and take $\hat{Y}^n_{(p,q)}$
as defined in
(\ref{eq: def of hat Y n p q}),
then for an appropriate choice of gluing map
$\bar{\varphi}: \partial Y \to -\partial_0 \hat{Y}^n_{(p,q)}$,
we expect the union
\begin{equation}
\label{eq: cable union construction}
Y^{(np,nq)} := Y \cup_{{\bar{\varphi}}}
\hat{Y}^n_{(p,q)},
\;\;\;\;
\bar{\varphi}: \partial Y \to -\partial_0 \hat{Y}^n_{(p,q)}
\end{equation}
to be the exterior of 
$K^{(np,nq)}\mkern-1mu \mkern-2mu\subset\mkern-2mu M$.

\begin{prop}
\label{prop: make satellite}
Suppose $p,q,n \in \Z$ with $n,p>0$ and $\gcd(p,q) = 1$.
Choose a surgery basis $(\mu, \lambda)$ for the boundary
homology $H_1(\partial Y; \Z)$ of the knot exterior 
$Y := M \setminus \overset{\circ}{\nu}(K)$,
and take $\hat{Y}^n_{(p,q)}$ as in
(\ref{eq: def of hat Y n p q}) and
$Y^{(np,nq)}$ as in (\ref{eq: cable union construction}).
If the gluing map
$\bar{\varphi}: \partial Y \to -\partial_0 \hat{Y}^n_{(p,q)}$
induces the homomorphism
${{\bar{\varphi}}}_* : H_1(\partial Y; \Z) \to 
H_1(\partial_0 \hat{Y}^n_{(p,q)}; \Z)$,
\begin{equation}
\label{eq: homology def of bar varphi}
{{\bar{\varphi}}}_*(\mu) := -q^* \mkern-2mu\tilde{f}_0 +\mkern.5mu p\tilde{h}_0,\;\;
{{\bar{\varphi}}}_*(\lambda):= p^*\mkern-2.2mu\tilde{f}_0 -\mkern.7mu q\tilde{h}_0,
\end{equation}
on homology, and hence
the orientation-preserving linear fractional map
\begin{align}
{{\bar{\varphi}}}_*^{{\mathbb{P}}} :&\; \P(H_1(\partial Y; \Z))_{\text{surg}} 
\to \P(H_1(\partial_0 \hat{Y}^n_{(p,q)}; \Z))_{\textsc{sf}},
\nonumber
    \\
\label{eq: phi lin fract map on slopes for cable glue}
{{\bar{\varphi}}}_*^{{\mathbb{P}}}\mkern-3mu &\left(\frac{a}{b}\right)
= \frac{a q^* - b p^*}{a p - b q}
=\mkern1mu \frac{\mkern.5mu q^*\mkern-2.5mu}{p} - 
\frac{b}{p(a p-b q)}, 
\end{align}
on slopes, then $Y^{(np,nq)}_{\textsc{sf}}(\boldsymbol{0}) = M$, 
and $Y^{(np,nq)}$ is the exterior of 
of the $T(np,nq)$ satellite 
$K^{(np,nq)} \subset\mkern-1mu M$
of $K \mkern-1mu\subset\mkern-1.5mu M$.
\end{prop}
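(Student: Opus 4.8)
The plan is to construct $Y^{(np,nq)}$ directly as the satellite exterior, with the Dehn-filling assertion falling out along the way. Write $Y = M\setminus\overset{\circ}{\nu}(K)$, and recall that by definition of the torus-link satellite one has a decomposition
\[
M\setminus\overset{\circ}{\nu}(K^{(np,nq)})
\;=\;
Y\;\cup_{\partial\nu(K)}\;\bigl(\nu(K)\setminus\overset{\circ}{\nu}(T(np,nq))\bigr),
\]
where $T(np,nq)\subset\partial\nu(K)$ is the torus link taken with respect to the surgery framing $(\mu,\lambda)$. Thus it suffices to produce a homeomorphism $\nu(K)\cong\hat Y_{(p,q)}$, with $\hat Y_{(p,q)}=\nu(\lambda_{-1})$ the fibered solid torus of (\ref{eq: def of solid torus Ypq}), such that (i) the pair $(\mu,\lambda)$ on $\partial\nu(K)$ corresponds under $\bar\varphi$ in (\ref{eq: homology def of bar varphi}) to the meridian--longitude pair of $\hat Y_{(p,q)}$ dictated by its Seifert presentation, and (ii) under this homeomorphism the pattern $T(np,nq)$ becomes the tuple $f_1,\dots,f_n$ of excised regular fibers in (\ref{eq: def of hat Y n p q}). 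Granting (i) and (ii), the union $Y^{(np,nq)}=Y\cup_{\bar\varphi}\hat Y^{n}_{(p,q)}$ of (\ref{eq: cable union construction}) is precisely $M\setminus\overset{\circ}{\nu}(K^{(np,nq)})$.

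For (ii), I would appeal to the description of the Seifert structure on $S^3$ recalled in Section~\ref{ss: seifert structures on exterior stuff}: $\hat Y_{(p,q)}$ is a solid torus with core the multiplicity-$p$ exceptional fiber $\lambda_{-1}$, and any regular fiber is a $(p,q)$-curve on $\partial\nu(\lambda_{-1})$ (winding $q$ times along, and $p$ times around, the core), so the $n$ parallel excised fibers $f_1,\dots,f_n$ form exactly $T(np,nq)$, which may be isotoped onto $\partial\nu(\lambda_{-1})$ without altering the resulting satellite. For (i), I would verify the homology statement directly: using $p^*p-q^*q=1$, the integer matrix of $\bar\varphi_*$ in (\ref{eq: homology def of bar varphi}) is unimodular, so $\bar\varphi_*$ is realized by a homeomorphism of tori whose orientation behavior is the one recorded in the statement; moreover $\bar\varphi_*(\mu)$ is a meridian of the solid torus $\hat Y_{(p,q)}$ and $\bar\varphi_*(\lambda)$ is a complementary longitude. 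The relevant $\textsc{sf}$-slopes are read off from a presentation of $H_1$ analogous to (\ref{eq: presentation of H_1 of sf}); the normalization $q^*\in\{0,\dots,p-1\}$ from Definition~\ref{def: p*q*} is what pins the longitudinal matching down, so that the cabling parameters are genuinely $(p,q)$ rather than $(p,q+kp)$.

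It then remains to read off $Y^{(np,nq)}_{\textsc{sf}}(\boldsymbol 0)$. Dehn filling the fiber boundary components $\partial_1,\dots,\partial_n$ of $Y^{(np,nq)}$ along $\textsc{sf}$-slope $0$ amounts to re-gluing solid tori along the meridians $-\tilde h_i$ of the excised regular fibers, which recreates $\hat Y_{(p,q)}=\nu(\lambda_{-1})$ and whose cores are isotopic to $f_1,\dots,f_n=T(np,nq)\subset\nu(\lambda_{-1})$. Hence $Y^{(np,nq)}_{\textsc{sf}}(\boldsymbol 0)=Y\cup_{\bar\varphi}\hat Y_{(p,q)}$, and since by (i) the gluing $\bar\varphi$ carries $\mu$ to a meridian of the solid torus $\hat Y_{(p,q)}$, this union is the Dehn filling $Y(\mu)=M$, with the glued-in core sent to $K$. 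The cores of the re-filled $\nu(f_i)$ therefore constitute a copy of $K^{(np,nq)}$ inside $\nu(K)\subset M$, which simultaneously gives $Y^{(np,nq)}_{\textsc{sf}}(\boldsymbol 0)=M$ and confirms $Y^{(np,nq)}=M\setminus\overset{\circ}{\nu}(K^{(np,nq)})$.

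The main obstacle I anticipate is entirely one of bookkeeping: arranging conventions so that the homeomorphism $\nu(K)\cong\nu(\lambda_{-1})$ induced by $\bar\varphi$ matches orientations and framings in such a way that the emerging pattern is genuinely $T(np,nq)$ with the intended cabling parameters and orientation, rather than its mirror or a reframed variant. The orientation-preservation of $\bar\varphi_*^{\mathbb{P}}$ already recorded in the statement, combined with the explicit placement of regular fibers in $\nu(\lambda_{-1})$ from Section~\ref{ss: seifert structures on exterior stuff} and the normalization of $(p^*,q^*)$, should resolve this, but it is the step that demands the most care.
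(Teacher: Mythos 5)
Your outline follows essentially the same route as the paper's proof: glue the fibered solid torus $\hat{Y}_{(p,q)}$ to $Y$ along $\bar{\varphi}$, identify $\bar{\varphi}_*(\mu)$ with the slope bounding the compressing disk, and check that the excised regular fibers are $(p,q)$-curves with respect to the surgery framing $(\mu,\lambda)$. The one caveat is that the step you defer as ``bookkeeping that demands the most care'' is in fact the entire substantive content of the argument, and it closes in two lines rather than requiring a delicate convention chase, so you should carry it out. For the framing of the fibers, use $p^*p-q^*q=1$ together with (\ref{eq: homology def of bar varphi}) to compute
$\tilde{f}_0=(pp^*-qq^*)\tilde{f}_0=p\,\bar{\varphi}_*(\lambda)+q\,\bar{\varphi}_*(\mu)$,
which says precisely that a regular fiber on $\partial_0\hat{Y}^n_{(p,q)}$ is a $(p,q)$-curve relative to $(\mu,\lambda)$; hence the $n$ excised fibers form $T(np,nq)$ in the satellite framing and not a mirror or reframed variant, which is exactly the point your appeal to Section~\ref{ss: seifert structures on exterior stuff} alone does not settle (``$(p,q)$-curve'' there is measured against the Seifert-structure framing of $\nu(\lambda_{-1})$, not against $(\mu,\lambda)$). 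For the meridian identification, note that in the basis $(\tilde{f}_0,-\tilde{h}_0)$ one has $[\bar{\varphi}_*(\mu)]_{\textsc{sf}}=\frac{q^*}{p}$, which equals the rational longitude of $\hat{Y}_{(p,q)}=M_{S^2}(-\frac{q^*}{p},*)$, i.e.\ the boundary slope of its compressing disk; this is how the paper obtains $Y^{(np,nq)}_{\textsc{sf}}(\boldsymbol{0})=Y(\mu)=M$, and it is the precise justification for your assertion that $\bar{\varphi}_*(\mu)$ is a meridian of the solid torus. With these two computations inserted, your argument is complete and coincides with the paper's.
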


\begin{proof}
The Dehn filling 
$Y^{\mkern-.8mu(np,nq)}_{\textsc{sf}}\mkern-.8mu(\boldsymbol{0})$
is given by the union
$Y^{\mkern-.8mu(np,nq)}_{\textsc{sf}}\mkern-.8mu(\boldsymbol{0})
\mkern-2mu=\mkern-2mu Y \cup_{{\bar{\varphi}}} \mkern-2mu \hat{Y}_{(p,q)}$,
for the solid torus
$\hat{Y}_{(p,q)} \mkern-2mu=\mkern-2mu M_{S^2}(\mkern1mu{\textstyle{
\mkern-2mu-\mkern-1mu\frac{q^*\mkern-6mu}{p}}},
*\mkern1mu)$
defined in
(\ref{eq: def of solid torus Ypq}).
The boundary of the compressing disk of $\hat{Y}_{(p,q)}$
is given by the rational longitude
$l = -\sum_{i=-1}^{-1} \frac{\beta_i}{\alpha_i} = \frac{q^*\mkern-2mu}{p}$
of $\hat{Y}_{(p,q)}$
(see the last line of 
Theorem~\ref{thm: l-space interval for seifert jsj}).
Thus, since $[\bar{\varphi}(\mu)]_{\textsc{sf}} = \frac{q^*\mkern-2mu}{p}$,
$Y^{\mkern-.8mu(np,nq)}_{\textsc{sf}}\mkern-.8mu(\boldsymbol{0})$
is in fact the Dehn filling 
$Y^{\mkern-.8mu(np,nq)}_{\textsc{sf}}\mkern-.8mu(\boldsymbol{0}) = Y(\mu) = M$.

Since $Y^{\mkern-.8mu(np,nq)} = M \setminus \coprod_{i=1}^n
\overset{\circ}{\nu}(f_i)$
is the exterior of $n$ regular fibers from
$\hat{Y}_{(p,q)} \mkern-2mu=\mkern-2mu M_{S^2}(\mkern1mu{\textstyle{
\mkern-2mu-\mkern-1mu\frac{q^*\mkern-6mu}{p}}},
*\mkern1mu)$
we must verify that our lift 
$\tilde{f}_0 \in H_1(\partial \hat{Y}_{(p, q)};\Z)$
of a regular fiber class to~the boundary
$\partial \hat{Y}_{(p,q)}
\mkern-2mu=\mkern-2mu 
\partial_0 \hat{Y}^n_{(p,q)}$
of the solid torus $\hat{Y}_{(p,q)}$
is represented by a $(p,q)$ torus knot
on $\partial \hat{Y}_{(p,q)}$
relative to the framing specified by
$\mu$ and $\lambda$.  Indeed, from
(\ref{eq: homology def of bar varphi}), we have
$$
\tilde{f}_0 = (pp^* - qq^*) \tilde{f}_0
= p{{\bar{\varphi}}}_*(\lambda) + q{{\bar{\varphi}}}_*(\mu),
$$
as required.  The induced map ${{\bar{\varphi}}}_*^{\P}$ on slopes
preserves orientation, because the map ${{\bar{\varphi}}}$
is orientation reversing, but the surgery basis
and Seifert fibered basis are positively
oriented and negatively oriented, respectively.
\end{proof}

\subsection{Computing L-space intervals}
The primary tool we shall use
is a result of the author which computes
the L-space interval for the exterior of 
a regular fiber in a closed 3-manifold
with a Seifert fibered JSJ component.
\begin{theorem}[S. Rasmussen (Theorem 1.6) \cite{lgraph}]
\label{thm: l-space interval for seifert jsj}
Suppose $M$ is a closed oriented 3-manifold
with some JSJ component $\hat{Y}$ which is Seifert
fibered over an $n_{\textsc{bi}}$-times-punctured $S^2$,
so that we may express $M$ as a union
\begin{align*}
M = \hat{Y} 
\cup_{\boldsymbol{{{\bar{\varphi}}}}} 
{\textstyle{\coprod_{j=1}^{n_{\textsc{bi}}} Y_j}},
\;\;\;\;\;{{\bar{\varphi}}}_j
: \partial Y_j \to -\partial_j \hat{Y},
\end{align*}
where each $Y_j$ is boundary incompressible ({\em{i.e.}} is not
a solid torus or a connected sum thereof).
Write $(y_1, \ldots, y_m)$ for the Seifert slopes
of $\hat{Y}$, so that $\hat{Y}$ is the partial Dehn filling
of $S^1 \times (S^2 \setminus \coprod_{i=1}^{m+n_{\textsc{bi}}}
D^2_i)$
by $(y_1, \ldots, y_m)$ in our Seifert fibered basis.
Further suppose that each $Y_j$ is Floer simple,
so that we may write
\begin{equation*}
{{\bar{\varphi}}}^{\P}_*(\mathcal{L}(Y_j))_{\textsc{sf}} = 
[[y_{j-}^{\textsc{bi}}, y_{j+}^{\textsc{bi}}]]
\subset \P(H_1(\partial_j \hat{Y}; \Z))_{\textsc{sf}}
\end{equation*}
for each $j \in \{1, \ldots, n_{\textsc{bi}}\}$.
Let $Y$ denote the exterior 
$Y = M \setminus \overset{\circ}{\nu}(f)$
of a regular fiber $f \subset \hat{Y}$.
If  
$\mathcal{L}(Y)$ is nonempty, then
\vspace{-.1cm}
\begin{equation*}
\hphantom{\mkern50mu\text{where}}
\mathcal{L}_{\textsc{sf}}(Y) =
\begin{cases}
\{y_-\} = \{y_+\}
& Y \text{ Floer not simple}
    \\
[[y_-, y_+]]
& Y \text{ Floer simple},
\end{cases}
\mkern50mu\text{where}
\end{equation*}

\vspace{-.3cm}

\begin{align*}
y_-
&:=\,
\sup_{k > 0}
-\mfrac{1}{k}\mkern-2.5mu\left( 1
+ {{\textstyle{\sum\limits_{i=0}^m  \left\lfloor y_i k \right\rfloor}}}
+ {{\textstyle{\sum\limits_{j=1}^{n_{{\textsc{bi}}}} \left(  \lceil 
y^{\textsc{bi}}_{j+}k 
\rceil - 1 \right)}}}
\right),
\\ \nonumber
y_+ 
&:=\,
\inf_{k > 0}
-\mfrac{1}{k}\mkern-2.5mu\left(\mkern-2mu -1
+ {{\textstyle{\sum\limits_{i=0}^m \left\lceil y_i k \right\rceil}}}
+ {{\textstyle{\sum\limits_{j=1}^{n_{{\textsc{bi}}}} \left(  \lfloor y^{\textsc{bi}}_{j-} k 
\rfloor + 1 \right)}}}
\right).
\end{align*}
The above extrema are realized for finite $k$
if and only if $Y$ is boundary incompressible.
When $Y$ is boundary compressible, $y_- = y_+ = l = -\sum_{i=1}^m y_i$ is the rational longitude of $Y$.
\end{theorem}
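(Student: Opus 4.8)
The plan is to compute $\mathcal{L}_{\textsc{sf}}(Y)$ by iterating the L-space gluing theorem (Theorem~\ref{thm: gluing theorem for floer simple or graph manifold}) along the JSJ graph of $Y$, eventually collapsing the recursion into the single extremization over positive integers stated above. Since $f$ is a regular fiber of $\hat{Y}$, the piece $\hat{Y}_0 := \hat{Y}\setminus\overset{\,\circ}{\nu}(f)$ is again Seifert fibered over a planar base, but with one additional boundary component $\partial_0\hat{Y}_0 = \partial Y$, so that $Y = \hat{Y}_0 \cup_{\boldsymbol{\bar{\varphi}}}\bigl(\coprod_j Y_j \amalg \coprod_i V_i\bigr)$, where the $V_i$ are the solid tori filling the exceptional-fiber boundary components. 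Fixing an $\textsc{sf}$-slope $\alpha$ on $\partial Y$: if $\alpha = \infty_{\textsc{sf}}$ is the fiber slope, then $Y(\alpha)$ is a connected sum (the Corollary of Section~\ref{ss: R and Z}), hence an L-space only in the degenerate situations that will coincide with $Y$ being boundary compressible; I would treat this case last. For every other $\alpha = y_0$, the filling $Y(\alpha)$ is the closed graph manifold obtained by adjoining, to the planar Seifert piece now carrying Seifert data $(y_0, y_1,\dots,y_m)$, the pieces $Y_j$ and $V_i$, so the task is to characterize, as a function of $y_0$, when this manifold is an L-space.

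For this I would induct on the number of pieces $Y_j$ glued to the Seifert piece, peeling off one $Y_j$ at a time --- the $V_i$ being Floer simple with $\mathcal{L}^{\circ}(V_i) = \P(H_1)\setminus\{y_i\}$, while the base case $n_{\textsc{bi}}=0$ is a closed Seifert fibered space over $S^2$, for which the Jankins--Neumann realization criterion (equivalently, by Lisca--Stipsicz, the L-space condition) supplies the answer directly, or one continues peeling off exceptional fibers down to lens spaces. At each step, the complementary piece again satisfies the hypotheses of the theorem --- it is the fiber exterior of a smaller planar Seifert piece, with the filling slope $y_0$ absorbed as an extra Seifert datum and the boundary freed by the peel playing the distinguished role --- so by induction its L-space interval is a known $[[\,\cdot\,,\,\cdot\,]]$. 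Applying Theorem~\ref{thm: gluing theorem for floer simple or graph manifold} and unwinding the gluing map in the $\textsc{sf}$ bases, where it exchanges the fiber and section directions, the covering condition $\varphi^{\P}_*(\mathcal{L}^{\circ})\cup\mathcal{L}^{\circ}=\P(H_1)$ becomes the requirement that a new inequality, assembled from partial sums of the $\lfloor y_i k\rfloor$ and the $\lceil y^{\textsc{bi}}_{j\pm}k\rceil$, hold for every integer $k>0$; passing to interiors $\mathcal{L}^{\circ}$ in the gluing theorem is precisely what forces the asymmetric shapes $\lceil\,\cdot\,\rceil - 1$ versus $\lfloor\,\cdot\,\rfloor + 1$ and the additive constants $\pm1$. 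Telescoping through all the gluings, and invoking Proposition~\ref{prop: structure of L-space intervals} to know a priori that the result is one of its four permitted shapes, should yield $\mathcal{L}_{\textsc{sf}}(Y)=[[y_-,y_+]]$ with $y_\pm$ the displayed $\sup$ and $\inf$, degenerating to $\{l\}=\{-\sum_i y_i\}$ exactly when the optimizing $k$ escapes to infinity --- which is exactly the boundary compressible case, and which also absorbs the fiber-slope case postponed above.

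The step I expect to be the real obstacle is this combinatorial collapse: showing that the nested chain of gluing inequalities reduces to a single clean extremum over \emph{all} positive integers $k$ amounts to a relative, L-space-interval refinement of the Jankins--Neumann criterion, and it demands that every ceiling, every floor, and every $\pm1$ land in precisely the right place; getting the finiteness clause (extrema attained at finite $k$ iff $Y$ is boundary incompressible) exactly right is part of the same difficulty. A secondary technical point is verifying that Theorem~\ref{thm: gluing theorem for floer simple or graph manifold} genuinely applies at each stage --- one must check that the accumulated piece stays Floer simple (or a graph manifold) as long as its L-space interval has nonempty interior, and that when Floer simplicity fails its L-space interval has already collapsed to a single slope, matching the ``Floer not simple'' branch of the conclusion. (Alternatively, one could route the reduction through the graph-manifold L-space conjecture and a direct classification, via rotation numbers, of the co-oriented taut foliations of $Y(\alpha)$ transverse to the Seifert fibration; the combinatorial heart, and hence the principal difficulty, is unchanged.)
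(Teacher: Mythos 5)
You should first be aware that this paper contains no proof of this statement to compare against: Theorem~\ref{thm: l-space interval for seifert jsj} is imported verbatim, with attribution, as Theorem 1.6 of \cite{lgraph}, and the present paper simply uses it (the same is true of Theorem~\ref{thm: gluing structure theorem}, for which the author points to Proposition 4.7 of \cite{lgraph}). So the only meaningful assessment is of your proposal on its own merits, measured against what such a proof must actually deliver.

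Your roadmap (peel the JSJ pieces off one at a time, apply Theorem~\ref{thm: gluing theorem for floer simple or graph manifold} at each splice, reduce the base case to Jankins--Neumann/Lisca--Stipsicz, and collapse the nested covering conditions into a single extremization over $k$) is the natural strategy and is in the spirit of the machinery of \cite{lgraph}, but as written it has a genuine gap that is not ``secondary.'' Theorem~\ref{thm: gluing theorem for floer simple or graph manifold} requires, at each gluing, that \emph{both} sides be Floer simple or both be graph manifolds. In your induction the complementary piece $Z$ --- the planar Seifert piece carrying the filling slope $y_0$ together with the previously attached $Y_j$'s --- is in general neither: the $Y_j$ are arbitrary boundary-incompressible Floer simple manifolds (not graph manifolds), and $Z$ can fail to be Floer simple precisely in the borderline configurations the theorem must adjudicate ($\mathcal{L}$ a single slope, $y_-=y_+$, the boundary-compressible degenerations, the $\infty$-slope interactions catalogued in Theorem~\ref{thm: gluing structure theorem}). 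This is exactly the regime where the available gluing technology of the time breaks down --- it is why the present paper has to prove its own Theorem~\ref{thm: knot exterior gluing theorem} for a closely related situation, and why the later Hanselman--Rasmussen--Watson gluing theorem was needed to remove such hypotheses --- so your induction stalls at these steps unless you supply either a stronger gluing statement or a direct non-L-space detection argument (in \cite{lgraph} the non-L-space side is ultimately tied to producing co-oriented taut foliations/orders, not to iterating the Floer-simple gluing theorem alone). Separately, the ``combinatorial collapse'' of the telescoped covering conditions into the single $\sup_k/\inf_k$ formula, together with the finite-$k$-attainment clause characterizing boundary incompressibility, is asserted rather than carried out; you correctly identify it as the heart of the matter, but it is the content of the theorem, so as it stands the proposal is a plausible outline of where a proof would live rather than a proof.
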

{\noindent{{\textbf{Remarks.}} In the above, we define $y_- := \infty$ or $y_+ := \infty$, respectively,
if any infinite terms appear as summands of $y_-$ or $y_+$, respectively.
For $x \in \R$, the notations
$\lfloor x \rfloor$ and
$\lceil x \rceil$ respectively
indicate the greatest integer less than
or equal to $x$ and the least integer
greater than or equal to $x$, as usual.
In addition, we always take $k$ to be an integer.
Thus the expression ``$k>0$'' always indicates $k \in \Z_{>0}$.
}}

\smallskip
In order to use the above theorem,
we first have to know whether
$\mathcal{L}(Y)$ is nonempty and whether
$Y$ is Floer simple.
The author provides a complete (and lengthy) answer 
to this question in \cite{lgraph}.
Here, we restrict to the cases of most relevance to the current question.
\begin{theorem}[S. Rasmussen \cite{lgraph}]
\label{thm: gluing structure theorem}
Assuming the hypotheses of 
Theorem
\ref{thm: l-space interval for seifert jsj},
set
\begin{align*}
n^{\infty} 
&:= \mkern1mu|\{i \in \{1, \ldots, n\}|\; y_i = \infty\}|,
    \\
n^{<}_{\textsc{bi}} 
&:= |\{j \in \{1, \ldots, n_{\textsc{bi}}\}\,|\; 
-\infty <y_{j-}^{\textsc{bi}} \!<\, y_{j+}^{\textsc{bi}} < +\infty \}|.
\end{align*}

{\noindent{If
$\infty \notin 
\{ y_{1-}^{\textsc{bi}}, y_{1+}^{\textsc{bi}},
\ldots,
y_{{n_{\textsc{bi}}}-}^{\textsc{bi}}, y_{{n_{\textsc{bi}}}+}^{\textsc{bi}}\}
$,
then the following are true.}}
\begin{itemize}
\item[(i)]
If $n^{\infty} > 1$,
then any Dehn filling of $Y$ is a connected sum
with $S^1 \times S^2$, and
$\mathcal{L}_{\textsc{sf}}(Y) = \emptyset$.

\item[(ii)]
If $n^{\infty} = 1$, then
$\mathcal{L}_{\textsc{sf}}(Y) = 
\begin{cases}
\left< -\infty, +\infty\right>
&
n^{<}_{\textsc{bi}} = 0
  \\
\emptyset
&
n^{<}_{\textsc{bi}} > 0
\end{cases}.
$

\item[(iii)]
If $n^{\infty} = 0$, then
$\mathcal{L}_{\textsc{sf}}(Y) =
\begin{cases}
[[y_-, y_+]] \text{ with } y_- \mkern-2mu> y_+
&\mkern7mu
n^{<}_{\textsc{bi}}=0
   \\
[y_-, y_+]
&\mkern7mu
n^{<}_{\textsc{bi}}=1 \mkern-2mu\text{ and } y_-<y_+
  \\
\{y_-\} = \{y_+\}
&\mkern6mu
n^{<}_{\textsc{bi}}=1 \mkern-2mu\text{ and } y_-=y_+
  \\
\emptyset
&\mkern7mu
n^{<}_{\textsc{bi}}=1 \mkern-2mu\text{ and } y_->y_+
  \\
\emptyset
&\mkern7mu
n^{<}_{\textsc{bi}}>1
\end{cases}\mkern-4mu.
$

\end{itemize}

{\noindent{Suppose instead that
$\infty \in 
\{ y_{1-}^{\textsc{bi}}, y_{1+}^{\textsc{bi}},
\ldots,
y_{{n_{\textsc{bi}}}-}^{\textsc{bi}}, y_{{n_{\textsc{bi}}}+}^{\textsc{bi}}\}
$.}}
\begin{itemize}
\item[(iv)]
If either
$\infty \notin
\{ y_{1-}^{\textsc{bi}}, 
\ldots,
y_{{n_{\textsc{bi}}}-}^{\textsc{bi}}\}$ 
or
$\infty \notin \{
\{ y_{1+}^{\textsc{bi}}, 
\ldots,
y_{{n_{\textsc{bi}}}+}^{\textsc{bi}}\}$, then

$\mathcal{L}_{\textsc{sf}}(Y)
=
\begin{cases}
[[y_-, y_+]]
  &
\mkern1mu n^{<}_{\textsc{bi}} = 0
\text{ and }
n^{\infty} = 0
    \\
\left<-\infty, +\infty\right>
  &
\mkern1mu n^{<}_{\textsc{bi}} = 0
\text{ and }
n^{\infty} = 1
    \\
\emptyset
  &
\mkern1mu n^{<}_{\textsc{bi}} = 0
\text{ and }
n^{\infty} > 1
    \\
\emptyset
  &
\mkern1mu n^{<}_{\textsc{bi}} \neq 0
\end{cases}.
$
\end{itemize}

\end{theorem}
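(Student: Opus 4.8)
The plan is to combine three already-stated facts: the gluing criterion of Theorem~\ref{thm: gluing theorem for floer simple or graph manifold}, the L-space interval formula of Theorem~\ref{thm: l-space interval for seifert jsj}, and the connected-sum decomposition of fiber-slope Dehn fillings obtained in the Corollary above. Writing $Y(y)$ for the Dehn filling of $Y$ along the $\textsc{sf}$-slope $y$ at the $f$-boundary, I would analyze $y=\infty$ (where $Y(y)$ is reducible) and $y\in\Q$ (where the Seifert piece stays connected) separately, and in each regime pin down which of the forms allowed by Proposition~\ref{prop: structure of L-space intervals} the set $\mathcal L_{\textsc{sf}}(Y)$ takes, as a function of $n^{\infty}$ and $n^{<}_{\textsc{bi}}$.

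First I would handle $y=\infty$. By the Corollary above, filling one boundary of a Seifert-fibered-over-sphere block along its fiber slope splits off a solid-torus summand for each remaining puncture, so the $n^{\infty}+1$ fiber-slope fillings present in $Y(\infty)$ produce $n^{\infty}$ connect-summands homeomorphic to $S^1\times S^2$, together with lens-space summands and, for each free boundary $\partial_j\hat Y$, a summand $Y_j(\hat y_j)$ with $\hat y_j$ the slope satisfying $\bar\varphi_{j}^{\P}(\hat y_j)=\infty$. Since $S^1\times S^2$ is not an L-space and $\widehat{HF}$ tensors over connect sums, this yields at once: if $n^{\infty}\ge 2$ then $n^{\infty}-1\ge 1$ such summands survive in $Y(y)$ for \emph{every} $y$, so $\mathcal L(Y)=\emptyset$, proving $(i)$; if $n^{\infty}=1$ then $Y(\infty)$ has exactly one $S^1\times S^2$ summand and is not an L-space, so every L-space filling of $Y$ has $y\in\Q$; and if $n^{\infty}=0$ then $Y(\infty)=(\#\,\text{lens spaces})\,\#\,(\#_j Y_j(\hat y_j))$ is an L-space iff $\infty\in\bar\varphi_{j}^{\P}(\mathcal L(Y_j))_{\textsc{sf}}=[[y_{j-}^{\textsc{bi}},y_{j+}^{\textsc{bi}}]]$ for all $j$, which under the standing hypothesis $\infty\notin\{y_{1-}^{\textsc{bi}},y_{1+}^{\textsc{bi}},\ldots\}$ is equivalent to $n^{<}_{\textsc{bi}}=0$ (and the same equivalence holds for the rational fillings when $n^{\infty}=1$, as $\hat y_j$ is independent of $y$).

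Next I would handle $y\in\Q$. Then $\hat Y(y)$ is connected and Seifert fibered over a punctured sphere, hence a graph manifold, and each $Y_j$ is Floer simple, so Theorem~\ref{thm: l-space interval for seifert jsj} applies: whenever $\mathcal L(Y)\ne\emptyset$ it gives $\mathcal L_{\textsc{sf}}(Y)=[[y_-,y_+]]$ (if $Y$ is Floer simple) or $\{y_-\}$ (if not), with $y_{\pm}$ the $\sup/\inf$ expressions there. If $n^{\infty}\ge 1$ one of those expressions contains an infinite summand, so $y_-=y_+=\infty$; combined with Proposition~\ref{prop: structure of L-space intervals} and the reducibility of $Y(\infty)$ this forces $\mathcal L_{\textsc{sf}}(Y)$ to be $\left<-\infty,+\infty\right>$ or $\emptyset$, and it is nonempty --- hence all of $\left<-\infty,+\infty\right>$ --- exactly when some rational filling is an L-space, i.e. when $n^{<}_{\textsc{bi}}=0$; this is $(ii)$. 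If $n^{\infty}=0$ then $\infty\in\mathcal L(Y)$ iff $n^{<}_{\textsc{bi}}=0$ by the computation of $Y(\infty)$ above: for $n^{<}_{\textsc{bi}}=0$ the interval wraps around $\infty$, giving $\mathcal L_{\textsc{sf}}(Y)=[[y_-,y_+]]$ with $y_->y_+$; for $n^{<}_{\textsc{bi}}\ge 1$ the slope $\infty$ is excluded, so $[[y_-,y_+]]$ cannot wrap, and Theorem~\ref{thm: l-space interval for seifert jsj} yields $[y_-,y_+]$, a point, or $\emptyset$ according to $y_-<y_+$, $y_-=y_+$, or $y_->y_+$ --- this is $(iii)$, modulo the claim below. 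Part $(iv)$ goes the same way: an $\infty$ among the endpoints $y_{j\pm}^{\textsc{bi}}$ pins one of $y_{\pm}$ to $\infty$ and acts as an additional tight constraint at that gluing torus, so that already $n^{<}_{\textsc{bi}}\ge 1$ forces $\mathcal L(Y)=\emptyset$ and the surviving outcomes, all under $n^{<}_{\textsc{bi}}=0$, are $[[y_-,y_+]]$, $\left<-\infty,+\infty\right>$, or $\emptyset$ as $n^{\infty}=0$, $n^{\infty}=1$, or $n^{\infty}>1$.

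The main obstacle is the claim twice deferred above: that $n^{<}_{\textsc{bi}}>1$ (and, in part $(iv)$, $n^{<}_{\textsc{bi}}\ge 1$) forces $\mathcal L(Y)=\emptyset$, while $n^{<}_{\textsc{bi}}\le 1$ with $y_-<y_+$ yields a genuine nondegenerate interval. This reduces to a quantitative monotonicity property of the piecewise-constant functions $k\mapsto\lfloor y_{j-}^{\textsc{bi}}k\rfloor$ and $k\mapsto\lceil y_{j+}^{\textsc{bi}}k\rceil$ governing $y_{\pm}$ in Theorem~\ref{thm: l-space interval for seifert jsj}: a Seifert-fibered-over-sphere block can absorb the obstruction to an L-space filling at its boundary tori at most once, so two such tori, each glued to a $Y_j$ whose L-space interval stays bounded away from the fiber slope, can never be simultaneously completed to an L-space. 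Making this precise is the technical core of the interval computation of \cite{lgraph}, and I would invoke that computation rather than reprove it.
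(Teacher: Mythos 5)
Your proposal is sound where it argues, and it is worth knowing that the paper itself offers no argument at all for this statement: Theorem~\ref{thm: gluing structure theorem} is an imported result, and the paper's entire proof is the single line ``See Proposition 4.7 in \cite{lgraph}.'' So there is no in-paper derivation to match; ultimately you and the paper lean on the same external computation. What you add on top of that citation is correct and genuinely useful: the fiber-slope analysis via the connected-sum Corollary does pin down exactly when $\infty \in \mathcal{L}_{\textsc{sf}}(Y)$ (note the compressing-disk slope of the solid-torus summand at $\partial_j\hat{Y}$ is the rational longitude $\tilde f_j$, i.e.\ $\textsc{sf}$-slope $\infty$, so your identification of $\hat y_j$ as $(\bar\varphi_{j*}^{\P})^{-1}(\infty)$ is right), and in the cases $n^{\infty}>1$ and $n^{\infty}=1$ the decomposition applies to \emph{every} filling, so parts $(i)$ and $(ii)$ really do follow from the Corollary plus the K\"unneth property without invoking the interval formula's extremization at all. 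By contrast, the content you defer --- that $n^{<}_{\textsc{bi}}>1$ (and, with an infinite endpoint present, $n^{<}_{\textsc{bi}}\ge 1$) forces $\mathcal{L}=\emptyset$, that $n^{<}_{\textsc{bi}}=0$ with $n^{\infty}=0$ forces $y_->y_+$ together with Floer simplicity (ruling out the single-point alternative $\{\infty\}$ allowed by Proposition~\ref{prop: structure of L-space intervals}), and all of part $(iv)$ --- is exactly the quantitative endpoint analysis of \cite{lgraph} and cannot be extracted from Theorem~\ref{thm: l-space interval for seifert jsj} alone, since that theorem is conditional on $\mathcal{L}(Y)\neq\emptyset$. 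Deferring it is therefore legitimate here and is no weaker than what the paper does, but be aware that your sketch of $(iv)$ in particular is an assertion rather than an argument.
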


\begin{proof}
See Proposition 4.7 in \cite{lgraph}.
\end{proof}

To state the below theorem efficiently, we need to introduce one last notational convention.

\vskip.1cm

\noindent {\textbf{Notation.}} 
When the brackets $[\cdot]$ are applied to a real number, they always indicate the map
\begin{equation}
[\cdot]: \R \to \left[0, 1\right>,
[x]:= x - \lfloor x \rfloor.
\end{equation}
Note that the maps $\lfloor \cdot \rfloor$, $\lceil \cdot \rceil$, and $[\cdot]$
satisfy the useful identities,
\begin{equation}
-\lfloor -x \rfloor 
\mkern.5mu=\mkern.5mu 
\lceil x \rceil,\;\;\;\;
x 
\mkern.5mu=\mkern.5mu 
\lfloor x \rfloor 
\mkern-1.5mu+\mkern-1.5mu 
[x] 
\mkern.5mu=\mkern.5mu 
\lceil x \rceil 
\mkern-1.5mu-\mkern-1.5mu 
[-x]
\;\;\;\;
\text{for all }
x\in\R.
\end{equation}

We are now ready to
classify L-space surgeries on
torus-link satellites of
L-space knots.
\vspace{-.1cm}

\begin{theorem}
\label{thm: s3 satellite in sf basis}
Let $Y := S^3 \setminus 
\overset{\mkern2.5mu\circ}{\nu}(K)$
denote the exterior of a
positive L-space knot $K \subset S^3$ of genus $g(K)$,
and let 
$Y^{(np,nq)} := S^3 \setminus 
\overset{\mkern2.5mu\circ}{\nu}(K^{(np,nq)})$
denote the exterior of the
$(np,nq)$-torus-link satellite $K^{(np,nq)} \subset S^3$ of $K \subset S^3$,
for $n, p,q \in \Z$ with $n,p>0$, and
$\gcd(p,q) = 1$.

Construct
$\hat{Y}^n_{(p,n)}$,
$\hat{Y}_{(p,q)}$,
and 
$Y^{(np,nq)} := Y \cup_{{{\bar{\varphi}}}} \hat{Y}^n_{(p,q)}$
as in Proposition
\ref{prop: make satellite},
with the Seifert structure $\textsc{sf}$
on $\hat{Y}^n_{(p,q)}$
as specified by 
Proposition~\ref{prop: make satellite}.

$(\psi)$ There is a change of basis map
\begin{equation*}
\psi: 
(\Q \cup \{\infty\})^n_{\textsc{sf}} \longrightarrow (\Q \cup \{\infty\})^n_{S^3},\;\;
\boldsymbol{y} \mapsto
\left(pq + {\textstyle{\frac{1}{y_1}}}, \ldots, pq + {\textstyle{\frac{1}{y_n}}}\right),
\end{equation*}
which converts the above-specified Seifert basis slopes 
$(\Q \cup \{\infty\})^n_{\textsc{sf}}$
into conventional link exterior slopes in $S^3$, so that
$\mathcal{L}_{S^3}(Y^{(np,nq)})=
\psi(\mathcal{L}_{\textsc{sf}}(Y^{(np,nq)}))$.

$(i.a)$
If $N:= 2g(K)-1 > \frac{q}{p}$, $K \subset S^3$ is nontrivial, and $p>1$, then
$$
\mathcal{L}_{\textsc{sf}}(Y^{(np,nq)})
=
\Lambda_{\textsc{sf}}(Y^{(np,nq)})
:= \{ \boldsymbol{l} \in \Z^n \subset (\Q \cup \{\infty\})^n |\;
{\textstyle{\sum_{i=1}^n l_i}} = 0\}.
$$

$(i.b)$
If $2g(K)-1 > \frac{q}{p}$, $K \subset S^3$ is nontrivial, and $p=1$, then
$$
\mathcal{L}_{\textsc{sf}}(Y^{(np,nq)})
=
\Lambda_{\textsc{sf}}(Y^{(np,nq)}) +
\mathfrak{S}_n\left(
\left[0,{\textstyle{\frac{1}{N-q}}}\right] \times \{0\}^{n-1}
\right).
$$

$(ii)$
If $2g(K)\mkern-1mu-\mkern-1mu 1 \mkern-2mu\le\mkern-2mu \frac{q}{p}$ 
with $K \mkern-2mu\subset\mkern-4mu S^3\mkern-3mu$ nontrivial,
$\mkern-7mu$
or if $\mkern.5mup,\mkern-1muq \mkern-2.5mu>\mkern-3mu 1$ with $K \mkern-4mu\subset\mkern-4mu 
S^3\mkern-2mu$ trivial, then
$$
\mathcal{NL}_{\textsc{sf}}(Y^{(np,nq)})
= 
\mathcal{Z}_{\textsc{sf}}(Y^{(np,nq)})
\;\cup\;
\{\boldsymbol{y} \in \Q^n\,|\; y_+ < 0 < y_-\},\;\text{where}
$$
\begin{align*}
\mathcal{Z}_{\textsc{sf}}(Y^{(np,nq)})
&= \left\{ \boldsymbol{y} \in (\Q \cup \{\infty\})^n \mkern2mu \left|\mkern5.5mu
\#\{i \in \{1, \ldots, n\}| y_i = \infty\} > 1 \right.\right\},
 \\
y_- 
&= 
-{\textstyle{\sum_{i=1}^n\lfloor y_i \rfloor}},
    \\
y_+ 
&=
-{\textstyle{\sum_{i=1}^n\lceil y_i \rceil}}
-
\begin{cases}
\frac{1}{p+q-2g(K)p}
& \sum_{i=1}^n\lfloor[-y_i](p+q - 2g(K)p)\rfloor =0
    \\
0
& \text{otherwise}
\end{cases}.
\end{align*}

$(iii)$ If $K \subset S^3$ is the unknot, with $p=1$ and $q>0$, then
\begin{align}
\nonumber
\mathcal{L}_{\textsc{sf}\mkern-.2mu}(Y^{(n,nq)}) 
=
\mathcal{R}_{\textsc{sf}\mkern-.2mu}(Y^{(n,nq)}) 
\mkern-1mu\setminus\mkern-1mu 
\mathcal{Z}_{\textsc{sf}\mkern-.2mu}(\mkern1mu&\mkern-1mu Y^{(n,nq)})
 \mkern-1mu\cup\mkern-1mu
\{\boldsymbol{y} \mkern-1.5mu\in\mkern-1.5mu \Q^n \mkern.5mu|\mkern2mu 
M_{S^2\mkern-.7mu}( {\textstyle{\frac{1}{q}}}
\mkern-.2mu,\mkern-1.7mu\boldsymbol{y})
\text{ a SF L-space}
\},\mkern-4mu
    \\
\nonumber
\text{or equivalently,}
\mkern35mu
\mathcal{NL}_{\textsc{sf}}(Y^{(n,nq)}) 
&=
\mathcal{Z}_{\textsc{sf}}(Y^{(n,nq)}) \mkern7mu\cup
\hphantom{\text{or equivalently,}
\mkern35mu}
  \\
\left\{ 
\vphantom{1-\left\lceil {\textstyle{\frac{k}{q}}} \right\rceil}
\boldsymbol{y} \in \Q^n \mkern.5mu\right|\mkern1mu 
1\mkern-1mu-\mkern-3mu\left\lceil 
\mkern-1mu{\textstyle{\frac{k}{q}}}\mkern-1mu \right\rceil\mkern-2.6mu
-\mkern-3mu{\textstyle{\sum_{i=1}^n}} 
\lceil 
y_i k \rceil
&
<
0<
\left.
-1\mkern-1mu -\mkern-3.6mu\left\lfloor 
\mkern-1mu{\textstyle{\frac{k}{q}}}\mkern-1mu \right\rfloor\mkern-2.6mu
-\mkern-3mu{\textstyle{\sum_{i=1}^n}} \lfloor y_i k \rfloor
\mkern15mu\forall\mkern3mu
k \mkern-2.5mu\in\mkern-2.5mu \Z_{>0}
\right\}.
\nonumber
\end{align}
\end{theorem}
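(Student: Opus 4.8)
The plan is to realize $Y^{(np,nq)}$ as the union of Proposition~\ref{prop: make satellite}, $Y^{(np,nq)} = Y \cup_{\bar\varphi}\hat{Y}^n_{(p,q)}$, and to analyze $\mathcal{L}_{\textsc{sf}}$ one link component at a time. When $K\subset S^3$ is a nontrivial positive L-space knot, $Y := S^3\setminus\overset{\circ}{\nu}(K)$ is Floer simple with $\mathcal{L}_{S^3}(Y) = [2g(K)-1,+\infty]$ by \OS, while $\hat{Y}^n_{(p,q)}$ is Seifert fibered over the $(n{+}1)$-punctured sphere with a single exceptional fiber of Seifert slope $-\tfrac{q^*}{p}$, boundaries $\partial_0$ (glued to $Y$ via $\bar\varphi$) and $\partial_1,\dots,\partial_n$ (the link-component boundaries, each the neighborhood of a regular fiber). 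Part $(\psi)$ just records the composite change of basis from this Seifert basis to the conventional $S^3$-link-exterior basis: each $f_i$ sits on $\partial\nu(\lambda_{-1})=\partial\nu(K)$ as a $(p,q)$-torus-knot curve, whose $S^3$-self-linking $pq$ produces the $pq$-shift and whose meridian--longitude swap relative to the surgery basis produces the reciprocal $\tfrac1{y_i}$; this is a one-line intersection-pairing computation given (\ref{eq: homology def of bar varphi}).

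\textbf{Reduction to the one-boundary case.} To compute $\mathcal{L}_{\textsc{sf}}(Y^{(np,nq)})$, fix $\boldsymbol{y}' := (y_1,\dots,y_{n-1})$ and set $Z := Y^{(np,nq)}(\boldsymbol{y}',-)$, a $3$-manifold with the single torus boundary $\partial_n$. Filling $\partial_1,\dots,\partial_{n-1}$ by $\boldsymbol{y}'$ turns $\hat{Y}^n_{(p,q)}$ into the Seifert fibered space over the twice-punctured sphere with Seifert slopes $(-\tfrac{q^*}{p}, y_1,\dots,y_{n-1})$, still carrying $\partial_n$ as the exterior of a regular fiber; when $K$ is nontrivial this piece is glued to $Y$ along $\partial_0$, and when $K$ is the unknot one may instead fill $\partial_0$ by the exceptional fiber $\lambda_0$ of Seifert slope $\tfrac{p^*}{q}$, absorbing it as an extra Seifert slope. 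Thus $Z$ falls under Theorems~\ref{thm: gluing structure theorem} and~\ref{thm: l-space interval for seifert jsj}, with $\hat{Y}$ there the above Seifert piece, $n_{\textsc{bi}}=1$ and $Y_1 = Y$ in the nontrivial case (the gluing formula underlying Theorem~\ref{thm: l-space interval for seifert jsj} being legitimate since an L-space-knot exterior is Floer simple, so Theorem~\ref{thm: gluing theorem for floer simple or graph manifold} applies; Theorem~\ref{thm: knot exterior gluing theorem} would cover the non-Floer-simple companion case), and $n_{\textsc{bi}}=0$ with the extra Seifert slope $\tfrac{p^*}{q}$ in the unknot case. From the linear-fractional map (\ref{eq: phi lin fract map on slopes for cable glue}) we read $\bar\varphi^{\P}_*(\infty) = \tfrac{q^*}{p}$ and $\bar\varphi^{\P}_*(2g(K){-}1) = \tfrac{q^*}{p} - \tfrac1{p(p(2g(K)-1)-q)}$, so $\bar\varphi^{\P}_*(\mathcal{L}(Y))_{\textsc{sf}} = [[\bar\varphi^{\P}_*(2g(K){-}1),\,\tfrac{q^*}{p}]]$; crucially, the sign of $(2g(K)-1)-\tfrac{q}{p}$ governs whether this image interval is an honest bounded interval ($n^<_{\textsc{bi}}=1$) or wraps once through $\infty$ ($n^<_{\textsc{bi}}=0$), which is exactly the dichotomy between cases $(i)$ and $(ii)$.

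\textbf{Evaluating the extrema and reassembling.} Theorem~\ref{thm: gluing structure theorem} now decides whether $\mathcal{L}(Z)$ is $\emptyset$, a point, or $[[\tilde y_-,\tilde y_+]]$ — in particular $\mathcal{L}(Z)=\emptyset$ whenever two or more of $y_1,\dots,y_{n-1}$ equal $\infty$, which contributes the $\mathcal{Z}_{\textsc{sf}}$ piece of $\mathcal{NL}$ — and Theorem~\ref{thm: l-space interval for seifert jsj} supplies $\tilde y_\pm$ as integer optimizations. The technical heart is evaluating these: using $\lfloor -x\rfloor=-\lceil x\rceil$ and $x=\lfloor x\rfloor+[x]$ one gets $\lfloor -\tfrac{q^*}{p}k\rfloor+\lceil\tfrac{q^*}{p}k\rceil=0$, so the exceptional fiber drops out of $\tilde y_-$, while in $\tilde y_+$ it merges with the $\bar\varphi$-endpoint into the count of integers in a short interval of length $\asymp \tfrac{k}{p\,|p(2g(K)-1)-q|}$; its normalized extremum over $k\in\Z_{>0}$ is attained at $k=1$ (value $0$, forcing collapse to $\Lambda$) when $2g(K)-1>\tfrac qp$ and $p>1$, since then $1\le q^*\le p-1$, whereas with $p=1$ (so $q^*=0$) it yields the slack of length $\tfrac1{N-q}$ of case $(i.b)$, and when $2g(K)-1\le\tfrac qp$ it yields the slack $\tfrac1{p+q-2g(K)p}$ of case $(ii)$, attained along $k$ in a suitable residue class. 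Moreover, as soon as some $y_j$ is non-integral, the $[\,\cdot\,]$-terms force $\tilde y_->\tilde y_+$, hence $\mathcal{L}(Z)=\emptyset$; this confines L-space slopes in case $(i)$ to the slices where all of $y_1,\dots,y_{n-1}$ are integers. Finally $\boldsymbol{y}\in\mathcal{L}_{\textsc{sf}}(Y^{(np,nq)})$ iff $y_n\in[[\tilde y_-(\boldsymbol{y}'),\tilde y_+(\boldsymbol{y}')]]$; since $\partial_1,\dots,\partial_n$ play symmetric roles the constraint is $\mathfrak{S}_n$-invariant and rewrites in the symmetric forms claimed: $\Lambda_{\textsc{sf}}$ in $(i.a)$; $\Lambda_{\textsc{sf}}+\mathfrak{S}_n([0,\tfrac1{N-q}]\times\{0\}^{n-1})$ in $(i.b)$; $\mathcal{Z}_{\textsc{sf}}\cup\{y_+<0<y_-\}$ with the displayed $y_\pm$ in $(ii)$; and in $(iii)$, applying the $n_{\textsc{bi}}=0$ form of Theorem~\ref{thm: l-space interval for seifert jsj} to $M_{S^2}(\tfrac1q,\boldsymbol{y})=Y^{(n,nq)}(\boldsymbol{y})$ yields the stated ``for all $k\in\Z_{>0}$'' description of $\mathcal{NL}_{\textsc{sf}}$, which is precisely the (generalized Jankins--Neumann) criterion for that Seifert fibered filling not to be an L-space.

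\textbf{Expected main obstacle.} The number-theoretic bookkeeping in the $\sup_{k>0}$ / $\inf_{k>0}$ evaluations is the crux: identifying the optimal $k$ in each subcase (relying on $\gcd(q^*,p)=1$), tracking the $[[\cdot,\cdot]]$ wrap-around through $\infty$ consistently in the $2g(K)-1\le\tfrac qp$ regime, and checking that the per-slice intervals assemble into the clean $\mathfrak{S}_n$- and $\Lambda$-equivariant closed forms. There is also the routine but necessary verification that $\hat{Y}$ is genuinely a JSJ component of $Z$ in the relevant fillings, together with separate treatment of the degenerate fillings in which a piece becomes a solid torus or $\mathbb{T}^2\times I$ (e.g. $p=1$, where $\hat{Y}^1_{(1,q)}$ is a product region and the $p=1$ cable merely reframes $K$).
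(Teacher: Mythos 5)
Your proposal is correct and follows essentially the same route as the paper: both arguments feed the satellite gluing into the generalized Jankins--Neumann machinery of Theorems~\ref{thm: l-space interval for seifert jsj} and~\ref{thm: gluing structure theorem}, using $\mathcal{L}_{S^3}(Y)=[2g(K)-1,+\infty]$, the induced interval ${{\bar{\varphi}}}_*^{\P}(\mathcal{L}(Y))_{\textsc{sf}}=[[\tfrac{q^*}{p}+\tfrac{1}{p(q-Np)},\tfrac{q^*}{p}]]$ to drive the case split on the sign of $(2g(K)-1)-\tfrac{q}{p}$, the same cancellation of the exceptional-fiber term in $y_-$, the same $\sup/\inf$ evaluations (including the $\tfrac{1}{p+q-2g(K)p}$ slack and Lemma~\ref{lemma: p+q bound}-type input for the unknot case), and the same change of basis $\psi$ coming from the rational-longitude slope $-\tfrac{1}{pq}$; the paper merely keeps the $\mathfrak{S}_n$-symmetry manifest by filling all $n$ components and testing whether the meridian slope $0$ of an auxiliary drilled regular fiber lies in the resulting interval, rather than filling $n-1$ components and reading the interval in the remaining boundary slope as you do, and these are equivalent applications of the same theorem. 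One caveat: your intermediate claim that any non-integral $y_j$ forces $\mathcal{L}(Z)=\emptyset$ throughout case $(i)$ is only true for $p>1$; when $p=1$ exactly one coordinate (possibly among $y_1,\ldots,y_{n-1}$) may carry a fractional part in $\left[0,\tfrac{1}{N-q}\right]$, which is precisely the slack your own remark identifies and which the paper's case $(i.b)$ computation makes precise.
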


\noindent {\textbf{Remarks.}}
A knot $K \subset S^3$ is called a {\em{positive}} (respectively
{\em{negative}}) {\em{L-space knot}} 
if $K$ admits an L-space surgery for some finite
$S^3$-slope $m \mkern-2.5mu>\mkern-2.5mu 0$ (respectively 
$m \mkern-2.5mu<\mkern-2.5mu 0$).
Since $\mathcal{L}_{S^3}(Y^{(np,nq)}) = -\mathcal{L}_{S^3}(\bar{Y}^{(np,-nq)})$
for $\bar{Y}^{(np,-nq)}$, the $(np,-nq)$-torus-link
satellite of the mirror knot $\bar{K} \subset S^3$,
the above theorem and 
Theorem~\ref{thm: torus link satellite in s3, lambda action}
are easily adapted to satellites of negative L-space knots
or to negative torus links.
Any $p=0$ satellite is just the $n$-component unlink, with
$\mathcal{L}_{S^3} = \prod_{i=1}^n \left[-\infty, 0\right> \cup \left<0, +\infty\right]$.
Note that while
Theorem~\ref{thm: torus link satellite in s3, lambda action}
excludes the case of torus links proper
(satellites of the unknot) which are ``degenerate,'' {\em{i.e.}}, which have $1 \in \{p,q\}$,
this case is treated in $(iii)$ above, setting $p=1$ without loss of generality.
If $q=0$ in this case, we again have the $n$-component unlink.
For any nontrivial degenerate torus link,
part $(iii)$ above implies that 
the boundary of $\mathcal{L}_{\textsc{sf}}$ follows a piece-wise-constant chaotic pattern,
similar to the boundary of the region of Seifert fibered L-spaces.
This is unsurprising, since the irreducible
surgeries on $T(n,n)$ consist of all oriented
Seifert fibered spaces over $S^2$ of $n$ or fewer exceptional fibers.
Lastly, if
$K^{(np,nq)} \subset S^3$ is any nontrivial-torus-link satellite 
of a {\em{non}}-L-space knot in $S^3$,
then the L-space gluing result conjectured in \cite{lslope} for arbitrary closed oriented 3-manifolds with single-torus boundary---which the authors of \cite{HRW} have announced they expect to prove in the near future---would imply that $\mathcal{L}(Y^{(np,nq)}) = \Lambda(Y^{(np,nq)})$.

\textit{Proof of }$\mkern-3mu(\psi)$.
Let
$Y_j: = Y^{(np,nq)}_{\textsc{sf}}
(0, \ldots, 0, 
\overset{j}{\hat{*}}
,0, \ldots, 0)$
denote the partial Dehn filling of $Y^{(np,nq)}$
which fills in all $n$ boundary components except 
$\partial Y_j = \partial_j Y^{(np,nq)}$
with regular fiber neighborhoods, so that,
by the definitions of 
$Y^{(np,nq)}$ and
$\hat{Y}^n_{(p,q)}$ in
(\ref{eq: cable union construction}) and
(\ref{eq: def of hat Y n p q}), we have
\vspace{-.3cm}
\begin{equation}
Y_j 
\mkern2mu=\mkern2mu 
Y \cup_{{\bar{\varphi}}} \hat{Y}^n_{(p,q)}
\mkern2mu=\mkern2mu 
Y \cup_{{\bar{\varphi}}} 
M_{S^2}(\mkern1mu{\textstyle{
\mkern-2mu-\mkern-1mu\frac{q^*\mkern-6mu}{p}}},
\overset{0}{\hat{*}},
0, \ldots,0, \overset{j}{\hat{*}},0\ldots, 0\mkern1mu),
\;\;\;\;
\bar{\varphi}: \partial Y \to -\partial_0 \hat{Y}^n_{(p,q)},
\end{equation}
with $\bar{\varphi}$ as defined in
Proposition \ref{prop: make satellite},
where we recall from Definition~\ref{def: p*q*} that
$pp^* \mkern-2mu-\mkern-1mu qq^* \mkern-2mu=\mkern-2mu 1$ with
$0 \mkern-.6mu\le\mkern-.6mu q^* \mkern-3mu<\mkern-.6mu p$.
Observing that ${Y_j}$ has the Dehn filling $Y_j(-\tilde{h}_j) = S^3$,
we take $\mu_j := -\tilde{h}_j \in H_1(\partial Y_j; \Z)$
for the meridian in our $S^3$ surgery basis for $H_1(\partial Y_j; \Z)$.

As shown, for example, in \cite{lgraph},
any homology class $\lambda_j \in H_1(\partial Y_j;\Z)$ representing the
rational longitude of $Y_j$ has $\textsc{sf}$-slope
$\pi_{\textsc{sf}}(\lambda_j)$
given by the negative sum of the $\textsc{sf}$-slope images 
of the rational
longitudes of the manifolds glued into the
boundary components of 
$\hat{Y}^n_{(p,q)}$,
plus the negative sum of Seifert-data slopes
(which are just the $\textsc{sf}$-slope images of the
rational longitudes 
of the corresponding fiber neighborhoods),
as follows:
\begin{equation}
\label{eq: calculation of longitude}
\pi_{\textsc{sf}}(\lambda_j)
=
-\left(-\mfrac{q^*}{p} + {{\bar{\varphi}}}_*^{{\mathbb{P}}}(\pi_{S^3\mkern-1mu}(\lambda))
+ {\textstyle{\sum_{i\neq j} 0}} \right)
=
-\left(-\mfrac{q^*}{p} + \mfrac{p^*}{q}\right)
= -\mfrac{1}{pq}.
\end{equation}
This uses the definition in
(\ref{eq: phi lin fract map on slopes for cable glue})
of the induced map
${{\bar{\varphi}}}_*^{{\mathbb{P}}} : \P(H_1(\partial Y; \Z))_{S^3} 
\to \P(H_1(\partial_0 \hat{Y}^n_{(p,q)} \Z))_{\textsc{sf}}$
on slopes, to calculate the slope
${{\bar{\varphi}}}^{\P}_*(\pi_{S^3\mkern-1mu}(\lambda)) 
=
{{\bar{\varphi}}}^{\P}_*(\pi_{S^3\mkern-1mu}(0)) =
\frac{p^*\mkern-4mu}{q}
\in \P(H_1(\partial_0 \hat{Y}^n_{(p,q)} \Z))_{\textsc{sf}}$.

To obtain $\mu_j  \cdot \lambda_j=1$,
we are constrained by the choice
$\mu_j \mkern-2mu=\mkern-2mu -h_j$ to select the representative
$\lambda_j \mkern-3mu:=\mkern-2.5mu \tilde{f}_j \mkern1mu+\mkern2mu pq\tilde{h}_j$
for the $\textsc{sf}$-slope $\pi_{\textsc{sf}}(\lambda_j) = -\frac{1}{pq}$.
The resulting homology change of basis
\begin{equation}
\mu_j \mapsto 0\tilde{f}_j -1 \tilde{h}_j,\;\; 
\lambda_j \mapsto 1\tilde{f}_j + pq \tilde{h}_j
\end{equation}
for $H_1(\partial_j Y^{(np,nq)}; \Z)$
then induces a map on slopes with inverse
\begin{equation}
\psi_j : 
\P(H_1(\partial_j Y^{(np,nq)}; \Z))_{\textsc{sf}} 
\to 
\P(H_1(\partial_j Y^{(np,nq)}; \Z))_{S^3},
\;\;\;\;
y_j \mapsto pq + {\textstyle{\frac{1}{y_j}}}.
\end{equation}

\smallskip
{\textit{Setup for}} $(i)$ {\textit{and}} $(ii)$.
We begin with the case in which
$K \subset S^3$ is nontrivial, so that its exterior 
$Y = S^3 \setminus \overset{\circ}{\nu}(K)$ is boundary incompressible.
It is easy to show (see ``example'' in \cite[Section 4]{lslope})
that such $Y$ has L-space interval
\begin{equation}
\mathcal{L}_{S^3}(Y) = [N, +\infty],\;\;\;\;
N:= \deg(\Delta(K)) - 1 = 2g(K) - 1,
\end{equation}
where $\Delta(K)$ and $g(K)$ are the Alexander polynomial and genus of $K$.
Writing 
\begin{equation}
{{\bar{\varphi}}}_*^{{\mathbb{P}}}(\mathcal{L}(Y))_{\textsc{sf}} 
= [[y_{0-}, y_{0+}]],
\end{equation}
we then use
(\ref{eq: phi lin fract map on slopes for cable glue})
to compute that
\begin{equation}
y_{0-}:= \mfrac{Nq^* - p^*}{Np - q}
= \mfrac{q^*}{p} + \mfrac{1}{p(q-Np)},
\;\;\;\;\;
y_{0+}:= \mfrac{q^*}{p}.
\end{equation}

For a given $\textsc{sf}$-slope
$\boldsymbol{y}:=(y_1, \ldots, y_n) \in (\Q \cup \mkern-2mu\{\infty\})^n_{\textsc{sf}}$,
we verify whether the Dehn filling
$Y^{(np,nq)}_{\textsc{sf}}(\boldsymbol{y})$
is an L-space
by examining the L-space interval, computed via
Theorem~\ref{thm: l-space interval for seifert jsj},
of a regular fiber exterior in $Y^{(np,nq)}_{\textsc{sf}}(\boldsymbol{y})$.
That is, if we let $\hat{Y}^{(np,nq)}$
denote the regular fiber exterior
\begin{equation}
\hat{Y}^{(np,nq)} := Y^{(np,nq)} \setminus \overset{\circ}{\nu}(f)
\end{equation}
for a regular fiber $f \subset \hat{Y}_{(p,q)}^n$,
then $Y^{(np,nq)}(\boldsymbol{y})$
is an L-space if and only if 
the meridional slope 
$0 \in \P(H_1(\hat{Y}^{(np,nq)}_{\textsc{sf}}(\boldsymbol{y}); \Z))_{\textsc{sf}}$ satisfies
$0 \in \mathcal{L}_{\textsc{sf}}(\hat{Y}^{(np,nq)}_{\textsc{sf}}(\boldsymbol{y}))$.
Since $Y$ is Floer simple and boundary incompressible,
Theorem~\ref{thm: l-space interval for seifert jsj}
tells us that if $\mathcal{L}(\hat{Y}^{(np,nq)}_{\textsc{sf}}(\boldsymbol{y}))$
is nonempty, then it is determined by 
$y_-, y_+\in 
\P(H_1(\hat{Y}^{(np,nq)}_{\textsc{sf}}(\boldsymbol{y}); \Z))_{\textsc{sf}}$,
where
\begin{align}
\label{eq: cabling def for y- and y_+}
&\;
y_- := \max_{k>0} y_-(k),\;\;\;\;\; y_+ := \min_{k>0} y_+(k),
\;\;\;\;\;\;\;
   \\
\label{eq: cabling def for y-}
y_-(k)
:=& -\mfrac{1}{k}\mkern-3mu\left(1 + 
\left\lfloor
\mkern-2mu-\mfrac{\mkern1.5muq^*\mkern-3.5mu}{p}k\mkern-1.5mu
\right\rfloor
+
{\textstyle{\sum_{i=1}^n \lfloor y_i k \rfloor}} +
(\left\lceil y_{0+}k\mkern-1.5mu
\right\rceil -1)
\right)
   \\
=& \mfrac{1}{k}\mkern-3mu\left( 
\left\lceil
\mkern-2mu\mfrac{\mkern1.5muq^*\mkern-3.5mu}{p}k\mkern-1.5mu
\right\rceil
-\left\lceil y_{0+}k\mkern-1.5mu
\right\rceil
-
{\textstyle{\sum_{i=1}^n \lfloor y_i k \rfloor}}
\right),  \nonumber
   \\
\label{eq: cabling def for y+}
y_+(k)
:=& -\mfrac{1}{k}\mkern-3mu\left(-1 + 
\left\lceil
\mkern-2mu-\mfrac{\mkern1.5muq^*\mkern-3.5mu}{p}k\mkern-1.5mu
\right\rceil
+
{\textstyle{\sum_{i=1}^n \lceil y_i k \rceil}} +
(\left\lfloor y_{0-}k\mkern-1.5mu
\right\rfloor +1)
\right)
   \\
=& \mfrac{1}{k}\mkern-3mu\left( 
\left\lfloor
\mkern-2mu\mfrac{\mkern1.5muq^*\mkern-3.5mu}{p}k\mkern-1.5mu
\right\rfloor
-\left\lfloor y_{0-}k\mkern-1.5mu
\right\rfloor
-
{\textstyle{\sum_{i=1}^n \lceil y_i k \rceil}}
\right).  \nonumber
\end{align}

Thus, since $y_{0+} = \frac{q^*}{p}$,
we have
\begin{align}
y_-(k)
=& -\mfrac{1}{k}
{\textstyle{\sum_{i=1}^n}}
\left\lfloor(\lfloor y_i \rfloor  +  [y_i] )k \right\rfloor
    \\
&=
-\mfrac{1}{k}
{\textstyle{\sum_{i=1}^n}}\lfloor[y_i] k \rfloor
-{\textstyle{\sum_{i=1}^n}}
\lfloor y_i \rfloor
  \nonumber
  \\
&\le
-{\textstyle{\sum_{i=1}^n}}
\lfloor y_i \rfloor \nonumber
\end{align}
for all $k > 0$, which, since 
$y_-(1) = -{\textstyle{\sum_{i=1}^n}}
\lfloor y_i \rfloor$,
implies 
\begin{equation}
y_- = -{\textstyle{\sum_{i=1}^n}}
\lfloor y_i \rfloor.
\end{equation}
For $y_+$, there are multiple cases.

{{{\textit{Proof of}} $(i)${\textit{:}} $N= 2g(K)-1 > \frac{q}{p}$.}}
Since $q-Np<0$, we have $y_{0-} < \frac{q^*}{p} = y_{0+}$,
which, by 
Theorem~\ref{thm: gluing structure theorem},
implies $\mathcal{L}(\hat{Y}^{(np,nq)}(\boldsymbol{y})) \mkern-1mu\neq\mkern-1mu \emptyset$
if and only if
$\boldsymbol{y} \mkern-1mu\in\mkern-1mu \Q^n$ and
$y_- \mkern-2mu\le\mkern-1mu y_+$.

{\textit{Case (a):}} $p >1$.  Since $0 < y_{0-} < \frac{q^*}{p} < 1$, we have
\begin{equation}
y_+
\;
\le
y_+(1)
=-{\textstyle{\sum_{i=1}^n \lceil y_i\rceil}},
\end{equation}
so that $y_- \le y_+$ if and only if
$\sum_{i=1}^n(\lceil y_i\rceil - \lfloor y_i\rfloor) \le 0$,
which, for $\boldsymbol{y} \in \Q^n$, occurs if and only if
$\boldsymbol{y} \in \Z^n$.
If $\boldsymbol{y} \in \Z^n$,
then $y_+(k) \le y_+(1)$ for all $k>0$, implying
\begin{equation}
y_+ = y_+(1) = -{\textstyle{\sum_{i=1}^n  y_i}} = y_-,
\end{equation}
so that
Theorem~\ref{thm: gluing structure theorem}
tells us
\begin{equation}
\mathcal{L}_{\textsc{sf}}(\hat{Y}^{(np,nq)}(\boldsymbol{y}))
= \{y_-\} = \{y_+\}
= \{-{\textstyle{\sum_{i=1}^n y_i}}\}.
\end{equation}
Thus, since
$Y^{(np,nq)}(\boldsymbol{y})$ is an L-space
if and only if 
$0 \in \mathcal{L}_{\textsc{sf}}(\hat{Y}^{(np,nq)}(\boldsymbol{y}))$,
we have
\begin{equation}
\mathcal{L}_{\textsc{sf}}(Y^{(np,nq)})
=
\Lambda_{\textsc{sf}}(Y^{(np,nq)})
:= \{ \boldsymbol{y} \in \Z^n \subset (\Q \cup \{\infty\})^n |\;
{\textstyle{\sum_{i=1}^n y_i}} = 0\}.
\end{equation}

{\textit{Case (b):}} $p =1$.  
In this case, $\frac{q^*}{p} = 0$ and $y_{0-} \mkern-1mu= -\frac{1}{N-q}$,
so that
\begin{equation}
y_+(k)
= \mfrac{1}{k}\left(\left\lceil\mfrac{k}{N-q}\right\rceil +
{\textstyle{\sum_{i=1}^n}}\lfloor [-y_i]k \rfloor \right)
-{\textstyle{\sum_{i=1}^n \lceil y_i\rceil}}.
\end{equation}
In particular, since $y_+(1) = 1 -{\textstyle{\sum_{i=1}^n \lceil y_i\rceil}}$,
the condition $y_- \le y_+ \le y_+(1)$ 
implies
\begin{equation}
\label{eq: case ib bound for ceil minus floor}
{\textstyle{\sum_{i=1}^n}}(\lceil y_i\rceil - \lfloor y_i\rfloor)
\le y_+ +{\textstyle{\sum_{i=1}^n}}\lceil y_i\rceil \le  1,
\end{equation}
which, for $\boldsymbol{y} \in \Q^n$,
occurs only if for some $j \in \{1, \ldots, n\}$
we have $y_i \in \Z$ for all $i \neq j$.
For such a $\boldsymbol{y}$, we then have
\begin{equation}
y_+(k)
= \mfrac{1}{k}\left(\left\lceil\mfrac{k}{N-q}\right\rceil +
\lfloor [-y_j]k \rfloor \right)
-{\textstyle{\sum_{i=1}^n \lceil y_i\rceil}}.
\end{equation}
For $k>0$, set
$s := \left\lceil\frac{k}{N-q} \right\rceil$
and write $k = s(N-q) - t$ with 
$0 \le t < N-q$.

If $[-y_i] \ge \frac{N-q-1}{N-q}$, then
\begin{equation}
y_+(k) +{\textstyle{\sum_{i=1}^n}}\lceil y_i\rceil
\ge \frac{s(N-q) - \lceil [-y_j]t\rceil}{s(N-q) -t} \ge 1
\end{equation}
for all $k >0$, which, since $y_+(1)+{\textstyle{\sum_{i=1}^n}}\lceil y_i\rceil = 1$, 
implies 
\begin{equation}
y_+ = 1 - {\textstyle{\sum_{i=1}^n}}\lceil y_i\rceil
= - {\textstyle{\sum_{i=1}^n}}\lfloor y_i\rfloor = y_-,
\end{equation}
so that $Y^{(np,nq)}(\boldsymbol{y})$
is an L-space if and only if
${\textstyle{\sum_{i=1}^n}}\lfloor y_i\rfloor = 0$.

If $[-y_i] < \frac{N-q-1}{N-q}$, then
\begin{equation}
y_+ 
+ {\textstyle{\sum_{i=1}^n}}\lceil y_i\rceil
\le
y_+(N-q) +{\textstyle{\sum_{i=1}^n}}\lceil y_i\rceil
\le \mfrac{N-q-1}{N-q} 
< 
\mkern1mu
1.
\end{equation}
The left half of (\ref{eq: case ib bound for ceil minus floor})
then tells us that
${\textstyle{\sum_{i=1}^n}}(\lceil y_i\rceil - \lfloor y_i\rfloor) < 1$,
implying  $\boldsymbol{y} \in \Z^n$ and
${\textstyle{\sum_{i=1}^n}\lceil y_i\rceil}
={\textstyle{\sum_{i=1}^n}\lfloor y_i\rfloor}$.
Thus, since $\Z \ni y_- \le y_+ < y_- +1$,
we have $y_- \le 0 \le y_+$ if and only if 
${\textstyle{\sum_{i=1}^n}}\lfloor y_i\rfloor = 0$.

In total, we have learned that 
$\boldsymbol{y} \in \mathcal{L}_{\textsc{sf}}(Y^{(np,nq)})$
if and only if ${\textstyle{\sum_{i=1}^n}}\lfloor y_i\rfloor = 0$
and there exists $j \in \{1, \ldots, n\}$
such that $y_i \in \Z$ for all $i \neq j$
and $[-y_j] \in \left[\frac{N-q-1}{N-q},1\right> \cup \{0\}$,
or equivalently, $[y_j] \in \left[0, \frac{1}{N-q}\right]$.
In other words,
\begin{align}
\mathcal{L}_{\textsc{sf}}(Y^{(np,nq)})
\,&=\, \mathfrak{S}_n \cdot
\left\{\left.\left[l_1+0, l_1 + {\textstyle{\frac{1}{N-q}}}\right] 
\times (l_2, \ldots, l_n) \right|
\boldsymbol{l} \in \Z^n,
\mkern3mu
{\textstyle{\sum_{i=1}^n l_i}} = 0
\right\}
   \\
&=\,
\Lambda_{\textsc{sf}}(Y^{(np,nq)}) 
\,+\, 
\mathfrak{S}_n \left(
\left[0, {\textstyle{\frac{1}{N-q}}}\right] \times \{0\}^{n-1}
\right).   
\mkern100mu\qed
\nonumber   
\end{align}

{\textit{Proof of}} $(ii)${\textit{:}} $N= 2g(K)-1 \le \frac{q}{p}$ {\textit{and}} $q>0$.
We divide this section into three cases:
$N = \frac{q}{p}$, $N < \frac{q}{p}$ with $K \subset S^3$ nontrivial,
and $K \subset S^3$ the unknot $p,q > 1$.

{\textit{Case}} $N = \frac{q}{p}$.
Here, $N>0$ implies $K \subset S^3$ is nontrivial,
and $Np = q$ implies $p=1$, so that
$y_{0+} = \frac{q^*}{p} = 0$
and $y_{0-} = -\frac{1}{N-q} = \infty$.  
Theorem~\ref{thm: gluing structure theorem}.iv
then implies that
$Z^{\infty}_{\textsc{sf}}(Y^{(np,nq)})
\subset
\mathcal{NL}_{\textsc{sf}}(Y^{(np,nq)})$,
but that
\begin{equation}
\label{eq: n_infty = 1 case}
((\Q \cup \{\infty\})^n \setminus \Q^n) \setminus
\mathcal{Z}_{\textsc{sf}}(Y^{(np,nq)})
\,\subset\, \mathcal{L}_{\textsc{sf}}(Y^{(np,nq)}),
\end{equation}
since these are the slopes with $n^{\infty} = 1$, and since
$0 \in \left<-\infty,+\infty\right>$.
For $\boldsymbol{y} \in \Q^n$,
Theorem~\ref{thm: gluing structure theorem}.iv
tells us
$\mathcal{L}_{\textsc{sf}}(\hat{Y}^{(np,nq)}(\boldsymbol{y})) = 
[[y_-, y_+]] =
\left[ -{\textstyle{\sum_{i=1}^n \lfloor y_i \rfloor}}, +\infty\right]$,
so that 
\begin{equation}
\label{eq: NL for N=q}
\mathcal{NL}_{\textsc{sf}}(Y^{(np,nq)})
=
\mathcal{Z}_{\textsc{sf}}(Y^{(np,nq)})
\cup
\{\boldsymbol{y}\in\Q^n\mkern1mu |\,
-\infty < 0 < y_-\}.
\end{equation}
Since $p+q -2g(K)p = q-Np = 0$, 
the definition of $y_+$ in part $(ii)$
of Theorem~\ref{thm: s3 satellite in sf basis}
makes $y_+ = -\infty$, and so 
Theorem~\ref{thm: s3 satellite in sf basis}.ii holds.

{\textit{Case}} $N < \frac{q}{p}$.
Here, 
$q-Np > 0$ implies
$0 \le y_{0+} = \frac{q^*}{p} < y_{0-} \le 1$.
Thus, since
$0 \in \left<-\infty, +\infty\right>$,
the $n^{\infty} = 1$ case of Theorem~\ref{thm: gluing structure theorem}.ii
implies (\ref{eq: n_infty = 1 case}) holds, whereas 
Theorem~\ref{thm: gluing structure theorem}.i
implies
$\mathcal{Z}_{\textsc{sf}}(Y^{(np,nq)}) \subset 
\mathcal{NL}_{\textsc{sf}}(Y^{(np,nq)})$.
For $\boldsymbol{y} \in \Q^n$,
Theorem~\ref{thm: gluing structure theorem}.iii
tells us that
$\mathcal{L}_{\textsc{sf}}(\hat{Y}^{(np,nq)}(\boldsymbol{y}))
= [[y_-, y_+]]$ with $y_- \ge y_+$,
so that $Y^{(np,nq)}(\boldsymbol{y})$ is {\em{not}} an L-space
if and only if $y_+ < 0 < y_-$.
We therefore have
\begin{equation}
\mathcal{NL}_{\textsc{sf}}(Y^{(np,nq)})
=\mathcal{Z}_{\textsc{sf}}(Y^{(np,nq)})
\;\cup\;
\{\boldsymbol{y} \in \Q^n\,|\; y_+ < 0 < y_-\},
\end{equation}
where, the definitions of $y_-$ and $y_+$
are appropriately adjusted in the case that
$K \subset S^3$ is the unknot.

{\textit{Case}} $N < \frac{q}{p}$
{\textit{with}} $K \subset S^3$ {\textit{nontrivial}}.
We already know that 
$y_- = -{\textstyle{\sum_{i=1}^n \lfloor y_i \rfloor}}$
when $K \subset S^3$ is nontrivial and 
$\boldsymbol{y} \in \Q^n$.  Thus, it remains to compute $y_+$
for $\boldsymbol{y} \in \Q^n$.

Since $x = \lceil x \rceil - [-x]$ 
for all $x \in \R$, we have
\begin{align}
y_+(k)
&= \mfrac{1}{k}\mkern-3mu\left(
\left\lfloor
\mkern-2mu\mfrac{\mkern1.5muq^*\mkern-3.5mu}{p}k\mkern-1.5mu
\right\rfloor
-
\left\lfloor
y_{0-}k
\right\rfloor
+
{\textstyle{\sum_{i=1}^n \lfloor [-y_i] k \rfloor}}
\right)
-{\textstyle{\sum_{i=1}^n \lceil y_i \rceil}}
\end{align}
for all $k > 0$.
Write $k = s(q-Np) + t$ for $s, t \in \Z_{\ge 0}$
with $s := \left\lfloor\frac{k}{q-Np}\right\rfloor$ and $t < q-Np$.
Using the facts that 
$q^*(q-Np) = q^*q-Npq^* = p^*p-1-Npq^* = p(p^*-Nq^*)-1$
and that
$\lfloor w \rfloor - \lfloor x\rfloor \ge \lfloor w-x\rfloor$
and $\lfloor-x\rfloor = -\lceil x \rceil$
for all $w,x \in \R$
(in (\ref{eq: approximate ybar for s3 satellite})),
we obtain
\begin{align}
\bar{y}_+(k) 
:=&
\mfrac{1}{k}\mkern-3mu\left(
\left\lfloor
\mkern-2mu\mfrac{\mkern1.5muq^*\mkern-3.5mu}{p}k\mkern-1.5mu
\right\rfloor
-
\left\lfloor
y_{0-}k
\right\rfloor
\right) \nonumber
   \\
=&
\mfrac{1}{k}\!\left(
\left\lfloor s(p^*-Nq^*) + \mfrac{-s + tq^*}{p}\right\rfloor
-\left\lfloor s(p^* - Nq^*) 
+ \mfrac{tq^* + t/(q-Np)}{p}
\right\rfloor
\right)
\nonumber
    \\
=&
\mfrac{1}{k}\!\left(
\left\lfloor -\mfrac{s}{p} + \mfrac{tq^*\!}{p} \right\rfloor
- \left\lfloor\mfrac{tq^*\!}{p}\right\rfloor\right)
\label{eq: simplified ybar for s3 satellite}
    \\
\ge&
\mfrac{-1}{s(q-Np) +t}\left\lceil \mfrac{s}{p}\right\rceil
\label{eq: approximate ybar for s3 satellite}
    \\
=&
-\mfrac{1}{q-Np}
+\mfrac{(s-\left\lceil s/p\right\rceil)(q-Np) + t}
{k(q-Np)}
\nonumber
    \\
\ge& 
-\mfrac{1}{q-Np}.
\label{eq: -1 over q-Np bound for ybar} 
\end{align}

If $\sum_{i=1}^n\lfloor[-y_i](q-Np)\rfloor = 0$,
then, writing $k=1(q-Np) + 0$, we can use line
(\ref{eq: simplified ybar for s3 satellite})
to compute $\bar{y}_+(q-Np)$, so that we obtain
\begin{equation}
y_+(q-Np)
=\bar{y}_+(q-Np) + 0
- {\textstyle{\sum_{i=1}^n\lceil y_i\rceil}}
=-\mfrac{1}{q-Np}
- {\textstyle{\sum_{i=1}^n\lceil y_i\rceil}}.
\end{equation}
Thus, since 
(\ref{eq: -1 over q-Np bound for ybar})
implies 
$y_+(k) \ge -\frac{1}{q-Np} 
- {\textstyle{\sum_{i=1}^n\lceil y_i\rceil}}$
for all $k>0$,
we conclude that
\begin{equation}
y_+ = -\mfrac{1}{q-Np} 
- {\textstyle{\sum_{i=1}^n\lceil y_i\rceil}}.
\end{equation}

On the other hand, if $\sum_{i=1}^n\lfloor[-y_i](q-Np)\rfloor > 0$,
then we know there exists $i_* \in \{1, \ldots, n\}$ for which
$y_{i_*} \ge (q-Np)^{-1}$.
Thus, writing
$k = s(q-Np) + t$ and using line
(\ref{eq: approximate ybar for s3 satellite}),
we obtain the lower bound
\begin{align}
y_+(k)
&\ge \bar{y}_+(k) + \mfrac{1}{k}
\left\lfloor \mfrac{1}{q-Np}k \right\rfloor
- {\textstyle{\sum_{i=1}^n\lceil y_i\rceil}}
\nonumber
   \\
&\ge -\mfrac{1}{k}\left\lceil \mfrac{s}{p}\right\rceil
+ \mfrac{1}{k}s
- {\textstyle{\sum_{i=1}^n\lceil y_i\rceil}}
\nonumber
   \\
&\ge
- {\textstyle{\sum_{i=1}^n\lceil y_i\rceil}}.
\end{align}
Since this bound is realized by
$y_+(1) = - {\textstyle{\sum_{i=1}^n\lceil y_i\rceil}}$,
we deduce that 
$y_+ = 
- {\textstyle{\sum_{i=1}^n\lceil y_i\rceil}}$.
Thus, since $q-Np = p+q - 2g(K)p$, the last line of
Theorem~\ref{thm: s3 satellite in sf basis}.ii holds.

{\textit{Case}} $N < \frac{q}{p}$
{\textit{with}} $p,q>1$ {\textit{and}} $K \subset S^3$ {\textit{the unknot}}.
Since $Y: = S^3 \setminus \overset{\mkern2mu\circ}{\nu}(K)$
satisfies
\begin{equation}
\mathcal{L}_{S^3}(Y) = [[0,0]] = \Q \cup \{\infty\} \setminus \{0\},
\end{equation}
we use 
(\ref{eq: phi lin fract map on slopes for cable glue})
to compute, for 
${{\bar{\varphi}}}_*^{{\mathbb{P}}}(\mathcal{L}(Y))_{\textsc{sf}} 
= [[y_{0-}, y_{0+}]]$, that
\begin{equation}
y_{0-} = y_{0+} \,= \mfrac{0q^* - 1p^*}{0p - 1q}
=\, \mfrac{p^*}{q} = \mfrac{q^*}{p} + \mfrac{1}{pq}.
\end{equation}

Thus, applying 
Theorem~\ref{thm: l-space interval for seifert jsj}
and mildly simplifying, we obtain that
\begin{align}
&y_- = \sup_{k>0} \,y_-(k)
\;\;\text{ and }\;\;
y_+ = \inf_{k>0} \,y_+(k),\;\;
\text{ for}
           \\
y_-(k)
:=&\, 
\mfrac{1}{k}\mkern-3mu\left(-1 + 
\left\lceil
\mkern-2mu\mfrac{\mkern1.5mu q^*\mkern-3.5mu}{p}k\mkern-1.5mu
\right\rceil
-
\left\lfloor
\mkern-2mu\mfrac{\mkern1.5mu p^*\mkern-3.5mu}{q}k\mkern-1.5mu
\right\rfloor
-
{\textstyle{\sum_{i=1}^n \lfloor [y_i] k \rfloor}}
\right) 
- {\textstyle{\sum_{i=1}^n \lfloor y_i \rfloor}},
        \\
y_+(k)
:=&\,
\mfrac{1}{k}\left(1 + \left\lfloor \mfrac{q^*}{p}k \right\rfloor
-\left\lceil\mfrac{p^*}{q}k\right\rceil
+
{\textstyle{\sum_{i=1}^n \lfloor [-y_i] k \rfloor}}
\right) 
-{\textstyle{\sum_{i=1}^n}} \lceil y_i \rceil.
\end{align}

For $y_-(k)$, we (again) obtain the bound
\begin{align}
y_{-\mkern-1mu}(k)
&= \mfrac{1}{k}\mkern-3mu\left( 
\left\lceil
\mkern-2mu\mfrac{\mkern1.5muq^*\mkern-3.5mu}{p}k\mkern-1.5mu
\right\rceil
\mkern1mu-\mkern1mu
\left(\left\lfloor \mkern-2.5mu\left(
\mkern-2mu\mfrac{\mkern1.5muq^*\mkern-3.5mu}{p}
\mkern-1.5mu+\mkern-1.5mu
\mfrac{1}{pq}
\right)\mkern-3mu
k\mkern-1.5mu
\right\rfloor 
\mkern-1mu+\mkern-1mu
1\right)
\mkern1mu-\mkern1mu
{\textstyle{\sum_{i=1}^n \lfloor \mkern-1.5mu[y_i] k\mkern-1.5mu \rfloor}}
\right) 
\mkern1mu-\mkern1mu
{\textstyle{\sum_{i=1}^n \lfloor y_i \rfloor}}  
\nonumber
     \\ 
&\le -{\textstyle{\sum_{i=1}^n \lfloor y_i \rfloor}}
\;\;\;\text{for all }k \in \Z_{>0},
\end{align}
which, for $p,q>\mkern-1mu1$ is realized by
$y_-(1) = -{\textstyle{\sum_{i=1}^n \lfloor y_i \rfloor}}$,
so that
$y_- =
-{\textstyle{\sum_{i=1}^n \lfloor y_i \rfloor}}
$.

To compute $y_+$,
we note that since $p, q \mkern-2mu>\mkern-2mu 1$, we can
invoke
Lemma~\ref{lemma: p+q bound} (below), so that
\begin{equation}
1 + \left\lfloor \mfrac{q^*}{p}k \right\rfloor
+ \left\lfloor \mfrac{k}{p+q} \right\rfloor
 \ge \left\lceil\mfrac{p^*}{q}k\right\rceil
\;\; \text{for all } k \in \Z_{>0}.
\end{equation}
(Here, we multiplied the original inequality by $k$
and then observed that the integer on the left hand side
must be bounded by an integer.)
In particular,
\begin{equation}
\label{eq: p+q bound for generic torus knot}
\mfrac{1}{k}\left(1 + \left\lfloor \mfrac{q^*}{p}k \right\rfloor
-\left\lceil\mfrac{p^*}{q}k\right\rceil
+ \left\lfloor \mfrac{k}{p+q} \right\rfloor
\right)
-{\textstyle{\sum_{i=1}^n}} \lceil y_i \rceil
\ge 
-{\textstyle{\sum_{i=1}^n}} \lceil y_i \rceil
\;\; \text{for all } k \in \Z_{>0}.
\end{equation}
Thus, when ${\textstyle{\sum_{i=1}^n}} \lfloor[-y_i](p+q)\rfloor > 0$,
so that at least one $y_i$ satisfies $[-y_i] \ge \frac{1}{p+q}$,
line (\ref{eq: p+q bound for generic torus knot})
tells us that 
$y_+(k) \ge 
-{\textstyle{\sum_{i=1}^n}} \lceil y_i \rceil$, a bound which is
realized by $y_+(1)$ when $p, q>1$.
On the other hand, if ${\textstyle{\sum_{i=1}^n}} \lfloor[-y_i](p+q)\rfloor = 0$,
then 
(\ref{eq: p+q bound for generic torus knot}) implies that
\begin{equation}
y_+(k) \ge 
-\mfrac{1}{k}\left\lfloor \mfrac{k}{p+q}\right\rfloor 
-{\textstyle{\sum_{i=1}^n}} \lceil y_i \rceil
\ge -\mfrac{1}{p+q}
-{\textstyle{\sum_{i=1}^n}} \lceil y_i \rceil
\;\;\text{for all }k \in \Z_{k>0},
\end{equation}
a bound which is realized by $y_+(p+q)$.
We therefore have
\begin{equation}
y_+ =
-{\textstyle{\sum_{i=1}^n}} \lceil y_i \rceil
-
\begin{cases}
\frac{1}{p+q}
&
{\textstyle{\sum_{i=1}^n}} \lfloor[-y_i](p+q)\rfloor =0
   \\
0
&
{\textstyle{\sum_{i=1}^n}} \lfloor[-y_i](p+q)\rfloor >0
\end{cases},
\end{equation}
completing the proof of part $(ii)$.  \qed

{\textit{Proof of }}$(iii)${\textit{:}}
$K \subset S^3$, $p=1$, $q>0$.
Here, we have the same case as above, but with $p=1$ and $q>0$,
implying $\frac{q^*}{p} = 0$ and $\frac{p^*}{q} = \frac{1}{q}$.
Thus, the Dehn filling $Y^{(n,nq)}_{\textsc{sf}}(\boldsymbol{y})$
is the Seifert fibered space $M_{S^2}(\frac{1}{q}, \boldsymbol{y})$,
and we have

\begin{align}
\mathcal{NL}_{\textsc{sf}}(Y^{(n,nq)}) 
&=
\mathcal{Z}_{\textsc{sf}}(Y^{(n,nq)}) \mkern4mu\cup
\left\{ \left.
\boldsymbol{y} \in \Q^n \mkern.5mu\right|\mkern1mu 
y_+ < 0 < y_-
\right\}
    \\
&=
\mathcal{Z}_{\textsc{sf}}(Y^{(n,nq)}) \mkern7mu\cup
\nonumber
  \\
\hphantom{12.}
\left\{ 
\vphantom{1-\left\lceil {\textstyle{\frac{k}{q}}} \right\rceil}
\boldsymbol{y} \in \Q^n \mkern.5mu\right|\mkern1mu 
1\mkern-1mu-\mkern-3mu\left\lceil 
\mkern-1mu{\textstyle{\frac{k}{q}}}\mkern-1mu \right\rceil\mkern-2.6mu
-\mkern-3mu{\textstyle{\sum_{i=1}^n}} 
&\lceil 
 y_i k \rceil
<0<
\left.
-1\mkern-1mu -\mkern-3.6mu\left\lfloor 
\mkern-1mu{\textstyle{\frac{k}{q}}}\mkern-1mu \right\rfloor\mkern-2.6mu
-\mkern-3mu{\textstyle{\sum_{i=1}^n}} \lfloor y_i k \rfloor
\mkern15mu\forall\mkern3mu
k \mkern-2.5mu\in\mkern-2.5mu \Z_{>0}
\right\}.  
\mkern28mu
\qed
\nonumber
\end{align}

The above theorem leads to the following
\begin{cor}
Theorem~\ref{thm: torus link satellite in s3, lambda action}
holds.
\end{cor}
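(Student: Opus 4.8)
The plan is to derive this corollary directly from Theorem~\ref{thm: s3 satellite in sf basis}, by pushing each of its $\textsc{sf}$-slope descriptions of $\mathcal{L}_{\textsc{sf}}(Y^{(np,nq)})$ forward through the change-of-basis map $\psi$ of part $(\psi)$ and then re-expressing the resulting $S^3$-slope set as the $\Lambda$-orbit of the representative region $\mathcal{L}^*_{S^3}$ of Theorem~\ref{thm: torus link satellite in s3, lambda action}. First I would record the elementary bookkeeping for $\psi_j(y_j)=pq+\tfrac1{y_j}$: it is an orientation-preserving linear-fractional map with $\psi_j(0)=\infty$, $\psi_j(\infty)=pq$, and (when $p=1$) $\psi_j(\tfrac1{N-q})=N$, and it carries $\{y_j\ge0\}$ onto $\langle pq,+\infty]$, $\{y_j\le0\}$ onto $[-\infty,pq\rangle$, and $\{-\tfrac1{pq-N_{pq}}<y_j\le0\}$ onto $[-\infty,N_{pq}\rangle$, where I use the identity $pq-N_{pq}=p+q-2g(K)p$. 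Since the $\Lambda$-action on $S^3$-slopes is defined by $\boldsymbol l\cdot\boldsymbol\alpha=\psi(\boldsymbol l+\psi^{-1}(\boldsymbol\alpha))$, one has $\psi(\Lambda_{\textsc{sf}}+X)=\Lambda\cdot\psi(X)$ for any $X$; so, writing $\mathcal{L}^*_{\textsc{sf}}:=\psi^{-1}(\mathcal{L}^*_{S^3})$, it suffices in each case to verify $\mathcal{L}_{\textsc{sf}}(Y^{(np,nq)})=\Lambda_{\textsc{sf}}+\mathcal{L}^*_{\textsc{sf}}$.

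Case $(i)$ reduces to the endpoint arithmetic above. For $p>1$, part $(i.a)$ gives $\mathcal{L}_{\textsc{sf}}=\Lambda_{\textsc{sf}}$ and $\psi(\boldsymbol0)=\boldsymbol\infty$, so $\mathcal{L}_{S^3}=\Lambda\cdot\{\boldsymbol\infty\}$. For $p=1$, part $(i.b)$ gives $\mathcal{L}_{\textsc{sf}}=\Lambda_{\textsc{sf}}+\mathfrak{S}_n([0,\tfrac1{N-q}]\times\{0\}^{n-1})$, and $\psi$ sends this $\mathfrak{S}_n$-orbit onto $\bigcup_{i=1}^n\{\infty\}^{i-1}\times[N,+\infty]\times\{\infty\}^{n-i}$, which is exactly the displayed $\mathcal{L}^*_{S^3}$. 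The two degenerate specializations that Theorem~\ref{thm: torus link satellite in s3, lambda action}, part $(ii)$, also records ($K$ the unknot, where $N_{pq}=(p-1)(q-1)-1$, and $N=\tfrac qp$, which forces $p=1$, $N_{pq}=pq$, $y_+=-\infty$, hence $\mathcal{L}^-_{S^3}=\emptyset$) fall out of the case $(ii)$ analysis together with these vanishing observations.

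The substantive part is case $(ii)$. Here Theorem~\ref{thm: s3 satellite in sf basis}, part $(ii)$, gives $\mathcal{NL}_{\textsc{sf}}=\mathcal{Z}_{\textsc{sf}}\cup\{\boldsymbol y\in\Q^n\mid y_+<0<y_-\}$ with $y_-=-\sum\lfloor y_i\rfloor$ and $y_+=-\sum\lceil y_i\rceil-c(\boldsymbol y)$, where $c(\boldsymbol y)\in\{(pq-N_{pq})^{-1},0\}$ according to the stated fractional-part test; equivalently $\mathcal{L}_{\textsc{sf}}\cap\Q^n=\{y_-\le0\}\cup\{y_+\ge0\}$. I would then verify four points. (1) $\mathcal{L}_{\textsc{sf}}$ is $\Lambda_{\textsc{sf}}$-invariant, since every defining condition is unchanged by adding an integer vector of coordinate-sum $0$. (2) $\psi^{-1}(\mathcal{R}_{S^3}\setminus\mathcal{Z}_{S^3})$ is precisely the $\Lambda_{\textsc{sf}}$-invariant set of $\boldsymbol y$ with exactly one $\infty$-coordinate, and every such slope lies in $\mathcal{L}_{\textsc{sf}}$. (3) $\psi^{-1}(\mathcal{L}^+_{S^3})=\{\boldsymbol y\in\Q^n:y_i\ge0\ \forall i\}\subseteq\mathcal{L}_{\textsc{sf}}$ and $\psi^{-1}(\mathcal{L}^-_{S^3})=\{\boldsymbol y\in\Q^n:y_i\le0\ \forall i,\ y_{i_*}\le-(pq-N_{pq})^{-1}\text{ for some }i_*\}\subseteq\mathcal{L}_{\textsc{sf}}$, the second containment being where the correction term is essential: when $\sum\lceil y_i\rceil=0$ the coordinate $y_{i_*}\le-(pq-N_{pq})^{-1}$ forces $\lfloor[-y_{i_*}](pq-N_{pq})\rfloor\ge1$, hence $c(\boldsymbol y)=0$ and $y_+=0\ge0$. (4) Every $\boldsymbol y\in\mathcal{L}_{\textsc{sf}}\cap\Q^n$ is $\Lambda_{\textsc{sf}}$-equivalent to a point of $\psi^{-1}(\mathcal{L}^+_{S^3})\cup\psi^{-1}(\mathcal{L}^-_{S^3})$: if $\sum\lfloor y_i\rfloor\ge0$, take $l_i=-\lfloor y_i\rfloor+c_i$ with $c_i\ge0$, $\sum c_i=\sum\lfloor y_i\rfloor$, so each $y_i+l_i=[y_i]+c_i\ge0$; if instead $\sum\lceil y_i\rceil\le0$, take $l_i=-\lceil y_i\rceil-d_i$ with $d_i\ge0$, $\sum d_i=-\sum\lceil y_i\rceil$, so each $y_i+l_i=-[-y_i]-d_i\le0$, and then either $c(\boldsymbol y)=0$, whereupon the test supplies an $i_*$ with $y_{i_*}+l_{i_*}\le-[-y_{i_*}]\le-(pq-N_{pq})^{-1}$, or $c(\boldsymbol y)=(pq-N_{pq})^{-1}$, whereupon $\sum\lceil y_i\rceil\le-c(\boldsymbol y)$ forces $\sum\lceil y_i\rceil\le-1$ and placing all the weight $d_1=-\sum\lceil y_i\rceil\ge1$ gives $y_1+l_1\le-1\le-(pq-N_{pq})^{-1}$. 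Combining (1)--(4) yields $\mathcal{L}_{\textsc{sf}}=\Lambda_{\textsc{sf}}+\bigl(\psi^{-1}(\mathcal{L}^-_{S^3})\cup\psi^{-1}(\mathcal{R}_{S^3}\setminus\mathcal{Z}_{S^3})\cup\psi^{-1}(\mathcal{L}^+_{S^3})\bigr)$, so applying $\psi$ and part $(\psi)$ gives $\mathcal{L}_{S^3}=\Lambda\cdot\mathcal{L}^*_{S^3}$ with the three pieces exactly as stated.

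I expect steps (3) and (4) of case $(ii)$ to be the main obstacle: one must reconcile the floor/ceiling-defined, $\Lambda_{\textsc{sf}}$-invariant region $\mathcal{L}_{\textsc{sf}}$ with the three geometrically described pieces of $\mathcal{L}^*_{S^3}$, and in particular confirm that the precise correction term in $y_+$ is exactly what drops $\psi^{-1}(\mathcal{L}^-_{S^3})$ into $\mathcal{L}_{\textsc{sf}}$ while no shallower truncation of the unit-radius neighborhood of $\mathcal{R}_{S^3}$ works. Everything else — the arithmetic of $\psi$, the specialization to the $n=1$ cable case recovering $[N_{pq},+\infty]=[2g(K^{(p,q)})-1,+\infty]$, and the two degenerate $p=1$ sub-cases — is routine bookkeeping once this correspondence is established.
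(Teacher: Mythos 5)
Your proposal is correct and follows essentially the same route as the paper: transport the $\textsc{sf}$-slope regions of Theorem~\ref{thm: s3 satellite in sf basis} through $\psi$, use $\psi(\Lambda_{\textsc{sf}}+X)=\Lambda\cdot\psi(X)$, and check that $\mathcal{L}^*_{S^3}$ (resp.\ $\mathfrak{S}_n\cdot([N,+\infty]\times\{\infty\}^{n-1})$, resp.\ $\{\boldsymbol{\infty}\}$) contains a fundamental domain of the $\Lambda$-action on the region. The paper's proof simply declares this fundamental-domain verification ``straightforward,'' whereas your steps (1)--(4) — $\Lambda$-invariance of the floor/ceiling conditions, the containments of the three pulled-back pieces, and the explicit integer translates exploiting the correction term $\frac{1}{pq-N_{pq}}$ — supply exactly the omitted bookkeeping, so no gap remains.
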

\begin{proof}
For the $p>1$ case of part $(i)$,
we simply replace $\Lambda_{\textsc{sf}}$ with $\Lambda_{S^3}$,
which contains the $S^3$-slope $(\infty, \ldots, \infty)$
in its orbit.
For part $(ii)$ and for the $p\mkern-2mu=\mkern-2mu 1$ case of part $(i)$,
it is straightforward to show that both the
expression in the bottom line of part $(ii)$ of
Theorem~\ref{thm: torus link satellite in s3, lambda action}
and the expression
$\mathfrak{S}_n\cdot\left([N, +\infty] \times \{\infty\}^{n-1}\right)$
in part $(i)$
contain fundamental domains (under the action of $\Lambda$)
of the respective L-space regions specified above.
\end{proof}

We now return to the lemma cited in the proof of 
Theorem~\ref{thm: s3 satellite in sf basis}.$ii$.
\begin{lemma}
\label{lemma: p+q bound}
If $p,q >1$, then 
$$
\mfrac{1}{k}
\left(
1 + \left\lfloor \mfrac{q^*}{p}k \right\rfloor
+ \left\lfloor \mfrac{k}{p+q} \right\rfloor
\right) \ge \mfrac{p^*}{q}
\;\;\text{for all } k \in \Z_{>0}.
$$
\end{lemma}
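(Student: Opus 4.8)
The plan is to clear the denominator and work with an equivalent inequality between integers. Multiplying the asserted bound through by $k>0$ makes the left-hand side an integer, and since $\lfloor x\rfloor \ge m \Leftrightarrow x\ge m$ for $m\in\Z$, the Lemma is equivalent to
\[
1 + \left\lfloor \tfrac{q^*k}{p} \right\rfloor + \left\lfloor \tfrac{k}{p+q} \right\rfloor \;\ge\; \left\lceil \tfrac{p^*k}{q} \right\rceil
\qquad (k\in\Z_{>0}),
\]
which is the form in which the Lemma is invoked in the proof of Theorem~\ref{thm: s3 satellite in sf basis} (cf.\ the remark following line~(\ref{eq: p+q bound for generic torus knot})).

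First I would introduce the residues $A := q^*k \bmod p \in \{0,\dots,p-1\}$ and $B := (-p^*k)\bmod q \in \{0,\dots,q-1\}$, so that $\lfloor q^*k/p\rfloor = (q^*k-A)/p$ and $\lceil p^*k/q\rceil = (p^*k+B)/q$. The key step is the identity
\[
N \;:=\; \left\lceil \tfrac{p^*k}{q}\right\rceil - \left\lfloor \tfrac{q^*k}{p}\right\rfloor \;=\; \frac{(p^*p - q^*q)\,k + qA + pB}{pq} \;=\; \frac{k + qA + pB}{pq},
\]
where the last equality uses the defining relation $p^*p - q^*q = 1$ of Definition~\ref{def: p*q*}. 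Thus $N$ is an integer (a difference of integers) with $pqN = k + qA + pB \ge k \ge 1$, so $N \ge 1$ and $k = pqN - qA - pB$.

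Next, applying once more the equivalence $\lfloor x\rfloor \ge m \Leftrightarrow x \ge m$ with $m = N-1$, the target inequality $1 + \lfloor k/(p+q)\rfloor \ge \lfloor q^*k/p\rfloor + N$ becomes the purely linear statement $k \ge (N-1)(p+q)$; substituting $k = pqN - qA - pB$ turns this into
\[
N(pq - p - q) + (p+q) \;\ge\; qA + pB .
\]
Since $p$ and $q$ are integers exceeding $1$ we have $pq \ge p+q$, while $qA + pB \le q(p-1) + p(q-1) = 2pq - p - q$. If $N\ge 2$, the left-hand side is at least $2(pq-p-q) + (p+q) = 2pq - p - q \ge qA+pB$. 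If $N = 1$, then $qA + pB = pq - k \le pq - 1 < pq = (pq-p-q)+(p+q)$. In either case the inequality holds, which completes the argument.

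I do not anticipate a genuine obstacle: once the identity $N = (k+qA+pB)/pq$ is in hand, the hypothesis $k\ge 1$ essentially forces the estimate. The only places requiring care are the two passages between "$\lfloor x\rfloor \ge m$" and "$x \ge m$" for integer $m$, and the boundary case $N=1$, where the hypothesis $k\ge 1$ is precisely the input that makes the bound go through.
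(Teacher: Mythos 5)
Your proof is correct, and it takes a genuinely different and more elementary route than the paper's. The paper proves the equivalent integer inequality by setting $z(k) := k\bigl(\text{LHS} - \tfrac{p^*}{q}\bigr)$, establishing a step-$(p+q)$ induction ($z(k)\ge 0$ whenever $z(k-(p+q))\ge 0$, with a case analysis on the residue $[kq^{-1}]_p$), and then disposing of the base range $0<k<p+q$ via the Chinese Remainder Theorem identity $q[kq^{-1}]_p + p[kp^{-1}]_q = k+pq$. You instead collapse everything into the single identity $pqN = k + qA + pB$ for $N := \lceil p^*k/q\rceil - \lfloor q^*k/p\rfloor$, where $A,B$ are the residues of $q^*k \bmod p$ and $-p^*k \bmod q$; the Bézout relation $p^*p - q^*q = 1$ is what makes the cross terms cancel, exactly as it powers the paper's CRT step. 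After that, the target reduces to the linear inequality $N(pq-p-q) + (p+q) \ge qA + pB$, which the crude bounds $A\le p-1$, $B\le q-1$ and $(p-1)(q-1)\ge 1$ settle in both cases $N\ge 2$ and $N=1$ (the latter being where $k\ge 1$ enters, playing the role of the paper's base case). What your approach buys is the elimination of both the induction and the explicit CRT invocation; what the paper's buys is a formulation ($z(k)$ in the $[\cdot]_m$ notation) that matches the bookkeeping used throughout its surrounding computations. One cosmetic slip: in the sentence reducing to the linear statement, the displayed "target inequality" $1 + \lfloor k/(p+q)\rfloor \ge \lfloor q^*k/p\rfloor + N$ has lost the $\lfloor q^*k/p\rfloor$ from its left side (it should read $1+\lfloor k/(p+q)\rfloor \ge N$ after cancellation); the algebra that follows makes clear this is only a typo, not a gap.
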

\begin{proof}
Using the notation
\begin{equation}
\label{eq: bracket p notation}
[x]_m := x - |m|\mkern-3.5mu\left\lfloor \mfrac{x}{|m|} \right\rfloor
\;\;\;
\text{for any }\;
x \in \R,\, m \in \Z_{\neq 0},
\end{equation}
we define $z(k) \in \Q$ for all $k \in \Z_{>0}$, as follows:
\begin{align}
z(k)
:=&\; 
k\left(
\mfrac{1}{k}\mkern-3mu
\left(
1 + \left\lfloor \mfrac{q^*}{p}k \right\rfloor
+ \left\lfloor \mfrac{k}{p+q} \right\rfloor
\right) - \mfrac{p^*}{q}
\right)
\nonumber
     \\
=&\;
1 + \mfrac{kqq^* - kpp^*}{pq} - \mfrac{[kq^*]_p}{p}
+
\left\lfloor \mfrac{k}{p+q} \right\rfloor
\nonumber
     \\
=&\;
\mfrac{[kq^{-1}]_p}{p} - \mfrac{k}{pq}
+
\left\lfloor \mfrac{k}{p+q} \right\rfloor
+
\begin{cases}
1 & [k]_p = 0
\\
0 & [k]_p \neq 0
\end{cases}
\label{eq: q inverse step}
     \\
=&\;
\mfrac{q[kq^{-1}]_p
\mkern1.5mu-\mkern1.5mu 
[k]_{p+q}}{pq}
\mkern4mu+\mkern3mu 
\left(\mfrac{pq-(p+q)}{pq} \right)\mkern-6mu
\left\lfloor \mfrac{k}{p+q} \right\rfloor 
+
\begin{cases}
1 & [k]_p = 0
\\
0 & [k]_p \neq 0,
\end{cases}
\label{eq: add and subtract p+q term}
\end{align}
where we note that 
\begin{equation}
pq-(p+q) 
\,=\, (p-1)(q-1)-1 
\,\ge\, 0
\;\;\text{ for  }\; p,q >1.
\end{equation}

We next claim that $z(k)\ge 0$ if $z(k-(p+q))\ge 0$.  First, for 
$[kq^{-1}]_p \notin \{0,1\}$, we have
\begin{align}
z(k)-z(k-(p+q)) 
&= \mfrac{q\cdot 1}{pq} + \mfrac{pq-(p+q)}{pq} >0,
\end{align}
so that $z(k) > z(k-(p+q)) \ge 0$.
If $[kq^{-1}]_p = 0$, implying $[k]_p = 0$, then 
line (\ref{eq: q inverse step})
gives
\begin{equation}
z(k)
=
- \mfrac{k}{pq}
\mkern2mu+\mkern1mu
\left\lfloor \mkern-1mu \mfrac{k}{p+q} \mkern-1mu \right\rfloor + 1
\ge \mfrac{k}{p+q} - \mfrac{k}{pq} \ge 0.
\end{equation}
This leaves us with the case in which $[kq^{-1}]_p = 1$, so that
line (\ref{eq: q inverse step})
yields
\begin{equation}
z(k) = 
\left\lfloor \mkern-1mu \mfrac{k}{p+q} \mkern-1mu \right\rfloor
- \mfrac{k-q}{pq}.
\end{equation}
Since $[kq^{-1}]_p = 1$ implies $k \equiv q \mkern2mu(\mod p)$,
we can write
$k = (sq + t)p + q$, with $s = \left\lfloor \mfrac{k-q}{pq} \right\rfloor$
and $t \in \{0 \ldots, q-1\}$.  When $t=0$, we obtain
\begin{equation}
z(k) = z(spq + q) = 
\left\lfloor \mkern-1mu \mfrac{spq + q}{p+q} \mkern-1mu \right\rfloor - s
=
\left\lfloor \mkern-1mu \mfrac{spq + q}{p+q} - s \mkern-1mu \right\rfloor
\ge 0\;\;\;
\text{ for all } p,q > 1.
\end{equation}
On the other hand, when $t \ge 1$, we have
\begin{equation}
z(k) \ge
\lfloor z(k) \rfloor 
=
\left\lfloor \mkern-1mu \mfrac{spq + tp + q}{p+q} \mkern-1mu \right\rfloor - (s+1)
=
\left\lfloor \mkern-1mu \mfrac{s(pq 
- (p \mkern-1.5mu + \mkern-1.5mu q))
\mkern2mu + \mkern2mu
(t-1)p}{p+q} \mkern-1mu \right\rfloor
\ge 0,
\end{equation}
completing the proof of our claim.
Since the case $k = p+q$ is subsumed in the case $[kq^{-1}]_p = 1$,
we also have $z(p+q) \ge 0$, and so by the induction,
it suffices to prove the lemma for $k< p+q$.

Suppose that $0 < k < p+q$ and $k \notin p\Z$ (since 
$z(k) \ge 0$ for $[k]_p = 0$),
so that we now have
\begin{equation}
 z(k) = \mfrac{1}{pq}\left(q[kq^{-1}]_p - k\right).
\end{equation}
Since $z(aq) =\frac{1}{pq}( q \cdot a - aq) = 0$ for $a \in \Z$,
we may also assume $k \notin q \Z$.
Now, the 
Chinese Remainder Theorem tells us that
\begin{equation}
k = \left[ q[kq^{-1}]_p + p[kp^{-1}]_q\right]_{pq},
\end{equation}
but since $0<k<p+q$ and $k \notin p\Z \cup q\Z$, we also have
\begin{equation}
k < p+q \le q[kq^{-1}]_p + p[kp^{-1}]_q < 2pq,
\end{equation}

\vspace{-.05cm}

\begin{equation}
\text{requiring that}\;\;\;\;\;
q[kq^{-1}]_p + p[kp^{-1}]_q = k+ pq,
\hphantom{\text{requiring that}\;\;\;\;\;}
\end{equation}

\vspace{-.1cm}

\begin{equation}
\mkern115mu
\text{so that}\;\;\;\;\;
 z(k) = \mfrac{1}{pq}\left(pq - p[kp^{-1}]_q\right) > 0.
\hphantom{\text{so that}\;\;\;\;\;}
\mkern130mu
\square\mkern-30mu
\end{equation}

\vspace{.1cm}

\section{L-space region topology}
\label{s: L-space region topology}

\subsection{Topologizing L-space regions}
To clarify the sense in which we interpret topological properties of L-space and non-L-space regions, we introduce the following notion.
\begin{definition}
For any subset 
$A \mkern-1mu\subset\mkern-2mu (\Q\cup\{\infty\})^n \mkern-.7mu\hookrightarrow \mkern-.5mu
(\R \cup \{\infty\})^n$ with complement 
$A^c := (\Q\cup\{\infty\})^n \setminus A$
and real closure $\overline{A} \subset (\R \cup \{\infty\})^n$,
we define the {\em{$\Q$-corrected $\R$-closure}}
$$
A^{\lowR} \,:=\, \overline{A} \setminus A^c \;\subset\; (\lowR \cup \{\infty\})^n
$$
of $A$. Note that this implies $A^{\lowR} \cap (\Q\cup\{\infty\})^n=A$
\end{definition}
The $\Q$-corrected $\R$-closure is a particularly natural construction
for L-space regions, due to the following fact.

\begin{prop}
\label{prop: complements and closure}
If $\mathcal{L} \subset (\Q \cup \{\infty\})^n$ and 
$\mathcal{N}\mkern-2mu\mathcal{L} \subset (\Q \cup \{\infty\})^n$ are
the respective L-space and non-L-space regions for some compact oriented 
3-manifold $Y$ with $\partial Y = \coprod_{i=1}^n \mathbb{T}^2_i$, then
\begin{equation}
\mathcal{L}^{\lowR} \amalg \mathcal{N}\mkern-2mu\mathcal{L}^{\lowR} = (\R \cup \{\infty\})^n.
\end{equation}
\end{prop}
\begin{proof}
This is mostly due to the structure of L-space intervals (L-space regions for $n = 1$)
described in Section~\ref{s: L-space intervals and gluing}.
In particular, when $n=1$,
Proposition~\ref{prop: structure of L-space intervals} implies that
the pair
$(\mathcal{L}^{\lowR},\mathcal{N}\mkern-2mu\mathcal{L}^{\lowR})$ takes precisely one
of the following forms:
\begin{itemize}
\item[$(i)$] 
$\mathcal{L}^{\lowR} = \emptyset,
\;
\mathcal{N}\mkern-2mu\mathcal{L}^{\lowR}
= (\R \cup \mkern-2mu \{\infty\})^1$;

\item[$(ii)$]
$\mathcal{L}^{\lowR} = \{y\},
\;
\mathcal{N}\mkern-2mu\mathcal{L}^{\lowR}
= (\R \cup \mkern-2mu \{\infty\})^1 \setminus \{y\}$, for some 
$y \in (\Q \cup \mkern-2mu \{\infty\})^1$;

\item[$(iii)$]
$\mathcal{L}^{\lowR}
= (\R \cup \mkern-2mu \{\infty\})^1 \setminus \{l\},
\;
\mathcal{N}\mkern-2mu\mathcal{L}^{\lowR}
=\{l\}$,
for
$l \in (\Q \cup \mkern-2mu \{\infty\})^1$ the rational longitude;

\item[$(iv)$]
$\mathcal{L}^{\lowR} = 
I_{(y_-,y_+)},
\;
\mathcal{N}\mkern-2mu\mathcal{L}^{\lowR}
={I}^{\circ}_{(y_+,y_-)}$,
for some
$y_-, y_+ \in (\Q \cup \mkern-2mu \{\infty\})^1$ with $y_- \neq y_+$,
\end{itemize}

\vspace{-.08cm}

\noindent where $I_{(y_-,y_+)} \mkern-4mu\subset\mkern-2mu (\R \cup \mkern-2mu \{\infty\})^1$ denotes the
real closed interval with left-hand endpoint $y_-$ and right-hand endpoint $y_+$,
and $I^{\circ}_{(y_+,y_-)} \mkern-4mu\subset\mkern-2mu (\R \cup \mkern-2mu \{\infty\})^1$
is the interior in $(\R \cup \mkern-2mu \{\infty\})^1$ of $I_{(y_+,y_-)}$. In particular,
we always have 
$\mathcal{L}^{\lowR} \amalg 
\mathcal{N}\mkern-2mu\mathcal{L}^{\lowR}
= (\R \cup \mkern-2mu \{\infty\})^1$,
and each of $\mathcal{L}^{\lowR}$ and $\mathcal{N}\mkern-2mu\mathcal{L}^{\lowR}$
is either a single rational point, a single interval with rational endpoints,
empty, or the whole set.

Since intervals form a basis for the topology on $(\R \cup \{\infty\})^1$,
and since the 
product topology on $(\R \cup \{\infty\})^{k-1} \times (\R \cup \{\infty\})^1$
coincides with the usual topology on $(\R \cup \{\infty\})^k$ for any
$k\in \Z_{\ge 0}$, the proposition follows from 
induction on $n$.
\end{proof}


\vspace{.02cm}

\subsection{L-space region topology for torus links}

There are five qualitatively different topologies possible for the L-space
region of a torus-link-satellite  of a knot in $S^3$.

\begin{theorem}
\label{thm: topology of torus link exterior L-space region}
For $n, p, q > \Z_{>0}$,
let	$K^{(np,nq)} \mkern-3mu\subset\mkern-3mu S^3$,
with exterior
$Y^{(np,nq)} := S^3 \setminus \overset{\circ}{\nu}(K^{(np,nq)})$, 
be the $T(np,nq)$-satellite of a positive L-space knot $K \subset S^3$.
Associate
$\mathcal{L}$,
$\mathcal{N}\mkern-2mu\mathcal{L}$,
$\Lambda$, and
$\mathcal{B}$
to $Y^{(np,nq)}$ as usual, with 
$\mathcal{B}$ the set of rational longitudes of $\mkern1muY^{(np,nq)}$
as discussed in Section~\ref{ss: rational longitudes B}.

\begin{itemize}
\item[$(i.a)$]
If $2g(K)\mkern-2mu-\mkern-2mu 1 \mkern-2mu>\mkern-2mu \frac{q}{p}$
and $p\mkern-2mu>\mkern-2mu 1$, 
or if 
$2g(K)\mkern-2mu-\mkern-2mu 1 \mkern-2mu>\mkern-2mu \frac{q}{p} + 1$
and $p\mkern-2mu=\mkern-2mu 1$, 
\\
then $\mathcal{L}^{\lowR}$ deformation retracts onto $\Lambda$.

\item[$(i.b)$]
If $2g(K)\mkern-2mu-\mkern-2mu 1 \mkern-2mu=\mkern-2mu \frac{q}{p}+1$ and
$n \mkern-2mu>\mkern-2mu 2$, 

\vspace{-.16cm}

\noindent then 
$\mathcal{L}^{\lowR}$ is connected,
$\pi_1(\mathcal{L}_{\textsc{sf}}^{\lowR}) 
\mkern-1.5mu\simeq\mkern-1.5mu 
\mathrm{ker}(\delta)$
as in {\em{(\ref{eq: def of delta})}}, and
$\mkern2mu\mathrm{rank}\, H_1(\mathcal{L}^{\lowR}) 
\mkern-2mu=\mkern-2mu \textstyle{\binom{n}{2}}-1$.

\item[($i.c)$]
If $2g(K)\mkern-2mu-\mkern-2mu 1 \mkern-2mu=\mkern-2mu \frac{q}{p}+1$ and
$n \in \{1,2\}$,
 \\
then $\dim(\mathcal{L}^{\lowR})=1$ and
$\mathcal{L}^{\lowR}$
is contractible.

\item[($ii.a)$]
If $2g(K)\mkern-2mu-\mkern-2mu 1 \mkern-2mu=\mkern-2mu \frac{q}{p}$,
 \\
then $\dim(\mathcal{L}^{\lowR})=n$ and
$\mathcal{L}^{\lowR}$
is contractible.

\item[$(ii.b)$]
If $2g(K)\mkern-2mu-\mkern-2mu 1 \mkern-2mu<\mkern-2mu \frac{q}{p}$,
including the case of $K^{(np,nq)} \mkern-1mu=\mkern-.5mu T(np,nq)$,
\\
then $\mathcal{N}\mkern-2.5mu\mathcal{L}^{\lowR}\mkern-1mu$ 
deformation retract onto the $(n\!-\!1)$-torus
$\mkern1mu\mathcal{B}^{\mkern.6mu\lowR} = 
\mathbb{T}^{n-1} \mkern-3mu\subset\mkern-2mu (\R \cup\{\infty\})^n 
= \mathbb{T}^n\mkern-1mu$, and
$\mathcal{L}^{\lowR}$
deformation retracts onto a $\mathbb{T}^{n-1}$ parallel to 
$\mathcal{B}^{\mkern.6mu\lowR}\mkern-2mu$.
\end{itemize}
\end{theorem}

{\em{Proof of $(i.a)$.}}
Since we already have $\mathcal{L} = \Lambda$ for $p>1$, assume that $p=1$.
Theorem~\ref{thm: s3 satellite in sf basis} then tells us that
$\mathcal{L}_{\textsc{sf}}^{\lowR} = \Lambda_{\textsc{sf}}
+ \mathcal{P}(N\mkern-1mu,\mkern1mu q)$, where
\begin{equation}
\label{eq: p=1 prong}
\mathcal{P}(N\mkern-1mu,\mkern1mu q)
:=
{\textstyle{\coprod_{i=1}^n}}\mkern-2mu
\left(\{0\}^{i-1}
\times
\left[0,{\textstyle{\frac{1}{N-q}}}\right] \times \{0\}^{n-i}
\right)^{\lowR} \subset (\R \cup \{\infty\})^n.
\end{equation}
Clearly $\mathcal{P}(N\mkern-1mu,\mkern1mu q)$ deformation retracts onto 
$\boldsymbol{0} \in (\Q \cup \{\infty\})^n$.
Since $2g(K)-1 > \frac{q}{p} + 1$, we have  
$\mathcal{P}(N\mkern-1mu,\mkern1mu q) \subset 
\coprod_{i=1}^n \mkern-4mu\left[0,\frac{1}{2}\right]^{\R}_i$.
Thus all of the translates
$\left\{\boldsymbol{l}
+ \mathcal{P}(N\mkern-1mu,\mkern1mu q)\right\}_{\boldsymbol{l} \in \Lambda_{\textsc{sf}}}$ are
pairwise disjoint, and 
$\mathcal{L}_{\textsc{sf}}^{\lowR}$ deformatioin retracts onto $\Lambda_{\textsc{sf}}$.
\qed

\vspace{.3cm}
{\em{Proof of $(i.b)$.}}
When $2g(K)-1 \mkern-2mu=\mkern-2mu \frac{q}{p} \mkern-2mu+\mkern-1mu 1$, and 
$n\mkern-2mu>\mkern-2mu 1$, 
line (\ref{eq: p=1 prong}) still holds, but~this~time~with
$\left[0,\frac{1}{N-q}\right] \mkern-2mu=\mkern-2mu [0,1]$.
To see that
$\pi_0(\mathcal{L}_{\textsc{sf}}^{\lowR})\mkern-2mu=\mkern-2mu0$,
 first note that
$\Lambda_{\textsc{sf}}$ is generated by the elements
\vspace{-.3cm}
\begin{equation}
\label{eq: def of epsilon ij and i}
{\boldsymbol{\epsilon}}_{ij}:=
{\boldsymbol{\epsilon}}_i - {\boldsymbol{\epsilon}}_j,\;\;
i,j\in\{1, \ldots, n\},
\;\;\;\text{with}\;\;\;
{\boldsymbol{\epsilon}}_i := (0, \ldots, 0, \overset{i}{1},0\ldots, 0) \in \Z^n
\end{equation}
the standard
basis element for $\Z^n$.
Then for any such ${\boldsymbol{\epsilon}}_{ij}$ and any 
$\boldsymbol{l} \in \Lambda_{\textsc{sf}}$,
the origin ${\boldsymbol{l}}$ of the translate
$\mathcal{P}_{\boldsymbol{l}} := \boldsymbol{l}+ \mathcal{P}(N\mkern-1mu,\mkern1mu q)$
is path-connected to the origin
${\boldsymbol{\epsilon}}_{ij} + \boldsymbol{l}$
of the translate
$\mathcal{P}_{{\boldsymbol{\epsilon}}_{ij}+\boldsymbol{l}}$,
via the path
$\gamma^{\mkern1mu\boldsymbol{l}}_{ij}(t): [0,1] \to 
\mathcal{L}_{\textsc{sf}}^{\lowR}$,
\vspace{-.15cm}
\begin{equation}
\gamma^{\mkern2mu\boldsymbol{l}}_{ij}\mkern-3mu
\left|\vphantom{A}_{\left[0,\frac{1}{2}\right]}\right. 
\mkern-6mu(t) = \boldsymbol{l} + 2t{\boldsymbol{\epsilon}}_{i},
\;\;\;\;
\gamma^{\mkern2mu\boldsymbol{l}}_{ij}\mkern-3mu
\left|\vphantom{A}_{\left[\frac{1}{2},1\right]}\right. 
\mkern-6mu(t) = ({\boldsymbol{\epsilon}}_{ij} + \boldsymbol{l}) + 2(1-t){\boldsymbol{\epsilon}}_j,
\end{equation}

\vspace{-.17cm}

\begin{equation}
\boldsymbol{l}
\mkern20mu\leadsto\mkern20mu
\{(\boldsymbol{l}) + {\boldsymbol{\epsilon}}_i\} 
=
\mathcal{P}_{\boldsymbol{l}}
\cap
\mathcal{P}_{{\boldsymbol{\epsilon}}_{ij}+\boldsymbol{l}}
= 
\{({\boldsymbol{\epsilon}}_{ij} + \boldsymbol{l}) + {\boldsymbol{\epsilon}}_j\}
\mkern20mu\leadsto\mkern20mu
{\boldsymbol{\epsilon}}_{ij} + \boldsymbol{l}.
\end{equation}
Thus 
$\mathcal{L}_{\textsc{sf}}^{\lowR}$
is path connected (hence connected),
and in fact, these basic paths
$\gamma^{\boldsymbol{l}}_{ij}$
from
${\boldsymbol{l}}\in\mathcal{P}_{\boldsymbol{l}}$
to
${\boldsymbol{\epsilon}}_{ij}+\boldsymbol{l} \in \mathcal{P}_{{\boldsymbol{\epsilon}}_{ij}+\boldsymbol{l}}$
generate the groupoid $G$ of homotopy classes of paths in
$\mathcal{L}_{\textsc{sf}}^{\lowR}$ between elements of $\Lambda_{\textsc{sf}}$.
Let $G_0 \subset G$ denote the subset of homotopy clases of paths starting 
at $\boldsymbol{0}$, so that elements of $G_0$ are uniquely represented by
reduced words
\begin{equation}
g =
\left(\gamma^{\mkern2mu\boldsymbol{l}_0(i_0,j_0,e_0)}_{i_0 j_0}\right)^{\mkern-4mu e_0}\mkern-6mu
\left(\gamma^{\mkern2mu\boldsymbol{l}_1}_{i_1 j_1}\right)^{\mkern-3mu e_1} \mkern-9mu\ldots 
\left(\gamma^{\mkern2mu\boldsymbol{l}_m}_{i_m j_m}\right)^{\mkern-3mu e_m} \mkern-5mu\in G,
\;\;\;\;
i_k \mkern-2mu<\mkern-2mu j_k,\;\,
e_k \mkern-2mu\in\mkern-2mu \{\pm1\}, \;\,
l_{k+1} \mkern-2mu=\mkern-2mu l_k \mkern-1mu+\mkern-1mu {\boldsymbol{\epsilon}}_{i_k j_k} \;\,\forall\;k,
\mkern-10mu
\end{equation}
with right-multiplication corresponding to concatenation of paths.
Note that we have replaced
\begin{equation}
\label{eq: l_0}
\gamma^{\mkern2mu\boldsymbol{l}}_{ji}
\mapsto \left(\gamma^{\mkern2mu\boldsymbol{l}-{\boldsymbol{\epsilon}}_{ij}}\right)_{ij}^{\mkern-2mu-1}
\;\forall\; i<j,
\;\;\;
\text{and defined}\;\;
\boldsymbol{l}_0(i,j,e) := 
\begin{cases}
0
& 
e = +1
 \\
-{\boldsymbol{\epsilon}}_{i j}
& 
e = -1
\end{cases},
\end{equation}
recalling that ${\boldsymbol{\epsilon}}_{ji} = -{\boldsymbol{\epsilon}}_{ij}$.

If we introduce the free group  
$\mathfrak{F}^{\binom{n}{2}}$ 
and epimorphism
$\delta: \mathfrak{F}^{\binom{n}{2}} \to \Lambda_{\textsc{sf}}$,
\begin{equation}
\label{eq: def of delta}
\mathfrak{F}^{\binom{n}{2}}:=\left<{x}_{ij}\right>_{1\le i\mkern1mu<\mkern1mu j\le n}
\;\;\;\;\;\;
\delta: \mathfrak{F}^{\binom{n}{2}} \to \Lambda_{\textsc{sf}},\;\;\;
\delta:{x}_{ij} \mapsto {\boldsymbol{\epsilon}}_{ij},
\end{equation}
then a straightforward inductive argument on word length shows that the forgetful map
\begin{equation}
\rho : G_0 \to \mathfrak{F}^{\binom{n}{2}},
\;\;\;
\gamma^{\mkern2mu\boldsymbol{l}}_{i j} \mapsto {x}_{ij},
\end{equation}
on words is invertible.
In particular, starting with $\rho^{-1}(1) = 1$, we can use the inductive rule
\begin{equation}
\rho^{-1}\left({w} \cdot ({x}_{ij})^e\right) = 
\rho^{-1}({w}) \cdot
\left(\gamma^{\mkern2mu \delta({w}) \,+\, 
\boldsymbol{l}_0(i,\mkern1mu j, \mkern1mu e)}_{i j}
\right)^{\mkern-3mu e}
\end{equation}
for any $i \mkern-2mu<\mkern-2mu j$, $e \mkern-2mu\in\mkern-2mu \{\pm 1\}$,
and word ${w} \mkern-2mu\in\mkern-2mu \left<{x}_{ij}\right>_{i<j}$
with known $\rho^{-1}({w})$, to reconstruct the map $\rho^{-1}$.
Thus, $G_0$ inherits the structure of a free group on $\binom{n}{2}$ generators.
Since $\delta\circ \rho(g)$ is the endpoint of any path $g \in G_0$, we then have

\begin{equation}
\mkern95mu
\pi_1(\mathcal{L}_{\textsc{sf}}^{\lowR}) \simeq 
\mathrm{ker}\left(\delta: \mathfrak{F}^{\binom{n}{2}} \to \Lambda_{\textsc{sf}}\right),\;\;\;
\dim H_1(\mathcal{L}_{\textsc{sf}}^{\lowR}) = \textstyle{\binom{n}{2}}-1.
\mkern95mu
\square\mkern-30mu
\end{equation}

\vspace{.18cm}

{\em{Proof of $(i.c)$.}}
When $n\mkern-2mu=\mkern-2mu 2$, the discussion in $(i.b)$ still holds, so 
$\pi_0(\mathcal{L}) \mkern-2mu=\mkern-2mu 0$ and
$\pi_1(\mathcal{L}) \mkern-2mu=\mkern-2mu \ker \delta \mkern-1mu=\mkern-1mu 1$. 
Thus, just as for $n=1$, we have 
$\mathcal{L}$ contractible and of dimension 1.
\qed

\vspace{.3cm}

{\em{Proof of $(ii.a)$.}}
Case
$N = \frac{q}{p}$.
of part $(ii)$ of the proof of
Theorem~\ref{thm: s3 satellite in sf basis} tells us in
(\ref{eq: NL for N=q})
that
\begin{equation}
\mathcal{NL}_{\textsc{sf}}
=
\mathcal{Z}_{\textsc{sf}}
\cup
\{\boldsymbol{y}\in\Q^n_{\textsc{sf}}\mkern1mu |\,
-\infty < 0 < y_-(\boldsymbol{y})\},
\end{equation}
which, given the definition of $y_-(\boldsymbol{y})$ in the theorem statement, implies that
\begin{equation}
\mathcal{L}_{\textsc{sf}}^{\lowR}
=
\mathcal{L}^{\lowR +}_{\textsc{sf}}
\;\cup\;
\mathcal{R}_{\textsc{sf}}^{\lowR} \setminus
\mathcal{Z}_{\textsc{sf}}^{\lowR};
\;\;\;
\mathcal{L}^{\lowR +}_{\textsc{sf}}
:=
\{ \boldsymbol{y} \in \R^n_{\textsc{sf}}
|\mkern1mu {\textstyle{\sum_{i=1}^n \lfloor y_i \rfloor }}  \ge 0\}.
\end{equation}
Since
$\mathcal{L}^{\lowR +}_{\textsc{sf}}\mkern-2mu$, of dimension $n$,
is contractible, with
$\mkern1mu\mathcal{R}_{\textsc{sf}}^{\lowR} \setminus \mathcal{Z}_{\textsc{sf}}^{\lowR}\mkern1mu$
inside its boundary, we are done.
\qed

\vspace{.25cm}

{\em{Proof of $(ii.b)$.}}
Since the result is trivial for $n=1$, we henceforth assume $n>1$.
Moreover, $2g(K)-1 < \frac{q}{p}$ implies $q>0$
unless $K$ is the unknot, in which case we can take the mirror of $K$
if $q<0$. Thus we also assume $q >0$ without
loss of generality.
The statement and proof of 
Theorem~\ref{thm: s3 satellite in sf basis} then tell us that
\begin{equation}
\label{eq: NL for top}
\mathcal{NL}_{\textsc{sf}}
=
\mathcal{Z}_{\textsc{sf}}
\cup
\mathcal{N},
\;\;\;
\mathcal{N}:=
\{\boldsymbol{y}\in\Q^n_{\textsc{sf}}\mkern1mu |\,
y_+(\boldsymbol{y}) < 0 < y_-(\boldsymbol{y})\},\;\;\text{with}
\end{equation}
\begin{align}
y_-(\boldsymbol{y}) := \max_{k>0} y_-(\boldsymbol{y},k),
\;\;&\;\;\;
y_-(\boldsymbol{y},k)
:= -\mfrac{1}{k}{\textstyle{\sum_{i=1}^n \lfloor y_i k \rfloor}} -c_-(k)
    \\
y_+(\boldsymbol{y}) := \min_{k>0} y_+(\boldsymbol{y},k),
\;\;&\;\;\;
y_+(\boldsymbol{y},k)
:= -\mfrac{1}{k}{\textstyle{\sum_{i=1}^n \lceil y_i k \rceil}} -c_+(k)
\end{align}
for all $\boldsymbol{y} \in \Q^n_{\textsc{sf}}$ and
for certain 
$c_-(k), c_+(k) \in \frac{1}{k}\Z$ bounded above and below 
by linear functions in $k$, and
determined by $p$, $q$, and $2g(K)-1$,
and on whether $K \subset S^3$ is trivial.
In particular, each of $c_-(k)$ and $c_+(k)$ 
are independent of $\boldsymbol{y} \in \Q^n_{\textsc{sf}}$.


\vspace{.03cm}

Since
$\mathcal{Z}_{\textsc{sf}} 
= \mathcal{B}_{\textsc{sf}} \setminus
(\mathcal{B}_{\textsc{sf}} \cap \Q^n_{\textsc{sf}})$,
implying
$\mathcal{Z}^{\lowR}_{\textsc{sf}} 
= \mathcal{B}^{\R}_{\textsc{sf}} \setminus
(\mathcal{B}^{\R}_{\textsc{sf}} \cap \R^n_{\textsc{sf}})$,
it remains to construct a deformation retraction from 
$\mathcal{N}^{\lowR}$ to 
$\mathcal{B}^{\lowR}_{\textsc{sf}} \cap \R^n_{\textsc{sf}} 
\mkern1mu\subset\mkern1mu \mathcal{N}^{\lowR}$.
Toward that end, we define
\begin{equation}
\boldsymbol{1}:= (1,\ldots, 1) \in \Z^n_{\textsc{sf}},
\;\;\;\;
l(\boldsymbol{y}) := 
{\textstyle{ -\frac{1}{pq} - \sum_{i=1}^n y_i }} \,\in \R^n_{\textsc{sf}},
\end{equation}
so that for
$\boldsymbol{y} \mkern-3mu\in\mkern-3mu \Q^n_{\textsc{sf}},\mkern-.5mu$ 
$l(\boldsymbol{y})$ is the rational longitude of
the exterior 
$\hat{Y}^{(np,nq)}(\boldsymbol{y}) 
\mkern-2.5mu:=\mkern-2.5mu 
Y^{(np,nq)}(\boldsymbol{y}) \mkern-1.5mu\setminus\mkern-.5mu
\overset{\circ}{\nu}(f)$ 
of a regular fiber $f$ in $Y^{(np,nq)}(\boldsymbol{y})$.
We then claim that the homotopy
\begin{equation}
\boldsymbol{z} : [0,1] \times  \mathcal{N}^{\lowR} \to \R^n_{\textsc{sf}},
\;\;\;
\boldsymbol{z}_t(\boldsymbol{y}) := 
\boldsymbol{y} + t \cdot {\textstyle{\frac{1}{n}}}l(\boldsymbol{y})\boldsymbol{1}
\end{equation}
provides a deformation retraction from 
$\mathcal{N}^{\lowR}$ to 
$\mathcal{B}^{\lowR}_{\textsc{sf}} \cap \R^n_{\textsc{sf}} 
\mkern1mu\subset\mkern1mu \mathcal{N}^{\lowR}$.

First, note that
(\ref{eq: NL for top}) also implies that
$\mathcal{NL}_{\textsc{sf}}(\hat{Y}^{(np,nq)}(\boldsymbol{z})) = 
\left<y_+(\boldsymbol{z}), y_+(\boldsymbol{z})\right>$ for all
$\boldsymbol{z} \in \Q^n_{\textsc{sf}}$.
Thus, for all $\boldsymbol{z} \in \Q^n_{\textsc{sf}}$, we have
$l(\boldsymbol{z}) \in \mathcal{NL}_{\textsc{sf}}(\hat{Y}^{(np,nq)}(\boldsymbol{z}))$,
so that
$l(\boldsymbol{z}) \in \left<y_+(\boldsymbol{z}), y_-(\boldsymbol{z})\right>$. 
Thus,
\begin{align}
0 <  (1-t)l(\boldsymbol{y})=l(\boldsymbol{z}_t(\boldsymbol{y})) 
< y_-(\boldsymbol{z}_t(\boldsymbol{y})) 
\;\;\text{for all } t\in \left[0,1\right>\cap\Q
\;\;\;\;\;\;\;\;
\text{if}\;\, 0 < l(\boldsymbol{y});
  \\
0 >  (1-t)l(\boldsymbol{y})=l(\boldsymbol{z}_t(\boldsymbol{y})) 
> y_+(\boldsymbol{z}_t(\boldsymbol{y})) 
\;\;\text{for all } t\in \left[0,1\right>\cap\Q
\;\;\;\;\;\;\;\;
\text{if}\;\, 0 > l(\boldsymbol{y})\hphantom{;}
\end{align}
for all $\boldsymbol{y} \in \Q^n_{\textsc{sf}}$,
where the equivalence
$(1-t)l(\boldsymbol{y})=l(\boldsymbol{z}_t(\boldsymbol{y}))$
follows quickly from the definitions of $l$ and $\boldsymbol{z}$.
Now, either by 
Proposition \ref{prop: complements and closure} and the structure
of the structure of L-space intervals, or by 
Calegari and Walker's studies of ``ziggurats'' \cite{ziggurat},
we know that as functions on $\R^n_{\textsc{sf}}$,
$y_-$ and $y_+$ are piecewise constant, with rational endpoints,
in each coordinate direction.
Thus, since $l$ and $\boldsymbol{z}$ are linear
and the above inequalities are strict, we have
\begin{align}
0 < y_-(\boldsymbol{z}_t(\boldsymbol{y})) 
\;\;\text{if}\;\, 0 < l(\boldsymbol{y}),
       \;\;\;\;
0 > y_+(\boldsymbol{z}_t(\boldsymbol{y})) 
\;\;\text{if}\;\, 0 > l(\boldsymbol{y})
\end{align}
for all $\boldsymbol{y} \in \R^n_{\textsc{sf}}$
and $t\in \left[0,1\right>^{\lowR}$.

On the other hand, our definitions of $y_{\pm}$ and $\boldsymbol{z}$
imply that for any $\boldsymbol{y} \in \mathcal{N}^{\lowR}$, we have 
\begin{align}
y_+(\boldsymbol{z}_t(\boldsymbol{y}))
\le y_+(\boldsymbol{y}) < 0
\;\;\text{for all } t\in [0,1]^{\lowR}
\;\;\;\;\;\;\;\;
\text{if}\;\, 0 < l(\boldsymbol{y});
  \\
y_-(\boldsymbol{z}_t(\boldsymbol{y}))
\ge y_-(\boldsymbol{y}) > 0 
\;\;\text{for all } t\in [0,1]^{\lowR}
\;\;\;\;\;\;\;\;
\text{if}\;\, 0 > l(\boldsymbol{y}).
\end{align}
Combining these three lines of inequalities tells us that for any
$\boldsymbol{y} \in \mathcal{N}^{\lowR}$, we have 
$\boldsymbol{z}_t(\boldsymbol{y}) \in \mathcal{N}^{\lowR}$
for all $t \in [0,1]^{\lowR}$. Thus $\boldsymbol{z}$
provides a deformation retraction from 
$\mathcal{N}^{\lowR}$ to 
$\mathcal{B}^{\lowR}_{\textsc{sf}} \cap \R^n_{\textsc{sf}} 
\mkern1mu\subset\mkern1mu \mathcal{N}^{\lowR}$.
\qed

\subsection{Topology of monotone strata}
Sections
\ref{s: iterated torus-link-satellites}
and
\ref{s: algebraic link satellites}
analyze the L-space surgery regions for
satellites by iterated torus-links and by algebraic links, respectively.
While these sections primarily focus on approximation tools,
Section \ref{ss: monotone strata} returns to the question of
exact L-space regions for such satellites, and describes 
how to decompose these L-space regions into strata according to
monotonicity criteria, which govern where the endpoints of local L-space
intervals lie, relative to asymptotes of maps on slopes induced by gluing maps.

Local monotonicity criteria also help determine the {\em{topology}}
of these strata, a phenomenon we illustrate with
Theorem \ref{thm: topology of monotone stratum}
in Section \ref{s: algebraic link satellites},
where we show that the $\Q$-corrected $\R$-closure of the
monotone stratum of the L-space surgery
region of an appropriate satellite link
admits a deformation retraction onto an embedded torus
analogous to that in 
Theorem \ref{thm: topology of torus link exterior L-space region}$.ii.b$ above.

\section{Iterated torus-link satellites}
\label{s: iterated torus-link-satellites}

Just as one can construct a torus-link-satellite exterior
from a knot exterior by gluing an appropriate Seifert fibered
space to the knot exterior (as described in 
Proposition~\ref{prop: make satellite}),
one constructs an {\textit{iterated}} torus-link-satellite
exterior by gluing an appropriate rooted (tree-)graph manifold
to the knot exterior, where this graph manifold is formed
by iteratively performing the Seifert-fibered-gluing operations
associated to individual torus-link-satellite operations.

\subsection{Construction of iterated torus-link-satellite exteriors}
An iterated torus-link-satellite of a knot exterior
$Y = M \setminus \overset{\mkern2mu\circ}{\nu}(K)$
is specified by a weighted, rooted tree $\Gamma$,
corresponding to the minimal JSJ decomposition of the
graph manifold glued to the knot exterior to form the satellite.
We weight each vertex $v \in \text{Vert}(\Gamma)$
by the 3-tuple $(p_v, q_v, n_v) \in \Z^3$
corresponding to the pattern link
$T_{v} := T(n_v p_v,n_v q_v)$.
As usual, we demand that $p_v, n_v > 0$ and that $q_v \neq 0$. 
(If any vertex had $p_v = 0$ or $q_v = 0$,
then our satellite-link exterior would be a nontrivial connected sum,
in which case we might as well have considered the
irreducible components of the exterior separately.
Moreover, the links
of complex surface singularities are irreducible,
so the algebraic links we consider later on will
necessarily be irreducible.)

The weight $(p_v, q_v, n_v)$ also specifies the JSJ component $Y_v$ as the
Seifert fibered exterior
\vspace{-.1cm}
\begin{equation}
Y_v 
\mkern2.5mu:=\mkern2.5mu 
\hat{Y}^{\mkern1mu n_v}_{\mkern-1mu(p_v,\mkern1mu  q_v)}
\mkern1mu=\mkern2mu
M_{S^2}(\mkern1mu{\textstyle{
\mkern-2mu-\mkern-1mu\frac{q^*_v}{p_v}}},\mkern-.5mu
\overset{0}{\hat{*}},
\overset{1}{\hat{*}}, \ldots, \overset{n}{\hat{*}}\mkern1mu)
\mkern2.5mu=\mkern2.5mu
\hat{Y}_{(p_v\mkern-1mu,\mkern1mu q_v)} 
\mkern-3mu\setminus \overset{\mkern1mu\circ}{\nu}\mkern.5mu
(T_v)
\end{equation}
of $T_v \subset 
\hat{Y}_{(p_v\mkern-1mu,\mkern1mu q_v)}$ as a link in the solid torus
\begin{equation}
\label{eq: iterated torus link solid torus}
\hat{Y}_{(p_v\mkern-1mu,\mkern1mu q_v)} 
\mkern1mu:=\mkern1mu 
M_{S^2}(\mkern1mu{\textstyle{
\mkern-2mu-\mkern-1mu\frac{q^*_v}{p_v},
\mkern-1mu{\textstyle{\frac{p^*_v}{q_v}}}}}\mkern1mu)
\mkern-1.2mu\setminus\mkern-1mu \overset{\circ}{\nu}(\lambda_0)
\mkern2mu=\mkern2mu 
S^3 
\mkern-1.2mu\setminus\mkern-1mu \overset{\circ}{\nu}(\lambda_0)
\mkern1mu=\mkern2mu 
\nu(\lambda_{-1}),
\end{equation}
for $\lambda_{-1}$ the multiplicity-$p_v$ fiber of merdional
$\textsc{sf}$-slope $y_{-1}^v \mkern-2mu=\mkern-2mu-\frac{q^*_v}{p_v}$,
and $\lambda_0$ the multiplicity-$q_v$ fiber
of meridional $\textsc{sf}$-slope $y_0^v\mkern-2mu=\mkern-2mu\frac{p^*_v}{q_v}$, as in 
Section
\ref{ss: seifert structures on exterior stuff}.
As~usual, 
$(p_v^*, q_v^*) \in \Z^2$
denotes the unique pair of integers satisfying
$\mkern1muvp_vp_v^* - q_vq_v^* = 1$ with
$q^*_v \in \{0, \ldots, p_v\mkern-2mu-\mkern-2mu 1\}$.

Specifying a root $r$ for the tree $\Gamma$ determines an orientation on edges, up to over-all sign. We choose to direct edges {\em{towards}} the root~$r$,
and write $E_{\mathrm{in}}(v)$ for the set of edges
terminating on a vertex $v$.
On the other hand, each non-root vertex $v$ has a unique 
edge emanating from it, and we call this outgoing edge $e_v$.
We additionally declare one edge $e_{r}$ to emanate from the
the root vertex $r$ towards a null vertex $\textsc{null} \notin \mathrm{Vert}(\Gamma)$,
which we morally associate (with no hat) to our original knot exterior,
$Y_{\textsc{null}} := Y = M \setminus \overset{\mkern2mu\circ}{\nu}(K)$.
For any (directed) edge $e \in \text{Edge}(\Gamma)$, we write $v(e)$ for the destination
vertex of $e$, so that $v = v(-e_v)$ for all $v \in \text{Vert}(\Gamma)$.

For notational convenience, 
we also associate an ``index'' $j(e) \in \{1, \ldots, n_{v(e)}\}$
to each edge~$e$, specifying the boundary component 
$\partial_{j(e)} \mkern-1mu Y_{v(e)}$ of $Y_{v(e)}$
to which $\partial_0 \mkern-1mu Y_{v(-e)}$ is glued when we embed the 
pattern link $T_{v(-e)}$ in a neighborhood of $\partial_{j(e)} Y_{v(e)}$.
As such, each edge $e \in \text{Edge}(\Gamma)$ corresponds 
to a gluing map
\begin{equation}
{\varphi}_e: \partial_0 Y_{v(-e)} \to -\partial_{j(e)}\mkern-1mu Y_{v(e)},
\end{equation}
along the incompressible torus joining
$Y_{v(-e)}$ to  $Y_{v(e)}$.
This
${\varphi}_e$ is the {\text{inverse}} of the map $\bar{\varphi}$ used in the
satellite construction of 
Proposition~\ref{prop: make satellite}.
We express its induced map on slopes
\begin{equation}
{\varphi}_{\mkern-2.05mu e\mkern.95mu *}^{{\mathbb{P}}}  :
\P(H_1(\partial_0 Y_{v(-e)}; \Z))_{\textsc{sf}} 
\,\longrightarrow\,
\P(H_1(\partial_{j(e)}\mkern-1mu Y_{v(e)}; \Z))_{\textsc{sf}},
\;\;\;\;\;
y \,\mapsto\,
\frac{yp_{v(-e)} - q^*_{v(-e)}}{y q_{v(-e)} - p^*_{v(-e)}},
\end{equation}
in terms of $\textsc{sf}$-slopes on both sides.
Thus ${\varphi}_{\mkern-2.05mu e\mkern.95mu *}^{{\mathbb{P}}}$
is orientation reversing.
Note that for any $v \in \text{Vert}(\Gamma)$,
the map
${\varphi}_{\mkern-2.05mu e_v\mkern.95mu *}^{{\mathbb{P}}}$
is determined by $(p_v,q_v,n_v)$ and $j(e_v)$.
We additionally define
\begin{equation}
J_v := \{j(e) |\mkern1mu e \in E_{\mathrm{in}}(v)\},
\;\;\text{and}\;\;
I_v:= \{1, \ldots, n_v\} \setminus J_v
\end{equation}
for each $v \in \text{Vert}(\Gamma)$, so that
the space of Dehn filling slopes of $Y^{\Gamma}$ is given by
\begin{equation}
\prod_{v \in \text{Vert}(\Gamma)} \prod_{i \in I_v} \P(H_1(\partial_i Y_v ; \Z)).
\end{equation}

Writing
$\Gamma_v$ for the subtree of $\Gamma$ of which $v$ is the root,
let $Y_{\Gamma_v}$ denote the graph manifold
with JSJ decomposition given by the Seifert fibered spaces
$Y_u$ and gluing maps ${\varphi}_{e(u)}$
for $u \in \text{Vert}(\Gamma_v)$, so that $Y_{\Gamma_v}$ is constructed recursively as
\begin{equation}
Y_{\Gamma_v} \mkern3mu=\mkern7mu Y_v 
\mkern10mu\cup_{\{\varphi_e\}} \mkern-8mu \coprod_{\mkern8mu e \in E_{\mathrm{in}(v)}}
\mkern-12mu Y_{\Gamma_{v(-e)}}.
\end{equation}
The exterior $Y^{\Gamma} := M \setminus \overset{\mkern2mu \circ}{\nu}(K^{\Gamma})$
of the iterated torus-link-satellite $K^{\Gamma} \subset M$ of $K \subset M$
specified by $\Gamma$ is then given by 
$Y^{\Gamma} = Y \mkern1mu\cup_{\varphi_{e_r}} \mkern-4mu Y_{\Gamma}$.

\subsection{Dehn fillings of $\boldsymbol{Y^{\Gamma}}\mkern-2mu$}
For $v \in \text{Vert}(\Gamma)$,
any ${\textsc{sf}}_{\Gamma_v}$-slope 
\begin{equation}
{\boldsymbol{y}}^{\Gamma_v} := 
\prod_{v\in \text{Vert}(\Gamma_v)} 
{\boldsymbol{y}}^v,
\mkern30mu
{\boldsymbol{y}}^v
\in
(\Q \cup \mkern-2mu \{\infty\})^{\mkern-1mu|I_v|}_{\textsc{sf}_v}
:=
\prod_{i \in I_v} \P(H_1(\partial_i Y_v; \Z))_{{\textsc{sf}}_v},
\end{equation}
determines an L-space interval
$\mathcal{L}(Y_{\Gamma_v}\mkern-1mu({\boldsymbol{y}}^{\Gamma_v}))$
for the $\textsc{sf}_{\Gamma_v}$-Dehn-filling 
$Y_{\Gamma_v}\mkern-1mu({\boldsymbol{y}}^{\Gamma_v})$.
If $Y_{\Gamma_v}\mkern-1mu({\boldsymbol{y}}^{\Gamma_v})$ is Floer simple,
then we write
\begin{equation}
[[y^v_{0-}, y^v_{0+}]] 
:=\mathcal{L}_{\textsc{sf}_v}(Y_{\Gamma_v}\mkern-1mu({\boldsymbol{y}}^{\Gamma_v})),
\mkern30mu
[[y^{v(e_v)}_{j(e_v)-}, y^{v(e_v)}_{j(e_v)+}]] =  
{\varphi}^{\P}_{\mkern -3mu e_v *}
(\mathcal{L}_{\textsc{sf}_v}(Y_{\Gamma_v}\mkern-1mu({\boldsymbol{y}}^{\Gamma_v})))
\end{equation}
to express 
$\mathcal{L}_{\textsc{sf}_v}(Y_{\Gamma_v}\mkern-1mu({\boldsymbol{y}}^{\Gamma_v}))$
in terms of 
$\textsc{sf}_v$-slopes and 
$\textsc{sf}_{v(e_v)}$-slopes, respectively.
Note that since
${\varphi}^{\P}_{\mkern -3mu e_v *}$
is orientation-reversing, we have
\begin{equation}
y^{v(e_v)}_{j(e_v)-} = {\varphi}^{\P}_{\mkern -3mu e_v *}
(y^v_{0+}),
\mkern30mu
y^{v(e_v)}_{j(e_v)+} = {\varphi}^{\P}_{\mkern -3mu e_v *}
(y^v_{0-}).
\end{equation}

If we focus instead on the incoming edges of $v$, then
any Dehn filling of the 
boundary components $\partial Y_{\Gamma_v} \setminus \partial Y_v$
of $Y_{\Gamma_v}$
allows us to partition the graph manifolds incident to $v$,
labeled by $J_v = \{j(e) | \mkern1mu E_{\mathrm{in}}(v)\}$,
according to whether they are boundary compressible (\textsc{bc})---a solid
torus or connected sum thereof---or boundary incompressible (\textsc{bi}): 
\begin{align}
\label{eq: def of J BC}
J_v^{\textsc{bc}}
&:= \{ j(e) |\mkern2mu
Y_{\Gamma_{v(-e)}}
\text{ is }\textsc{bc},\;
e \in E_{\mathrm{in}}(v)\},
     \\
\label{eq: def of J BI}
J_v^{\textsc{bi}}
&:= J_v \setminus J_v^{\textsc{bc}}.
\end{align}
Setting $y^v_j \mkern-2mu:=\mkern-2mu y^v_{j\pm}$ when $y^v_{j+} \mkern-4mu=\mkern-2mu y^v_{j+}$,
we additionally define the sets
$J_{v\Z}^{\textsc{bi}+}$,
$J_{v\Z}^{\textsc{bi}-}$ 
and
$J_{v\Z}^{\textsc{bc}}$:
\begin{equation}
\label{eq: def of J Z}
J_{v\Z}^{\textsc{bi}\pm}
:= \{j \in J_v^{\textsc{bi}}|\mkern2mu
y^v_{j\pm} \in \Z\},\;\;
J_{v\Z}^{\textsc{bc}}
:= \{j \in J_v^{\textsc{bc}}|\mkern2mu
y^v_{j} \in \Z\}.
\end{equation}

For $v \mkern-2mu\in\mkern-2mu \text{Vert}(\Gamma)$, $k \mkern-2mu\in\mkern-2mu \Z_{> 0}$, define
$\bar{y}^v_{0\mp\Sigma}(k) \mkern-2mu:=\mkern-2mu 0$
if $\infty \mkern-2mu\in \mkern-2mu
\{y^v_{j\pm}\}_{j\in J_v} 
\mkern-4mu\cup\mkern-1mu \{y^v_i\}_{i\in I_v}$, and otherwise set
\begin{align}
\label{eq: sigma thing for yv0-}
\bar{y}^v_{0-\Sigma}(k)
&:= 
{\textstyle{\sum\limits_{j \in J^{\textsc{bi}}_v}}}
\mkern-12mu
\left( \left\lceil [y_{j+}^v]k\right\rceil 
\mkern-2mu-\mkern-2mu 1\right)
\mkern12mu+\mkern1mu
{\textstyle{\sum\limits_{i \in I_v \cup  J^{\textsc{bc}}_v}}}
\mkern-8mu
\left\lfloor [y_i^v]k\right\rfloor,
     \\
\label{eq: sigma thing for yv0+}
\bar{y}^v_{0+\Sigma}(k)
&:= 
{\textstyle{\sum\limits_{j \in J^{\textsc{bi}}_v}}}
\mkern-12mu
\left( \left\lceil [-y_{j-}^v]k\right\rceil 
\mkern-2mu-\mkern-2mu 1\right)
\mkern12mu+\mkern1mu
{\textstyle{\sum\limits_{i \in I_v \cup  J^{\textsc{bc}}_v}}}
\mkern-8mu
\left\lfloor [-y_i^v]k\right\rfloor.
\end{align}
In addition, define $\bar{y}^v_{0-} \mkern-3mu:= \sup_{k >0} \bar{y}^v_{0-}(k)$,
$\bar{y}^v_{0+} \mkern-3mu:= \inf_{k >0} \bar{y}^v_{0+}(k)$,
where
\begin{align}
\bar{y}^v_{0-}(k)
&:=
\mfrac{1}{k}\mkern-3.5mu\left(
-1 + \left\lceil \mfrac{q^*_v}{p_v}k \right\rceil
-\bar{y}^v_{0-\Sigma}(k)
\right)
\mkern2mu-\mkern2mu 
|J_{v\Z}^{\textsc{bi}+}|,
\label{eq: def of y_0-v-bar for iterated}
   \\
\bar{y}^v_{0+}(k)
&:=
\mfrac{1}{k}\mkern-3.5mu\left(
\mkern3mu 1 + \left\lfloor \mfrac{q^*_v}{p_v}k \right\rfloor
+\bar{y}^v_{0+\Sigma}(k)
\right)
\mkern2mu+\mkern2mu 
|J_{v\Z}^{\textsc{bi}-}|.
\label{eq: def of y_0+v-bar for iterated}
\end{align}
The ``$\sup$'' and ``$\inf$'' account for cases
in which $\bar{y}^v_{0\pm}(k) = 0$.
The above notation provides a convenient way
to repackage our computation of L-space interval endpoints.
\begin{prop}
\label{prop: basic defs of y_0+-}
If $y_{0-}^v, y_{0+}^v \in \P(H_1(\partial_0 Y_v; \Z))_{\textsc{sf}_v}$
are the (potential) L-space interval endpoints for 
$Y_{\Gamma_v}\mkern-2mu({\boldsymbol{y}}^{\Gamma_v})$
as defined in
Theorem~\ref{thm: l-space interval for seifert jsj}, then
\begin{align*}
y_{0-}^v 
&= \bar{y}_{0-}^v
\mkern.5mu-\mkern-3.5mu
{\textstyle{\sum\limits_{\hphantom{1\mkern-2mu} 
j \in J_v^{\textsc{bi}}}}}\mkern-9mu
\left(\mkern-1mu\lceil y^v_{j+} \mkern-2mu\rceil \mkern-3mu-\mkern-2.5mu 1\right)
\mkern2mu-\mkern-7mu
{\textstyle{\sum\limits_{\hphantom{1\mkern-2mu} 
j \in J_v^{\textsc{bc}}}}}\mkern-8mu
\lfloor y^v_{j} \rfloor
\mkern10mu-\mkern7mu
{\textstyle{\sum\limits_{i \in I_v}}}\mkern-6mu
\lfloor y^v_{i} \rfloor,
    \\
y_{0+}^v 
&= \bar{y}_{0+}^v
\mkern.5mu-\mkern-3.5mu
{\textstyle{\sum\limits_{\hphantom{1\mkern-2mu} 
j \in J_v^{\textsc{bi}}}}}\mkern-9mu
\left(\mkern-1mu\lfloor y^v_{j-} \mkern-2mu\rfloor \mkern-3mu+\mkern-2.5mu 1\right)
\mkern2mu-\mkern-7mu
{\textstyle{\sum\limits_{\hphantom{1\mkern-2mu} 
j \in J_v^{\textsc{bc}}}}}\mkern-8mu
\lceil y^v_{j} \rceil
\mkern10mu-\mkern7mu
{\textstyle{\sum\limits_{i \in I_v}}}\mkern-6mu
\lceil y^v_{i} \rceil.
\end{align*}
Moreover,
$\mkern1mu\bar{y}^v_{0\mp} \mkern-3mu= \frac{q^*_v}{p_v}$
when
$\mkern1mu\infty \mkern-1mu\in \mkern-1mu
\{y^v_{j\pm}\}_{j\in J_v} 
\mkern-4mu\cup\mkern-1mu \{y^v_i\}_{i\in I_v}$, but
\begin{align}
\bar{y}^v_{0-} 
\mkern-2mu&= 
\left\lceil \mfrac{q^*_v}{p_v} \right\rceil - 1
\;\;\;\;\text{if}\;\;\;\;
J_{v\Z}^{\textsc{bi}+} \mkern-1mu\neq\mkern-0mu \emptyset
\text{ and }
\mkern1mu\infty \mkern-1mu\notin \mkern-1mu
\{y^v_{j+}\}_{j\in J_v} 
\mkern-2mu\cup\mkern.5mu \{y^v_i\}_{i\in I_v};
    \\
\bar{y}^v_{0+} 
\mkern-2mu&= 1
\mkern60mu
\;\;\;\;\text{if}\;\;\;\;
J_{v\Z}^{\textsc{bi}-} \mkern-1mu\neq\mkern-0mu \emptyset
\text{ and }
\mkern1mu\infty \mkern-1mu\notin \mkern-1mu
\{y^v_{j-}\}_{j\in J_v} 
\mkern-2mu\cup\mkern.5mu \{y^v_i\}_{i\in I_v}.
\end{align}

\end{prop}

\begin{proof}
The displayed equations in
Proposition \ref{prop: basic defs of y_0+-}
come directly from the definitions
of $y_{0\pm}^v$ specified by 
Theorem~\ref{thm: l-space interval for seifert jsj},
but subjected to some mild manipulation of terms
using the facts that
$x = \lfloor x \rfloor + [x]$ and $x = \lceil x \rceil - [-x]$
for all $x \in \R$, and that
\begin{equation}
|J_{v\Z}^{\textsc{bi}+}| =
{\textstyle{\sum\limits_{j \in J^{\textsc{bi}}_v}}}
\mkern-8mu
\left( \lfloor y_{j+}^v \rfloor -  (\lceil y_{j+}^v \rceil 
\mkern-2mu-\mkern-2mu 1)\right),
\mkern30mu
-|J_{v\Z}^{\textsc{bi}-}| =
{\textstyle{\sum\limits_{j \in J^{\textsc{bi}}_v}}}
\mkern-8mu
\left( \lceil y_{j-}^v \rceil -  (\lfloor y_{j-}^v \rfloor 
\mkern-2mu+\mkern-2mu 1)\right).
\end{equation}

For the second half of the proposition,
first note that the $\mkern1mu\bar{y}^v_{0\mp} \mkern-3mu= \frac{q^*_v}{p_v}$
result follows directly from taking the $k \to \infty$ limit.
In the case of
$\mkern1mu\infty \mkern-1mu\notin \mkern-1mu
\{y^v_{j+}, y^v_{j-}, y^v_i|\,
j \in J_v, i\in I_v\}$
we temporarily set 
$\hat{y}_{0\mp}^v(k)
:= \bar{y}_{0\mp}^v(k) 
\pm
|J_{v\Z}^{\textsc{bi}\pm}|$, so that
\begin{align}
\hat{y}^v_{0-}\mkern-1mu(k)
\mkern2mu&\le\mkern4mu
\mfrac{1}{k}\mkern-5mu\left(
\left\lceil \mkern-2mu\mfrac{q^*_v\mkern-1mu}{p_v\mkern-1mu}k\mkern-2.5mu \right\rceil
+(|J_{v\Z}^{\textsc{bi}+}| -1) \right)
\mkern4mu\le\mkern4mu
\left\lceil \mkern-2mu\mfrac{q^*_v\mkern-1mu}{p_v\mkern-1mu}
\mkern-2.2mu \right\rceil
+ \mfrac{1}{k}(|J_{v\Z}^{\textsc{bi}+}| -1),
     \\
\hat{y}^v_{0+}\mkern-1mu(k)
\mkern2mu&\ge\mkern4mu
\mfrac{1}{k}\mkern-5mu\left(
\left\lfloor \mkern-2mu\mfrac{q^*_v\mkern-1mu}{p_v\mkern-1mu}k\mkern-2.5mu \right\rfloor
-(|J_{v\Z}^{\textsc{bi}-}| -1) \right)
\mkern4mu\ge\mkern4mu
-\mfrac{1}{k}(|J_{v\Z}^{\textsc{bi}-}| -1)
\end{align}
for all $k \in \Z_{>0}$.
If $J_{v\Z}^{\textsc{bi}+} \mkern-4mu \neq \mkern-1.5mu \emptyset$
(respectively $J_{v\Z}^{\textsc{bi}-} \mkern-4mu \neq \mkern-1.5mu \emptyset$),
then the above bound for $\hat{y}^v_{0-}\mkern-1mu(k)$
(respectively $\hat{y}^v_{0+}\mkern-1mu(k)$)
is nonincreasing (respectively nondecreasing) in $k$,
so that
\begin{align}
\hat{y}^v_{0-}\mkern-1mu(k)
\le 
\left\lceil \mkern-2mu\mfrac{q^*_v\mkern-1mu}{p_v\mkern-1mu}
\mkern-2.2mu \right\rceil
-1+
|J_{v\Z}^{\textsc{bi}+}|,
    \mkern30mu
\hat{y}^v_{0+}\mkern-1mu(k)
\ge 
1
-
|J_{v\Z}^{\textsc{bi}-}|
\end{align}
for all $k \in \Z_{>0}$.
Since these bounds are each realized when $k=1$,
this completes the proof
of the bottom line of the proposition.
\end{proof}

The above method of computation for
L-space interval endpoints
helps us to prove some useful bounds for these endpoints.
\begin{prop}
\label{prop: main bounds for y0- and y0+}
Suppose
$\bar{y}^v_{0+\Sigma}(k)$,
$\mkern1.5mu\bar{y}_{0-}^v\mkern-2mu$, and
$\mkern2mu\bar{y}_{0+}^v\mkern-3mu$
are as defined in
(\ref{eq: sigma thing for yv0+}),
(\ref{eq: def of y_0-v-bar for iterated}), and
(\ref{eq: def of y_0+v-bar for iterated}),
and that
$\mkern1mu\infty \mkern-1mu\notin \mkern-1mu
\{y^v_{j+}, y^v_{j-}, y^v_i|\,
j \in J_v, i\in I_v\}$.
Then $\mkern1.5mu\bar{y}_{0-}^v\mkern-2mu$ and
$\mkern2mu\bar{y}_{0+}^v\mkern-3mu$
satisfy the following properties.

\begin{align*}
\tag{$=$}
\mkern2mu\bar{y}_{0-}^v\mkern-3mu = \mfrac{q^*_v}{p_v}
\mkern5mu
\Longleftrightarrow 
\mkern5mu
\mkern2mu\bar{y}_{0+}^v\mkern-3mu = \mfrac{q^*_v}{p_v}
\mkern15mu 
&\Longleftrightarrow 
\mkern15mu
J^{\textsc{bi}}_v \mkern-3mu = \mkern-1.5mu \emptyset
\text{ and }
\{y^v_{j+}, y^v_{j-}, y^v_i|\,j \in J_v, i\in I_v\}
\mkern-2.5mu\subset\mkern-1mu \Z
  \\
&\mkern4.5mu \Longrightarrow 
\mkern15mu
Y_{\Gamma_v}\mkern-2mu({\boldsymbol{y}}^{\Gamma_v})
\text{ is }\, \textsc{bc}.
\end{align*}

{\noindent{$(-)\;$}}
If 
$Y_{\Gamma_v}\mkern-2mu({\boldsymbol{y}}^{\Gamma_v})$ is \textsc{bi},
then
$\mkern1mu\bar{y}_{0-}^v \in
\left[
\left\lceil\mkern-2mu\frac{q^*_v}{p_v}\mkern-2mu\right\rceil \mkern-3mu-\mkern-3mu1, 
\frac{q^*_v}{p_v}
\right>
=
\begin{cases}
\left[\mkern-4mu\vphantom{\frac{a}{c}} \right.
0, 
\frac{q^*_v}{p_v} 
\left.\vphantom{\frac{a}{c}}\mkern-4mu\right>
  &
  \mkern10mu
  p_v \neq 1
      \\
\left[\mkern-4mu\vphantom{\frac{a}{c}} \right.
\mkern-2.2mu-\mkern-5mu 1, 0 
\left.\vphantom{\frac{a}{c}}\mkern-4mu\right>
  &
  \mkern10mu
  p_v = 1
\end{cases}.
$

{\noindent{$(+)\;$}}
If 
$Y_{\Gamma_v}\mkern-2mu({\boldsymbol{y}}^{\Gamma_v})$ is \textsc{bi},
then
$\bar{y}_{0+}^v \in 
\left<
\frac{q^*_v}{p_v},
\left\lfloor\mkern-2mu\frac{q^*_v}{p_v}\mkern-2mu\right\rfloor \mkern-3mu+\mkern-3mu1
\right]
=
\left<
\frac{q^*_v}{p_v},
1
\right]$.
If in addition,
$J_{v\Z}^{\textsc{bi}-} \mkern-4.5mu=\mkern-1.5mu \emptyset$, 

$\mkern20mu$then for $q_v > 0$ and $m \in \Z$ (possibly negative or zero)
with $m < \frac{q_v}{p_v}$, $\mkern1.5mu\bar{y}_{0+}^v\mkern-1mu$ satisfies

\begin{align*}
(i)\mkern102mu
\bar{y}_{0+}^v \in
\begin{cases}
\left<
\frac{q^*_v}{p_v},
\frac{p_v^*- \mkern1mu mq_v^*}{q_v - \mkern1mu mp_v} 
\right]
&
\bar{y}^v_{0+\Sigma}(q_v\mkern-1.8mu - \mkern-1.5mu m p_v) \mkern-2mu=\mkern-2mu 0
\vphantom{\left<\frac{A^{A^A}}{A^{A^A}}\right>}
    \\
\left<
\frac{p_v^*- \mkern1mu mq_v^*}{q_v - \mkern1mu mp_v} ,1
\right]
&
\bar{y}^v_{0+\Sigma}(q_v \mkern-1.8mu-\mkern-1.5mu m p_v) \mkern-2mu>\mkern-2mu 0
\end{cases},
\mkern90mu\hphantom{(ii)}
\end{align*}

{\noindent{$\hphantom{(+)\;}$}}
where we note that for $a,b \in\Z$ with $a, b < \frac{q_v}{p_v}$, one has
\begin{equation*}
(ii)\mkern150mu
\mfrac{p_v^*- aq_v^*}{q_v - ap_v} 
<
\mfrac{p_v^*- bq_v^*}{q_v - bp_v}
\iff
a < b.
\mkern135mu\hphantom{(ii)}
\end{equation*}

{\noindent{$\hphantom{(+)\;}$}}
Lastly, if $q_v > p_v > 1$, then 
\begin{equation*}
(iii)\mkern32mu
\bar{y}_{0+}^v = \mfrac{p^*_v}{q_v}
\;\;\;\iff\;\;\;
\bar{y}^v_{0+\Sigma}(q_v) \mkern-2mu=\mkern-2mu 0,\;
\bar{y}^v_{0+\Sigma}(q_v\mkern-2.5mu + \mkern-1.5mu p_v) \mkern-2mu>\mkern-2mu 0,\mkern-2mu
\text{ and }
\mkern1.5muJ_{v\Z}^{\textsc{bi}-} \mkern-4.5mu=\mkern-1.5mu \emptyset.
\end{equation*}

\end{prop}

To aid in the proof of $(=)$, we first prove the following 
\begin{claim*} If the hypotheses of
Proposition~\ref{prop: main bounds for y0- and y0+}
hold, then
$$
J^{\textsc{bi}}_v \mkern-3mu = \mkern-1.5mu \emptyset
\text{ and }
\{y^v_{j+}, y^v_{j-}, y^v_i|\,j \in J_v, i\in I_v\}
\mkern-2.5mu\subset\mkern-1mu \Z
\mkern5mu\iff\mkern5mu
Y_{\Gamma_v}(\boldsymbol{y}^{\Gamma_v})
\text{ is }
\textsc{bc}
\text{ and }
\mkern2mu\bar{y}_{0-}^v
\mkern-3mu = 
\mkern2mu\bar{y}_{0-}^v
\mkern-3mu = 
\mfrac{q^*_v}{p_v}.
$$
\end{claim*}
{\em{Proof of Claim.}} For the $\Rightarrow$ direction,
the lefthand side implies the
irreducible component of $Y_{\Gamma_v}(\boldsymbol{y}^{\Gamma_v})$
containing 
$\partial Y_{\Gamma_v}(\boldsymbol{y}^{\Gamma_v})$
is Seifert fibered over the disk
with one or fewer exceptional fibers, hence is $\textsc{bc}$,
and direct computation shows that
$\bar{y}_{0-}^v
\mkern-3mu = 
\mkern2mu\bar{y}_{0-}^v
\mkern-3mu = 
\frac{q^*_v}{p_v}$.

For the $\Leftarrow$ direction, suppose the righthand side holds.
Then $J_v^{\textsc{bi}} = \emptyset$, and
the irreducible component of $Y_{\Gamma_v}(\boldsymbol{y}^{\Gamma_v})$
containing  
$\partial Y_{\Gamma_v}(\boldsymbol{y}^{\Gamma_v})$
is Seifert fibered over the disk
with one or fewer exceptional fibers, implying
$\left|\left\{\vphantom{A^A}\right.\right.\mkern-6mu\frac{q^*_v}{p_v}, y^v_{j+}, y^v_{j-}, y^v_i
\mkern-2.5mu\left|\vphantom{A^A}\right.\mkern-1mu
j \in J_v, i\in I_v\left.\left.\vphantom{A^A}\mkern-7mu\right\} 
\mkern1mu\cap\mkern2mu \Z\right|
\le 1$.
If $\frac{q^*_v}{p_v} \notin \Z$, then we are done, 
but if $\frac{q^*_v}{p_v} \in \Z$ and 
$\left|\left\{\vphantom{A^A}\right.\right.\mkern-6mu\frac{q^*_v}{p_v}, y^v_{j+}, y^v_{j-}, y^v_i
\mkern-2.5mu\left|\vphantom{A^A}\right.\mkern-1mu
j \in J_v, i\in I_v\left.\left.\vphantom{A^A}\mkern-7mu\right\} 
\mkern1mu\cap\mkern2mu \Z\right|
= 1$, then the longitude $l$ satisfies
$l=-\sum_{i \in I_v \cup J_v} y^v_i \notin \Z$, contradicting the fact that
$l = \bar{y}_{0-}^v
\mkern-3mu = 
\mkern2mu\bar{y}_{0-}^v
\mkern-3mu = 
\frac{q^*_v}{p_v} = 0$. \qed

{{\textit{Proof of }$(-)${\textit{ and part of }}$(=).$}}
When $J^{\textsc{bi}+}_{v\Z} \mkern-3.2mu\neq\mkern-2mu \emptyset$,
$(-)$ follows from 
Proposition~\ref{prop: basic defs of y_0+-}, which tells us
$\bar{y}_{0-}^v \mkern-1mu= \bar{y}_{0-}^v(1)
=\left\lceil\mkern-2mu\frac{q^*_v}{p_v}\mkern-2mu\right\rceil - 1$.
When $J^{\textsc{bi}+}_{v\Z} \mkern-3.2mu=\mkern-1.7mu \emptyset$,
we have the bounds
\begin{equation}
\bar{y}_{0-}^v 
\mkern-2mu\ge\mkern1mu 
\bar{y}_{0-}^v(1)
= 
\begin{cases}
\mkern11.6mu 0
& p_v 
\mkern-3mu\neq\mkern-2mu 1
    \\
\mkern-2mu-1
& p_v
\mkern-3mu=\mkern-2mu 1
\end{cases},
\mkern35mu
\bar{y}_{0-}^v(k) \le
\mfrac{1}{k}\mkern-3mu
\left(
\left\lfloor \mfrac{q^*_v}{p_v}k\right\rfloor \mkern-3mu+\mkern-3mu 1 \mkern-.8mu
\right) < \mfrac{q^*_v}{p_v}
\mkern10mu \forall\; k \mkern-2.5mu\in\mkern-2.5mu \Z_{>0}.
\end{equation}
Thus, either $\mkern1mu\bar{y}_{0-}^v \in
\left[
\left\lceil\mkern-2mu\frac{q^*_v}{p_v}\mkern-2mu\right\rceil \mkern-3mu-\mkern-3mu1, 
\frac{q^*_v}{p_v}
\right>$ or $\bar{y}^v_{0-} \mkern-3mu= \frac{q^*_v}{p_v}$. The latter case implies
$\sup_{k \to +\infty} \bar{y}_{0-}^v(k)$ is not attained for finite $k$,
and so
Theorem~\ref{thm: l-space interval for seifert jsj} tells us that
$Y_{\Gamma_{\mkern-3mu v}}\mkern-2mu({\boldsymbol{y}}^{\Gamma_{\mkern-4mu v }})$
is \textsc{bc} and $\bar{y}^v_{0-} \mkern-3mu= \bar{y}^v_{0+}$.
\qed

\vspace{.2cm}
{{\textit{Proof of }$(+)${\textit{ and remainder of }}$(=).$}}
Proposition~\ref{prop: basic defs of y_0+-} tells us that 
$\bar{y}_{0+}^v = 1$
when $J^{\textsc{bi}-}_{v\Z} \mkern-3.2mu\neq\mkern-2mu \emptyset$,
so we henceforth assume $J^{\textsc{bi}-}_{v\Z} \mkern-3.2mu =\mkern-2mu \emptyset$.
In this case, we have
\begin{equation}
\bar{y}_{0+}^v 
\mkern-2mu\le\mkern1mu 
\bar{y}_{0+}^v(1)
= 
1,
\mkern35mu
\bar{y}_{0+}^v(k) \ge
\mfrac{1}{k}\mkern-3mu
\left(
\left\lceil \mfrac{q^*_v}{p_v}k\right\rceil \mkern-3mu-\mkern-3mu 1 \mkern-.8mu
\right) > \mfrac{q^*_v}{p_v}
\mkern10mu \forall\; k \mkern-2.5mu\in\mkern-2.5mu \Z_{>0}.
\end{equation}
Thus, either 
$\bar{y}_{0+}^v \in
\left<\mkern-4mu\vphantom{\frac{a}{c}} \right.
\frac{q^*_v}{p_v}, 1 
\left.\vphantom{\frac{a}{c}}\mkern-4mu\right]$
or
$\bar{y}^v_{0+} \mkern-3mu= \frac{q^*_v}{p_v}$. The latter case implies
$\inf_{k \to +\infty} \bar{y}_{0+}^v(k)$ is not attained for finite $k$,
which
Theorem~\ref{thm: l-space interval for seifert jsj} tells us implies that
$Y_{\Gamma_{\mkern-3mu v}}\mkern-2mu({\boldsymbol{y}}^{\Gamma_{\mkern-4mu v }})$
is \textsc{bc} and $\bar{y}^v_{0+} \mkern-3mu= \bar{y}^v_{0-}$.

For $(+.i)$, fix some $m \in \Z$ with $m < \frac{q_v}{p_v}$.
If 
$\bar{y}^v_{0+\Sigma}(q_v\mkern-1.8mu - \mkern-1.5mu m p_v) \mkern-2mu=\mkern-2mu 0$, then
\begin{equation}
\bar{y}^v_{0+} 
\le\,
\bar{y}^v_{0+}(q_v\mkern-1.8mu - \mkern-1.5mu m p_v) 
\,=\, \mfrac{1}{q_v\mkern-1.8mu - \mkern-1.5mu m p_v}
\mkern-3.5mu\left(1 + 
\left\lfloor \mfrac{q^*_v}{p_v}
(q_v\mkern-1.8mu - \mkern-1.5mu m p_v) \right\rfloor + 0 \right)
\,=\;\mfrac{p_v^* - mq^*_v}{q_v-mp_v}.
\end{equation}
Next, suppose that
$\bar{y}^v_{0+\Sigma}(q_v\mkern-1.8mu - \mkern-1.5mu m p_v) \mkern-2mu>\mkern-2mu 0$,
so that either $[-y_{j-}^v] > (q_v-mp_v)^{-1}$ for some $j \in J_v^{\textsc{bi}}$,
or $[-y_i^v] \ge (q_v-mp_v)^{-1}$ for some $i \in I_v \cup J_v^{\textsc{bi}}$
(or both occur).  Since for any rational $x > (q_v-mp_v)^{-1}$, we have
$
\left\lceil xk \right\rceil - 1 \ge \left\lfloor \mfrac{k}{q_v - mp_v} \right\rfloor
$
for all $k \in \Z_{>0}$,
the condition
$\bar{y}^v_{0+\Sigma}(q_v\mkern-1.8mu - \mkern-1.5mu m p_v) \mkern-2mu>\mkern-2mu 0$
therefore implies 
$\bar{y}^v_{0+\Sigma}(k) \ge \left\lfloor \frac{k}{q_v-\mkern1mu mp_v} \right\rfloor$
for all $k \in \Z_{>0}$.
We then have
\begin{align}
k\mkern-2mu\left(\bar{y}^v_{0+}(k) - \mfrac{p_v^* - mq^*_v}{q_v-mp_v}\right)
&\ge
k\left(
\mfrac{1}{k}\mkern-2mu\left(1+\left\lfloor \mfrac{q^*_v}{p_v}k\right\rfloor
+\left\lfloor \mfrac{k}{q_v - mp_v}\right\rfloor\right)
-\mfrac{p_v^* - mq^*_v}{q_v-mp_v}\right)
   \\
&=
1
+\left\lfloor \mfrac{k}{q_v - mp_v}\right\rfloor
-\mfrac{[q^*_v k]_{p_v}}{p_v}
+\mfrac{q^*_v}{p_v}k
-\mfrac{p_v^* - mq^*_v}{q_v-mp_v}k
\nonumber
   \\
&=
1
-\mfrac{[q^*_v k]_{p_v}}{p_v}
+\left\lfloor \mfrac{k}{q_v - mp_v}\right\rfloor
-\mfrac{k}{p_v(q_v-mp_v)}
\nonumber
  \\
&=
1
-\mfrac{[q^*_v k]_{p_v} \mkern0mu+\mkern2.5mu [k]_{q_v\mkern-1mu-mp_{\mkern-.2mu v}}
/(q_v\mkern-2mu-\mkern-1.5mump_v)}{p_v}
\mkern1mu+
\left(\mfrac{p_v\mkern-2mu-1}{p_v}\right)\mkern-4mu\left\lfloor \mfrac{k}{q_v - mp_v}\right\rfloor
\nonumber
  \\
&> 0
\nonumber
\end{align}
for all 
$k \in \Z_{>0}$, and so
$\bar{y}^v_{0+} > 
\frac{p_v^* - mq^*_v}{q_v-mp_v}$.

Statement $(+.ii)$ is a simple consequence of the fact that
\begin{equation}
\mfrac{p_v^*- bq_v^*}{q_v - bp_v}
-\mfrac{p_v^*- aq_v^*}{q_v - ap_v} 
\,=\, 
\mfrac{b-a}{(q_v - bp_v)(q_v - ap_v)}.
\end{equation}

This leaves us with $(+.iii)$.
Since $q_v > p_v > 1$ implies $\frac{p^*_v}{q_v} <1$,
but Proposition~\ref{prop: basic defs of y_0+-} tells us
$\bar{y}_{0+}^v = 1$ if
$\mkern1.5muJ_{v\Z}^{\textsc{bi}-} \mkern-4.5mu\neq\mkern-1.5mu \emptyset$,
we henceforth assume 
$\mkern1.5muJ_{v\Z}^{\textsc{bi}-} \mkern-4.5mu=\mkern-1.5mu \emptyset$.
Setting $m=0$ in $(+.i)$ then gives us
\begin{equation}
\bar{y}_{0+}^v \le \mfrac{p^*_v}{q_v}
\;\;\iff\;\;
\bar{y}^v_{0+\Sigma}(q_v) \mkern-2mu=\mkern-2mu 0.
\end{equation}
Similarly, setting $m=-1$ in $(+.i)$ yields the relation
\begin{equation}
\bar{y}_{0+}^v \le \mfrac{p^*_v+q^*_v}{q_v+p_v}
<\mfrac{p^*_v}{q_v}
\;\;\iff\;\;
\bar{y}^v_{0+\Sigma}(q_v+p_v) \mkern-2mu=\mkern-2mu 0,
\end{equation}
where we used $(+.ii)$ for the right-hand inequality.
Lastly, suppose that $\bar{y}^v_{0+\Sigma}(q_v+p_v) >0$.
By reasoning similar to that used in the proof of $(+.i)$,
this implies that
$\bar{y}^v_{0+\Sigma}(k) \ge \left\lfloor \frac{k}{p_v+q_v}\right\rfloor$
for all $k \in \Z_{>0}$.  We then have
\begin{equation}
\bar{y}^v_{0+}(k) 
\,\ge\,
\mfrac{1}{k}
\left(
1 + \left\lfloor \mfrac{q^*_v}{p_v}k \right\rfloor
+ \left\lfloor \mfrac{k}{p_v+q_v} \right\rfloor
\right) 
\,\ge\, \mfrac{p^*_v}{q_v}
\;\;\;\;\;
\forall \;k \in \Z_{>0},
\end{equation}
with the right-hand inequality
coming from
Lemma~\ref{lemma: p+q bound}, and so 
$\bar{y}^v_{0+} \ge
\frac{p^*_v}{q_v}$.
\qed

There is one more collection of estimates that will be
particularly useful in the case of general iterated
torus-link satellites.
\begin{prop}
\label{prop: bounds good for iterated case}
The following bounds hold.
\begin{align*}
(i)\hphantom{ii}\;\;
\text{If } \,q_v > 0 \text{ and }\mkern1.5mu
\hphantom{-}
\bar{y}_{0-\Sigma}^v([p_v]_{q_v})>0,
\,\;&\text{then}\;\;
\bar{y}^v_{0-}\mkern-2mu < \mfrac{p^*_v}{q_v} - \mfrac{1}{[p_v]_{q_v} (q_v)},
\mkern8mu\hphantom{-}\mkern-1mu
\varphi_{e_v *}^{\P}(\bar{y}_{0-}^v) > \left\lceil \mfrac{p_v}{q_v} \right\rceil - 1;
     \\
(ii)\hphantom{i}\;\;
\text{If } \,q_v < 0 \text{ and }\mkern1.5mu
\bar{y}_{0-\Sigma}^v([-p_v]_{q_v})>0,
\,\;&\text{then}\;\;
\bar{y}^v_{0-}\mkern-2mu < \mfrac{p^*_v}{q_v} + \mfrac{1}{[-p_v]_{q_v} (q_v)},
\mkern8mu
\varphi_{e_v *}^{\P}(\bar{y}_{0-}^v) > \left\lceil \mfrac{p_v}{q_v} \right\rceil - 1;
     \\
(iii)\;\;
\text{If } \,q_v > 0 \text{ and }\mkern1.5mu
\bar{y}_{0+\Sigma}^v([-p_v]_{q_v})>0,
\,\;&\text{then}\;\;
\bar{y}^v_{0+}\mkern-2mu > \mfrac{p^*_v}{q_v} + \mfrac{1}{[-p_v]_{q_v} (q_v)},
\mkern8mu
\varphi_{e_v *}^{\P}(\bar{y}_{0+}^v) < \left\lfloor \mfrac{p_v}{q_v} \right\rfloor + 1;
     \\
(iv)\mkern1mu\;\;
\text{If } \,q_v < 0 \text{ and }\mkern1.5mu
\hphantom{-}
\bar{y}_{0+\Sigma}^v([p_v]_{q_v})>0,
\,\;&\text{then}\;\;
\bar{y}^v_{0+}\mkern-2mu > \mfrac{p^*_v}{q_v} - \mfrac{1}{[p_v]_{q_v} (q_v)},
\mkern8mu
\hphantom{-}\mkern-1mu
\varphi_{e_v *}^{\P}(\bar{y}_{0+}^v) < \left\lfloor \mfrac{p_v}{q_v} \right\rfloor + 1.
\end{align*}
\end{prop}

{\noindent{\textit{Proof of }$(i)$.}}
If $[p_v]_{q_v} \in \{0,1\}$, then
$\bar{y}_{0-\Sigma}^v([p_v]_{q_v})\le 0$ and the claim holds
vacuously, so we assume $[p_v]_{q_v} \ge 2$, implying $p_v \ge 2$ and $q_v \ge 3$,
so that
\begin{equation}
\label{eq: part i bound les than p/q - 1}
\varphi_{e_v *}^{\P}\left(\mfrac{p^*_v}{q_v} - \mfrac{1}{[p_v]_{q_v} (q_v)}\right)
= \left\lceil \mfrac{p_v}{q_v} \right\rceil - 1.
\end{equation}
By reasoning similar to that used in the proof of $(+.i)$ above,
the hypothesis $\bar{y}^v_{0-\Sigma}([p_v]_{q_v}) > 0$
implies that $\bar{y}^v_{0-\Sigma}(k) \ge 
\left\lfloor \mfrac{k}{[p_v]_{q_v}}\right\rfloor$ for all $k \in \Z_{>0}$.
Thus, if we set
\begin{equation}
m:= \left\lfloor \mfrac{p_v}{q_v} \right\rfloor,
\;\text{ so that }\;
[p_v]_{q_v} = p_v - m q_v
\,\text{ and }\,
\mfrac{p^*_v}{q_v} - \mfrac{1}{[p_v]_{q_v} (q_v)}
= \mfrac{q^*_v - mp^*_v}{p_v - m q_v},
\end{equation}
then it suffices to prove negativity, for all $k \in \Z_{>0}$, of the difference
\begin{align}
\nonumber
k\left(\bar{y}^v_{0-}(k) - \mfrac{q^*_v - mp^*_v}{p_v - m q_v}\right)
&\le\mkern2mu
k \mkern-1mu\left(\mfrac{1}{k}\mkern-2mu\left(-1+\left\lceil\mfrac{q^*_v}{p}k\right\rceil 
-\left\lfloor \mfrac{k}{p_v-m q_v} \right\rfloor \right)
-\mfrac{q^*_v - mp^*_v}{p_v - m q_v}\right)
    \\
\nonumber
&=\mkern2mu
-1 + \mfrac{[-q^*_v k]_{p_v}}{p_v} 
- \left\lfloor \mfrac{k}{p_v-m q_v} \right\rfloor
+ \mfrac{q^*_v}{p}k
-\mfrac{q^*_v - mp^*_v}{p_v - m q_v}k
    \\
\label{eq: prop iterated bound, y- q>, 3rd line}
&=\mkern2mu
\mfrac{-p_v q_v + q_v [q^{-1}_v k]_{p_v}}{p_v q_v} 
- \left\lfloor \mfrac{k}{[p_v]_{q_v}} \right\rfloor
+\left(\mfrac{m}{p_v}\right)\mkern-4mu\mfrac{k}{[p_v]_{q_v}}.
\end{align}

Now, $\frac{m}{p_v}$ already satisfies the bound
\begin{equation}
\mfrac{m}{p_v}
= \mfrac{\lfloor {p_v}/{q_v}\rfloor}{p_v}
= \mfrac{p_v - [p_v]_{q_v}}{p_v q_v} 
\le \mfrac{p_v - 2}{p_v q_v} < \mfrac{1}{q_v} \le \mfrac{1}{3}.
\end{equation}
Thus, if $\left\lfloor\frac{k}{[p_v]_{q_v}} \right\rfloor \ge 1$, then
\begin{equation}
\left\lfloor \mfrac{k}{[p_v]_{q_v}}\right\rfloor
\ge 
\mfrac{1}{2}\left(\left\lfloor \mfrac{k}{[p_v]_{q_v}}\right\rfloor + 1\right)
>\mfrac{1}{2}\left( \mfrac{k}{[p_v]_{q_v}}\right)
> \left(\mfrac{m}{p_v}\right)\mkern-4mu\mfrac{k}{[p_v]_{q_v}},
\end{equation}
making the right-hand side of (\ref{eq: prop iterated bound, y- q>, 3rd line}) negative.

We therefore henceforth assume that 
$\left\lfloor\frac{k}{[p_v]_{q_v}} \right\rfloor = 0$, implying
$k < [p_v]_{q_v}$.
Thus, since 
\begin{equation}
q_v [q_v^{-1} k]_{p_v} + p_v[p_v^{-1} k]_{q_v} = p_v q_v + k,
\end{equation}
and since $mq_v = p_v - [p_v]_{q_v}$,
we obtain
\begin{align}
k\left(\bar{y}^v_{0-}(k) - \mfrac{q^*_v - mp^*_v}{p_v - m q_v}\right)
&\le\mkern2mu
\mfrac{k - p_v [p^{-1}_v k]_{q_v} + (p_v - [p_v]_{q_v})\frac{k}{[p_v]_{q_v}} }{p_v q_v} 
\,=\,
\mfrac{- [p^{-1}_v k]_{q_v} + \frac{k}{[p_v]_{q_v}} }{q_v} < 0.
\end{align}
\qed

{\noindent{\textit{Proof of }$(ii)$.}}
The claim holds vacuously for $[-p_v]_{q_v} \in \{0,1\}$, so we
assume $[-p_v]_{q_v} \ge 2$ and $q_v \le -3$, in which case
\begin{equation}
\varphi_{e_v *}^{\P}\left(\mfrac{p^*_v}{q_v} + \mfrac{1}{[-p_v]_{q_v} (q_v)}\right)
= \left\lceil \mfrac{p_v}{q_v} \right\rceil - 1.
\end{equation}
Since 
$\bar{y}^v_{0-\Sigma}([-p_v]_{q_v}) > 0$
implies $\bar{y}^v_{0-\Sigma}(k) \ge 
\left\lfloor \mfrac{k}{[-p_v]_{q_v}}\right\rfloor$ for all $k \in \Z_{>0}$,
we set
\begin{equation}
m:= -\left\lfloor \mfrac{p_v}{q_v} \right\rfloor
=\left\lceil \mfrac{p_v}{|q_v|} \right\rceil,
\;\text{ with }\;
[-p_v]_{q_v} = -p_v - m q_v
\,\text{ and }\,
\mfrac{p^*_v}{q_v} + \mfrac{1}{[-p_v]_{q_v} (q_v)}
= \mfrac{-q^*_v - mp^*_v}{-p_v - m q_v}.
\end{equation}
Using arguments similar to those in part $(i)$, it is straightforward to derive the bound
\begin{align}
\nonumber
k\left(\bar{y}^v_{0-}(k) - \mfrac{-q^*_v - mp^*_v}{-p_v - m q_v}\right)
&\le\mkern2mu
\mfrac{-p_v |q_v| + |q_v| [|q|^{-1}_v(-k)]_{p_v}}{p_v |q_v|} 
- \left\lfloor \mfrac{k}{[-p_v]_{q_v}} \right\rfloor
+\left(\mfrac{m}{p_v}\right)\mkern-4mu\mfrac{k}{[-p_v]_{q_v}}
\end{align}
for all $k \in \Z_{>0}$, and to show that the right-hand side is negative
if $\left\lfloor \frac{k}{[-p_v]_{q_v}} \right\rfloor \ge 1$,
allowing us to assume that 
$\left\lfloor \frac{k}{[-p_v]_{q_v}} \right\rfloor = 0$
and $k < [-p_v]_{q_v}$.  Thus, since
\begin{equation}
|q_v| [|q_v|^{-1} (-k)]_{p_v} + p_v[p_v^{-1} (-k)]_{q_v} = p_v q_v - k,
\end{equation}
and since $mq_v = p_v + [-p_v]_{q_v}$,
we obtain
\begin{align}
k\left(\bar{y}^v_{0-}(k) - \mfrac{-q^*_v - mp^*_v}{-p_v - m q_v}\right)
&\le\mkern2mu
\mfrac{-k - p_v [p^{-1}_v (-k)]_{q_v} + (p_v + [-p_v]_{q_v})\frac{k}{[-p_v]_{q_v}} }{p_v |q_v|} 
  \\
&=\mkern2mu
\mfrac{- [p^{-1}_v (-k)]_{q_v} + \frac{k}{[-p_v]_{q_v}} }{|q_v|} 
\;<\; 0.
\end{align}
\qed

{\noindent{\textit{Proofs of }$(iii)$\textit{ and }$(iv)$.}}
Respectively similar to proofs of $(ii)$ and $(i)$.
\qed

\subsection{L-space surgery regions for iterated satellites: Proof of Theorem
\ref{thm: iterated torus links satellites}}
We have finally done enough preparation to prove
Theorem \ref{thm: iterated torus links satellites}
from the introduction.

\vspace{.15cm}
{\noindent{\textit{Proof of Theorem \ref{thm: iterated torus links satellites}.}}}
The bulk of part $(i)$ is proven in
``Claim 1'' in the proof of 
Theorem~
\ref{thm: bgw and juhasz for general satellites}.
Since the right-hand condition of 
(\ref{eq: iff for s3 and lspace conds})
is equivalent to the condition that
$Y^{\Gamma}\mkern-2mu(\boldsymbol{y}^{\Gamma})$
be an L-space, Claim 1 proves that
$Y^{\Gamma}\mkern-2mu(\boldsymbol{y}^{\Gamma})$ is an L-space
if and only if 
$Y^{\Gamma}\mkern-2mu(\boldsymbol{y}^{\Gamma}) = S^3$.
Thus, if we define $\Lambda_{\Gamma}$ as in 
(\ref{eq: lambda_gamma def}),
then the statement
$\mathcal{L}(Y^{\Gamma}) = \Lambda_{\Gamma}$
holds tautologically.

The proof of part $(ii)$ begins similarly to the proof of
Theorem~\ref{thm: s3 satellite in sf basis}.$(i.b)$,
except that instead of deducing that 
$\sum_{i \in I_r} (\lceil y^r_i \rceil - \lfloor y^r_i \rfloor) \le 1$,
we deduce that
\begin{equation}
{\textstyle{\sum\limits_{i \in I_r}}} (\lceil y^r_i \rceil - \lfloor y^r_i \rfloor)
\mkern2mu+\mkern-4mu
{\textstyle{\sum\limits_{\hphantom{1\mkern-2mu} 
j \in J_r^{\textsc{bi}}}}}\mkern-9mu
\left(\left(\mkern-1mu\lfloor y^r_{j-} \mkern-2mu\rfloor \mkern-3mu+\mkern-2.5mu 1\right)
\mkern-1.5mu-\mkern-1.5mu
\left(\mkern-1mu\lceil y^r_{j+} \mkern-2mu\rceil \mkern-3mu-\mkern-2.5mu 1\right)\right)
\mkern2mu-\mkern-7mu
{\textstyle{\sum\limits_{\hphantom{1\mkern-2mu} 
j \in J_r^{\textsc{bc}}}}}\mkern-8mu
(\lceil y^r_{j} \rceil - 
\lfloor y^r_{j} \rfloor)
\le 1,
\end{equation}
with $y_{j-}^r \ge y_{j+}^r$ for all $j \in J_r^{\textsc{bc}}$.
In the case that
$\sum_{i \in I_r} (\lceil y^r_i \rceil - \lfloor y^r_i \rfloor) = 1$
and the other sums vanish, we are reduced to the original case
of 
Theorem~\ref{thm: s3 satellite in sf basis}.$(i.b)$,
obtaining the component
\begin{equation}
\Lambda_r \mkern-3.7mu\cdot\mkern-2.5mu
\mathfrak{S}_{|I_r|}( [N,+\infty] \mkern-4mu\times\mkern-4mu \{\infty\}^{\mkern-1mu|I_r|-1})
\mkern8mu
\times
{{\prod\limits_{e\in E_{\mathrm{in}}(r)}}} \Lambda_{\Gamma_{v(-e)}}
\;\;
\subset\;
\mathcal{L}_{S^3}(Y^{\Gamma}).
\end{equation}

In the case that 
$\sum_{i \in I_r} (\lceil y^r_i \rceil - \lfloor y^r_i \rfloor) = 0$,
we have that $\boldsymbol{y}^r \in \Z^{|I_r|}$,
and all but one incoming edge of $r$, say $e$, descend from trees with trivial fillings.
Performing these trivial fillings reduces $Y^{\Gamma}$ to the exterior of
a $\Gamma_{v(-e)}$-satellite of the $(1, q_r)$-cable of $K \subset S^3$,
but the $(1, q_r)$-cable is just the identity operation, so we are left with
the exterior $Y^{\Gamma_{v(-e)}}$ of the
$\Gamma_{v(-e)}$-satellite of $K \subset S^3$.
Considering this for all edges $e \in E_{\mathrm{in}}(r)$
then gives the remaining component
\begin{equation}
\coprod_{e \in E_{\mathrm{in}}(r)}\mkern-10mu
\left(
\mathcal{L}_{\mkern-1muS^3}\mkern-1mu(Y^{\Gamma_{v(-e)}}) \times
\Lambda_{\Gamma  \mkern2mu\setminus\mkern2mu \Gamma_{\mkern-1muv(-e)}}
\right)
\;\; \subset \;
\mathcal{L}_{S^3}(Y^{\Gamma}).
\end{equation}

\vspace{.1cm}

{\textit{Part $(iii)$.}} 
Before proceeding with the main inductive argument in this section,
we attend to some bookkeeping issues.
In particular, our inductive proof requires each $I_v$ to be nonempty.
For any $w \mkern-2mu\in\mkern-2mu \mathrm{Vert}(\Gamma)$ with
$I_{w} \mkern-3mu=\mkern-1mu \emptyset$,
we repair this situation artificially, as follows.
First, redefine $I_{w} \mkern-2.5mu:=\mkern-2.5mu \{1\}$.
Next, if $0 \ge m_{w}^+$, then set 
$\mathcal{L}^{\min +}_{\textsc{sf}_{w}} \mkern-3.5mu=\mkern-2mu \{0\}$,
and declare
$\mathcal{R}_{\textsc{sf}_{w}} \mkern-4mu\setminus\mkern-1mu \mathcal{Z}_{\textsc{sf}_{w}}
\mkern-3.5mu=\mkern-1.5mu
\mathcal{L}^{\min-}_{\textsc{sf}_{w}} 
\mkern-3.5mu=\mkern-2mu \emptyset$.
Finally, if 
$0 \mkern-.5mu<\mkern-.5mu m_w^+$, then 
$0 \mkern-.5mu<\mkern-.5mu m_w^+ \mkern-2.5mu<\mkern-.5mu m_w^-$, so set
$\mathcal{L}^{\min -}_{\textsc{sf}_w} \mkern-3.5mu=\mkern-2mu \{0\}$,
and declare
$\mathcal{R}_{\textsc{sf}_w} \mkern-4mu\setminus\mkern-1mu \mathcal{Z}_{\textsc{sf}_w}
\mkern-3.5mu=\mkern-1.5mu
\mathcal{L}^{\min+}_{\textsc{sf}_w} 
\mkern-3.5mu=\mkern-2mu \emptyset$.

For a  vertex $v \in \text{Vert}(\Gamma)$,
inductively assume, for each incoming edge $e \in E_{\mathrm{in}}(v)$, that
for any
$\boldsymbol{y}^{\Gamma_{v(-e)}}
\in
\prod_{u \in \text{Vert}(\Gamma_{v(-e)})}
\left(\mathcal{L}^{\min+}_{\textsc{sf}_u} 
\cup
\mathcal{R}_{\textsc{sf}_u} \mkern-4mu\setminus\mkern-1mu \mathcal{Z}_{\textsc{sf}_u}
\cup
\mathcal{L}^{\min-}_{\textsc{sf}_u}\right)$, we have
\begin{align}
\label{eq: induct one interated}
\left\lceil \mfrac{p_{v(-e)}}{q_{v(-e)}}\right\rceil -1
\;\le\; 
y^v_{j(e)+} 
\,&\le\; 
y^v_{j(e)-} 
\;\le\; 
\left\lfloor \mfrac{p_{v(-e)}}{q_{v(-e)}}\right\rfloor +1,
\hphantom{\;\;\;\;\;
\text{if }\,
Y_{\Gamma_{v(-e)}}(\boldsymbol{y}^{\Gamma_{v(-e)}})
\,\;\text{is}\;\,\textsc{bi}}
    \\
\label{eq: induct two interated}
\left\lceil \mfrac{p_{v(-e)}}{q_{v(-e)}}\right\rceil -1
\;<\; 
y^v_{j(e)+},\mkern-5mu
\,&\mkern24.1mu\; 
y^v_{j(e)-} 
\;<\; 
\left\lfloor \mfrac{p_{v(-e)}}{q_{v(-e)}}\right\rfloor +1
\;\;\;\;\;
\text{if }\,
Y_{\Gamma_{v(-e)}}(\boldsymbol{y}^{\Gamma_{v(-e)}})
\,\;\text{is}\;\,\textsc{bi},
\end{align}
where, again, $y^v_{j(e)\pm}:= \varphi^{\P}_{e*}(y^{v(-e)}_{0\mp})$,
with $\mathcal{L}_{\textsc{sf}}(Y_{\Gamma_{v(-e)}}(\boldsymbol{y}^{\Gamma_{v(-e)}})) = 
[[y^{v(-e)}_{0-}, y^{v(-e)}_{0+}]]$.
Note that this inductive assumption already holds vacuously if $v$ is a leaf.

\vspace{.08cm}

If
$\boldsymbol{y}^v \in
\mathcal{R}_{\textsc{sf}_v} \mkern-4mu\setminus\mkern-1mu \mathcal{Z}_{\textsc{sf}_v}$,
then $y_{0-}^v = y_{0+}^v = \infty$, implying that
\begin{equation}
y^{v(e_v)}_{j(e_v)\pm} = \varphi^{\P}_{e_v *}(y_{0\mp}^v) = \mfrac{p_v}{q_v}
\in 
\left<
\left\lceil \mfrac{p_v}{q_v}\right\rceil -1,
\left\lfloor \mfrac{p_v}{q_v}\right\rfloor +1
\right>.
\end{equation}

If
$\boldsymbol{y}^v \in 
\mathcal{L}^{\min-}_{\textsc{sf}_v}  
\mkern-1.5mu\cup\mkern1.5mu\mathcal{L}^{\min+}_{\textsc{sf}_v}
\mkern-3mu\subset\mkern-0mu
\Q^{|I_v|}$,
then applying
(\ref{eq: induct one interated})
to
Proposition 
\ref{prop: basic defs of y_0+-}
yields
\begin{align}
\label{eq: y0- bound from prop 5.1}
y_{0-}^v 
\;&\le\,\;
\bar{y}_{0-}^v\;
- 
{\sum\limits_{e\mkern1mu\in\mkern1mu 
E_{\mathrm{in}}\mkern-1mu(v)}}
\mkern-12mu
\left(\left\lceil
\mfrac{p_{v(-e)}}{q_{v(-e)}}
\right\rceil - 1\right)
\mkern10mu-\mkern7mu
{\textstyle{\sum\limits_{i \in I_v}}}\mkern-3mu
\lfloor y^v_{i} \rfloor,
    \\
\label{eq: y0+ bound from prop 5.1}
y_{0+}^v 
\;&\ge\,\;
\bar{y}_{0+}^v\;
\mkern.5mu-\mkern-3.5mu
{\sum\limits_{e\mkern1mu\in\mkern1mu 
E_{\mathrm{in}}\mkern-1mu(v)}}
\mkern-12mu
\left(\left\lfloor
\mfrac{p_{v(-e)}}{q_{v(-e)}}
\right\rfloor + 1\right)
\mkern10mu-\mkern7mu
{\textstyle{\sum\limits_{i \in I_v}}}\mkern-3mu
\lceil y^v_{i} \rceil,
\end{align}
and assuming
(\ref{eq: induct one interated})
for
Theorem~\ref{thm: gluing structure theorem}
implies that
\begin{equation}
\label{eq: y0+ le y0- in iterated torus proof}
y_{0+}^v \le y_{0-}^v.
\end{equation}

Suppose $\boldsymbol{y}^v \in \mathcal{L}^{\min +}_{\textsc{sf}_v}$,
so that
the bound
$\textstyle{\sum\limits_{i\in I_v}} \lfloor y_i^v \rfloor \ge m^+_v$,
together with 
(\ref{eq: y0- bound from prop 5.1}), implies that
\begin{align}
\label{eq: + case bound iterated}
y_{0-}^v 
\;&\le\,\;
\bar{y}_{0-}^v\;
- 
\begin{cases}
2 
& q_v = -1
    \\
1
& J_v \neq \emptyset;\, q_v < -1 \text{ or } 
\frac{p_v}{q_v} > 1
    \\
0
& \text{otherwise}
\end{cases}.
\end{align}
Since 
Proposition~\ref{prop: main bounds for y0- and y0+} tells us
$\bar{y}_{0-}^v \le \frac{q^*_v}{p_v}$, we then have
\begin{equation}
\label{eq: y0s less than vertical asymptote iterated torus}
y_{0+}^v \le y_{0-} \le \mfrac{q^*_v}{p_v} < \mfrac{p^*_v}{q_v} = 
(\phi_{e_v*}^{\P})^{-1}(\infty).
\end{equation}
Since $\varphi_{e_v *}^{\P}$ is locally monotonically decreasing
in the complement of its vertical asymptote at
$(\phi_{e_v*}^{\P})^{-1}(\infty) = \frac{p^*_v}{q_v}$,
line (\ref{eq: y0s less than vertical asymptote iterated torus}) implies
\begin{equation}
\label{eq: bound on j+}
y^{v(e_v)}_{j(e_v)+} := \varphi_{e_v *}^{\P}(y^v_{0-})
\,\le\,
y^{v(e_v)}_{j(e_v)-} := \varphi_{e_v *}^{\P}(y^v_{0+})
< \phi_{e_v*}^{\P}(\infty) = \mfrac{p_v}{q_v}
<
\left\lfloor \mfrac{p_{v}}{q_{v}}\right\rfloor +1,
\end{equation}
where 
$\phi_{e_v*}^{\P}(\infty) 
=\frac{p_v}{q_v}$ is the location of the horizontal asymptote of $\varphi_{e_v *}^{\P}$.
Thus, since
$\left\lceil \frac{p_v}{q_v}\right\rceil -1 < 
\frac{p_v}{q_v} = \phi_{e_v*}^{\P}(\infty)$,
we deduce that to finish establishing our inductive hypotheses for $e_v$
in the $\boldsymbol{y}^v \in \mathcal{L}^{\text{min}\,+}_{\textsc{sf}_v}$ case,
it suffices to show that
\begin{equation}
\label{eq: + case iterated, what we want to show}
y^v_{0-} \;\le\;
(\varphi_{e_v *}^{\P})^{-1}\mkern-4mu\left(\left\lceil\mfrac{p_v}{q_v} \right\rceil - 1\right),
\;\;\text{
with equality only if }\,
Y_{\Gamma_v}\mkern-1mu(\boldsymbol{y}^{\Gamma_v}) \text{ is }\textsc{bc}.
\end{equation}

If $|q_v| = 1$, it is straightforward to compute that
\begin{equation}
\label{eq: + and |q|=1 case}
(\varphi_{e_v *}^{\P})^{-1}\left(\left\lceil\mfrac{p_v}{q_v} \right\rceil - 1\right)
=
\mfrac{p^*_v}{q_v} - 1
=
\begin{cases}
-2
&
(p_v, q_v) = (1,-1)
  \\
-1
&
(p_v, q_v) \neq (1,-1),
q_v = -1
  \\
\hphantom{-}0
&
q_v = 1
\end{cases}.
\end{equation}
Proposition
\ref{prop: main bounds for y0- and y0+}
tells us $\bar{y}_{0-} \le \frac{q^*_v}{p_v}$,
with equality only if
$Y_{\Gamma_v}(\boldsymbol{y}^{\Gamma_v})$ is boundary compressible.
Thus, since $\frac{q^*_v}{p_v} < 1$, with $\frac{q^*_v}{p_v} = 0$
when $p_v = 1$, it follows from 
(\ref{eq: + case bound iterated}) and
(\ref{eq: + and |q|=1 case})  that
(\ref{eq: + case iterated, what we want to show}) holds.

Next suppose that $|q_v| > 1$, so that
\begin{equation}
(\varphi_{e_v *}^{\P})^{-1}\mkern-3.5mu\left(\left\lceil\mfrac{p_v}{q_v} \right\rceil - 1\right)
=
\mfrac{p^*_v}{q_v} -
\pm \mfrac{1}{[\pm p_v]_{q_v} (q_v)}
\;\;\;
\text{ for }
\pm q_v > 1.
\end{equation}
If $0< \frac{p_v}{q_v} < 1$, this makes
$(\varphi_{e_v *}^{\P})^{-1}\mkern-4mu
\left(\left\lceil\frac{p_v}{q_v} \right\rceil - 1\right) = \frac{q^*_v}{p_v}$,
so that
(\ref{eq: + case iterated, what we want to show})
follows from 
(\ref{eq: + case bound iterated})
and the fact that 
$\bar{y}_{0-} \le \frac{q^*_v}{p_v}$,
with equality only if
$Y_{\Gamma_v}(\boldsymbol{y}^{\Gamma_v})$ is boundary compressible.
This leaves the cases in which
$\frac{p_v}{q_v} > 1$ or
$q_v < -1$.
If $J_v = \emptyset$, then
$\mathcal{L}^{\min +}_{\textsc{sf}_v}$ respectively excludes
$\left\{ \sum \lfloor y_i^v\rfloor = \sum \lfloor [y_i^v][-p_v]_{q_v} \rfloor = 0 \right\}$
or
$\left\{ \sum \lfloor y_i^v\rfloor = \sum \lfloor [y_i^v][p_v]_{q_v} \rfloor = 0 \right\}$,
and so
(\ref{eq: + case iterated, what we want to show}) follows from
part $(i)$ or $(ii)$, respectively, of 
Proposition \ref{prop: bounds good for iterated case}.
If $J_v \neq \emptyset$, then
$y_{0-}^v \le \mfrac{q^*_v}{p_v} - 1$, and it is easy to show that
\begin{equation}
\mfrac{q^*_v}{p_v} - 1 < \mfrac{p^*_v}{q_v} -
\pm \mfrac{1}{[\pm p_v]_{q_v} (q_v)}
\;\;\;
\text{ for }
\pm q_v > 1,
\end{equation}
completing our
inductive step
for the 
case of $\boldsymbol{y}^v \in \mathcal{L}^{\min +}_{\textsc{sf}_v}$.

The proof of our inductive step for the case of 
$\boldsymbol{y}^v \in \mathcal{L}^{\min -}_{\textsc{sf}_v}$
follows from symmetry under orientation reversal.

\vspace{.2cm}
Recall that we regard the root vertex $r$ at the bottom of the tree 
$\Gamma = \Gamma_r$ as having an outgoing edge $e_r$ pointing to
the empty vertex $v(e_r) := \textsc{null}$,
where this null vertex $v(e_r)$ 
corresponds to the exterior 
$Y := S^3 \setminus \overset{\mkern2mu\circ}{\nu}(K)$ 
of the companion knot $K \subset S^3$,
from which the satellite exterior 
$Y^{\Gamma} := Y_{\Gamma} \cup Y$ is formed.
Recursing down to this null vertex $v(e_r)$,
our above induction shows that
for any
$\boldsymbol{y}^{\Gamma}
\in
\prod_{v \in \text{Vert}(\Gamma_{r})}
\left(\mathcal{L}^{\min+}_{\textsc{sf}_v} 
\cup
\mathcal{R}_{\textsc{sf}_v} \mkern-4mu\setminus\mkern-1mu \mathcal{Z}_{\textsc{sf}_v}
\cup
\mathcal{L}^{\min-}_{\textsc{sf}_v}\right)$, we have
\begin{equation}
\left\lceil \mfrac{p_r}{q_r} \right\rceil - 1 
\;\le\; 
y^{v(e_r)}_{j(e_r)+} 
\;\le\;  
y^{v(e_r)}_{j(e_r)-} 
\;\le\;
\left\lfloor \mfrac{p_r}{q_r} \right\rfloor + 1.
\end{equation}
This final L-space interval
$\varphi_{e_r *}^{\P}(
\mathcal{L}_{\textsc{sf}}(Y_{\Gamma_r}(\boldsymbol{y}^{\Gamma})))
=[[y^{v(e_r)}_{j(e_r)-}, y^{v(e_r)}_{j(e_r)+}]]_{\overline{S}^3}$
expresses slopes in terms of the {\em{reversed}} $S^3$-slope basis, ``$\overline{S}^3$.''
That is, the meridian and longitude of $K\subset S^3$
have respective $\overline{S}^3$-slopes $0 = \frac{1}{\infty}$ and
$\infty = \frac{1}{0}$.
If $K \subset S^3$ is the unknot, then its $\overline{S}^3$-longitude $\infty$ satisfies 
$\infty \in [[y^{v(e_r)}_{j(e_r)-}, y^{v(e_r)}_{j(e_r)+}]]$,
so we deduce that 
$\boldsymbol{y}^{\Gamma} \in \mathcal{L}_{\textsc{sf}_{\Gamma}}(Y^{\Gamma})$ in this case.

If $K \subset S^3$ is nontrivial, then its exterior $Y$
has L-space interval $\mathcal{L}_{\overline{S}^3}(Y) = [0, \frac{1}{N}]_{\overline{S}^3}$.
Assume that $\Gamma$ satisfies hypothesis $(iii)$ of the theorem.
Then since $\frac{q_r}{p_r} \ge N := 2g(K) -1$ implies
$0< \frac{p_r}{q_r} \le 1$, 
(\ref{eq: induct one interated}) and
(\ref{eq: induct two interated})
tell us that
$0 \le y^{v(e_r)}_{j(e_r)+} \le y^{v(e_r)}_{j(e_r)-}$, with 
$y^{v(e_r)}_{j(e_r)+} = 0$ only if $Y_{\Gamma_r}(\boldsymbol{y}^{\Gamma})$
is \textsc{bc}.  Thus, it remains to show that
either $y^{v(e_r)}_{j(e_r)-} < \frac{1}{N}$,
or $y^{v(e_r)}_{j(e_r)-} \le \frac{1}{N}$ and 
$Y_{\Gamma}(\boldsymbol{y}^{\Gamma})$ is \textsc{bc}.

If $\boldsymbol{y}^{\Gamma}|_r \in \mathcal{L}^{\min +}_r$,
then (\ref{eq: bound on j+}) tells us
$y^{v(e_r)}_{j(e_r)-} < \frac{p_r}{q_r} < \frac{1}{N}$.
If $\boldsymbol{y}^{\Gamma}|_r \in \mathcal{R}_r\setminus \mathcal{Z}_r$,
then $Y_{\Gamma}(\boldsymbol{y}^{\Gamma})$ is \textsc{bc},
and since $y_{0-}^r = y_{0+}^r = \infty$, we have
$y^{v(e_r)}_{j(e_r)-} = y^{v(e_r)}_{j(e_r)+} = 
\frac{p_r}{q_r} \in \left<0, \frac{1}{N} \right]$.
This leaves us with the case of
$\boldsymbol{y}^{\Gamma}|_r \in \mathcal{L}^{\min -}_r$,
for which we have
\begin{equation}
\label{eq: last step of satelliting bit}
y_{0+}^r \ge\,
\bar{y}^r_{0+}
+
\begin{cases}
1
&
p\neq 1
 \\
2
&
p=1
\end{cases}
\;
\ge\,
\mfrac{q^*}{p}
+
\begin{cases}
1
&
p\neq 1
 \\
2
&
p=1
\end{cases}
\;
>\,
\mfrac{q^*}{p}
+
\mfrac{1}{p(q-pN)}
\,=\,
(\varphi_{e_r *}^{\P})^{-1}\mkern-5mu\left(\mkern-.5mu\mfrac{1}{N}\mkern-.5mu\right),
\end{equation}
implying 
$y^{v(e_r)}_{j(e_r)-} < \frac{1}{N}$,
and thereby completing the proof of the theorem.
\end{proof}

\noindent {\textbf{Remark.}}
It is only in
(\ref{eq: last step of satelliting bit})
that we use the hypothesis of part $(iii)$
that $q_r > 2g(K) - 1 =: N$. In the case that we do have
$p_r = 1$ and $q_r = N$, this implies
$(\varphi_{e_r *}^{\P})^{-1}\mkern-5mu\left(\mkern-.5mu\mfrac{1}{N}\mkern-.5mu\right) = \infty$,
requiring  $\mathcal{L}_{\textsc{sf}_r}^{\min -}$ to be empty, but we still have the modified result that
\vspace{-.1cm}
\begin{equation}
\label{eq: special case of p=1 and q=N}
\mathcal{L}_{\textsc{sf}_{\Gamma}}(Y^\Gamma) \supset
\left(\prod_{v \in \mathrm{Vert}(\Gamma)}
\left(
\mathcal{L}^{\min -}_{\textsc{sf}_v}
\;\cup\;
\mathcal{R}_v \setminus \mathcal{Z}_v
\;\cup\;
\mathcal{L}^{\min +}_{\textsc{sf}_v}
\right) \right) \setminus \mathcal{L}_{\textsc{sf}_r}^{\min -}.
\end{equation}

\section{Satellites by algebraic links}
\label{s: algebraic link satellites}

\subsection{Smooth and Exceptional splices}
As mentioned in the introduction,
the class of algbraic link satellites is slightly more general
than the class of iterated torus link satellites,
in that the JSJ decomposition graph of the exterior 
must allow one extra type of edge.

To describe how this new type of edge is different, 
we first address the notion of {\em{splice}} maps.
Suppose $K_1 \mkern-2mu\subset\mkern-2mu M_1$ and 
$K_2 \mkern-2mu\subset\mkern-2mu  M_2$ are knots in
compact oriented 3-manifolds $M_1$ and $M_2$,
with each $\partial M_i$ a possibly-empty disjoint union of tori.
Let $Y_i \mkern-2mu:=\mkern-2mu M_i \setminus \overset{\mkern2mu\circ}{\nu}(K)$
denote the exterior of each knot $K_i \mkern-2mu\subset\mkern-2mu M_i$,
with $\partial_0 Y_i \mkern-2mu:=\mkern-2mu -\partial \overset{\mkern2mu\circ}{\nu}(K_i)$,
and choose a surgery basis $(\mu_i, \lambda_i) \in H_1(\partial_0 Y_i; \Z)$
for each exterior, with $\lambda_i$ the Seifert longitude if 
$M_i$ is either an integer homology sphere or a link
exterior in a specified integer homology sphere.

A gluing map $\phi : \partial_0 Y_1 \to -\partial_0 Y_2$
is then called a {\em{splice}} if the induced map on homology
sends $\mu_1 \mapsto \lambda_2$ and $\mu_2 \mapsto \lambda_1$.
Gluings via splice maps are minimally disruptive to homology.
For instance, if $M_2$ is an integer homology sphere, then
$H_1(Y_1 \cup_{\phi} \mkern-2mu Y_2;\Z) \cong H_1(M_1;\Z)$.
If $M_2 \mkern-2mu=\mkern-2mu S^3$ and 
$K_2$ is an unknot, then we in fact have
$Y_1 \cup_{\phi} \mkern-2mu Y_2 = M_1$.  In particular, if $M_2$ is the
exterior $M_2 \mkern-2mu=\mkern-2mu S^3 \setminus \overset{\mkern2mu{\circ}}{\nu}(L_2)$
of some link $L_2 \mkern-2mu\subset\mkern-2mu S^3$, and if
$K_2 \mkern-2mu\subset\mkern-2mu M_2$ is an unknot in the composition
$K_2 \mkern2.5mu\into\mkern2.5mu M_2 \mkern1.5mu\into\mkern1.5mu S^3$,
then $Y_1 \cup_{\phi} \mkern-2mu Y_2$ is the exterior of the
satellite link
of the companion knot $K_1 \mkern-3.5mu\subset\mkern-3mu M_1$
by the pattern link
$L_2 \mkern-2.5mu\subset\mkern-2mu 
(S^3 \mkern-1mu\setminus\mkern-1mu \overset{\mkern2mu\circ}{\nu}(K_2))$.
In particular, for satellites by $T(np,nq)$,
this unknot $K_2 \subset M_2$
is the multiplicity-$q$ fiber $\lambda_0 \subset Y^n_{(p,q)}$
in the $T(np,nq)$-exterior 
\vspace{-.1cm}
\begin{equation}
Y^n_{(p,q)} := S^3 \mkern-1.2mu\setminus\mkern-1mu 
\overset{\circ}{\nu}(T(np,nq)
=M_{S^2}(\mkern1mu{\textstyle{
\mkern-2mu-\mkern-1mu\frac{q}{p},
\mkern-1mu{\textstyle{\frac{p}{q}}}}}\mkern1mu)
\mkern-1.2mu\setminus\mkern-1mu \overset{\circ}{\nu}
{\textstyle{\left(\coprod_{i=1}^n f_n\right)}},
\;\;\;
\textit{c.f.}\;
(\ref{eq: T(np,nq) as reg fibers in Seifert fibered space}).
\end{equation}

\vspace{-.06cm}


In an iterated torus-link satellite, 
we only perform satellites on components of the companion link we are building.
That is, for an edge $e \in \text{Edge}(\Gamma)$ from $v(-e)$ to $v(e)$,
we always form a $T_{v(e)}$-torus-link satellite that splices 
the multiplicity-$q_{v(-e)}$-fiber $\lambda_0^{v(-e)}$,
with exterior
\vspace{-.08cm}
\begin{equation}
Y_{v(-e)} := Y^{n_{v(-e)}}_{(p_{v(-e)},q_{v(-e)})} \setminus
\overset{\circ}{\nu}(\lambda_0^{v(-e)}),
\end{equation}

\vspace{-.08cm}

\noindent to the $j(e)^{\text{th}}$ component of the $T_{v(e)}$ torus link.
Since this $j(e)^{\text{th}}$ link component is represented by 
the {\em{smooth}} fiber 
$f_{j(e)} \subset M_{S^2}(\mkern1mu{\textstyle{
\mkern-2mu-\mkern-1mu\frac{q^*_v}{p_v},
\mkern-1mu{\textstyle{\frac{p^*_v}{q_v}}}}}\mkern1mu)$,
we call this operation a 
{\em{smooth splice}}.

In an algebraic link exterior, however, an edge $e$ can also specify an
{\em{exceptional splice}} map, in which we splice
the $q_{v(-e)}$-fiber $\lambda_0^{v(-e)}$
to the 
exceptional
$p_{v(e)}$-fiber $\lambda_{-1}^{v(e)} \subset Y_{v(e)}$.
This multiplicity-$p_{v(e)}$ fiber $\lambda_{-1}^{v(e)}$ is not a component  of our
original companion link or its iterated satellites, but is rather
the {\em{core of the solid torus
$\nu(\lambda_{-1}^{v(e)})$ hosting
$T_{v(e)} \subset \nu(\lambda_{-1}^{v(e)})$, of which
$Y_{v(e)} = \nu(\lambda_{-1}^{v(e)}) \setminus \overset{\circ}{\nu}(T_{v(e)})$
is the exterior}}.
Thus an exceptional-splice satellite embeds the
solid torus hosting $T_{v(-e)}$
inside the solid torus hosting $T_{v(e)}$.
Since an exceptional splice at $e$ takes the satellite of 
the $p_{v(e)}$-fiber $\lambda_{-1}^{v(e)} \subset Y_{v(e)}$, we set $j(e) = -1$
in this case.

\subsection{Slope maps induced by splices}
For the induced maps on slopes, we have
\begin{align}
\text{Smooth splice}&\;\; \varphi_e : \partial_0 Y_{v(-e)} \to -\partial_{j(e)\mkern-2mu} Y_{v(e)},
 \\
\nonumber
[\varphi_{e*}^{\P}] 
\left(
\mkern-6mu
\begin{array}{cc}
p^*_{v(-e)} &  -q^*_{v(-e)}
  \\
q_{v(-e)} &  -p_{v(-e)}
\end{array}
\mkern-6mu
\right)
=&
\left(
\mkern-5mu
\begin{array}{cc}
1 & 0
  \\
0 & 1
\end{array}
\mkern-5mu
\right)
\implies 
\varphi_{e*}^{\P}(y) = \mfrac{p_{v(-e)}y - q^*_{v(-e)}}{q_{v(-e)}y - p^*_{v(-e)}}
\end{align}
for an edge $e$ corresponding to a smooth splice, and
\begin{align}
\label{eq: def of sigma}
\text{Exceptional splice}\;\; &\sigma_e : \partial_0 Y_{v(-e)} \to -\partial_{-1\mkern-2mu} Y_{v(e)},
 \\
\nonumber
[\sigma_{e*}^{\P}] \mkern-2mu
\left(
\mkern-6mu
\begin{array}{rl}
p_{v(-e)}^* &  \mkern-5mu -q^*_{v(-e)}
  \\
q_{v(-e)} &  \mkern-5mu  -p_{v(-e)}
\end{array}
\mkern-7mu
\right)
\mkern-3mu=\mkern-3mu
\left(
\mkern-7mu
\begin{array}{rr}
p^*_{v(e)} & \mkern-5mu   -q^*_{v(e)}
  \\
-q_{v(e)} & \mkern-5mu p_{v(e)}
\end{array}
\mkern-6mu
\right)
&\implies 
[\sigma_{e*}^{\P}] 
\mkern-1mu
=
\mkern-3mu
\left(
\mkern-6mu
\begin{array}{rr}
p^*_{v(e)} & \mkern-5mu   -q^*_{v(e)}
  \\
-q_{v(e)} & \mkern-5mu p_{v(e)}
\end{array}
\mkern-6mu
\right)
\mkern-6mu
\left(
\mkern-6mu
\begin{array}{rl}
p_{v(-e)} &  \mkern-5mu -q^*_{v(-e)}
  \\
q_{v(-e)} &  \mkern-5mu  -p^*_{v(-e)}
\end{array}
\mkern-7mu
\right)
\end{align}
for an edge $e$ corresponding to an exceptional splice.
To accommodate our notation
to these two different types of maps, we define
\begin{equation}
\phi_e:=
\begin{cases}
\sigma_e
&
j(e) = -1
 \\
\varphi_e
&
j(e) \neq -1.
\end{cases}
\end{equation}
In addition, since
the exceptional fiber at $\partial_{-1} Y_v$
is only exceptional if $p_v \!>\! 1$,
we adopt the convention that
exceptional splice edges only terminate
on vertices $v$ with $p_v > 1$.

\subsection{Algebraic links}

Eisenbud and Neumann show in \cite{EisenbudNeumann}
that a graph $\Gamma$ with such edges and vertices
specifies an algebraic link exterior
if and only if
$\Gamma$ satisfies the algebraicity conditions
\begin{align}
\nonumber
(i)&\;\; p_v, q_v, n_v > 0 \;\;\text{for all } v\in \text{Vert}(\Gamma),
    \\
(ii)&\;\;\mkern46mu \Delta_e > 0 \;\;\text{for all } e\in \text{Edge}(\Gamma),
\label{eq: alg cond}
    \\
\nonumber
\Delta_e :=\mkern20mu&\mkern-20mu
\begin{cases}
\;p_{v(e)}q_{v(-e)} - p_{v(-e)}q_{v(e)}
 & \;\;j(e) = -1
   \\
\;q_{v(-e)} - p_{v(e)} p_{v(-e)}q_{v(e)}
 & \;\;j(e) \neq -1
\end{cases}.
\end{align}
ensuring negative definiteness.
Eisenbud and Neumann also prove
that any algebraic link exterior can be realized by
such a graph.
Note that the above algebraicity conditions imply
\begin{equation}
0 < \mfrac{p_{v(-e)}}{q_{v(-e)}} < 1
\;\;\;\;\text{if}\;\;\;\; 
j(e) \neq -1.
\end{equation}

For notational convenience, we adopt the convention that
$J_v$ remains the same, only indexing incoming edges corresponding to {\em{smooth}} splices.
That is, we define
\begin{equation}
J_v := \{j(e)|e\in E_{\mathrm{in}}(v)\} \cap \{1, \ldots, n_v\},
\end{equation}
and its complement $I_v$ still indexes the remaining boundary components,
\begin{equation}
I_v := 
\{1,\ldots, n_v\} \cap J_v.
\end{equation}

Lastly, since $\phi_e$ is always orientation-reversing,
the induced map $\phi^{\P}_{e*}$ is still decreasing with respect to the circular order on
$\textsc{sf}$-slopes, and the impact of $\phi^{\P}_{e*}$ on the {\em{linear}} order of finite \textsc{sf}-slopes still depends on the
positions of the horizontal and vertical asymptotes
\begin{equation}
\label{eq: asymptote def}
\xi_{v(e)} := 
\phi_{e*}^{\P}(\infty) 
\in (\Q \cup \{\infty\})_{\textsc{sf}_{v(e)}}
\;\;\;\text{and}\;\;\;
\eta_{v(-e)} := 
(\phi_{e*}^{\P})^{-1}(\infty)
\in (\Q \cup \{\infty\})_{\textsc{sf}_{v(-e)}},
\end{equation}
respectively,
of the graph of $\phi^{\P}_{e*}$. More explicitly, we have

\begin{equation}
\label{eq: asymptote value}
\xi_{v(e)}
= 
\begin{cases}
\mfrac{p_{v(-e)}}{q_{v(-e)}}
&
j(e) \neq -1
\vspace{.1cm}
\\
-\mfrac{q^*_{v(e)}}{p_{v(e)}} + 
\mfrac{p_{v(-e_)}}{p_{v(e)}\Delta_{e}}
&
j(e) = -1
\end{cases},
\;\;\;\;
\eta_{v(-e)}
=
\begin{cases}
\mfrac{p^*_{v(-e)}}{q_{v(-e)}}
&
j(e) \neq -1
\vspace{.1cm}
  \\
\mfrac{q^*_{v(-e)}}{p_{v(-e)}} + 
\mfrac{p_{v(e)}}{p_{v(-e)}\Delta_{e}}
&
j(e) = -1
\end{cases}.
\end{equation}

\subsection{Adapting Propositions \ref{prop: basic defs of y_0+-} and
\ref{prop: main bounds for y0- and y0+} for algebraic link exteriors}

In the case of algebraic link exteriors, 
we must incorporate the possiblity of exceptional splices into
the expressions
$\bar{y}^v_{0-}(k)$ and 
$\bar{y}^v_{0+}(k)$ originally defined in 
(\ref{eq: def of y_0-v-bar for iterated}) and
(\ref{eq: def of y_0+v-bar for iterated}),
from which
\begin{equation}
\label{eq: def of y0}
\bar{y}^v_{0-} \mkern-3mu:= \sup_{k >0} \bar{y}^v_{0-}(k)
\;\;\;\text{and}\;\;\;
\bar{y}^v_{0+} \mkern-3mu:= \inf_{k >0} \bar{y}^v_{0+}(k)
\end{equation}
are defined.
The only changes that arise are localized to the summands
$\left\lceil
\mkern-2.5mu\frac{q^*_v}{p_v} k \mkern-2mu
\right\rceil$
and
$\left\lfloor
\mkern-2.5mu\frac{q^*_v}{p_v} k \mkern-2mu
\right\rfloor$
in $\bar{y}^v_{0-}(k)$ and 
$\bar{y}^v_{0+}(k)$, respectively.
We perform such modifications as follows.

First, for $v \in \mathrm{Vert}(\Gamma)$, with $\Gamma$ specifying an algebraic link exterior,
set
\begin{equation}
y^v_{-1\pm} 
:=
\begin{cases}
-\frac{q^*_v}{p_v}
&
-1 \notin j(E_{\mathrm{in}}(v))
  \\
y^v_{j(e')\pm} \mkern-3mu= 
\phi_{e'*}^{\P}\mkern-5mu\left( y^{v(-e')}_{0\mp}\right)
&
-1 = j(e'), e' \mkern-2mu\in\mkern-2mu E_{\mathrm{in}}(v),
\end{cases}
\end{equation}
and for  
$k \in \Z_{>0}$, 
define 
$y^{v\,\prime}_{-1+}(k)$ and $y^{v\,\prime}_{-1-}(k)$
by setting
\begin{align}
y^{v\,\prime}_{-1+}(k)
&:=
\begin{cases}
-\left(\left\lceil y^v_{-1 +} k\right\rceil - 1\right)
  &
Y_{\Gamma_{v(-e')}}(\boldsymbol{y}^{\Gamma_{v(-e')}}) \;\textsc{bi}
   \\
-\left\lfloor y^v_{-1+} k\right\rfloor
   &
Y_{\Gamma_{v(-e')}}(\boldsymbol{y}^{\Gamma_{v(-e')}}) \;\textsc{bc}
\end{cases},
    \\
y^{v\,\prime}_{-1-}(k)
&:=
\begin{cases}
-\left(\left\lfloor y^v_{-1 -} k\right\rfloor + 1\right)
  &
Y_{\Gamma_{v(-e')}}(\boldsymbol{y}^{\Gamma_{v(-e')}}) \;\textsc{bi}
   \\
-\left\lceil y^v_{-1-} k\right\rceil
   &
Y_{\Gamma_{v(-e')}}(\boldsymbol{y}^{\Gamma_{v(-e')}}) \;\textsc{bc}
\end{cases},
\end{align}
where again, $e'\in E_{\mathrm{in}}(v)$ is the unique incoming edge with $j(e') = -1$,
if such $e'$ exists. If $-1 \notin j(E_{\mathrm{in}}(v))$,
then we take
$Y_{\Gamma_{v(-e')}}(\boldsymbol{y}^{\Gamma_{v(-e')}})$
to be boundary-compressible.

Next, we define 
$\bar{y}^{v\,\prime}_{0-}(k)$ and $\bar{y}^{v\,\prime}_{0+}(k)$
to be respective results of replacing the summand
$\left\lceil
\mkern-2.5mu\frac{q^*_v}{p_v} k \mkern-2mu
\right\rceil$
with 
$y^{v\,\prime}_{-1+}(k)$ in 
the definition of $\bar{y}^v_{0-}(k)$ in (\ref{eq: def of y_0-v-bar for iterated}),
and replacing the summand
$\left\lfloor
\mkern-2.5mu\frac{q^*_v}{p_v} k \mkern-2mu
\right\rfloor$
with 
$y^{v\,\prime}_{-1-}(k)$ in 
the definition of $\bar{y}^v_{0+}(k)$ in (\ref{eq: def of y_0+v-bar for iterated}).
That is, we set
\begin{align}
\label{eq: y'0-(k)}
\bar{y}^{v\,\prime}_{0-}(k)
&:=
\bar{y}^v_{0-}(k) +
\mfrac{1}{k}\mkern-3.5mu\left(
y^{v\,\prime}_{-1+}(k)
-\left\lceil \mfrac{q^*_v}{p_v}k \right\rceil \right),
   \\
\label{eq: y'0+(k)}
\bar{y}^{v\,\prime}_{0+}(k)
&:=
\bar{y}^v_{0-}(k) +
\mfrac{1}{k}\mkern-3.5mu\left(
y^{v\,\prime}_{-1-}(k)
- \left\lfloor \mfrac{q^*_v}{p_v}k \right\rfloor
\right),
\end{align}
and by analogy with the definition of
$\bar{y}_{0\pm}^v$
in (\ref{eq: def of y0}),
we define
\begin{equation}
\label{eq: defs of y0'}
\bar{y}^{v\,\prime}_{0-} \mkern-3mu:= \sup_{k >0} \bar{y}^{v\,\prime}_{0-}(k)
\;\;\;\text{and}\;\;\;
\bar{y}^{v\,\prime}_{0+} \mkern-3mu:= \inf_{k >0} \bar{y}^{v\,\prime}_{0+}(k).
\end{equation}
We are now ready to state and prove an analog of 
Proposition~\ref{prop: basic defs of y_0+-} and
a supplement to
Proposition~\ref{prop: main bounds for y0- and y0+}.

\begin{prop}
\label{prop: compute y0 for alg links}
Suppose $v \mkern-3mu\in\mkern-3mu \mathrm{Vert}(\Gamma)$ for a graph 
$\Gamma$ specifying the exterior of an algebraic link.
If $y_{0-}^v, y_{0+}^v \mkern-3mu\in\mkern-3mu \P(H_1(\partial_0 Y_v; \Z))_{\textsc{sf}_v}$
are the (potential) L-space interval endpoints for 
$Y_{\Gamma_v}\mkern-2mu({\boldsymbol{y}}^{\Gamma_v})$
as defined in
Theorem~\ref{thm: l-space interval for seifert jsj}, then
\begin{align*}
y_{0-}^v 
&= \bar{y}_{0-}^{v \, \prime}
\mkern.5mu-\mkern-3.5mu
{\textstyle{\sum\limits_{\hphantom{1\mkern-2mu} 
j \in J_v^{\textsc{bi}}}}}\mkern-9mu
\left(\mkern-1mu\lceil y^v_{j+} \mkern-2mu\rceil \mkern-3mu-\mkern-2.5mu 1\right)
\mkern2mu-\mkern-7mu
{\textstyle{\sum\limits_{\hphantom{1\mkern-2mu} 
j \in J_v^{\textsc{bc}}}}}\mkern-8mu
\lfloor y^v_{j} \rfloor
\mkern10mu-\mkern7mu
{\textstyle{\sum\limits_{i \in I_v}}}\mkern-6mu
\lfloor y^v_{i} \rfloor,
    \\
y_{0+}^v 
&= \bar{y}_{0+}^{v \, \prime}
\mkern.5mu-\mkern-3.5mu
{\textstyle{\sum\limits_{\hphantom{1\mkern-2mu} 
j \in J_v^{\textsc{bi}}}}}\mkern-9mu
\left(\mkern-1mu\lfloor y^v_{j-} \mkern-2mu\rfloor \mkern-3mu+\mkern-2.5mu 1\right)
\mkern2mu-\mkern-7mu
{\textstyle{\sum\limits_{\hphantom{1\mkern-2mu} 
j \in J_v^{\textsc{bc}}}}}\mkern-8mu
\lceil y^v_{j} \rceil
\mkern10mu-\mkern7mu
{\textstyle{\sum\limits_{i \in I_v}}}\mkern-6mu
\lceil y^v_{i} \rceil,
\end{align*}
for 
$J_v^{\textsc{bc}}$ and $J_v^{\textsc{bi}}$
as defined in 
(\ref{eq: def of J BC}) and
(\ref{eq: def of J BI}).
\end{prop}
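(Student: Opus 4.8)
The plan is to derive the two displayed identities by starting from the formula for $y^v_{0\pm}$ supplied by Theorem~\ref{thm: l-space interval for seifert jsj} applied to the Seifert fibered piece $Y_v$ sitting inside $M = Y_{\Gamma_v}(\boldsymbol{y}^{\Gamma_v})$ (regarded as having $f \subset Y_v$ the regular fiber whose exterior we are analyzing), and then carry out the same term-by-term rewriting that was done in the proof of Proposition~\ref{prop: basic defs of y_0+-}, except that the summand coming from the first Seifert invariant $-q^*_v/p_v$ of $Y_v$ must now be treated differently depending on whether or not $v$ has an incoming exceptional-splice edge $e'$ with $j(e')=-1$. The point is that the quantity $\bar y^{v\,\prime}_{0\pm}$ was \emph{defined} in (\ref{eq: y'0-(k)})--(\ref{eq: defs of y0'}) precisely so as to absorb this summand; so the proof is essentially the observation that Theorem~\ref{thm: l-space interval for seifert jsj}'s sum $\sum_{i=0}^m \lfloor y_i k\rfloor$ (resp.\ $\sum_{i=0}^m \lceil y_i k\rceil$) over the Seifert invariants of $Y_v$ splits as the $-q^*_v/p_v$ term, which is replaced by $y^{v\,\prime}_{-1\pm}(k)$, plus the contributions of the punctures, which split further according to the partition $\{1,\dots,n_v\} = I_v \amalg J_v^{\textsc{bc}} \amalg J_v^{\textsc{bi}}$.

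Concretely, first I would recall that for each incoming edge $e\in E_{\mathrm{in}}(v)$ the manifold $Y_{\Gamma_{v(-e)}}(\boldsymbol{y}^{\Gamma_{v(-e)}})$ glued into $\partial_{j(e)}Y_v$ contributes, via ${\varphi}^{\P}_{e*}$, an L-space interval with endpoints $y^v_{j(e)\pm}$ (when boundary-incompressible) or a single longitude $y^v_{j(e)}$ (when boundary-compressible), and that the $j\in I_v$ boundary components carry the free Dehn-filling slopes $y^v_i$. In Theorem~\ref{thm: l-space interval for seifert jsj}, boundary-compressible neighbors behave like ordinary Seifert/fiber fillings and so contribute $\lfloor y^v_j k\rfloor$ (resp.\ $\lceil y^v_j k\rceil$) inside the floor/ceiling sum, whereas boundary-incompressible neighbors contribute the $\lceil y^{\textsc{bi}}_{j+}k\rceil - 1$ (resp.\ $\lfloor y^{\textsc{bi}}_{j-}k\rfloor+1$) terms; this is exactly the content recorded in (\ref{eq: sigma thing for yv0-})--(\ref{eq: sigma thing for yv0+}) for $\bar y^v_{0\mp\Sigma}(k)$. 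Then I would use the identities $x=\lfloor x\rfloor+[x]$ and $x=\lceil x\rceil-[-x]$ to pull the integer parts $\lfloor y^v_i\rfloor$, $\lceil y^v_j\rceil$, etc.\ out of the floor/ceiling sums, exactly as in the proof of Proposition~\ref{prop: basic defs of y_0+-}, leaving behind only fractional-part sums which are what $\bar y^v_{0\pm\Sigma}(k)$ collects; the analogous manipulation handles the $|J^{\textsc{bi}\pm}_{v\Z}|$ corrections. The only genuinely new bookkeeping is that the single term $\lfloor (-q^*_v/p_v)k\rfloor = -\lceil (q^*_v/p_v)k\rceil$ (resp.\ $\lceil(-q^*_v/p_v)k\rceil = -\lfloor(q^*_v/p_v)k\rfloor$), which appears when $-1\notin j(E_{\mathrm{in}}(v))$, gets \emph{replaced} by $y^{v\,\prime}_{-1+}(k)$ (resp.\ $y^{v\,\prime}_{-1-}(k)$) when $v$ has an incoming exceptional splice $e'$; and one checks directly from the definitions that in the case $-1\notin j(E_{\mathrm{in}}(v))$ one has $y^{v\,\prime}_{-1\pm}(k) = -\lceil(q^*_v/p_v)k\rceil$ (resp.\ $-\lfloor(q^*_v/p_v)k\rfloor$), so the two conventions agree and $\bar y^{v\,\prime}_{0\pm}$ reduces to $\bar y^v_{0\pm}$ there, consistent with Proposition~\ref{prop: basic defs of y_0+-}.

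After assembling these pieces one reads off that
\[
y^v_{0-} = \sup_{k>0}\ -\tfrac1k\Big(1 + \sum_{i=0}^m\lfloor y_i k\rfloor + \sum_j(\lceil y^{\textsc{bi}}_{j+}k\rceil-1)\Big)
= \bar y^{v\,\prime}_{0-} - \!\!\sum_{j\in J_v^{\textsc{bi}}}\!\!(\lceil y^v_{j+}\rceil-1) - \!\!\sum_{j\in J_v^{\textsc{bc}}}\!\!\lfloor y^v_j\rfloor - \sum_{i\in I_v}\lfloor y^v_i\rfloor,
\]
since all the $\boldsymbol{y}$-independent integer parts may be pulled outside the $\sup_k$, and symmetrically for $y^v_{0+}$ with $\inf_k$, ceilings in place of floors, $y^v_{j-}$ in place of $y^v_{j+}$, and $\bar y^{v\,\prime}_{0+}$; this is precisely the asserted pair of equations. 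I would present this as a short computation referring back to the proof of Proposition~\ref{prop: basic defs of y_0+-} for the routine algebra, and spelling out only the exceptional-splice replacement.

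I expect the main obstacle to be purely organizational rather than conceptual: namely making sure that the matching of indices between Theorem~\ref{thm: l-space interval for seifert jsj}'s abstract Seifert data $(y_1,\dots,y_m)$, the neighbor labels $j\in J_v$ partitioned into $J_v^{\textsc{bc}}$ and $J_v^{\textsc{bi}}$, the exceptional index $-1$, and the free indices $I_v$ is done correctly and consistently with the sign conventions baked into $\phi^{\P}_{e*}$ (orientation-reversing, so $y^v_{j\pm}=\phi^{\P}_{e*}(y^{v(-e)}_{0\mp})$) and into the $\textsc{sf}_v$ basis; one must also be careful that $y^{v\,\prime}_{-1\pm}(k)$ really does reproduce the $-q^*_v/p_v$ Seifert-invariant contribution in the no-exceptional-splice case and that in the exceptional-splice case it correctly encodes ``$Y_v$ with its $\partial_{-1}$ boundary filled by the boundary-incompressible or boundary-compressible graph manifold $Y_{\Gamma_{v(-e')}}$'' via the $\lceil y^v_{-1+}k\rceil-1$ versus $\lfloor y^v_{-1+}k\rfloor$ dichotomy, mirroring how the $J^{\textsc{bi}}_v$ versus $J^{\textsc{bc}}_v$ neighbors are handled. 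Once that correspondence is set up, the identities follow by the same line-by-line rewriting already used for Proposition~\ref{prop: basic defs of y_0+-}, so no new analytic input (no new extremization estimates) is needed here.
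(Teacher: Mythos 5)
Your route is the same as the paper's: the paper's own proof is just the one-line observation that the identities follow directly from Theorem~\ref{thm: l-space interval for seifert jsj}, and your expansion—splitting that theorem's sums over the partition $I_v \amalg J_v^{\textsc{bc}} \amalg J_v^{\textsc{bi}}$ together with the $-1$ boundary index, and then repeating the integer-part extraction from the proof of Proposition~\ref{prop: basic defs of y_0+-}—is exactly the intended argument, with no new analytic input needed. One sign to fix in your consistency check: by the definitions preceding (\ref{eq: y'0-(k)}), $y^{v\,\prime}_{-1\pm}(k)$ is the \emph{negative} of the corresponding summand in Theorem~\ref{thm: l-space interval for seifert jsj} (it is designed to replace the summand $\lceil \frac{q^*_v}{p_v}k\rceil$, resp.\ $\lfloor \frac{q^*_v}{p_v}k\rfloor$, appearing \emph{after} the overall factor $-\frac{1}{k}$ has been distributed), so in the case $-1\notin j(E_{\mathrm{in}}(v))$ one gets $y^{v\,\prime}_{-1+}(k) = -\lfloor -\frac{q^*_v}{p_v}k\rfloor = +\lceil \frac{q^*_v}{p_v}k\rceil$ and $y^{v\,\prime}_{-1-}(k) = +\lfloor \frac{q^*_v}{p_v}k\rfloor$, not the negatives you state; if instead you insert $y^{v\,\prime}_{-1\pm}(k)$ directly in place of the raw term $\lfloor -\frac{q^*_v}{p_v}k\rfloor$ inside the un-negated sum, as your middle paragraph suggests, the exceptional contribution comes out with the wrong sign. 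Your conclusion that the two conventions agree and $\bar y^{v\,\prime}_{0\pm}$ reduces to $\bar y^v_{0\pm}$ in the no-exceptional-splice case is nonetheless correct once the signs are tracked as above, and the final displayed identities match the proposition.
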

\begin{proof}
This follows directly from
Theorem~\ref{thm: l-space interval for seifert jsj}.

\end{proof}

\begin{prop}
\label{prop: y0bar y0prime compare}
Suppose that
$\Gamma$ specifies the exterior of an algebraic link,
and that $v \mkern-3mu\in\mkern-3mu \mathrm{Vert}(\Gamma)$ 
has an incoming edge $e'$ with 
$j(e') = -1$.  If
\begin{equation}
\label{eq: fake inductive statement}
-\mfrac{q^*_v}{p_v}
\;\le\; 
y^v_{j(e')+} 
\;\le\; 
y^v_{j(e')-} 
\;\le\; 
-\mfrac{q^*_v}{p_v}
\mkern1mu+\mkern1mu m^{e'-},
\end{equation}
for some
$m^{e'-} \in \Z_{>0}$,
then $\bar{y}_{0-}^{v \, \prime}$ and $\bar{y}_{0+}^{v \, \prime}$ satisfy
\begin{equation}
\bar{y}_{0-}^{v \, \prime} \le \bar{y}_{0-}^v,
\;\;\;\;\;\;\;\;
\bar{y}_{0+}^{v \, \prime} \ge \bar{y}_{0+}^v + m^{e'-}.
\end{equation}

\end{prop}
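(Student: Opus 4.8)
The plan is to compare the two sequences of functions $\bar{y}^{v\,\prime}_{0-}(k)$ and $\bar{y}^v_{0-}(k)$ term-by-term in $k$, and similarly for the ``$+$'' versions. Recall from (\ref{eq: y'0-(k)}) and (\ref{eq: y'0+(k)}) that the only difference between $\bar{y}^{v\,\prime}_{0\mp}(k)$ and $\bar{y}^{v}_{0\mp}(k)$ is that the summand $\left\lceil \frac{q^*_v}{p_v} k\right\rceil$ (resp.\ $\left\lfloor \frac{q^*_v}{p_v}k\right\rfloor$) has been replaced by $y^{v\,\prime}_{-1+}(k)$ (resp.\ $y^{v\,\prime}_{-1-}(k)$). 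So everything reduces to bounding, for every $k \in \Z_{>0}$,
$$
\mfrac{1}{k}\left( y^{v\,\prime}_{-1+}(k) - \left\lceil \mfrac{q^*_v}{p_v}k\right\rceil\right)
\;\;\le\;\; 0,
\qquad
\mfrac{1}{k}\left( y^{v\,\prime}_{-1-}(k) - \left\lfloor \mfrac{q^*_v}{p_v}k\right\rfloor\right)
\;\;\ge\;\; m^{e'-},
$$
and then taking the appropriate extremum ($\sup$ over $k$ for the ``$-$'' bound, $\inf$ over $k$ for the ``$+$'' bound) on both sides, using that $\sup$ and $\inf$ are monotone.

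First I would dispose of the easier ``$-$'' inequality. Using $y^v_{j(e')-} \le y^v_{j(e')+}$ under the circular order — more precisely the left half of (\ref{eq: fake inductive statement}), $-\frac{q^*_v}{p_v} \le y^v_{-1+}$ — together with the definition of $y^{v\,\prime}_{-1+}(k)$: in the \textsc{bi} case this is $-\left(\left\lceil y^v_{-1+}k\right\rceil - 1\right)$, and since $y^v_{-1+} \ge -\frac{q^*_v}{p_v}$ we get $\left\lceil y^v_{-1+}k\right\rceil \ge \left\lceil -\frac{q^*_v}{p_v}k\right\rceil = -\left\lfloor \frac{q^*_v}{p_v}k\right\rfloor$, hence $y^{v\,\prime}_{-1+}(k) \le \left\lfloor \frac{q^*_v}{p_v}k\right\rfloor + 1 \le \left\lceil \frac{q^*_v}{p_v}k\right\rceil + 1$; the stray $+1$ needs a little care, and the cleanest route is to observe that the replaced term must be matched against $\left\lceil \frac{q^*_v}{p_v}k\right\rceil$ exactly as in the original $\bar{y}^v_{0-}(k)$ formula, where the ``$-1$'' in $y^{v\,\prime}_{-1+}(k)$ plays the structural role of a \textsc{bi}-type summand $\left(\left\lceil \cdot k\right\rceil - 1\right)$. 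In the \textsc{bc} case $y^{v\,\prime}_{-1+}(k) = -\left\lfloor y^v_{-1+}k\right\rfloor$, and the same inequality $y^v_{-1+}\ge -\frac{q^*_v}{p_v}$ gives $-\left\lfloor y^v_{-1+}k\right\rfloor \le -\left\lfloor -\frac{q^*_v}{p_v}k\right\rfloor = \left\lceil \frac{q^*_v}{p_v}k\right\rceil$ directly. Taking $\sup_{k>0}$ of $\bar{y}^{v\,\prime}_{0-}(k) \le \bar{y}^v_{0-}(k)$ yields $\bar{y}^{v\,\prime}_{0-} \le \bar{y}^v_{0-}$.

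For the ``$+$'' inequality I would use the right half of (\ref{eq: fake inductive statement}), $y^v_{-1-} \le -\frac{q^*_v}{p_v} + m^{e'-}$. In the \textsc{bi} case $y^{v\,\prime}_{-1-}(k) = -\left(\left\lfloor y^v_{-1-}k\right\rfloor + 1\right)$; the bound on $y^v_{-1-}$ gives $\left\lfloor y^v_{-1-}k\right\rfloor \le \left\lfloor \left(-\frac{q^*_v}{p_v} + m^{e'-}\right)k\right\rfloor = \left\lfloor -\frac{q^*_v}{p_v}k\right\rfloor + m^{e'-}k$ (using $m^{e'-}\in\Z$), so
$$
y^{v\,\prime}_{-1-}(k) \;\ge\; -\left\lfloor -\mfrac{q^*_v}{p_v}k\right\rfloor - m^{e'-}k - 1
\;=\; \left\lceil \mfrac{q^*_v}{p_v}k\right\rceil - m^{e'-}k - 1,
$$
and the gap between $\left\lceil \frac{q^*_v}{p_v}k\right\rceil$ and $\left\lfloor \frac{q^*_v}{p_v}k\right\rfloor$ absorbs the $-1$ exactly the way the structural ``$+1$'' in $y^{v\,\prime}_{-1-}(k)$ was designed to pair against the $\left\lfloor \frac{q^*_v}{p_v}k\right\rfloor$ summand of $\bar{y}^v_{0+}(k)$. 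The \textsc{bc} case is parallel with ceilings replacing floors and no $\pm1$ correction. Taking $\inf_{k>0}$ of $\bar{y}^{v\,\prime}_{0+}(k) \ge \bar{y}^v_{0+}(k) + m^{e'-}$ (the $+\frac{1}{k}\cdot m^{e'-}k = m^{e'-}$ term being $k$-independent) gives $\bar{y}^{v\,\prime}_{0+} \ge \bar{y}^v_{0+} + m^{e'-}$.

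\textbf{Main obstacle.} The genuine subtlety is the bookkeeping of the $\pm 1$ terms: $\bar{y}^v_{0-}(k)$ is built from summands $\left\lceil \frac{q^*_v}{p_v}k\right\rceil$ and $\left(\left\lceil[y^v_{j+}]k\right\rceil - 1\right)$-type terms, and when we swap in $y^{v\,\prime}_{-1+}(k) = -\left(\left\lceil y^v_{-1+}k\right\rceil - 1\right)$ we are effectively recategorizing the $\partial_{-1}$ boundary from a Seifert-exceptional-fiber contribution to a \textsc{bi}-or-\textsc{bc}-glued contribution, and one must verify the constant offsets line up so the claimed clean inequalities $\le 0$ and $\ge m^{e'-}$ hold for every $k$, not just asymptotically. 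I would handle this by writing both $\bar{y}^v_{0\mp}(k)$ and $\bar{y}^{v\,\prime}_{0\mp}(k)$ out fully from (\ref{eq: def of y_0-v-bar for iterated})--(\ref{eq: def of y_0+v-bar for iterated}) and (\ref{eq: y'0-(k)})--(\ref{eq: y'0+(k)}), cancelling all common summands, and reducing to exactly the two scalar inequalities above, which then follow from the integrality of $m^{e'-}$, the identities $-\lfloor -x\rfloor = \lceil x\rceil$ and $x = \lfloor x\rfloor + [x]$, and hypothesis (\ref{eq: fake inductive statement}). Once the per-$k$ inequalities are in hand, monotonicity of $\sup$ and $\inf$ finishes the proof.
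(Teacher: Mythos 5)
Your overall route is the same as the paper's: isolate the single replaced summand, prove a per-$k$ comparison of $y^{v\,\prime}_{-1\pm}(k)$ against $\left\lceil\frac{q^*_v}{p_v}k\right\rceil$ resp.\ $\left\lfloor\frac{q^*_v}{p_v}k\right\rfloor$, then invoke monotonicity of $\sup$ and $\inf$. The ``$-$'' half of your argument is essentially the paper's (though note that in the \textsc{bi} case with $y^v_{-1+}=-\frac{q^*_v}{p_v}$ and $p_v\mid k$ your stray $+1$ is genuinely not absorbed, a boundary case your ``structural role'' remark waves at rather than settles).

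The real gap is in the ``$+$'' half. To reach $\bar{y}_{0+}^{v\,\prime}\ge\bar{y}_{0+}^v+m^{e'-}$ you need, for every $k$, $y^{v\,\prime}_{-1-}(k)\ge\left\lfloor\frac{q^*_v}{p_v}k\right\rfloor+m^{e'-}k$, which in effect requires $y^v_{-1-}\le-\frac{q^*_v}{p_v}-m^{e'-}$; but the hypothesis (\ref{eq: fake inductive statement}) only gives $y^v_{-1-}\le-\frac{q^*_v}{p_v}+m^{e'-}$, and your own displayed chain produces $y^{v\,\prime}_{-1-}(k)\ge\left\lceil\frac{q^*_v}{p_v}k\right\rceil-m^{e'-}k-1$, in which the $m^{e'-}k$ term enters with a \emph{negative} sign. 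Dividing by $k$ this yields only $\bar{y}^{v\,\prime}_{0+}(k)\ge\bar{y}^v_{0+}(k)-m^{e'-}$ (up to $1/k$), and no bookkeeping of the $\pm1$'s can convert $-m^{e'-}k$ into $+m^{e'-}k$; the final sentence of your ``$+$'' argument simply flips the sign. Concretely, in the \textsc{bc} case with $p_v=2$, $q^*_v=1$, $m^{e'-}=1$, $y^v_{-1+}=y^v_{-1-}=\tfrac12$, the hypothesis holds with equality on the right, yet $y^{v\,\prime}_{-1-}(k)-\left\lfloor\tfrac{k}{2}\right\rfloor=-k$ for all $k$, so $\bar{y}^{v\,\prime}_{0+}=\bar{y}^v_{0+}-m^{e'-}$, not $\ge\bar{y}^v_{0+}+m^{e'-}$. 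So the step asserting the per-$k$ bound ``$\ge m^{e'-}$'' does not follow from what you proved. (For what it is worth, the paper's own one-line proof asserts the same per-$k$ inequality and has the same sign tension, and the estimate your algebra actually delivers, $\bar{y}^{v\,\prime}_{0+}\ge\bar{y}^v_{0+}-m^{e'-}$, is the one consumed in line (\ref{eq: line 2 of Lmin- alg link}) of the proof of Theorem~\ref{thm: algebraic link satellites}; but as a proof of the statement as printed, your write-up does not close, and the ``absorption'' sentence is exactly where it breaks.)
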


\begin{proof}
It is straightforward to show that the bounds in
(\ref{eq: fake inductive statement}) imply that
\begin{equation}
y^{v\,\prime}_{-1+}(k)
\le 
\left\lceil
\mfrac{q^*_v}{p_v} k
\right\rceil,
\;\;\;\;\;\;\;
y^{v\,\prime}_{-1-}(k)
\ge
\left\lfloor
\mfrac{q^*_v}{p_v} k
\right\rfloor
+ m^{e'-}
\end{equation}
for all $k \in \Z_{>0}$.
Thus, since $m^{e'-} \in \Z$ implies that
$\frac{1}{k}\lfloor m^{e'-} k \rfloor = \frac{1}{k}\lceil m^{e'-} k \rceil =  m^{e'-}$,
the desired result follows directly from 
(\ref{eq: y'0-(k)}),
(\ref{eq: y'0+(k)}),
and the definitions of 
$\bar{y}_{0\mp}^{v \, \prime}$ in 
(\ref{eq: defs of y0'}).

\end{proof}

\subsection{L-space surgery regions for algbraic link satellites: Proof of Theorem \ref{thm: algebraic link satellites}}
If $\Gamma$ specifies a one-component algebraic link, {\em{i.e.}}, a knot,
then the L-space region is just an interval, determined
by iteratively computing the genus of successive cables.
For multi-component links, we can bound the L-space region 
as described in 
Theorem~\ref{thm: algebraic link satellites}
in the introduction.

\vspace{.2cm}
{\noindent{\textit{Proof of Theorem \ref{thm: algebraic link satellites}}}}.

The proofs of parts $(i)$ and $(ii)$ are the same as
those in the iterated torus link satellite case,
if one keeps in mind that the $p_r=1$ condition for
$(ii.b)$ and an explicit hypothesis for $(ii.a)$ 
each rule out the possibility of an incoming exceptional splice
at the root vertex.

The proof of part $(iii)$ also adapts the proof used for iterated torus satellites,
but we provide more details in this case.
Again, for bookkeeping convenience, we redefine
$I_{w} \mkern-2.5mu:=\mkern-2.5mu \{1\}$
and set
$\mathcal{L}^{\min +}_{\textsc{sf}_{w}} \mkern-3.5mu:=\mkern-2mu \{0\}$
and $\mathcal{R}_{\textsc{sf}_{w}} \mkern-4mu\setminus\mkern-1mu \mathcal{Z}_{\textsc{sf}_{w}}
\mkern-3.5mu:=\mkern-1.5mu
\mathcal{L}^{\min-}_{\textsc{sf}_{w}} 
\mkern-3.5mu:=\mkern-2mu \emptyset$,
for any 
$w \mkern-2mu\in\mkern-2mu \mathrm{Vert}(\Gamma)$ with
$I_{w} \mkern-3mu=\mkern-1mu \emptyset$.

For a vertex
$v \in \text{Vert}(\Gamma)$,
we inductively assume, for each incoming edge $e \in E_{\mathrm{in}}(v)$, that
for any
$\boldsymbol{y}^{\Gamma_{v(-e)}}
\in
\prod_{u \in \text{Vert}(\Gamma_{v(-e)})}
\left(\mathcal{L}^{\min+}_{\textsc{sf}_u} 
\cup
\mathcal{R}_{\textsc{sf}_u} \mkern-4mu\setminus\mkern-1mu \mathcal{Z}_{\textsc{sf}_u}
\cup
\mathcal{L}^{\min-}_{\textsc{sf}_u}\right)$, we have
\begin{align}
\label{eq: induct bi algebraic}
\;\;\;\;
\mu_e \mkern1mu+\mkern1mu m^{e+}
\;\le\; 
y^v_{j(e)+} 
\,&\le\; 
y^v_{j(e)-} 
\;\le\; 
\mu_e \mkern1mu+\mkern1mu m^{e-},
\hphantom{\;\;\;\;\;
\text{if }\,
Y_{\Gamma_{v(-e)}}(\boldsymbol{y}^{\Gamma_{v(-e)}})
\,\;\text{is}\;\,\textsc{bi}}
    \\
\label{eq: induct bc algebraic}
\;\;\;\;
\mu_e \mkern1mu+\mkern1mu m^{e+}
\;<\; 
y^v_{j(e)+},\mkern-5mu
\,&\mkern24.1mu\; 
y^v_{j(e)-} 
\;<\; 
\mu_e \mkern1mu+\mkern1mu m^{e-}
\;\;\;\;\;
\text{if }\,
Y_{\Gamma_{v(-e)}}(\boldsymbol{y}^{\Gamma_{v(-e)}})
\,\;\text{is}\;\,\textsc{bi},
\end{align}
\vspace{-.7cm}
\begin{equation}
\text{where}\;\;
\mu_e  := \phi_{\mkern-1mue*}^{\P}\mkern-6mu
\left(\mkern-3mu\mfrac{-q^*_{v(-e)}\mkern-1mu}{-p_{v(-e)}\mkern-1mu}\mkern-3mu\right)
= 
\begin{cases}
0
 &
j(e) \neq -1
   \\
\frac{-q^*_v}{p_v}
 &
j(e) = -1
\end{cases}
\end{equation}
is the meridian slope of the fiber of $Y_v$ to which $Y_{v(-e)}$
is spliced along $e$ (so the image of the longitude
of slope $\frac{-q^*_{v(-e)}\mkern-1mu}{-p_{v(-e)}\mkern-1mu}$
paired with the meridian of slope
$\frac{p^*_{v(-e)}\mkern-1mu}{q_{v(-e)}\mkern-1mu}$), and where
\begin{align}
m^{e+}
&:=
\begin{cases}
\left\lceil
\mkern-2mu
\frac{p_{v(-e)}}{q_{v(-e)}}
\mkern-2mu
\right\rceil
\mkern-3mu-\mkern-2mu 1
   &
j(e) \mkern-3mu\neq\mkern-3mu -1
   \\
0
&
j(e) \mkern-3mu=\mkern-3mu -1
\end{cases}
\;\;\;
=\; 0,
     \\
m^{e-}
&:=
\begin{cases}
\left\lfloor
\mkern-2mu
\frac{p_{v(-e)}}{q_{v(-e)}}
\mkern-2mu
\right\rfloor
\mkern-3mu+\mkern-2mu 1
   &
j(e) \mkern-3mu\neq\mkern-3mu -1
\vspace{.1cm}
   \\
\left\lceil \mkern-2mu\frac{p_{v(-e)}}{{p_v}\Delta_{e}}\mkern-3mu \right\rceil
\mkern-3mu+\mkern-2mu 1
\vphantom{\mfrac{A^B}{A^B}}
&
j(e) \mkern-3mu=\mkern-3mu -1
\end{cases}.
\end{align}
We then set
\begin{align}
m_v^+
&:=
\mkern4mu-\mkern-6mu
{\sum\limits_{e\mkern1mu\in\mkern1mu 
E_{\mathrm{in}}\mkern-1mu(v)}}
\mkern-12mu
m^{e+}
\,+\,
0
\;\;\;
=\;0,
     \\
\label{eq: def of mv}
m_v^-
&:=
\mkern4mu-\mkern-6mu
{\sum\limits_{e\mkern1mu\in\mkern1mu 
E_{\mathrm{in}}\mkern-1mu(v)}}
\mkern-12mu
m^{e-}
\,-\,
\begin{cases}
1
& J_v \neq \emptyset;\, j(e_v) \neq -1
    \\
\left\lceil \mkern-2mu\frac{p_{v(e_v)}}{p_v\Delta_{e_v}}\mkern-3mu \right\rceil 
\mkern-3mu+\mkern-2mu 1
&
j(e_v) \mkern-3mu=\mkern-3mu -1
    \\
0
& \text{otherwise}
\end{cases}.
\end{align}

The statement of
Theorem~\ref{thm: algebraic link satellites}
makes the substitution
$\left\lfloor\mkern-2mu\frac{p_{v(-e)}}{q_{v(-e)}}\mkern-2mu\right\rfloor
\mkern-3mu+\mkern-2mu 1 \,\mapsto\, 1$ 
for the $j(e) \neq -1$ case of $m^{e-}$, in its role as a summand of 
of $m_v^-$.
However, this substitution is an equivalence for all $v \in \mathrm{Vert}(\Gamma)$
and $e \in E_{\mathrm{in}}(v)$,
since the algebraicity condition
(\ref{eq: alg cond})
implies
$0 < \frac{p_{v(-e)}}{q_{v(-e)}} < 1$ for $j(e) \neq -1$.
Note that this does {\em{not}} imply 
$0 < \frac{p_r}{q_r} < 1$ for the root vertex $r = v(-e_r)$,
because of our declared convention that $v(e_r) = \textsc{null} \notin \mathrm{Vert}(\Gamma)$.
The algebraicity condition 
(\ref{eq: alg cond})
also implies that
the conditions $q = +1$, $q < 0$, and $\frac{p_v}{q_v} > +1$
are never met for $j(e_v) \neq -1$ and $v \neq r$.
Thus, the $j(e) \neq -1$ cases of our definitions of
$\mathcal{L}^{\min -}_{\textsc{sf}_v}$
and 
$\mathcal{L}^{\min +}_{\textsc{sf}_v}$ in 
Theorem~\ref{thm: algebraic link satellites}.$iii$
coincide with the respective definitions of
$\mathcal{L}^{\min -}_{\textsc{sf}_v}$
and 
$\mathcal{L}^{\min +}_{\textsc{sf}_v}$ in 
Theorem~\ref{thm: iterated torus links satellites}.$iii$.

If
$\boldsymbol{y}^v \in
\mathcal{R}_{\textsc{sf}_u} \mkern-4mu\setminus\mkern-1mu \mathcal{Z}_{\textsc{sf}_u}$,
then $y_{0-}^v = y_{0+}^v = \infty$. Thus,
$y^{v(e_v)}_{j(e_v)\pm} = \phi^{\P}_{e_v *}(y_{0\mp}^v) = \phi^{\P}_{e_v *}(\infty) 
=: \xi_{v(e_v)}$,
and referring to
(\ref{eq: asymptote value}) for the computation of $\xi_{v(e_v)}$, we have
\begin{equation}
\label{eq: horiz asymptote bound}
y^{v(e_v)}_{j(e_v)\pm} = 
\xi_{v(e_v)}
=
\mu_{e_v}
+
\begin{cases}
\mfrac{p_v}{q_v}
& j(e_v) \neq -1
 \\
\mfrac{p_v}{p_{v(e_v)}\Delta_{e_v}}
& j(e_v) = -1
\end{cases}
\;\;\in\;\; 
\left<\mu_{e_v} + m^{e_v+},
\mu_{e_v} + m^{e_v-}\right>.
\end{equation}

We assume
$\boldsymbol{y}^v \in 
\mathcal{L}^{\min -}_{\textsc{sf}_v} \cup \mathcal{L}^{\min +}_{\textsc{sf}_v}$
for the remainder.
This assumption, together with our inductive assumptions,
makes 
Theorem~\ref{thm: gluing structure theorem}
yield
\begin{equation}
y^v_{0+} \le y^v_{0-},
\end{equation}
and Proposition \ref{prop: main bounds for y0- and y0+} tells us
\begin{equation}
\label{eq: alg link from prop for q*/p and y0- y0+}
\bar{y}^v_{0-} \le \mfrac{q^*_v}{p_v} \le \bar{y}^v_{0+},
\;\text{with equality only if}\;\; Y_{\Gamma_v}(\boldsymbol{y}^{\Gamma_v})
\text{ is }
\textsc{bc}.
\end{equation}
We furthermore already know that $\bar{y}_{0\pm}^v = \bar{y}_{0\pm}^{v \, \prime}$
when $-1 \notin j(E_{\mathrm{in}}(v))$. Combining this fact with
Proposition~\ref{prop: y0bar y0prime compare},
given our inductive assumptions, yields
\begin{equation}
\label{eq: y' bound for Lmin- alg link}
\bar{y}_{0-}^{v \, \prime} \le \bar{y}_{0-}^v,
\;\;\;\;\;\;\;\;
\bar{y}_{0+}^{v \, \prime} \ge \bar{y}_{0+}^v
+
\begin{cases}
m^{e'-}
& \exists\, e' \mkern-3.5mu\in\mkern-3.5mu E_\mathrm{in}(v), j(e') = -1
  \\
0
& -1 \notin j(E_\mathrm{in}(v))
\end{cases}.
\end{equation}

Suppose
$\boldsymbol{y}^v \in 
\mathcal{L}^{\min +}_{\textsc{sf}_v}$.
Then from Proposition~\ref{prop: compute y0 for alg links}, we have
\begin{align}
y_{0-}^v 
\,&\le\;
\bar{y}_{0-}^{v \, \prime}
-
{\textstyle{\sum\limits_{\hphantom{1\mkern-2mu} 
j \in J_v}}}\mkern-5mu
y^v_{j+}
\mkern3mu-\mkern3mu
{\textstyle{\sum\limits_{\hphantom{1\mkern-2mu} 
i \in I_v}}}\mkern-6mu
\lfloor y^v_{i} \rfloor
    \\
\,&\le\; 
\bar{y}_{0-}^{v \, \prime}
-
{\textstyle{\sum\limits_{\hphantom{1\mkern-2mu} 
j \in J_v}}}
\mkern-6.5mu m^{e+}
\mkern-.5mu-\mkern15mu 
m_v^+
  \\
\,&=\;
\bar{y}_{0-}^{v \, \prime}
\mkern25mu\le\mkern25mu
\bar{y}^v_{0-} 
\mkern25mu\le\mkern25mu
\mfrac{q^*_v}{p_v}.
\end{align}
Thus, altogether we have
\begin{equation}
\label{eq: alg link L+ vert asymptote bound}
y^v_{0+} \le y^v_{0-}  \le \mfrac{q^*_v}{p_v}
<
\eta_v, \;
\text{ with equality only if }\,
Y_{\Gamma_v}(\boldsymbol{y}^{\Gamma_v})
\text{ is }
\textsc{bc}.
\end{equation}
Here, $\eta_v := (\phi_{e_v*}^{\P})^{-1}(\infty)$
is the location of the vertical asymptote
of $\phi_{e_v*}^{\P}$. 
The inequality
$\frac{q^*_v}{p_v} < \eta_v$ follows directly from the computation of
$\eta$ in (\ref{eq: asymptote value}), plus the fact that
$\frac{q^*_v}{p_v} < \frac{p^*_v}{q_v}$.
Since $\phi_{e_v*}^{\P}$ is locally monotonically decreasing on the
complement of $\eta_v$, this implies that
all the expressions on the left-hand side of
(\ref{eq: alg link L+ vert asymptote bound}) have 
$\phi_{e_v*}^{\P}$-images below the horizontal
asymptote at $\xi_{v(e_v)}$, but in reverse order. That is, we have
\begin{equation}
\phi_{e_v*}^{\P}\mkern-4mu\left(\mfrac{q^*}{p_v}\mkern-1mu\right)
\mkern-2mu:=\mkern-1mu
\mu_{e_v}
\mkern10mu\le\mkern10mu
y^{v(e_v)}_{j(e_v)+} 
\mkern-2mu:=\mkern-1mu
\phi_{e_v*}^{\P}(y^v_{0-})
\mkern10mu\le\mkern10mu
y^{v(e_v)}_{j(e_v)-}
\mkern-2mu:=\mkern-1mu
\phi_{e_v*}^{\P}(y^v_{0+}) 
\mkern10mu<\mkern10mu
\xi_{v(e_v)}.
\end{equation}
Thus, since $m^{e_v+}=0$
and since
(\ref{eq: horiz asymptote bound}) shows that
$\xi_{v(e_v)} < \mu_{e_v} + m^{e_v-}$, we obtain
\begin{equation}
\mu_{e_v} + m^{e_v+}
\mkern3mu\le\mkern4mu 
y^{v(e_v)}_{j(e_v)+}
\mkern4mu\le\mkern5mu
y^{v(e_v)}_{j(e_v)-}
\mkern3mu<\mkern4mu
\mu_{e_v} + m^{e_v-},
\end{equation}
with equality only if $Y_{\Gamma_v}(\boldsymbol{y}^{\Gamma_v})$
is \textsc{bc}.


Lastly, suppose $\boldsymbol{y}^v \in \mathcal{L}^{\min -}_{\textsc{sf}_v}$.
Then combining
Proposition~\ref{prop: compute y0 for alg links}
(for line (\ref{eq: line 1 of Lmin- alg link}))
with
the righthand inequality of 
(\ref{eq: y' bound for Lmin- alg link}),
the inductive upper bounds on $y^v_{j}$ for $j \in J_v := j(E_{\mathrm{in}}(v))|_{>0}$,
and the upper bound 
$\sum_{i \in I_v} \lceil y_i^v \rceil \le m_v^-$ for
$\boldsymbol{y}^v \in \mathcal{L}^{\min -}_{\textsc{sf}_v}$
(for line (\ref{eq: line 2 of Lmin- alg link})),
we obtain
\begin{align}
\label{eq: line 1 of Lmin- alg link} 
y_{0+}^v 
&\ge\; \bar{y}_{0+}^{v \, \prime}
\mkern10mu-\mkern7mu
{\textstyle{\sum\limits_{\hphantom{1\mkern-2mu} 
j \in J_v}}}\mkern-8mu
y^v_{j-}
\mkern18mu-\mkern7mu
{\textstyle{\sum\limits_{i \in I_v}}}\mkern-6mu
\lceil y^v_{i} \rceil,
    \\
\label{eq: line 2 of Lmin- alg link} 
&\ge\; \bar{y}_{0+}^v
\mkern10mu-
{\textstyle{\sum\limits_{e \in E_{\mathrm{in}}(v)}}}\mkern-8mu
m^{e-}
\mkern1mu-\mkern20mu
m_v^-.
\end{align}
Combining 
(\ref{eq: alg link from prop for q*/p and y0- y0+})
from
Proposition \ref{prop: main bounds for y0- and y0+}
with the definition 
(\ref{eq: def of mv})
of $m_v^-$ then gives
\begin{align}
y_{0+}^v
\;&\ge\,\;
\mfrac{q^*_v}{p_v}
+
\begin{cases}
1
& J_v \neq \emptyset;\, j(e_v) \neq -1
    \\
\left\lceil \mkern-2mu\frac{p_{v(e_v)}}{p_v\Delta_{e_v}}\mkern-3mu \right\rceil 
\mkern-3mu+\mkern-2mu 1
&
j(e_v) \mkern-3mu=\mkern-3mu -1
    \\
0
& \text{otherwise}
\end{cases},
\end{align}
with equality only if $Y_{\Gamma_v}\mkern-2mu(\boldsymbol{y}^{\Gamma_v})$ is \textsc{bc}.
When $j(e_v) \!\neq\! -1$, the desired inductive result~is~established in the 
$\boldsymbol{y}^v \in \mathcal{L}^{\min -}_{\textsc{sf}_v}$
case of the proof of
Theorem~\ref{thm: iterated torus links satellites}.
We henceforth assume $j(e_v) = -1$.

Since $j(e_v) = -1$ implies
$\frac{q^*_v}{p_v}
+ \left\lceil \mkern-2mu\mfrac{p_{v(e_v)}}{p_v\Delta_{e_v}}\mkern-3mu \right\rceil 
\mkern-3mu+\mkern-2mu 1 > \eta_v$, we have
$y_{0-}^v \ge y_{0+}^v >\eta_v$, {\em{i.e.}}, to the right of the
vertical asymptote of $\phi_{e_v*}^{\P}$ at $\eta_v$. The respective
$\phi_{e_v*}^{\P}$-images
$y_{j(e_v)+}^{v(e_v)}$ and $y_{j(e_v)-}^{v(e_v)}$ 
of $y^v_{0-}$ and $y^v_{0+}$ therefore
lie above the horizontal asymptote at $\xi_{v(e_v)}$, but with reversed order:
\begin{equation}
\mu_{e_v} + m^{e_v+} = -\mfrac{q^*_{v(e_v)}}{p_{v(e_v)}} < \xi_{v(e_v)} <
y_{j(e_v)+}^{v(e_v)}
\le
y_{j(e_v)-}^{v(e_v)}.
\end{equation}
It remains to show that
$\mu_{e_v} + m^{e_v-} \ge y_{j(e_v)-}^{v(e_v)} := \phi^{\P}_{e_v*}(y^v_{0+})$
(with equality only if $Y_{\Gamma_v}(\boldsymbol{y}^{\Gamma_v})$ is \textsc{bc}),
for which it suffices to show that 
$(\mu_{e_v} + m^{e_v-}) 
-
\phi_{e_v*}^{\P}\mkern-3mu\left(
\frac{q^*_v}{p_v}
+ \left\lceil \mkern-2mu\mfrac{p_{v(e_v)}}{p_v\Delta_{e_v}}\mkern-3mu \right\rceil 
\mkern-3mu+\mkern-2mu 1
\right) \ge 0$.

Recall that any edge $e$ with $j(e)\mkern-2mu=\mkern-2mu-1$ has
$\mkern1.5mu\phi^{\P}_{e*} \mkern-2.5mu=\mkern-1.5mu \sigma^{\P}_{e*}\mkern-.5mu$. 
If we write
$\left[\sigma^{\P}_{e*}\right] \mkern-2mu=:\mkern-2mu
\left(
\begin{array}{cc}
\alpha_e 
& \Delta^{\prime}_e
  \\
\Delta_e
& 
\beta_e
\end{array}\right)$ for the entries of the matrix 
$\left[\sigma^{\P}_{e*}\right]$ as computed in
(\ref{eq: def of sigma}), then the relations 
$p_u^* p_u \mkern.5mu-\mkern1mu q_u^* q_u \mkern-1.5mu=\mkern-1.5mu 1$ for each
$u \mkern-2mu\in\mkern-2mu \mathrm{Vert}(\Gamma)$, particularly for 
$u \mkern-2mu\in\mkern-2.5mu \{v, v(e_v)\}$,
produce simplifications, incuding~the~identities
\begin{equation}
p_{v(e_v)} \alpha_{e_v} + q_{v(e_v)}^* \Delta_{e_v} = p_v,
\mkern20mu
p_v \beta_{e_v} + q_v^* \Delta_{e_v} = -p_{v(e_v)},
\mkern20mu
q_v^* \alpha_{e_v} + p_v \Delta^{\prime}_{e_v} = q^*_{v(e_v)},
\end{equation}
used in the intermediate steps suppressed in the following calculation.
We compute that
\begin{align}
\nonumber
(\mu_{e_v}\mkern-3mu
&+
m^{e_v-}) 
\mkern10mu-\mkern10mu
\phi_{e_v*}^{\P}\mkern-3mu\left(
\mfrac{q^*_v}{p_v}
+ \left\lceil \mkern-2mu\mfrac{p_{v(e_v)}}{p_v\Delta_{e_v}}\mkern-3mu \right\rceil 
\mkern-3mu+\mkern-2mu 1
\right) 
   \\
&=\,
\left(-\frac{q^*_{v(e_v)}}{p_{v(e_v)}}
+ \left\lceil \mkern-2mu\frac{p_v}{p_{v(e_v)}\Delta_{e_v}}\mkern-3mu \right\rceil 
\mkern-3mu+\mkern-2mu 1\right)
\mkern10mu-\mkern10mu
\frac{\mkern7mu\alpha_{e_v}\mkern-4.5mu
\left(p_v \Delta_{e_v} + \left[-p_{v(e_v)}\right]_{p_v \Delta_{e_v}}\right) 
+ p_v}{\Delta_{e_v}\mkern-4.5mu
\left(p_v \Delta_{e_v} + \left[-p_{v(e_v)}\right]_{p_v \Delta_{e_v}}\right) 
\mkern2mu \hphantom{+ p_v}}
    \\
&= 
\label{eq: final answer Lmin- alg link}
1
\mkern5mu+\mkern5mu 
\left[ \mkern-2mu\frac{p_v}{p_{v(e_v)}\Delta_{e_v}}\mkern-3mu \right]
\mkern8mu-\mkern8mu
\Delta_{e_v}^{-2}\mkern-4.5mu
\left(1 + \left[\mkern-2mu\frac{\!-p_{v(e_v)}}{p_v \Delta_{e_v}\!\!}\right]\right)^{\!-1}
    \\
&\ge 0,
\end{align}
since $[x]:= x-\lfloor x\rfloor$ implies $0 \le [x] < 1$ for $x \in \Q$,
and this completes our inductive argument.


\vspace{.2cm}

Since $j(e_r) \neq -1$, the proof that these inductive bounds cause
the Dehn-filled satellite exterior 
$Y^{\Gamma}(\boldsymbol{y}^{\Gamma})
:= Y_{\Gamma}(\boldsymbol{y}^{\Gamma}) \cup (S^3 \setminus \overset{\mkern2mu{\circ}}{\nu}(K))$
to form an L-space whenever
\begin{equation}
\boldsymbol{y}^{\Gamma}
\in
\prod_{v \in \text{Vert}(\Gamma_{r})}
\left(\mathcal{L}^{\min+}_{\textsc{sf}_v} 
\cup
\mathcal{R}_{\textsc{sf}_v} \mkern-4mu\setminus\mkern-1mu \mathcal{Z}_{\textsc{sf}_v}
\cup
\mathcal{L}^{\min-}_{\textsc{sf}_v}\right)
\end{equation}

\vspace{-.15cm}

\noindent is the same as the corresponding argument in the proof of
Theorem~\ref{thm: iterated torus links satellites}.

\qed

\subsection{Monotone strata}
\label{ss: monotone strata}
Whether for iterated torus satellites and for algebraic link satellites,
our inner approximations
$\mathcal{L}^{\min}_{\textsc{sf}_{\Gamma}}(Y^\Gamma) :=
\prod_{v \in \mathrm{Vert}(\Gamma)}
\left(
\mathcal{L}^{\min -}_{\textsc{sf}_v}
\;\cup\;
\mathcal{R}_v \setminus \mathcal{Z}_v
\;\cup\;
\mathcal{L}^{\min +}_{\textsc{sf}_v}
\right)
$
each involve inductive bounds,
namely, (\ref{eq: induct one interated}) and (\ref{eq: induct bi algebraic}),
respectively, which
make $y^{v(e)}_{j(e)+} \le y^{v(e)}_{j(e)-}$,
as a function of $\boldsymbol{y}^{\Gamma}|_{\Gamma_{v(-e)}}$,
for each edge $e \in \mathrm{Edge}(\Gamma)$ and slope 
$\boldsymbol{y}^{\Gamma} \in 
\mathcal{L}^{\min}_{\textsc{sf}_{\Gamma}}(Y^\Gamma)$.
This implies that 
$\mathcal{L}^{\min}_{\textsc{sf}_{\Gamma}}(Y^\Gamma)$
is confined to a particular substratum
of $\mathcal{L}_{\textsc{sf}_{\Gamma}}(Y^\Gamma)$,
called the {\em{monotone stratum}}.

\begin{definition}
For any 
$\mkern1.5mu\boldsymbol{y}^{\Gamma} \mkern-1mu\in 
(\Q\cup\{\infty\})^{|I_{\Gamma}|}_{\textsc{sf}_{\Gamma}}$
and 
$v \in \mathrm{Vert}(\Gamma)$,
we call
$\boldsymbol{y}^{\Gamma}$ {\em{monotone at}} $v$ if
\begin{equation}
\label{eq: monotonicity at v}
\infty \in 
\phi_{\mkern-1mue*\mkern1mu}^{\P}\mathcal{L}^{\circ}_{v(-e)}(\boldsymbol{y})\;\,
\forall\,e \in E_{\mathrm{in}}(v)
\;\;\;\text{and}\;\;\;
\infty\in
\phi_{\mkern-1mu{e_v}*\mkern1mu}^{\P}\mathcal{L}_v^{\circ}(\boldsymbol{y}).
\end{equation}
The {\em{monotone stratum}}
$\mathcal{L}^{\mathrm{mono}}_{\textsc{sf}_{\Gamma}}(Y^\Gamma)$
of $\mathcal{L}_{\textsc{sf}_{\Gamma}}(Y^\Gamma)$ is then the set of slopes 
$\boldsymbol{y}^{\Gamma} \in \mathcal{L}_{\textsc{sf}_{\Gamma}}(Y^\Gamma)$
such that $\boldsymbol{y}^{\Gamma}$ is monotone at all $v \in \mathrm{Vert}(\Gamma)$.
\end{definition}

In the above, for brevity, we have adopted the following
\begin{notation}
For any 
slope
$\mkern1.5mu\boldsymbol{y}^{\Gamma} \mkern-1mu\in 
(\Q\cup\{\infty\})^{|I_{\Gamma}|}_{\textsc{sf}_{\Gamma}}$
and 
vertex
$v \in \mathrm{Vert}(\Gamma)$,
we shall write
\begin{equation}
\mathcal{L}_v(\boldsymbol{y}):=
\mathcal{L}_{\textsc{sf}_v\mkern-4mu}
(Y_{\Gamma_v}\mkern-1.5mu(\boldsymbol{y}^{\Gamma}|_{\Gamma_v})),
\;\;\;\;
\mathcal{L}_v^{\circ}(\boldsymbol{y}):=
\mathcal{L}_{\textsc{sf}_v\mkern-4mu}^{\circ}
(Y_{\Gamma_v}\mkern-1.5mu(\boldsymbol{y}^{\Gamma}|_{\Gamma_v})).
\end{equation}
\end{notation}

\noindent \textbf{Remark.}
Note that if 
$\mathcal{L}^{\circ}_{v(-e)}(\boldsymbol{y}) \neq \emptyset$
for all $e \in E_{\mathrm{in}}(v)$
and 
$\mathcal{L}_v^{\circ}(\boldsymbol{y}) \neq \emptyset$, then
\begin{equation*}
\phi_{\mkern-1mue*\mkern1mu}^{\P}\mathcal{L}_{v(-e)}(\boldsymbol{y})
= [[y^v_{j(e)-}, y^v_{j(e)+}]] \;\;\forall\;e \in E_{\mathrm{in}}(v),
\;\;\;
\phi_{\mkern-1mu{e_v}*\mkern1mu}^{\P}\mathcal{L}_v(\boldsymbol{y})
=[[y^{v(e_v)}_{j(e_v)-},y^{v(e_v)}_{j(e_v)+}]],
\end{equation*}
and the monotonicity condition
(\ref{eq: monotonicity at v})
at $v$ is equivalent to the condition that
\begin{equation*}
y^v_{j(e)+} \le y^v_{j(e)-} \;\forall\;e \in E_{\mathrm{in}}(v),
\;\;\;
y^{v(e_v)}_{j(e_v)+} \le y^{v(e_v)}_{j(e_v)-},
\end{equation*}
corresponding to the endpoint-ordering consistent with that for generic Seifert fibered L-space intervals. We call this condition ``monotonicity'' because of its preservation of this ordering.

The tools developed in 
Sections \ref{s: iterated torus-link-satellites} and
\ref{s: algebraic link satellites}
can be used in much more general settings than that of the
inner approximation theorems we proved, so long as one first
decomposes 
$\mathcal{L}_{\textsc{sf}_{\Gamma}}(Y^\Gamma)$
into strata according to monotonicity conditions,
similar to how torus link satellites must first be classified according to whether
$2g(K)-1 \le \frac{q}{p}$.
Monotonicity conditions also impact the topology of strata.

\begin{theorem}
\label{thm: topology of monotone stratum}
Suppose that $K^{\Gamma} \subset S^3$ is an algebraic link satellite,
specified by $\Gamma$, of a positive L-space knot $K\subset S^3$,
where either $K$ is trivial, or
$K$ is nontrivial with
$\frac{q_r}{p_r} > \mkern2mu 2g(K) -1$.
Let $V \subset \mathrm{Vert}(\Gamma)$ denote the subset of vertices $v \in V$
for which $|I_v| > 0$.

Then the $\Q$-corrected $\R$-closure
$\mathcal{L}^{\mathrm{mono}}_{\textsc{sf}_{\Gamma}}(Y^\Gamma)^{\R}$
of the monotone stratum of 
$\mathcal{L}_{\textsc{sf}_{\Gamma}}(Y^\Gamma)$
is of dimension $|I_{\Gamma}|$ and 
deformation retracts onto an $(|I_{\Gamma}| - |V|)$-dimensional embedded torus,
\begin{equation}
\label{eq: torus}
\mathcal{L}^{\mathrm{mono}}_{\textsc{sf}_{\Gamma}}(Y^\Gamma)^{\R}
\;\;\to\;\;
{\textstyle{\prod_{v\in V}}} \mathbb{T}^{|I_v|-1}
\;\into\; 
{\textstyle{\prod_{v\in V}}} 
(\R\cup\{\infty\})^{|I_v|}_{\textsc{sf}_v},
\end{equation}
projecting to an embedded torus
$\mathbb{T}^{|I_v|-1}
\;\into\; 
(\R\cup\{\infty\})^{|I_v|}_{\textsc{sf}_v}$
parallel to 
$\mathcal{B}_{\textsc{sf}_v} \subset (\R\cup\{\infty\})^{|I_v|}_{\textsc{sf}_v}$
at each $v \in V$.
\end{theorem}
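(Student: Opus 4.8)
\textbf{Plan of proof for Theorem~\ref{thm: topology of monotone stratum}.}
The plan is to show first that, on the monotone stratum, the L-space condition decouples vertex-by-vertex into a single scalar inequality per vertex, and then to run an induction down the tree $\Gamma$ (rootward) that identifies the shape of the region. The key observation is that for $\boldsymbol{y}^{\Gamma}$ monotone at $v$, all the interval endpoints $y^v_{j(e)\pm}$ and $y^{v(e_v)}_{j(e_v)\pm}$ are finite and ordered as $y^v_{j(e)+}\le y^v_{j(e)-}$, so by Theorem~\ref{thm: l-space interval for seifert jsj} and Propositions~\ref{prop: basic defs of y_0+-}, \ref{prop: compute y0 for alg links} the endpoints $y^v_{0\mp}$ of $\mathcal{L}_v(\boldsymbol{y})$ are given by the explicit $\bar y$-formulas, with the $\lfloor y^v_i\rfloor$, $\lceil y^v_i\rceil$ terms entering additively. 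Since the hypothesis $\frac{q_r}{p_r}>2g(K)-1$ (or $K$ trivial) puts us in case $(ii)$ of Theorems~\ref{thm: iterated torus links satellites}/\ref{thm: algebraic link satellites}, the global L-space condition $\infty\in \phi^{\P}_{e_r*}\mathcal{L}_r(\boldsymbol y)$ together with monotonicity at every vertex unwinds, using the $\frac{q^*_v}{p_v}$-bounds of Proposition~\ref{prop: main bounds for y0- and y0+} and the asymptote computations in (\ref{eq: asymptote value}), into a system of inequalities
\begin{equation}
\mathcal{L}^{\mathrm{mono}}_{\textsc{sf}_{\Gamma}}(Y^\Gamma)
\;=\;
\left\{\boldsymbol{y}^{\Gamma}\;\middle|\;
c^-_v\le {\textstyle\sum_{i\in I_v}} y^v_i \le c^+_v \ \ \forall\, v\in V\right\}
\end{equation}
up to a $\mathbb{Q}$-correction at the boundary hyperplanes, where each $c^{\pm}_v\in\mathbb{Q}$ is independent of $\boldsymbol{y}^{\Gamma}$; crucially the constraints for distinct $v$ involve disjoint blocks of coordinates, because the only coupling between vertices is through the finitely many ``carry'' terms $\lfloor y^v_{j(e)\pm}\rfloor$ which are constant on the interior of the stratum once the inductive bounds (\ref{eq: induct one interated})/(\ref{eq: induct bi algebraic}) are in force.

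The second step is to pass to the $\mathbb{Q}$-corrected $\mathbb{R}$-closure. By Proposition~\ref{prop: complements and closure} (applied in each coordinate direction, as in the proof of Theorem~\ref{thm: topology of torus link exterior L-space region}$.ii.b$) the region $\{c^-_v\le \sum_{i\in I_v} y^v_i\le c^+_v\}$ has $\mathbb{R}$-closure a product over $v\in V$ of a ``slab'' in $(\R\cup\{\infty\})^{|I_v|}_{\textsc{sf}_v}$ bounded by two parallel affine hyperplanes, each a translate of $\mathcal{B}_{\textsc{sf}_v}$ (recall from Proposition~\ref{prop: basics for B} and (\ref{eq: stuff with B}) that $\mathcal{B}_{\textsc{sf}_v}$ is the closure of $\{\sum y^v_i = -\frac1{p_vq_v}\}$, so ``$\sum y^v_i = \text{const}$'' hyperplanes are exactly the translates of $\mathcal{B}_{\textsc{sf}_v}$). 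In the one-point-compactification $(\R\cup\{\infty\})^{|I_v|}\cong\mathbb{T}^{|I_v|}$, such a closed slab between two parallel copies of the $(|I_v|-1)$-torus $\mathcal{B}^{\R}_{\textsc{sf}_v}$ deformation retracts onto a single parallel $\mathbb{T}^{|I_v|-1}$ (push the slab onto a mid-level torus along the normal $\mathbb{R}$-direction, exactly the homotopy $\boldsymbol z_t$ used in the torus-link case). Taking the product of these deformation retractions over $v\in V$ and filling in the trivial factors for $v\notin V$ (where $|I_v|=0$ contributes nothing) yields the claimed retraction onto $\prod_{v\in V}\mathbb{T}^{|I_v|-1}$, an embedded torus of dimension $\sum_{v\in V}(|I_v|-1)=|I_{\Gamma}|-|V|$, while the ambient dimension is $\sum_{v\in V}|I_v|=|I_{\Gamma}|$; compatibility with the product structure gives the projection statement at each $v$.

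The main obstacle is the first step: verifying that on the monotone stratum the defining conditions really do reduce cleanly to one affine inequality $c^-_v\le\sum_{i\in I_v}y^v_i\le c^+_v$ per vertex, with $c^{\pm}_v$ genuinely constant. This requires carefully checking that, once the inductive endpoint bounds (\ref{eq: induct one interated})/(\ref{eq: induct bi algebraic}) hold, the integer ``carry'' quantities $\lfloor y^v_{j(e)\pm}\rfloor$, $\lceil y^v_{j(e)\pm}\rceil$ appearing in Propositions~\ref{prop: basic defs of y_0+-}/\ref{prop: compute y0 for alg links} are locally constant in $\boldsymbol{y}^{\Gamma}$ on the interior of the stratum, so that the inequality cutting out monotonicity and L-spaceness at $v$ depends on the other vertices only through constants; one must also handle the reducible locus $\mathcal{R}_v\setminus\mathcal{Z}_v$ and the $\mathbb{Q}$-corrections at the boundary hyperplanes $\sum y^v_i\in\{c^{\pm}_v\}$, showing these do not affect the $\mathbb{R}$-closure. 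I expect this to be a somewhat intricate but essentially bookkeeping extension of the arguments already carried out in the proofs of Theorems~\ref{thm: iterated torus links satellites} and \ref{thm: algebraic link satellites}, using Proposition~\ref{prop: main bounds for y0- and y0+}$(+.i)$--$(+.iii)$ and Proposition~\ref{prop: bounds good for iterated case} to pin down the constants $c^{\pm}_v$; once it is in place, the topology is immediate from the product-of-slabs description and Proposition~\ref{prop: complements and closure}.
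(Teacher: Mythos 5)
There is a genuine gap at exactly the step you flag as the main obstacle, and it is not mere bookkeeping: the monotone stratum is not cut out by a single affine inequality $c^-_v\le\sum_{i\in I_v}y^v_i\le c^+_v$ per vertex with constants $c^{\pm}_v$, even up to $\Q$-corrections along boundary hyperplanes. Already at a single vertex (Theorem~\ref{thm: s3 satellite in sf basis}$(ii)$) the relevant conditions have the form $\sum_i\lfloor y_i\rfloor\ge 0$ and $\sum_i\lceil y_i\rceil\le c$ with a correction depending on the fractional parts $[-y_i]$; such staircase-bounded regions do not agree with slabs bounded by translates of $\mathcal{B}_{\textsc{sf}_v}$ on any set with interior, so the ``product of slabs'' equality is false. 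Moreover, for a general $\Gamma$ the inter-vertex coupling is not through locally constant carries: the endpoints $y^v_{j(e)\pm}=\phi^{\P}_{e*}(y^{v(-e)}_{0\mp})$ vary (piecewise constantly, but through infinitely many values) with $\boldsymbol{y}^{\Gamma}|_{\Gamma_{v(-e)}}$, the \textsc{bc}/\textsc{bi} dichotomy for the filled subtree changes the formulas in Propositions~\ref{prop: basic defs of y_0+-} and \ref{prop: compute y0 for alg links}, and the exact local L-space endpoints involve the $k$-dependent extremizations of Theorem~\ref{thm: l-space interval for seifert jsj}; this is precisely why Theorems~\ref{thm: iterated torus links satellites}$(ii)$ and \ref{thm: algebraic link satellites}$(ii)$ are stated as containments rather than equalities. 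So the monotone stratum is neither a product region nor a region with hyperplane boundary, and the ``immediate'' topology conclusion in your second step rests on a description that does not hold.

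The paper's proof never needs such an exact description. It argues by a leaf-to-root induction on the fibration~(\ref{eq: mono fibration}): for fixed subtree slopes $\boldsymbol{y_*}$, the fiber $\mathcal{T}^v_{\boldsymbol{y_*}}$ is handled exactly as in Theorem~\ref{thm: topology of torus link exterior L-space region}$(ii.b)$, i.e.\ by the deformation retraction $\boldsymbol{z}_t$ along the rational-longitude direction, with the condition $y^v_{0+}<\eta_v<y^v_{0-}$ (the vertical asymptote $\eta_v$ of $\phi^{\P}_{e_v*}$ from~(\ref{eq: asymptote value})) replacing $y_+<0<y_-$; the only input needed is that the longitude slope of the fiber exterior always lies strictly inside the non-L-space interval, together with piecewise constancy of $y^v_{0\pm}$ in each coordinate, not any closed-form boundary. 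Triviality of the fibration is then deduced from the embedded product $\mathcal{L}^{\min}_{\textsc{sf}_{\Gamma_v}}$, each factor of which retracts onto a torus parallel to $\mathcal{B}_{\textsc{sf}_u}$. Your second step (product of per-vertex retractions, dimension count $\sum_{v\in V}(|I_v|-1)=|I_\Gamma|-|V|$) is essentially the right endgame, but it must be fed by this fiberwise longitude-retraction argument rather than by the claimed slab decomposition.
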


\begin{proof}
We argue by induction, recursing downward from the leaves of $\Gamma$ towards its root.
Observe that for any $v \in \mathrm{Vert}(\Gamma)$, we have the fibration
\begin{equation}
\label{eq: mono fibration}
\mathcal{L}^{\mathrm{mono}}_{\textsc{sf}_{\Gamma_v}}\mkern-1mu(Y^{\Gamma_v})
\;\;\longrightarrow\;\;
{\textstyle{\prod_{e \in E_{\mathrm{in}}(v)}}}
\mathcal{L}^{\mathrm{mono}}_{\textsc{sf}_{\Gamma_{v(-e)}}}\mkern-4mu(Y^{\Gamma_{v(-e)}})
\end{equation}
with fiber
\begin{equation}
\mathcal{T}_{\boldsymbol{y_*}}^v
:= \left\{ \boldsymbol{y}^{\Gamma_v} \in 
\mathcal{L}^{\mathrm{mono}}_{\textsc{sf}_{\Gamma_v}}\mkern-1mu(Y^{\Gamma_v})
\left|\vphantom{y^{\Gamma_v}}\right.\;\,
\boldsymbol{y}^{\Gamma_v}\mkern-4mu
\left.\vphantom{\frac{a}{a}}\right|_{\prod_{e\in E_{\mathrm{in}}(v)} \Gamma_{v(-e)}}
= \boldsymbol{y_*}
\right\}
\end{equation}
over 
$\boldsymbol{y_*}
\in
{\textstyle{\prod_{e \in E_{\mathrm{in}}(v)}}}
\mathcal{L}^{\mathrm{mono}}_{\textsc{sf}_{\Gamma_{v(-e)}}}\mkern-4mu(Y^{\Gamma_{v(-e)}})$
for $I_v \neq \emptyset$, with $\mathcal{T}_{\boldsymbol{y_*}}^v$
regarded as a point when $I_v = \emptyset$.

For $v \in \mathrm{Vert}(\Gamma)$, inductively assume the theorem holds for
$\Gamma_{v(-e)}$ for all $e \in E_{\mathrm{in}}(v)$.
(Note that this holds vacuously when $v$ is a leaf, in which case we declare
$\mathcal{T}_{\emptyset}^v
:= \mathcal{L}^{\mathrm{mono}}_{\textsc{sf}_{\Gamma_v}}\mkern-1mu(Y^{\Gamma_v})$.)

If $I_v = \emptyset$, then the fibration in (\ref{eq: mono fibration}) is the identity map,
making the theorem additionally hold for $\Gamma_v$. Next assuming $I_v \neq \emptyset$,
we claim the $\Q$-corrected $\R$-closure 
$(\mathcal{T}_{\boldsymbol{y_*}}^v)^{\R}$ of $\mathcal{T}_{\boldsymbol{y_*}}^v$
is of dimension $|I_v|$ and deformation retracts onto an embedded torus 
$\mathbb{T}^{|I_v|-1}
\;\into\; 
(\R\cup\{\infty\})^{|I_v|}_{\textsc{sf}_v}$
parallel to 
$\mathcal{B}_{\textsc{sf}_v} \subset (\R\cup\{\infty\})^{|I_v|}_{\textsc{sf}_v}$.
In fact, the proof of this statement is nearly identical to the proof of 
Theorem \ref{thm: topology of torus link exterior L-space region}$.ii.b$
in Section \ref{s: L-space region topology},
but with the replacement 
\begin{equation}
\mathcal{N}
\mkern-2mu:=\mkern-2mu
\{\boldsymbol{y}\in\Q^n_{\textsc{sf}}\mkern1mu |\,
y_+(\boldsymbol{y}) < 0 < y_-(\boldsymbol{y})\}
\;\,
\longrightarrow
\;\,
\mathcal{N}_v
\mkern-2mu:=\mkern-2mu
\{\boldsymbol{y}^v\in\Q^{|I_v|}_{\textsc{sf}_v}\mkern1mu |\;
y_{0+}^v(\boldsymbol{y^v}) < \eta_v
 < y_{0-}^v(\boldsymbol{y}^v)\}\mkern-10mu
\end{equation}
in line (\ref{eq: NL for top})
(where $\eta_v$, computed in (\ref{eq: asymptote value}),
is the position of the vertical asymptote of $\phi^{\P}_{e_v*}$),
along with a few minor analogous adjustments corresponding to this change.

It remains to show that the fibration in 
(\ref{eq: mono fibration}) is trivial, but this follows from the fact that
\begin{equation}
\mathcal{L}^{\min}_{\textsc{sf}_{\Gamma_v}}(Y^{\Gamma_v}) :=
\prod_{u \in V \cap \mathrm{Vert}(\Gamma_v)}
\left(
\mathcal{L}^{\min -}_{\textsc{sf}_u}
\;\cup\;
\mathcal{R}_u \setminus \mathcal{Z}_u
\;\cup\;
\mathcal{L}^{\min +}_{\textsc{sf}_u}
\right)
\end{equation}
is a {\em{product}} over $u \in V \cap \mathrm{Vert}(\Gamma_v)$ which embeds into
$\mathcal{L}^{\mathrm{mono}}_{\textsc{sf}_{\Gamma}}(Y^{\Gamma_v})$,
and for reasons again similar to the proof of 
Theorem \ref{thm: topology of torus link exterior L-space region}$.ii.b$,
each factor
$\mathcal{L}^{\min -}_{\textsc{sf}_u}
\;\cup\;
\mathcal{R}_u \setminus \mathcal{Z}_u
\;\cup\;
\mathcal{L}^{\min +}_{\textsc{sf}_u}$
also deformation retracts onto an embedded torus
$\mathbb{T}^{|I_u|-1}
\;\into\; 
(\R\cup\{\infty\})^{|I_u|}_{\textsc{sf}_u}$
parallel to 
$\mathcal{B}_{\textsc{sf}_u} \subset (\R\cup\{\infty\})^{|I_u|}_{\textsc{sf}_u}$,
completing the proof.
\end{proof}

\section{Extensions of L-space Conjecture Results}
\label{s: bgw conjectures}

As mentioned in the introduction,
Boyer-Gordon-Watson 
\cite{BGW}
conjectured several years ago that among prime, closed, oriented
3-manifolds, L-spaces are those 3-manifolds whose fundamental groups
do not admit a left orders (LO).  Similarly, 
Juh{\'a}sz 
\cite{JuhaszConj}
conjectured that prime, closed, oriented 3-manifold are L-spaces
if and only if they fail to admit a co-oriented taut foliation (CTF).
Procedures which generate new collections of L-spaces or non-L-spaces,
such as surgeries on satellites, provide new testing grounds for these
conjectures.

For $Y$ a compact oriented 3-manifold
with boundary a disjoint union of $n>0$ tori,
define the slope subsets 
$\mathcal{F}(Y), \mathcal{LO}(Y) \subset \prod_{i=1}^n \P(H_1(\partial_i Y;\Z))$
so that
\begin{align}
\mathcal{F}(Y)
&:= 
\left\{
\boldsymbol{\alpha} \in \prod_{i=1}^n \P(H_1(\partial_i Y;\Z))
\right.\left|
\begin{array}{c}
Y \text{ admits a CTF } \mathcal{F} \text{ such that }
\\
\mathcal{F}|_{\partial Y} \text{ is the product foliation of slope } 
\boldsymbol{\alpha}.
\end{array}
\vphantom{\prod_{i=1}^n \P(H_1(\partial_i Y;\Z))}\mkern-5mu
\right\},
    \\
\mathcal{LO}(Y) 
&:= 
\left\{
\boldsymbol{\alpha} \in \prod_{i=1}^n \P(H_1(\partial_i Y;\Z))
\right.\left|\;
\pi_1(Y(\boldsymbol{\alpha})) \text{ is LO.}
\vphantom{\prod_{i=1}^n \P(H_1(\partial_i Y;\Z))}
\right\}.
\end{align}
Note that $\boldsymbol{\alpha} \in \mathcal{F}(Y)$ implies
that $Y(\boldsymbol{\alpha})$ admits a CTF, but the converse,
while true for $Y$ a graph manifold, is not known in general.

\subsection{Proof of Theorem
\ref{thm: bgw and juhasz} and Generalizations}
The proof of 
Theorem~\ref{thm: bgw and juhasz}
relies on the related gluing behavior of 
co-oriented taut foliations, left orders on fundamental groups,
and the property of being an non-L-space,
for a pair $Y_1, Y_2$ of compact oriented 3-manifolds with torus boundary
glued together via a gluing map $\varphi : \partial Y_1  \to \partial Y_2$.

That is, the contrapositives of 
Theorems \ref{thm: gluing theorem for floer simple or graph manifold} and
\ref{thm: knot exterior gluing theorem} tell us that if
$Y_i$ have incompressible boundaries and are
both Floer simple manifolds, both graph manifolds,
or an L-space knot exterior and a graph manifold, then 
\begin{equation}
\label{eq: L-space gluing contrapositive}
\varphi_{*}^{\P}(\overline{\mathcal{NL}(Y_1)}) \cap \mkern1mu\overline{\mathcal{NL}(Y_2)} \neq \emptyset 
\;\;\iff\;\;
Y_1 \cup_{\varphi} \mkern-4mu Y_2 \mkern2mu
\text{ not an L-space.}
\end{equation}
The analogous statements for CTFs and LOs, while true for graph manifolds
(once an exception is made for reducible slopes in the case of CTFs),
are not established in general.  However, we still
have weak gluing statements in the general case.
Since product foliations of matching slope can always be glued together,
we have
\begin{equation}
\label{eq: foliation gluing}
\varphi_{*}^{\P}({\mathcal{F}(Y_1)}) \cap {\mathcal{F}(Y_2)} \neq \emptyset
\;\;\implies\;\;
Y_1 \cup_{\varphi} \mkern-4mu Y_2 \mkern2mu,
\text{ if prime, admits a CTF.}
\end{equation}
Moreover,
Clay, Lidman, and Watson \cite{ClayLidmanWatson} built on a result of 
Bludov and Glass
\cite{BludovGlassI}
to~show~that
\begin{equation}
\label{eq: left order gluing}
\varphi_{*}^{\P}({\mathcal{LO}(Y_1)}) \cap {\mathcal{LO}(Y_2)} \neq \emptyset
\;\;\implies\;\;
\pi_1(Y_1 \cup_{\varphi} \mkern-4mu Y_2)
\text{ is LO.}
\end{equation}

\begin{theorem}
\label{thm: bgw and juhasz for general satellites}
Suppose $Y^{\Gamma}$
is the exterior of an algebraic link satellite or 
(possibly-iterated) torus-link satellite
of a nontrivial positive L-space knot $K \mkern-2mu\subset\mkern-2mu S^3$ 
of genus $g(K)$ and exterior $Y$,
with $p_r > 1$ and $-1 \notin j(E_{\mathrm{in}}(r))$.

\noindent $\;\;(\textsc{LO})$
Suppose $\mathcal{LO}(Y) \supset \mathcal{NL}(Y)$.

$\;(\textsc{lo}.i)$
If $2g(K)-1 > \frac{q_r+1}{p_r}$, then 
$\mathcal{LO}(Y^{\Gamma}) = 
\mathcal{NL}(Y^{\Gamma}).$

$\;(\textsc{lo}.ii)$
If $2g(K)-1 < \frac{q_r}{p_r}$ and $\Gamma = r$ specifies a torus link satellite, then
$\mathcal{LO}(Y^{\Gamma}) \supset$
$$\;(\mathcal{NL}(Y^{\Gamma}) \mkern-1mu\setminus\mkern-1mu \mathcal{R}(Y^{\Gamma}))
\;\setminus\; \Lambda(Y^{\Gamma}) \mkern-2mu\cdot\mkern-2mu 
(\left[-\infty, N_{\Gamma}\right>^n \mkern-1mu\setminus\mkern-1mu \left[-\infty, N_{\Gamma}-p_r\right>^n),$$

\vspace{-.1cm}
where $N_{\Gamma} := p_r q_r - q_r - p_r + 2g(K)p_r$.
\vspace{.2cm}

\noindent $\;\;(\textsc{CTF})$
Suppose $\mathcal{F}(Y) = \mathcal{NL}(Y)$.

$\;(\textsc{ctf}.i)$ 
If $2g(K)-1 > \frac{q_r+1}{p_r}$, then 
$\mathcal{F}(Y^{\Gamma}) = 
\mathcal{NL}(Y^{\Gamma}) \setminus \mathcal{R}(Y^{\Gamma}).$

$\;(\textsc{ctf}.ii)$
If $2g(K)-1 < \frac{q_r}{p_r}$ and $\Gamma = r$ specifies a torus link satellite, then
$\mathcal{F}(Y^{\Gamma}) \supset$
$$\;(\mathcal{NL}(Y^{\Gamma}) \mkern-1mu\setminus\mkern-1mu \mathcal{R}(Y^{\Gamma}))
\;\setminus\; \Lambda(Y^{\Gamma}) \mkern-2mu\cdot\mkern-2mu 
(\left[-\infty, N_{\Gamma}\right>^n \mkern-1mu\setminus\mkern-1mu \left[-\infty, N_{\Gamma}-p_r\right>^n),$$

\end{theorem}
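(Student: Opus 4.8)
The plan is to run co-oriented-taut-foliation (CTF) and left-order (LO) ``recursions'' in parallel with the L-space-region recursion of Sections~\ref{s: iterated torus-link-satellites}--\ref{s: algebraic link satellites}, upgrading the \emph{equivalence} in the L-space gluing theorem to the one-directional gluing statements $(\ref{eq: foliation gluing})$ and $(\ref{eq: left order gluing})$ wherever the combinatorics allows. Throughout, write $Y^{\Gamma}(\boldsymbol{\alpha}) = Y \cup_{\varphi_{e_r}} Y_{\Gamma}(\boldsymbol{\alpha})$, where $Y_{\Gamma}(\boldsymbol{\alpha})$ is the $\boldsymbol{\alpha}$-filling of the pattern exterior along its exterior boundary components; for $\boldsymbol{\alpha}\notin\mathcal{R}(Y^{\Gamma})$ this is a prime graph manifold, or a solid torus (in which case $Y^{\Gamma}(\boldsymbol{\alpha})$ is a surgery on $K$ and the assertions reduce directly to the hypotheses on $Y$), carrying the single torus $\partial_0 Y_r$. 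Applying the contrapositive L-space gluing equivalence $(\ref{eq: L-space gluing contrapositive})$ to the pair $(Y,Y_{\Gamma}(\boldsymbol{\alpha}))$ -- an L-space knot exterior glued to a graph manifold, so Theorem~\ref{thm: knot exterior gluing theorem} applies -- and feeding in the description of $\mathcal{L}(Y_{\Gamma}(\boldsymbol{\alpha}))$ underlying Theorems~\ref{thm: algebraic link satellites} and~\ref{thm: iterated torus links satellites}, I would record as ``Claim~1'' that under the hypotheses of part~$(i)$,
\begin{equation}
\label{eq: iff for s3 and lspace conds}
Y^{\Gamma}(\boldsymbol{y}^{\Gamma}) = S^3
\iff
\varphi_{e_r*}^{\P}(\overline{\mathcal{NL}(Y)}) \cap \overline{\mathcal{NL}(Y_{\Gamma}(\boldsymbol{y}^{\Gamma}))} = \emptyset ,
\end{equation}
the right-hand condition being equivalent, via $(\ref{eq: L-space gluing contrapositive})$, to $Y^{\Gamma}(\boldsymbol{y}^{\Gamma})$ being an L-space; the proof of Claim~1 uses $\overline{\mathcal{NL}_{S^3}(Y)} = \left[-\infty,2g(K)-1\right]$ and the fact that, since $p_r>1$ and $-1\notin j(E_{\mathrm{in}}(r))$, the companion gluing carries $2g(K)-1$ into the interior of the graph-manifold-side complementary region.

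The heart of the argument is replacing $\overline{\mathcal{NL}}$ by $\mathcal{F}$ (resp.\ $\mathcal{LO}$) on both sides. On the graph-manifold side I would invoke the classification of \cite{lgraph}: it gives $\mathcal{F}(Y_{\Gamma}(\boldsymbol{\alpha})) = \mathcal{NL}(Y_{\Gamma}(\boldsymbol{\alpha}))\setminus\mathcal{R}(Y_{\Gamma}(\boldsymbol{\alpha}))$, its $\mathcal{LO}$-analogue, and more generally computes the multi-boundary foliation/left-order regions of $Y_{\Gamma}$ (with product conditions on the exterior components) by the \emph{same} $\phi_{e*}^{\P}$-and-$y^v_{\pm}$ recursion that computes $\mathcal{L}$, the only change being the excision of $\mathcal{R}$-slopes in the foliation case. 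On the companion side I would use the standing hypotheses $\mathcal{F}(Y)=\mathcal{NL}(Y)$, $\mathcal{LO}(Y)\supset\mathcal{NL}(Y)$, supplemented by Li--Roberts' $\mathcal{F}_{S^3}(Y)\supset\left<a,b\right>$. Since every slope set in sight is a half-open interval with rational endpoints, as soon as the closures of $\varphi_{e_r*}^{\P}(\mathcal{NL}(Y))$ and $\mathcal{NL}(Y_{\Gamma}(\boldsymbol{\alpha}))$ overlap \emph{in their interiors} one extracts a genuine gluing slope $\gamma\in\varphi_{e_r*}^{\P}(\mathcal{F}(Y))\cap\mathcal{F}(Y_{\Gamma}(\boldsymbol{\alpha}))$; then $(\ref{eq: foliation gluing})$ produces a CTF on $Y^{\Gamma}$ with product boundary of slope $\boldsymbol{\alpha}$, so $\boldsymbol{\alpha}\in\mathcal{F}(Y^{\Gamma})$, and $(\ref{eq: left order gluing})$ makes $\pi_1(Y^{\Gamma}(\boldsymbol{\alpha}))$ left-orderable. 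This is precisely where the numerical hypotheses bite. In case $(i)$, $2g(K)-1 > \tfrac{q_r+1}{p_r}$ is equivalent to $|q_r - p_r(2g(K)-1)|>1$, which makes $\varphi_{e_r*}^{\P}(\mathcal{L}^{\circ}(Y))$ land inside the complementary region with room to spare, so the overlap is never a mere one-point touching; in case $(ii)$ the excised set $\Lambda(Y^{\Gamma})\cdot(\left[-\infty,N_{\Gamma}\right>^{n}\setminus\left[-\infty,N_{\Gamma}-p_r\right>^{n})$ is exactly the locus of $\boldsymbol{\alpha}$ for which the only available $\gamma$ is the endpoint $2g(K)-1$ of $\mathcal{L}(Y)$, whose CTFs on $Y$ carry nontrivial boundary holonomy (a suspension, not a product) and hence need not lie in $\mathcal{F}(Y)$ -- the gluing obstruction recorded by Boyer--Clay.

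The reverse inclusions and specializations are comparatively soft. For the ``$\subset$'' directions in part~$(i)$: if $\boldsymbol{\alpha}\in\mathcal{L}(Y^{\Gamma})$ then $\boldsymbol{\alpha}\in\Lambda_{\Gamma}$ and $Y^{\Gamma}(\boldsymbol{\alpha})=S^3$ by Claim~1, whose fundamental group is trivial (hence not left-orderable, by the usual convention) and which admits no CTF, giving $\mathcal{LO}(Y^{\Gamma})\subset\mathcal{NL}(Y^{\Gamma})$ and $\mathcal{F}(Y^{\Gamma})\subset\mathcal{NL}(Y^{\Gamma})$; moreover a slope in $\mathcal{R}(Y^{\Gamma})$ gives a connected sum of non-$S^3$ pieces, which by Novikov--Rosenberg admits no CTF (the $S^1\times S^2$ exception occurring only inside $\mathcal{Z}\subset\mathcal{R}$), so $\mathcal{F}(Y^{\Gamma})\subset\mathcal{NL}(Y^{\Gamma})\setminus\mathcal{R}(Y^{\Gamma})$, while free products of left-orderable groups are left-orderable, so no such excision is needed for $(\textsc{lo})$. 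Finally, Theorem~\ref{thm: bgw and juhasz} is the case $\Gamma=r$, where $\Lambda$, $\mathcal{R}$ and $N_{\Gamma}$ reduce to the torus-link quantities of Theorem~\ref{thm: torus link satellite in s3, lambda action}, and Corollary~\ref{cor: bgw and juhasz} follows since ``$K^{\Gamma}$ has no L-space surgeries besides $S^3$'' forces case $(i)$ at the root.

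The main obstacle I expect is the bookkeeping in the upgrade step for a general tree $\Gamma$: one must verify, stratum by stratum over the decomposition of $\mathcal{L}_{\textsc{sf}_{\Gamma}}(Y^{\Gamma})$ by monotonicity, that the gluing slopes realizing the closure-intersection can be chosen strictly interior -- and this is exactly where the analytic estimates of Propositions~\ref{prop: main bounds for y0- and y0+} and~\ref{prop: bounds good for iterated case} are needed, and where the restriction to $\Gamma=r$ in part~$(ii)$ is forced, since for deeper trees the regime $2g(K)-1<\tfrac{q_r}{p_r}$ produces the non-monotone strata whose near-boundary behavior, hence whose CTF/LO status, the paper deliberately leaves open.
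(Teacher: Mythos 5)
Your proposal follows essentially the same route as the paper's proof: decompose $Y^{\Gamma}(\boldsymbol{\alpha})=Y\cup_{\phi_{e_r}}Y_{\Gamma}(\boldsymbol{\alpha})$, combine the L-space gluing equivalence (\ref{eq: L-space gluing contrapositive}) with the one-directional statements (\ref{eq: foliation gluing}) and (\ref{eq: left order gluing}) and the graph-manifold CTF/LO classification of \cite{lgraph}, prove a Claim~1 identifying L-space fillings with $S^3$ in case $(i)$, and use the endpoint estimates of Propositions~\ref{prop: basic defs of y_0+-} and~\ref{prop: main bounds for y0- and y0+} to show that the hypothesis $2g(K)-1\neq\frac{q_r+1}{p_r}$ rules out the one-point (closed-endpoint) intersection, while in case $(ii)$ the excised $\Lambda$-orbit is exactly the locus where $y_{0+}$ hits the endpoint $\frac{q^*_r}{p_r}+\frac{1}{p_r(q_r-p_rN)}$. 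The only refinement needed is that your Claim~1 must be stated with the boundary-incompressible/compressible dichotomy (the closure criterion applies only in the \textsc{bi} case), which the paper does and your surrounding remarks about the solid-torus case implicitly accommodate.
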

Note that the requirement that $K$ be nontrivial
is just to simplify the theorem statement.
If $K$ is trivial, then any surgery on $Y^{\Gamma}$
is a graph manifold or a connected sum thereof,
in which case
the L-space conjectures written down by Boyer-Gordon-Watson and Juh{\'a}sz
are already proven to hold,
through the work of Boyer and Clay \cite{BoyerClay}
and of Hanselman, J.~Rasmussen, Watson, and the author \cite{HRRW}.
In addition, the author explicitly shows in
\cite{lgraph} that any graph manifold $Y_{\Gamma}$
always satisfies
\begin{equation}
\mathcal{LO}(\Gamma) = 
\mathcal{NL}(Y_{\Gamma}),
\;\;\;\;
\mathcal{F}(Y_{\Gamma}) = \mathcal{NL}(Y_{\Gamma}) \setminus \mathcal{R}(Y_{\Gamma}).
\end{equation}

{\noindent{\textit{Proof of Theorem.}}}
Suppose that 
$\mathcal{LO}(Y) \supset \mathcal{NL}(Y)$
(respectively 
$\mathcal{F}(Y) = \mathcal{NL}(Y)$).
Since
\begin{equation}
Y^{\Gamma} = Y_{\Gamma} \cup_{\phi_{e_r}}\mkern-2mu Y
\;\;\;\text{and}\;\;\;
(\phi_{e^*}^{\P})^{-1}(\mathcal{L}(Y)) = 
\left[\mfrac{q^*_r}{p_r}-\mfrac{1}{p_r(p_rN-q_r)}, \mfrac{q^*_r}{p_r}\right]_{\textsc{sf}},
\end{equation}
where $N := 2g(K)-1$,
it follows from 
(\ref{eq: left order gluing})
(respectively
(\ref{eq: foliation gluing}))
that in order to prove that
$\boldsymbol{y}^{\Gamma} \in \mathcal{LO}(Y^{\Gamma})$
(respectively 
$\boldsymbol{y}^{\Gamma} \in \mathcal{F}(Y^{\Gamma}) \cup \mathcal{Z}(Y^{\Gamma})$),
it suffices to show that
\begin{equation}
\label{eq: bgwj condition}
\mathcal{N\mkern-1.5muL}_{\textsc{sf}}(Y_{\Gamma}(\boldsymbol{y}^{\Gamma})) \cap 
\left(\left[-\infty,  \mfrac{q^*_r}{p_r}-\mfrac{1}{p_r(p_rN-q_r)}\right>
\cup \left<\mfrac{q^*_r}{p_r},  +\infty\right]\right)
\neq \emptyset.
\end{equation}
On the other hand,
(\ref{eq: L-space gluing contrapositive}) implies that
$\boldsymbol{y}^{\Gamma} \in \mathcal{NL}(Y^{\Gamma})$ if and only if
\begin{equation}
\label{eq: nlspace condition}
\overline{\mathcal{N\mkern-1.5muL}_{\textsc{sf}}(Y_{\Gamma}(\boldsymbol{y}^{\Gamma}))} \cap 
\left(\left[-\infty,  \mfrac{q^*_r}{p_r}-\mfrac{1}{p_r(p_rN-q_r)}\right]
\cup \left[\mfrac{q^*_r}{p_r},  +\infty\right]\right)
\neq \emptyset
\end{equation}
when $Y_{\Gamma}(\boldsymbol{y}^{\Gamma})$ is $\textsc{bi}$,
and if and only if 
(\ref{eq: bgwj condition}) holds when 
$Y_{\Gamma}(\boldsymbol{y}^{\Gamma})$ is $\textsc{bc}$.

Fix some slope 
$\boldsymbol{y}^{\Gamma} \in (\Q \cup \{\infty\})^{\sum_{v\in \mathrm{Vert}(\Gamma)} |I_v|}$
and write
\begin{equation}
\mathcal{L}_{\textsc{sf}}(Y_{\Gamma}(\boldsymbol{y}^{\Gamma})) = [[y_{0-}, y_{0+}]],
\end{equation}
as usual.
It is straightforward to show that
(\ref{eq: nlspace condition}) fails to hold if and only if 
\begin{equation}
\label{eq: new lspace condition}
\mfrac{q^*_r}{p_r}-\mfrac{1}{p_r(p_rN-q_r)}
\;<\; y_{0+}
\;\le\; y_{0-}
\;<\;
\mfrac{q^*_r}{p_r},
\end{equation}
and that
(\ref{eq: bgwj condition}) fails to hold if and only if
\begin{equation}
\label{eq: new bgwj condition}
\mfrac{q^*_r}{p_r}-\mfrac{1}{p_r(p_rN-q_r)}
\;\le\; y_{0+}
\;\le\; y_{0-}
\;\le\;
\mfrac{q^*_r}{p_r}.
\end{equation}
Note that $p_r > 1$ implies
\begin{equation}
\label{eq: bounds on q*/p and interval}
0 \le 
\mfrac{q^*_r}{p_r}-\mfrac{1}{p_r(p_rN-q_r)}
<
\mfrac{q^*_r}{p_r} < 1,
\;\;\;\text{with}\;\;\;\,
\mfrac{q^*_r}{p_r}-\mfrac{1}{p_r(p_rN-q_r)} = 0
\iff
N = \mfrac{q_r+1}{p_r}.
\end{equation}

We begin by proving the following claim.

\begin{claim*}
If  
$N:= 2g(K)-1 > \frac{q_r}{p_r}$, $p_r > 1$, and
$-1 \notin j(E_{\mathrm{in}}(r))$,
then 
\begin{equation}
\label{eq: iff for s3 and lspace conds}
Y^{\Gamma}\mkern-2mu(\boldsymbol{y}^{\Gamma}) = S^3
\;\;\iff\;\;
\left\{
\mkern-8mu
\begin{array}{l}
\text{(\ref{eq: new lspace condition}) holds if }
Y_{\Gamma}(\boldsymbol{y}^{\Gamma})
\;\text{is}\; \textsc{bi},
  \\
\text{(\ref{eq: new bgwj condition}) holds if }
Y_{\Gamma}(\boldsymbol{y}^{\Gamma})
\;\text{is}\; \textsc{bc},
\end{array}
\right.
\end{equation}
If, in addition, 
$N:= 2g(K)-1 \neq \frac{q_r+1}{p_r}$, then
\begin{equation}
\label{eq: second part of first claim in bgw}
\text{(\ref{eq: new bgwj condition}) holds}
\;\;\iff\;\;
\left\{
\mkern-8mu
\begin{array}{l}
\text{(\ref{eq: new lspace condition}) holds if }
Y_{\Gamma}(\boldsymbol{y}^{\Gamma})
\;\text{is}\; \textsc{bi},
  \\
\text{(\ref{eq: new bgwj condition}) holds if }
Y_{\Gamma}(\boldsymbol{y}^{\Gamma})
\;\text{is}\; \textsc{bc},
\end{array}
\right.
\end{equation}
\end{claim*}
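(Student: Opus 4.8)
The plan is to reduce both equivalences to the single assertion that, under the stated hypotheses, a Dehn filling $Y^\Gamma(\boldsymbol y^\Gamma)$ is an L-space if and only if it is $S^3$, and then to extract this from the endpoint bounds of Proposition~\ref{prop: main bounds for y0- and y0+}. Throughout let $r$ be the root, so $\Gamma=\Gamma_r$ and $(y_{0-},y_{0+})=(y^r_{0-},y^r_{0+})$, with $\mathcal{L}_{\textsc{sf}}(Y_\Gamma(\boldsymbol y^\Gamma))=[[y_{0-},y_{0+}]]$ when nonempty (the case $\mathcal{L}_{\textsc{sf}}(Y_\Gamma(\boldsymbol y^\Gamma))=\emptyset$ is immediate: then $\mathcal{L}^\circ(Y_\Gamma(\boldsymbol y^\Gamma))=\emptyset$ while $\mathcal{L}^\circ_{S^3}(Y)=(N,+\infty)$ is a proper subset of the slope circle, so $Y^\Gamma(\boldsymbol y^\Gamma)$ is neither an L-space nor $S^3$, and neither interval condition holds). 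First I would record the translations already prepared before the Claim. When $Y_\Gamma(\boldsymbol y^\Gamma)$ is $\textsc{bi}$, condition (\ref{eq: new lspace condition}) is the negation of (\ref{eq: nlspace condition}), and by (\ref{eq: L-space gluing contrapositive}) applied to $(Y_\Gamma(\boldsymbol y^\Gamma),Y)$, (\ref{eq: nlspace condition}) holds iff $Y^\Gamma(\boldsymbol y^\Gamma)$ is not an L-space; thus (\ref{eq: new lspace condition}) holds iff $Y^\Gamma(\boldsymbol y^\Gamma)$ is an L-space. When $Y_\Gamma(\boldsymbol y^\Gamma)$ is $\textsc{bc}$ we have $\mathcal{NL}_{\textsc{sf}}(Y_\Gamma(\boldsymbol y^\Gamma))=\{l\}$ for the rational longitude $l$, so (\ref{eq: bgwj condition}) fails (i.e.\ (\ref{eq: new bgwj condition}) holds) exactly when $l\in[\frac{q^*_r}{p_r}-\frac{1}{p_r(p_rN-q_r)},\frac{q^*_r}{p_r}]=(\phi^{\P}_{e_r*})^{-1}(\mathcal{L}_{S^3}(Y))$, i.e.\ exactly when $\phi^{\P}_{e_r*}(l)\in\mathcal{L}_{S^3}(Y)=[N,+\infty]$; since $Y_\Gamma(\boldsymbol y^\Gamma)$ is then a solid torus with compressing-disk slope $l$, this is exactly the statement that $Y^\Gamma(\boldsymbol y^\Gamma)=Y(\phi^{\P}_{e_r*}(l))$ is an L-space. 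Hence the right-hand side of each displayed equivalence is just ``$Y^\Gamma(\boldsymbol y^\Gamma)$ is an L-space,'' so (\ref{eq: iff for s3 and lspace conds}) becomes ``$Y^\Gamma(\boldsymbol y^\Gamma)=S^3\iff Y^\Gamma(\boldsymbol y^\Gamma)$ is an L-space'' and (\ref{eq: second part of first claim in bgw}) becomes ``(\ref{eq: new bgwj condition}) holds $\iff Y^\Gamma(\boldsymbol y^\Gamma)$ is an L-space.''

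The key step is the $\textsc{bi}$ case. Since $-1\notin j(E_{\mathrm{in}}(r))$ we have $\bar y^{r\,\prime}_{0\pm}=\bar y^r_{0\pm}$, and Proposition~\ref{prop: compute y0 for alg links} gives $y^r_{0\pm}=\bar y^r_{0\pm}-m_\pm$ with $m_\pm\in\Z$. If $Y_\Gamma(\boldsymbol y^\Gamma)$ is $\textsc{bi}$, Proposition~\ref{prop: main bounds for y0- and y0+}$(+)$, using $p_r>1$ (so $\lfloor q^*_r/p_r\rfloor+1=1$), gives $\frac{q^*_r}{p_r}<\bar y^r_{0+}\le1$; moreover $1\le q^*_r\le p_r-1$ (from $p_rp^*_r-q_rq^*_r=1$, $p_r>1$) and, by (\ref{eq: bounds on q*/p and interval}), $0\le\frac{q^*_r}{p_r}-\frac{1}{p_r(p_rN-q_r)}<\frac{q^*_r}{p_r}<1$. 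Now the integrality argument: (\ref{eq: new lspace condition}) requires $y^r_{0+}<\frac{q^*_r}{p_r}$, i.e.\ $\bar y^r_{0+}-m_+<\frac{q^*_r}{p_r}$, which forces $m_+\ge1$ (since $\bar y^r_{0+}>\frac{q^*_r}{p_r}$); but then $y^r_{0+}=\bar y^r_{0+}-m_+\le1-1=0$, contradicting the requirement $y^r_{0+}>\frac{q^*_r}{p_r}-\frac{1}{p_r(p_rN-q_r)}\ge0$. So (\ref{eq: new lspace condition}) never holds when $Y_\Gamma(\boldsymbol y^\Gamma)$ is $\textsc{bi}$ — equivalently $Y^\Gamma(\boldsymbol y^\Gamma)$ is never an L-space (hence not $S^3$) in that case, and the $\textsc{bi}$ line of (\ref{eq: iff for s3 and lspace conds}) is vacuous. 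The same computation applied to (\ref{eq: new bgwj condition}) again forces $m_+\ge1$ and $y^r_{0+}\le0$, which contradicts $y^r_{0+}\ge\frac{q^*_r}{p_r}-\frac{1}{p_r(p_rN-q_r)}$ precisely when this lower bound is strictly positive — and by (\ref{eq: bounds on q*/p and interval}) it is, exactly because of the added hypothesis $N\ne\frac{q_r+1}{p_r}$; so (\ref{eq: new bgwj condition}) also never holds in the $\textsc{bi}$ case, and the $\textsc{bi}$ line of (\ref{eq: second part of first claim in bgw}) reads ``false $\iff$ false.''

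For the $\textsc{bc}$ case I would use that $Y_\Gamma(\boldsymbol y^\Gamma)$ is $\textsc{bc}$ iff the supremum defining $\bar y^r_{0-}$ is not attained for finite $k$ (Theorem~\ref{thm: l-space interval for seifert jsj}), iff $\bar y^r_{0-}=\bar y^r_{0+}=\frac{q^*_r}{p_r}$, iff (the Claim inside the proof of Proposition~\ref{prop: main bounds for y0- and y0+}) $J_r^{\textsc{bi}}=\emptyset$ and all relevant slopes below $r$ are integral; in particular no reducible ($\infty$) slope occurs, so $Y_\Gamma(\boldsymbol y^\Gamma)$ is genuinely a single solid torus with no closed summands and $Y^\Gamma(\boldsymbol y^\Gamma)=Y(\phi^{\P}_{e_r*}(l))$ for $l$ its rational longitude. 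By Proposition~\ref{prop: compute y0 for alg links}, $l=\bar y^r_{0-}-(\text{integer})=\frac{q^*_r}{p_r}-m$ for some $m\in\Z$, and since $\frac{q^*_r}{p_r}\in(0,1)$ while $[\frac{q^*_r}{p_r}-\frac{1}{p_r(p_rN-q_r)},\frac{q^*_r}{p_r}]\subset[0,1)$, the only $m$ placing $l$ in that interval is $m=0$; hence whenever $Y^\Gamma(\boldsymbol y^\Gamma)$ is an L-space (i.e.\ (\ref{eq: new bgwj condition}) holds, i.e.\ $l$ lies in that interval) we get $l=\frac{q^*_r}{p_r}=(\phi^{\P}_{e_r*})^{-1}(\infty)$, so $Y^\Gamma(\boldsymbol y^\Gamma)=Y(\infty)=S^3$. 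This gives ``L-space $\Rightarrow S^3$'' (the converse being trivial), completing (\ref{eq: iff for s3 and lspace conds}); and in the $\textsc{bc}$ case (\ref{eq: second part of first claim in bgw}) is tautological, completing it as well.

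I expect the main obstacle to be the $\textsc{bi}$-case bookkeeping: verifying cleanly that $y^r_{0\pm}$ differ from $\bar y^r_{0\pm}$ by integers with no further constraint, tracking exactly which endpoint inequality of (\ref{eq: new lspace condition})/(\ref{eq: new bgwj condition}) forces $m_+\ge1$, and isolating the single place where $N\ne\frac{q_r+1}{p_r}$ is genuinely used (keeping $\frac{q^*_r}{p_r}-\frac{1}{p_r(p_rN-q_r)}$ strictly positive). A secondary point needing care is the $\textsc{bc}$ assertion that $\bar y^r_{0\pm}=\frac{q^*_r}{p_r}$ really forces a solid torus with no lens-space summands, so that $Y^\Gamma(\boldsymbol y^\Gamma)$ is honestly a Dehn filling of $Y$ and the identification $Y(\infty)=S^3$ applies.
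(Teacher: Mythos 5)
Your argument is, in structure, the paper's own proof of the Claim: the same reduction of the right-hand side to ``$Y^{\Gamma}(\boldsymbol{y}^{\Gamma})$ is an L-space'' (this translation is set up just before the Claim in the paper and used just after it), the same \textsc{bi}/\textsc{bc} dichotomy, the same \textsc{bi}-case integrality argument via Proposition~\ref{prop: basic defs of y_0+-} (equivalently Proposition~\ref{prop: compute y0 for alg links}, since $-1\notin j(E_{\mathrm{in}}(r))$) combined with Proposition~\ref{prop: main bounds for y0- and y0+}$(+)$ and (\ref{eq: bounds on q*/p and interval}), with the hypothesis $N\neq\frac{q_r+1}{p_r}$ entering exactly where you place it, and the same \textsc{bc}-case endgame via Proposition~\ref{prop: main bounds for y0- and y0+}$(=)$ and the identification of the slope $\frac{q^*_r}{p_r}$ with the image of the meridian of $K$. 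All of that is correct and matches the paper.

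The one point where you add a justification the paper does not give is the \textsc{bc}-case step ``all relevant slopes below $r$ are integral; in particular no reducible ($\infty$) slope occurs, so $Y_{\Gamma}(\boldsymbol{y}^{\Gamma})$ is genuinely a single solid torus with no closed summands,'' and that inference does not follow. The integrality supplied by the Claim inside the proof of Proposition~\ref{prop: main bounds for y0- and y0+} concerns only the root-level data $\{y^r_{j\pm}\}_{j\in J_r}\cup\{y^r_i\}_{i\in I_r}$; it places no constraint on the fillings at lower vertices. A subtree attached to $r$ can be filled so that $Y_{\Gamma_{v(-e)}}(\boldsymbol{y}^{\Gamma_{v(-e)}})$ is boundary compressible but is a solid torus connect-summed with lens spaces, with the $\phi^{\P}_{e*}$-image of its rational longitude an integer: for example, in the iterated torus-link case a lower vertex $w$ with $q_w=1$ filled along its fiber slope $\infty$ yields $(\text{solid torus})\,\#\,L(p_w,\cdot)\,\#\,(\text{lens})$, whose longitude $\infty_{\textsc{sf}}$ maps to $p_w/q_w=p_w\in\Z$. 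Then every root-level slope is an integer, $y_{0\pm}=\frac{q^*_r}{p_r}$, and (\ref{eq: new bgwj condition}) can hold, yet $Y^{\Gamma}(\boldsymbol{y}^{\Gamma})$ is $S^3$ connect-summed with lens spaces rather than $S^3$ itself. So the implication ``$l=\frac{q^*_r}{p_r}\Rightarrow Y^{\Gamma}(\boldsymbol{y}^{\Gamma})=S^3$'' requires an argument (or an additional hypothesis) excluding closed summands carried by \textsc{bc} subtrees, and the citation you give does not supply one. You have in fact located the genuinely delicate spot correctly --- the paper's own proof asserts this implication at the same place with no justification at all --- but the patch you propose does not close it, and as stated it would also not detect configurations like the one above.
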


{\textit{Proof of Claim.}}
Suppose $N > \frac{q_r}{p_r}$, $p_r > 1$,
and $-1 \notin j(E_{\mathrm{in}}(r))$.
Then
Proposition \ref{prop: basic defs of y_0+-}
together with 
Proposition~\ref{prop: main bounds for y0- and y0+}.$(=)$
imply that
\begin{equation}
y_{0+} \in \mfrac{q_r^*}{p_r} + \Z 
\iff 
y_{0-} \in \mfrac{q_r^*}{p_r} + \Z 
\iff 
Y_{\Gamma}(\boldsymbol{y}^{\Gamma})
\;\text{is}\; \textsc{bc}
\implies y_{0-} = y_{0+},
\end{equation}
so that
\begin{equation}
\text{(\ref{eq: new bgwj condition}) holds and }
Y_{\Gamma}(\boldsymbol{y}^{\Gamma})
\;\text{is}\; \textsc{bc}
\;\iff\;
y_{0+} = \mfrac{q_r^*}{p_r}
\;\iff\;
y_{0-} = \mfrac{q_r^*}{p_r}
\;\implies\;
Y^{\Gamma}(\boldsymbol{y}^{\Gamma}) = S^3.
\end{equation}
Thus, since the fact that $S^3$ is an L-space makes the
$\implies$ implication of 
(\ref{eq: iff for s3 and lspace conds}) automatically hold,
this exhausts the case when 
$Y_{\Gamma}(\boldsymbol{y}^{\Gamma})$ is
$\textsc{bc}$.
Next suppose that 
$Y_{\Gamma}(\boldsymbol{y}^{\Gamma})
\;\text{is}\; \textsc{bi}$,
so that
Proposition~\ref{prop: main bounds for y0- and y0+}.(+)
implies
$y_{0+} \in \left<\frac{q^*_r}{p_r}, 1\right] + \Z$.
Then
(\ref{eq: bounds on q*/p and interval})
implies that
(\ref{eq: new lspace condition}) always fails to hold, and that
(\ref{eq: new bgwj condition}) fails to hold
if $N \neq \frac{q_r + 1}{p_r}$,
completing the proof of the claim.

\vspace{.1cm}

Continuing with the proof of the theorem,
since the right-hand condition of 
(\ref{eq: iff for s3 and lspace conds})
and
(\ref{eq: second part of first claim in bgw})
is equivalent to $Y^{\Gamma}(\boldsymbol{y}^{\Gamma})$ being an L-space,
and since 
(\ref{eq: new bgwj condition}) is the negation of
(\ref{eq: bgwj condition}), we have shown that
(\ref{eq: bgwj condition}) holds
if and only if 
$\boldsymbol{y}^{\Gamma} \in \mathcal{NL}(Y^{\Gamma})$,
proving that
$\mathcal{LO}(Y^{\Gamma}) \supset \mathcal{NL}(Y^{\Gamma})$
(respectively
$\mathcal{F}(Y^{\Gamma}) \supset \mathcal{NL}(Y^{\Gamma}) \setminus \mathcal{R}(Y^{\Gamma})$)
if
$\mathcal{LO}(Y) = \mathcal{NL}(Y)$
(respectively 
$\mathcal{F}(Y) = \mathcal{NL}(Y)$),
with
$N > \frac{q_r}{p_r}$, $p_r > 1$,
and $-1 \notin j(E_{\mathrm{in}}(r))$.
Since $S^3$ has no co-oriented taut foliations or left-orders
on its fundamental group, we then have
$\mathcal{LO}(Y^{\Gamma}) = \mathcal{NL}(Y^{\Gamma})$
(respectively $\mathcal{F}(Y^{\Gamma}) 
= \mathcal{NL}(Y^{\Gamma}) \setminus \mathcal{R}(Y^{\Gamma})$).

\vspace{.3cm}

This leaves the case in which 
we have a single $T(np,nq)$ torus-link satellite,
with $N:= 2g(K)-1 < \frac{q}{p}$.
Arguments similar to those above then show that if
$\mathcal{LO}(Y) = \mathcal{NL}(Y)$
(respectively 
$\mathcal{F}(Y) = \mathcal{NL}(Y)$),
then $\boldsymbol{y}^{\Gamma} \in \mathcal{NL}(Y^{\Gamma})$
implies 
that $\boldsymbol{y}^{\Gamma} \in \mathcal{LO}(Y^{\Gamma})$
(respectively $\boldsymbol{y}^{\Gamma} \in \mathcal{F}(Y^{\Gamma}) \cup \mathcal{Z}(Y^{\Gamma})$),
provided that 
\begin{equation}
y_{0+} 
\neq 
\mfrac{q^*}{p} + \mfrac{1}{p(q-pN)} \;\; \left(= \mfrac{p^*-q^*N}{q-pN}\right).
\end{equation}

Propositions \ref{prop: basic defs of y_0+-} and
\ref{prop: main bounds for y0- and y0+}.(+)
then tell us that
$y_{0+} = \frac{p^*-q^*N}{q-pN}$
implies
\begin{equation}
\sum_{i=1}^n \lceil y_i \rceil = 0,
\sum_{i=1}^n \lfloor [-y_i](q-Np) \rfloor = 0,
\sum_{i=1}^n \lfloor [-y_i](q-(N-1)p) \rfloor > 0,
\end{equation}
which, under change of basis to $S^3$-slopes, becomes
\begin{equation}
\Lambda(Y^{\Gamma}) \mkern-2mu\cdot\mkern-2mu 
(\left[-\infty, pq-q + pN\right>^n \mkern-1mu\setminus\mkern-1mu 
\left[-\infty, pq-q+pN-p\right>^n ).
\end{equation}
Since $pq-q+pN = pq - q - p + 2(K)p$, the theorem follows.

\qed

\subsection{Exceptional Symmetries}
\label{ss: exceptional symmetries}

As mentioned in the introduction,
there are instances, for exteriors of iterated torus-link satellites or
algebraic link satellites, in which the 
$\Lambda$-type symmetries for Seifert fibered components
have their influence extend across edges.
This phenomenon is more relevant in the context of exceptional splices.

\begin{prop}
Suppose $Y^{\Gamma}$ is the exterior of an algebraic link satellite
$K^{\Gamma} \subset S^3$
of a nontrivial positive L-space knot $K \subset S^3$ of genus $g(K)$,
with $N:= 2(g)K-1 > \frac{q_r}{p_r}$ and $-1 \in j(E_{\mathrm{in}}(r))$.
Then $\mathcal{L}(Y^{\Gamma}) = \bigcup_{e\in E_{\mathrm{in}}(r)} \mathcal{L}_e$, where
$$
\mathcal{L}_e :=
\left\{
\boldsymbol{y}^{\Gamma} 
\left.
\vphantom{\begin{array}{c}A \\ A \\ A\end{array}}
\right|
\begin{array}{l}
Y_{\Gamma_{v(-e')}}(\boldsymbol{y}^{\Gamma_{v(-e')}})
\;\text{ is }\;\textsc{bc},\,
\text{ with }\, y^r_{j(e')\pm} \in \Z,
\text{ for all } \;
e' \in E_{\mathrm{in}}(r),
  \\
\mfrac{q^*_r}{p_r}-\mfrac{1}{p_r(p_rN-q_r)}
\;<\; y_{0+}
\;\le\; y_{0-}
\;<\;
\mfrac{q^*_r}{p_r}
\;\;\text{ if }\;
Y_{\Gamma_{v(-e)}}(\boldsymbol{y}^{\Gamma_{v(-e)}})
\;\text{ is }\;\textsc{bi},\,
  \\
\mfrac{q^*_r}{p_r}-\mfrac{1}{p_r(p_rN-q_r)}
\;\le\; y_{0+}
\;\le\; y_{0-}
\;\le\;
\mfrac{q^*_r}{p_r}
\;\;\text{ if }\;
Y_{\Gamma_{v(-e)}}(\boldsymbol{y}^{\Gamma_{v(-e)}})
\;\text{ is }\;\textsc{bc}.
\end{array}
\right\}
$$

\end{prop}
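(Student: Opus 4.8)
The strategy is to reuse, almost verbatim, the interval analysis from the proof of Theorem~\ref{thm: bgw and juhasz for general satellites}, now tracking the effect of the exceptional splice at the root. Write $Y^\Gamma = Y_\Gamma \cup_{\phi_{e_r}} Y$ with $Y := S^3\setminus\overset{\circ}{\nu}(K)$; since an exceptional splice terminates at $r$, our conventions force $p_r > 1$, so
$$(\phi_{e_r *}^{\P})^{-1}(\mathcal L(Y)) = \left[\,\frac{q^*_r}{p_r}-\frac{1}{p_r(p_rN-q_r)},\ \frac{q^*_r}{p_r}\,\right]_{\textsc{sf}_r}\ \subset\ [0,1),$$
exactly as there. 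Combining Theorem~\ref{thm: knot exterior gluing theorem} (in the case $Y_\Gamma(\boldsymbol y^\Gamma)$ is \textsc{bi}, where it applies since $Y$ is a nontrivial positive L-space knot exterior and $Y_\Gamma(\boldsymbol y^\Gamma)$ is a graph manifold or connected sum thereof) with the direct Dehn-filling description in the \textsc{bc} case, one gets: writing $[[y^r_{0-},y^r_{0+}]] := \mathcal L_{\textsc{sf}_r}(Y_\Gamma(\boldsymbol y^\Gamma))$ for the L-space interval at the $\lambda^r_0$-boundary, $Y^\Gamma(\boldsymbol y^\Gamma)$ is an L-space if and only if (\ref{eq: new lspace condition}) holds (when $Y_\Gamma(\boldsymbol y^\Gamma)$ is \textsc{bi}) or (\ref{eq: new bgwj condition}) holds (when it is \textsc{bc}). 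The exceptional splice changes the values $y^r_{0\pm}$ but not this criterion, so the whole task is to identify the $\boldsymbol y^\Gamma$ for which $y^r_{0\pm}$ lands in the window $[\frac{q^*_r}{p_r}-\frac{1}{p_r(p_rN-q_r)},\frac{q^*_r}{p_r}]\subset[0,1)$.

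For this I would compute $y^r_{0\pm}$ via Proposition~\ref{prop: compute y0 for alg links},
$$y^r_{0-} = \bar y^{r\,\prime}_{0-} - \sum_{j\in J_r^{\textsc{bi}}}(\lceil y^r_{j+}\rceil - 1) - \sum_{j\in J_r^{\textsc{bc}}}\lfloor y^r_j\rfloor - \sum_{i\in I_r}\lfloor y^r_i\rfloor,$$
and dually for $y^r_{0+}$, and then bound $\bar y^{r\,\prime}_{0\pm}$. By Proposition~\ref{prop: main bounds for y0- and y0+} one has $\bar y^r_{0-}\le \frac{q^*_r}{p_r}\le \bar y^r_{0+}$, with equality only if $Y_\Gamma(\boldsymbol y^\Gamma)$ is \textsc{bc}; and by Proposition~\ref{prop: y0bar y0prime compare}, once the incoming exceptional subtree $\Gamma_{v(-e')}$ (the one with $j(e') = -1$) satisfies the hypothesis $-\frac{q^*_r}{p_r}\le y^r_{j(e')+}\le y^r_{j(e')-}\le -\frac{q^*_r}{p_r}+m^{e'-}$, one has $\bar y^{r\,\prime}_{0-}\le \bar y^r_{0-}$ while $\bar y^{r\,\prime}_{0+}\ge \bar y^r_{0+}+m^{e'-}$, where $m^{e'-} = \lceil\frac{p_{v(-e')}}{p_r\Delta_{e'}}\rceil+1\ge 2$, so $\bar y^{r\,\prime}_{0+}\ge \frac{q^*_r}{p_r}+2$. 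Feeding these back into the displayed formula and using $0\le y^r_{0+}\le y^r_{0-}<1$, the excess forces every correction sum to be as small as possible: every smooth-spliced incoming subtree must be refilled trivially (\textsc{bc} with integral $\textsc{sf}_r$-slope $y^r_{j(e')\pm}\in\Z$), one must have $\sum_{i\in I_r}\lfloor y^r_i\rfloor=\sum_{i\in I_r}\lceil y^r_i\rceil=0$, and the remaining (active) incoming subtree must be pinned so that $y^r_{0\pm}$ lands in the required window. These are precisely the conditions defining the $\mathcal L_e$, the union over $e\in E_{\mathrm{in}}(r)$ recording which incoming subtree carries the active data — the exceptional one supplying the genuinely new L-space slopes, the smooth ones reproducing the $\Lambda_\Gamma$-orbit part, in direct parallel with the union in Theorem~\ref{thm: algebraic link satellites}$(i.b)$.

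The reverse inclusion is comparatively routine: for $\boldsymbol y^\Gamma\in\mathcal L_e$, the triviality of the other incoming subtrees kills every correction sum in Proposition~\ref{prop: compute y0 for alg links}, so the inequalities on $y^r_{0\pm}$ in the definition of $\mathcal L_e$ are literally (\ref{eq: new lspace condition}) or (\ref{eq: new bgwj condition}), whence $Y^\Gamma(\boldsymbol y^\Gamma)$ is an L-space. The only subtlety is matching the \textsc{bi}/\textsc{bc} dichotomy of $Y_\Gamma(\boldsymbol y^\Gamma)$ as a whole with that of the active subtree $\Gamma_{v(-e)}$: once every other incoming boundary of $Y_r$ is filled by a well-framed solid torus, $Y_\Gamma(\boldsymbol y^\Gamma)$ is boundary compressible exactly when $\Gamma_{v(-e)}$ is, so the two cases in the definition of $\mathcal L_e$ align with the two gluing conditions.

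I expect the main obstacle to be Step~2's bookkeeping: precisely, (a) showing that a \textsc{bi} filling of the exceptional subtree is genuinely incompatible with the L-space window — that no combination of the integer correction sums can pull $y^r_{0+}$ back below $1$ while keeping $y^r_{0-}\ge 0$, given $\bar y^{r\,\prime}_{0+}\ge \frac{q^*_r}{p_r}+2$ — and (b), conversely, verifying that when the exceptional subtree is refilled with $y^r_{-1\pm}\in\Z$ the transported interval $[[y^r_{0-},y^r_{0+}]]$ actually sweeps out the full window, which requires an explicit evaluation of $\bar y^{r\,\prime}_{0\pm}$ through the exceptional-splice map $\sigma_{e'}$, analogous to the computation culminating in line (\ref{eq: final answer Lmin- alg link}). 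The gluing reduction in Step~1 and the reverse inclusion are straightforward given the cited results.
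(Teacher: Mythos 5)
Your high-level plan --- reduce, via Theorem~\ref{thm: knot exterior gluing theorem} and the transported interval $(\phi_{e_r*}^{\P})^{-1}(\mathcal{L}(Y))$, to the window conditions (\ref{eq: new lspace condition})/(\ref{eq: new bgwj condition}), and then analyze the endpoints through Proposition~\ref{prop: compute y0 for alg links} --- is exactly the route the paper indicates (its proof is, verbatim, ``adapt Claim 1 of Theorem~\ref{thm: bgw and juhasz for general satellites}''). The genuine gap is in how you propose to carry out the endpoint analysis. Your anticipated steps (a) and (b) --- showing a $\textsc{bi}$ filling of the exceptional subtree is incompatible with the window, and then checking only the case where that subtree is refilled with $y^r_{-1\pm}\in\Z$ --- would prove something inconsistent with the statement you are trying to establish. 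The second defining line of $\mathcal{L}_e$ (the strict window when $Y_{\Gamma_{v(-e)}}(\boldsymbol{y}^{\Gamma_{v(-e)}})$ is $\textsc{bi}$), the remark immediately following the proposition, and the introduction's observation that $\mathcal{L}(Y^{\Gamma})\supset \mathcal{L}(Y^{\Gamma_{v(-e')}})\times\Lambda_{\Gamma\setminus\Gamma_{v(-e')}}$ with possibly proper containment, all exist precisely because the exceptionally spliced subtree \emph{can} be filled nontrivially (in particular $\textsc{bi}$): those fillings, with $y^r_{-1\pm}$ ranging over non-integral values, are the genuinely new L-space slopes this proposition is meant to capture. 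If (a) were true, the region would collapse to the trivial $\Lambda$-type locus and Section~\ref{ss: exceptional symmetries} would have no content. Relatedly, the condition $\sum_{i\in I_r}\lfloor y^r_i\rfloor=\sum_{i\in I_r}\lceil y^r_i\rceil=0$ that you claim is forced is false in general: a $\Lambda_r$-type shift of $\boldsymbol{y}^r$ can be compensated by moving the filling of the exceptional subtree, which is exactly the ``symmetry extending across edges'' the section describes.

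Two further points in the same vein. First, your only quantitative input on the primed endpoints is Proposition~\ref{prop: y0bar y0prime compare}, whose hypothesis (\ref{eq: fake inductive statement}) is an inner-approximation bound on the transported interval of the exceptional subtree; for the forward inclusion you must handle \emph{arbitrary} fillings, for which that hypothesis need not hold, so the bound $\bar{y}^{r\,\prime}_{0+}\ge \frac{q^*_r}{p_r}+2$ is not available (and, even where it holds, a large $\bar{y}^{r\,\prime}_{0+}$ does not force the correction sums to be ``as small as possible''; it forces them to compensate by a specific amount). What is actually needed is an analysis, in the spirit of Claim 1 but with $\lceil q^*_r k/p_r\rceil$, $\lfloor q^*_r k/p_r\rfloor$ replaced by $y^{r\,\prime}_{-1+}(k)$, $y^{r\,\prime}_{-1-}(k)$, of which positions of $y^r_{-1\pm}$ and which integrality defects of the smooth subtrees and of $\boldsymbol{y}^r$ are compatible with an interval of width at most $\frac{1}{p_r(p_rN-q_r)}$ inside $[0,1)$. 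Second, in your reverse inclusion the assertion that $Y_{\Gamma}(\boldsymbol{y}^{\Gamma})$ is $\textsc{bc}$ exactly when the active subtree is, is not correct as stated: after the trivial refillings, boundary compressibility of $Y_{\Gamma}(\boldsymbol{y}^{\Gamma})$ depends on how many non-integral effective slopes occur among \emph{all} of the filled boundaries of $Y_r$ (the active subtree's effective slope together with the $y^r_i$), so matching the strict/non-strict windows in the definition of $\mathcal{L}_e$ with the gluing criterion is itself part of the bookkeeping --- indeed it is where the integrality constraints recorded in the paper's remark come from --- rather than something that can be asserted outright.
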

\begin{proof}
This is established by a straightforward but tedious
adaptation of the arguments used to prove Claim 1
in the proof of 
Theorem~\ref{thm: bgw and juhasz for general satellites}.
\end{proof}
Note that the latter two conditions place strong constraints on
$\boldsymbol{y}^r$ as well.  In particular, we must have
$\boldsymbol{y}^r \in \Z^{|I_v|}$ unless 
$Y_{\Gamma_{v(-e)}}(\boldsymbol{y}^{\Gamma_{v(-e)}})
\;\text{ is }\;\textsc{bc}$ with $y_{j(e)\pm} \in \Z$,
in which case $y_i^r \in \Z$ all but at most one $i \in I_v$.

This phenomenon also affects 
$\Lambda_{\Gamma}$ as defined in
(\ref{eq: lambda_gamma def}).  While
$\Lambda_{\Gamma} \supset \prod_{v\in \text{Vert}(\Gamma)} \Lambda_v$,
this containment is proper if there is an edge
$e \in \text{Edge}(\Gamma)$ for which one can have
$Y_{\Gamma_{v(-e)}}(\boldsymbol{y}^{\Gamma_{v(-e)}})$ \textsc{bc}
with $0 \neq y^{v(e)}_{j(e)\pm} \in \Z$ if $j(e) \neq -1$,
or any integer value $y^{v(e)}_{j(e)\pm} \in \Z$ if $j(e) = -1$.
For a satellite without exceptional splices, the situation
is still relatively simple.
It is straightforward to show that the above edge condition
can hold only if $(p_{v(-e)}, q_{v(-e)}) = (1,2)$.
For such an edge, one must locally replace the product
$\Lambda_{v(-e)} \times \Lambda_{v(e)}$ with the product
$(\Lambda_{v(-e)} \times \Lambda_{v(e)})
\,\cup\,
(\Lambda_{v(-e)}^{(1)} \mkern-2mu\times \Lambda_{v(e)}^{(1)})$,
where $\Lambda_v^{(1)}:= \{\boldsymbol{y}^v \in \Z^{|I_v|} | \sum y^v_i = 1\}$.
If $\Gamma$ has no exceptional splice edges and no vertices with
$(p_v, q_v) = (1,2)$, then $\Lambda_{\Gamma} = \prod_{v \in \text{Vert}(\Gamma)} \Lambda_v$.

\bibliography{toruslinks}

\begin{thebibliography}{10}

\bibitem{BludovGlassI}
V.~V. Bludov and A.~M.~W. Glass.
\newblock Word problems, embeddings, and free products of right-ordered groups
  with amalgamated subgroup.
\newblock {\em Proc. Lond. Math. Soc. (3)}, 99(3):585--608, 2009.

\bibitem{Bowden}
Jonathan Bowden.
\newblock Approximating {$C^0$}-foliations by contact structures.
\newblock {\em Geom. Funct. Anal.}, 26(5):1255--1296, 2016.

\bibitem{BoyerClay}
Steven Boyer and Adam Clay.
\newblock Foliations, orders, representations, {L}-spaces and graph manifolds.
\newblock {\em Adv. Math.}, 310:159--234, 2017.

\bibitem{BGW}
Steven Boyer, Cameron~McA. Gordon, and Liam Watson.
\newblock On {L}-spaces and left-orderable fundamental groups.
\newblock {\em Math. Ann.}, 356(4):1213--1245, 2013.

\bibitem{ziggurat}
Danny Calegari and Alden Walker.
\newblock Ziggurats and rotation numbers.
\newblock {\em J. Mod. Dyn.}, 5(4):711--746, 2011.

\bibitem{ClayLidmanWatson}
Adam Clay, Tye Lidman, and Liam Watson.
\newblock Graph manifolds, left-orderability and amalgamation.
\newblock {\em Algebr. Geom. Topol.}, 13(4):2347--2368, 2013.

\bibitem{EisenbudNeumann}
David Eisenbud and Walter Neumann.
\newblock {\em Three-dimensional link theory and invariants of plane curve
  singularities}, volume 110 of {\em Annals of Mathematics Studies}.
\newblock Princeton University Press, Princeton, NJ, 1985.

\bibitem{EliashbergThurston}
Yakov~M. Eliashberg and William~P. Thurston.
\newblock {\em Confoliations}, volume~13 of {\em University Lecture Series}.
\newblock American Mathematical Society, Providence, RI, 1998.

\bibitem{gorskyhom}
Eugene Gorsky and Jennifer Hom.
\newblock Cable links and {L}-space surgeries.
\newblock {\em Quantum Topol.}, 8(4):629--666, 2017.

\bibitem{GNalglink}
Eugene Gorsky and Andr{\'a}s N{\'e}methi.
\newblock Links of plane curve singularities are {$L$}-space links.
\newblock {\em Algebr. Geom. Topol.}, 16(4):1905--1912, 2016.
\newblock arXiv:1403.3143.

\bibitem{GNLbounded}
Eugene Gorsky and Andr\'{a}s N\'{e}methi.
\newblock On the set of {L}-space surgeries for links.
\newblock {\em Adv. Math.}, 333:386--422, 2018.

\bibitem{HRRW}
Jonathan Hanselman, Jacob Rasmussen, Sarah~Dean Rasmussen, and Liam Watson.
\newblock {Taut foliations on graph manifolds}.
\newblock arXiv:1508.05911, 2015.

\bibitem{HRW}
Jonathan Hanselman, Jacob Rasmussen, and Liam Watson.
\newblock {Bordered Floer homology for manifolds with torus boundary via
  immersed curves}.
\newblock arXiv:1604.03466.

\bibitem{HanWat}
Jonathan Hanselman and Liam Watson.
\newblock {A calculus for bordered Floer homology}.
\newblock { arXiv:1508.05445}, 2015.

\bibitem{Heddencableii}
Matthew Hedden.
\newblock On knot {F}loer homology and cabling. {II}.
\newblock {\em Int. Math. Res. Not. IMRN}, (12):2248--2274, 2009.

\bibitem{Homcable}
Jennifer Hom.
\newblock A note on cabling and {$L$}-space surgeries.
\newblock {\em Algebr. Geom. Topol.}, 11(1):219--223, 2011.

\bibitem{JankinsNeumann}
Mark Jankins and Walter~D. Neumann.
\newblock Rotation numbers of products of circle homeomorphisms.
\newblock {\em Math. Ann.}, 271(3):381--400, 1985.

\bibitem{JuhaszConj}
Andr{\'a}s Juh{\'a}sz.
\newblock A survey of {H}eegaard {F}loer homology.
\newblock In {\em New ideas in low dimensional topology}, volume~56 of {\em
  Ser. Knots Everything}, pages 237--296. World Sci. Publ., Hackensack, NJ,
  2015.
\newblock arXiv:1310.3418.

\bibitem{KazezRobertsCzero}
William~H. Kazez and Rachel Roberts.
\newblock Approximating {$C^{1,0}$}-foliations.
\newblock In {\em Interactions between low-dimensional topology and mapping
  class groups}, volume~19 of {\em Geom. Topol. Monogr.}, pages 21--72. Geom.
  Topol. Publ., Coventry, 2015.

\bibitem{LiRoberts}
Tao Li and Rachel Roberts.
\newblock Taut foliations in knot complements.
\newblock {\em Pacific J. Math.}, 269(1):149--168, 2014.

\bibitem{liulspace}
Yajing Liu.
\newblock {$L$}-space surgeries on links.
\newblock {\em Quantum Topol.}, 8(3):505--570, 2017.

\bibitem{NemethiLO}
Andr\'as N\'emethi.
\newblock Links of rational singularities, {L}-spaces and {LO} fundamental
  groups.
\newblock {\em Invent. Math.}, 210(1):69--83, 2017.
\newblock {arXiv:1510.07128}.

\bibitem{NeumannEnd}
Walter~D. Neumann and Jonathan Wahl.
\newblock The end curve theorem for normal complex surface singularities.
\newblock {\em J. Eur. Math. Soc. (JEMS)}, 12(2):471--503, 2010.

\bibitem{OSGen}
Peter. Ozsv{\'a}th and Zolt{\'a}n Szab{\'o}.
\newblock Holomorphic disks and genus bounds.
\newblock {\em Geom. Topol.}, 8:311--334, 2004.
\newblock math.GT/0311496.

\bibitem{OSRat}
Peter~S. Ozsv{\'a}th and Zolt{\'a}n Szab{\'o}.
\newblock Knot {F}loer homology and rational surgeries.
\newblock {\em Algebr. Geom. Topol.}, 11(1):1--68, 2011.

\bibitem{lslope}
Jacob Rasmussen and Sarah~Dean Rasmussen.
\newblock Floer simple manifolds and {L}-space intervals.
\newblock {\em Adv. Math.}, 322:738--805, 2017.
\newblock {arXiv:1508.05900}.

\bibitem{lgraph}
Sarah~Dean Rasmussen.
\newblock L-space intervals for graph manifolds and cables.
\newblock {\em Compos. Math.}, 153(5):1008--1049, 2017.
\newblock {arXiv:1511.04413}.

\end{thebibliography}
\bibliographystyle{plain}

\end{document}